\documentclass[a4paper,12pt,reqno]{amsart}
\usepackage[utf8]{inputenc}
\usepackage{amsmath, amsthm, amssymb,todonotes}
\newtheorem{theorem}{Theorem}[section]
\newtheorem*{theorem*}{Theorem}
\newtheorem{lemma}[theorem]{Lemma}
\newtheorem{proposition}[theorem]{Proposition}

\theoremstyle{definition}
\newtheorem{definition}[theorem]{Definition}

\theoremstyle{remark}
\newtheorem{remark}[theorem]{Remark}
\numberwithin{equation}{section}
\usetikzlibrary{angles,quotes,patterns,arrows.meta,bending,decorations.markings,intersections}
\usetikzlibrary{calc}

\DeclareMathOperator{\im}{Im}
\DeclareMathOperator{\diag}{\rm diag}

\usepackage{booktabs} 
\usepackage{array}
\usetikzlibrary{arrows.meta, decorations.markings}

\tikzset{arc arrow/.style args={%
    to pos #1 with length #2}{
    decoration={
        markings,
         mark=at position 0 with {\pgfextra{%
         \pgfmathsetmacro{\tmpArrowTime}{#2/(\pgfdecoratedpathlength)}
         \xdef\tmpArrowTime{\tmpArrowTime}}},
        mark=at position {#1-\tmpArrowTime} with {\coordinate(@1);},
        mark=at position {#1-2*\tmpArrowTime/3} with {\coordinate(@2);},
        mark=at position {#1-\tmpArrowTime/3} with {\coordinate(@3);},
        mark=at position {#1} with {\coordinate(@4);
        \draw[-{Stealth[length=#2,bend]}]
        (@1) .. controls (@2) and (@3) .. (@4);},
        },
     postaction=decorate,
     }
}

\title[Jacobi-Piñeiro orthogonal polynomials]{Strong asymptotics for Jacobi-Piñeiro orthogonal polynomials}

\author{Sergei Kalmykov$^{1,2}$}
\address{$^{1}$School of Mathematical Sciences,   CMA-Shanghai, Shanghai Jiao Tong University, 800 Dongchuan RD, Shanghai 200240, China}
\address{$^{2}$Khabarovsk Division of the Institute for Applied Mathematics, Far Eastern Branch, Russian Academy of Sciences, Russia, Khabarovsk, 60  Seryshev st., 680038, Russia} 
\email{kalmykovsergei@sjtu.edu.cn} 

\author{Vladimir Lysov$^{3}$}
\address{$^{3}$Keldysh Institute of Applied Mathematics
of Russian Academy of Sciences
Miusskaya pl., 4, 125047, Moscow, Russia} 
\email{vlysov@mail.ru} 

\author{Vinay Shukla$^{4,5,\#}$}{\thanks{$^{\#}$Corresponding author} }
\address{$^{4}$School of Mathematical Sciences,  Shanghai Jiao Tong University, 800 Dongchuan RD, Shanghai 200240, China} 
\address{$^{5}$Department of Mathematics, School of Computer Science Engineering and Technology, Bennett University, Greater Noida, India} 
\email{vinayshukla4321@gmail.com, vnshukla01@sjtu.edu.cn} 

\allowdisplaybreaks

\begin{document}
\keywords{Multiple orthogonal polynomials, Jacobi-Piñeiro polynomials, Riemann–Hilbert problem, Logarithmic potential theory, Strong asymptotics, Steepest descent method}
	
\subjclass[2020] {42C05, 31A15, 34M50}
	
\begin{abstract}
We investigate the asymptotic behavior of Jacobi-Piñeiro polynomials of degree $2n$ orthogonal on $[0,1]$ with respect to weights $w_j(x) = x^{\alpha_j}(1-x)^{\beta}$, $j=1,2$ where $\alpha_1,\alpha_2, \beta>-1$, and $\alpha_1-\alpha_2 \notin \mathbb{Z}$. These polynomials are characterized by a Riemann-Hilbert problem for a $3 \times 3$ matrix-valued function. We use the Deift-Zhou steepest descent method for Riemann-Hilbert problems to obtain strong uniform asymptotics in the complex plane. The local parametrix around the origin is constructed using Meijer G-functions. We match the local parametrix around the origin with the global parametrix with a double matching, a technique that was recently introduced.
\end{abstract}
\maketitle

\section{Introduction}
Let \(\mu_1, \mu_2, \dots, \mu_r\) be a system of \(r\) positive measures on the real line. 
A sequence of polynomials \(\{P_{\vec{n}}\}\) indexed by a multi-index 
\(\vec{n} = (n_1, \dots, n_r) \in \mathbb{N}^r\) is called a system of 
\emph{multiple orthogonal polynomials} if
\[
\int x^k P_{\vec{n}}(x) \, d\mu_j(x) = 0, \qquad k = 0, 1, \dots, n_j-1, \quad j = 1, \dots, r.
\]

The polynomial \(P_{\vec{n}}\) has degree at most \(|\vec{n}| = n_1 + \cdots + n_r\) and is 
uniquely determined under suitable conditions on the measures. This definition gives rise to 
two types of multiple orthogonal polynomials:

\begin{itemize}
    \item \textbf{Type II}: The monic polynomial \(P_{\vec{n}}\) of degree \(|\vec{n}|\) 
    satisfying the above orthogonality conditions.
    \item \textbf{Type I}: A vector of polynomials \((A_{\vec{n},1}, \dots, A_{\vec{n},r})\) 
    with \(\deg A_{\vec{n},j} \le n_j-1\) such that
    \[
    \sum_{j=1}^r \int x^k A_{\vec{n},j}(x) \, d\mu_j(x) = 
    \begin{cases}
        0, & k = 0, 1, \dots, |\vec{n}|-2, \\
        1, & k = |\vec{n}|-1.
    \end{cases}
    \]
\end{itemize}

Multiple orthogonal polynomials (MOPs) have found diverse and profound applications across mathematics and physics over the past 25 years. MOPs appear in random matrix theory as MOP ensembles, giving determinantal point processes whose correlation kernels can be expressed via an $(r+1) \times (r+1)$ Riemann–Hilbert problem. Kuijlaars’ survey \cite{Kuijlaars_ICM_Survey_2010} explicitly places MOPs in random matrices, non-intersecting Brownian motions, and the two-matrix model, while Van Assche’s tutorial \cite{Assche_book_2020} emphasizes their role in random matrices and Painlevé-type structures. 

Daems and Kuijlaars \cite{Daems_Kuijlaars_2007} showed that the correlation kernel for Brownian paths with multiple starting and ending points coincides with a Christoffel–Darboux kernel for mixed-type MOPs with Gaussian weights. In other words, MOPs encode the spatial statistics of constrained diffusion models. A closely related example is non-intersecting squared Bessel processes. Kuijlaars, Martínez-Finkelshtein, and Wielonsky \cite{Kuijlaars_Finkelshtein_Wielonsky_2009} showed that the positions of these paths form a MOP ensemble for modified Bessel weights, again leading to a matrix Riemann–Hilbert characterization and asymptotic kernels such as sine, Airy, and Bessel kernels. Kuijlaars and Zhang \cite{Kuijlaars_Zhang_2014} showed that the squared singular values of products of Ginibre matrices form a determinantal point process that can be interpreted as a multiple orthogonal polynomial ensemble. Products of random matrices model phenomena in wireless telecommunication and econophysics; that application context is stated explicitly in Akemann, Ipsen, and Kieburg’s work \cite{akemann2013products}. Some discrete families of MOPs have been given an explicit quantum-mechanical realization through non-Hermitian oscillator Hamiltonians. Miki, Vinet, and Zhedanov \cite{Miki_Vinet_Zhedanov_2011} constructed a set of simultaneously diagonalizable non-Hermitian oscillators whose common eigenstates are expressed in terms of multiple Charlier polynomials. This work is extended by Ndayiragije and Van Assche \cite{Ndayiragije_Assche_2013} to multiple Meixner polynomials, again producing $r$-dimensional non-Hermitian oscillator Hamiltonians for which the common eigenstates are expressed through MOPs. MOPs are used in simultaneous Gaussian quadrature, where one wants to approximate several integrals with the same function evaluations \cite{Laudadio_Mastronardi_Dooren_2025}. Alqahtani and Reichel \cite{Alqahtani_Reichel_2018} used MOP-based quadrature ideas for matrix function evaluation, and current papers \cite{Laudadio_Mastronardi_Dooren_2024, Laudadio_Mastronardi_Dooren_2025} continue to develop efficient algorithms for simultaneous Gaussian quadrature based on MOP recurrence structures. 

Jacobi–Piñeiro (JP) polynomials—the canonical example of type II multiple orthogonal polynomials (MOPs) on $[0, 1]$ with respect to an $r$-tuple of Jacobi-type weights occupy a distinguished position at the confluence of approximation theory, analytic number theory, and mathematical physics. Their utility is perhaps most classical in the context of Hermite–Padé approximation, where the denominators of simultaneous rational approximants to Stieltjes transforms of the weights are precisely the JP polynomials.  

Adler, Moerbeke, and Wang \cite{Adler_Mark_Wang_2013} studied matrix minor processes in which the JP model appears alongside the Gaussian Unitary Ensemble-with-external-source and multiple Laguerre models. They showed that the interlacing eigenvalues form a determinantal point process, and they connected the model to last-passage percolation. They further showed that an appropriate scaling leads to the Pearcey process, linking the model to random directed polymers.
A second important application is in quantum integrable systems, specifically the Gaudin model. Mukhin and Varchenko \cite{Mukhin_Varchenko_2007} showed that, for a certain Bethe-ansatz problem, the first polynomial in the relevant polynomial sequence is exactly the Jacobi–Piñeiro MOP. Lu, Mukhin, and Varchenko \cite{Lu_Mukhin_Varchenko_2016} wrote that in the Gaudin model for classical Lie algebras, “the solutions of the Bethe ansatz equations are related to zeros of JP polynomials.” 
A newer and quite concrete application is to stochastic processes. Branquinho and coauthors \cite{branquinho2021multiple} developed a framework in which nonnegative higher-order recurrence matrices for MOPs generate dual stochastic matrices. In this work, the JP family is used as a case study, and the resulting objects are interpreted as 1D random walks / Markov chains. The research \cite{Branquinho_Juan_Moreno_Manuel_2024} states that the JP case yields two dual Markov chains, with explicit stochastic matrices, Karlin–McGregor-type formulas, and a classification of recurrent/transient regimes. A recent extension goes further to finite Markov chains \cite{Branquinho_Juan_Moreno_Manuel_2025}, and explicitly lists Jacobi–Piñeiro among the multiple orthogonal families for which recurrent finite Markov chains and time-reversed chains are derived. 
There is also a direct application to urn models, which are classical simplified models in statistical physics and probability. Grünbaum and de la Iglesia \cite{Alberto_Iglesia_2022} emphasize that the set of physically motivated urn models solvable through orthogonal polynomials is very small, and they then construct an urn model for the JP polynomials. 

Jacobi–Piñeiro polynomials go back to Piñeiro Díaz’s 1987 work \cite{pineiro1987simultaneous} on simultaneous approximation. Soon after, Aptekarev, Branquinho, Van Assche and others \cite{Assche_Coussement_2001, Aptekarev_Branquinho_Assche_2003, Branquinho_Juan_Moreno_Manuel_SAM_2025} gave a systematic treatment of the JP case. These works are important because they place the family inside a unified framework based on Pearson equations, Rodrigues operators, and explicit structural formulas. They also showed that the classical families in this framework satisfy a linear differential equation of order $p+1$ and derived explicit formulas and recurrence relations.
Beckermann, Coussement, and Van Assche \cite{Beckermann_Coussement_Assche_2005} studied JP polynomials with complex parameters, gave explicit formulas, including expressions in terms of Kampé de Fériet series, and placed them in a network of limiting relations linked to multiple Wilson polynomials.
Neuschel and Van Assche \cite{Neuschel_Assche_2016} obtained the asymptotic zero distribution of JP polynomials, using nearest-neighbor recurrences and showed that the limiting distribution is related to the Fuss–Catalan distribution. Martínez-Finkelshtein, Morales, and Perales \cite{Finkelshtein_Morales_Perales_2026} using finite free convolution studied zeros of generalized hypergeometric polynomials and then applied the method to multiple orthogonal families, including Jacobi–Piñeiro, obtaining results on interlacing, monotonicity, and asymptotics of zeros. Van Assche’s paper \cite{Assche_2017} on Mehler–Heine asymptotics studied local behavior near endpoints of the support for several MOP families, including Jacobi–Piñeiro. Smet’s paper \cite{Smet_Assche_2010} worked out the Mellin transform of multiple JP polynomials. Lysov’s paper \cite{Lysov_2018, Lysov_2019} focused on JP polynomials and the functions of the second kind, emphasizing asymptotics, integral representation, and strong asymptotics. Branquinho, Díaz, Foulquié-Moreno, and Mañas \cite{Branquinho_Juan_Moreno_Manuel_PAMS_2024} gave explicit multiple-hypergeometric formulas for type I JP polynomials with arbitrary number of weights. A related paper \cite{Branquinho_Juan_Moreno_Manuel_Wolfs_2025} derived contour integral representations for both type I and type II JP polynomials, with evaluation leading to new explicit hypergeometric representations.

The asymptotic analysis of orthogonal polynomials reveals profound insights into their limiting behavior as the degree tends to infinity. Wong’s surveys \cite{Wong_Zhao_2009, Wong_2018} are good entry points for classical asymptotic methods for families such as Stieltjes–Wigert, Hahn, Racah, and pseudo-Jacobi. Darboux’s method \cite{Ismail_book_2005} extracts the
asymptotics of the n-th coefficient of a power series from the singularity structure of
the generating function. Another important stream derives asymptotics from the difference equation or recurrence itself, often via Liouville–Green/WKB and turning-point analysis. Huang, Cao, and Wang’s paper \cite{Huang_Lihua_Xiang_2019} develops a unified asymptotic-expansion method for orthogonal polynomials from second-order difference equations and explicitly points out that many Askey-scheme families—such as Hermite, Laguerre, Krawtchouk, Meixner, Hahn, Racah, and Wilson—fit this framework. Chen and Ismail \cite{Chen_Ismail_2005} applied a ladder operator
technique to generate differential equations for semi-classical Jacobi-type polynomials
and extracted leading-order WKB asymptotics, asymptotics of recurrence coefficients, Hankel determinants, etc. The saddle point analysis of  Cauchy integral representations yields asymptotics for several OP families \cite{Lysov_2018}. The theory of logarithmic potential theory provides the natural framework for weak asymptotics via the vector equilibrium problem.


Weak asymptotics usually means the asymptotic zero distribution, or equivalently $n^{th}$ root / logarithmic asymptotics outside the support. Totik’s survey \cite{Totik_Survey_2005} explains that $n^{th}$ root asymptotics is the weakest level and is “mostly equivalent” to the asymptotic distribution of zeros. Ratio asymptotics studies limits like $p_{n+1}(z)/p_{n}(z)$ in the scalar case, or ratios of neighboring MOPs in the MOP setting. Totik’s survey \cite{Totik_Survey_2005} places it strictly above weak asymptotics. Strong asymptotics gives full asymptotic expansions, often of Szegő or Plancherel–Rotach type, together with explicit prefactors and error estimates. This is exactly where the \emph{Deift–Zhou steepest-descent method}, also called the \emph{nonlinear steepest-descent method} \cite{Deift_Zhou_1993} becomes central.

The Riemann–Hilbert (RH) characterization of orthogonal polynomials was established by
Fokas et al. \cite{Fokas_Kitaev_1992}. They found a $2 \times 2$ matrix as the unique solution of a $2 \times 2$ Riemann–Hilbert problem (RHP) with jump matrix $\begin{pmatrix}
      1  & w(t)\\
       0 & 1
    \end{pmatrix}$ 
on $\mathbb{R}$ and normalization $Y(z)e^{-n \sigma_3} \to I$ as $z \to \infty$. In general, to carry out steepest-descent analysis, we begin by formulating a related matrix Riemann-Hilbert problem that uniquely characterizes the orthogonal polynomials. The asymptotic analysis then proceeds through a series of explicit and invertible transformations. The first transformation normalizes the behavior of the solution at infinity. The second transformation, known as "opening of the lenses," leverages the factorization of the jump matrix to convert the highly oscillatory behavior on the support of the orthogonality measure into exponential decay away from this set. With the oscillatory behavior tamed, we construct a global or "outer" parametrix, which captures the leading-order asymptotics away from the endpoints of the support. To obtain a uniform approximation, we then build local parametrices near the endpoints, typically using special functions like Bessel, Airy or Meijer G functions, ensuring that the jump matrices there are also reduced to near-identity forms. By matching these local solutions with the global parametrix and carefully tracing back through all the transformations, we finally obtain an explicit asymptotic expression for the polynomials, valid in the entire complex plane.

The core RH literature for scalar orthogonal polynomials developed first around weights on the real line. Deift–Kriecherbauer–McLaughlin–Venakides–Zhou overview \cite{Deift_Kriecherbauer_McLaughlin_Venakides_Zhou_2001} summarized the RH steepest-descent method for orthogonal polynomials with varying exponential weights. In the same early phase, Kuijlaars, McLaughlin, Van Assche, and Vanlessen \cite{Kuijlaars_McLaughlin_Assche_2004} carried out a full RH analysis for orthogonal polynomials on $[-1,1]$ with modified Jacobi weights, obtaining strong asymptotic expansions. For Laguerre-type families, Vanlessen \cite{Vanlessen_2007} obtained full Plancherel–Rotach asymptotics for orthogonal polynomials on $[0,\infty)$ with weights $x^\alpha e^{-Q(x)}$. Dai’s survey \cite{Dai_2018} reviews RH asymptotics for orthogonal polynomials connected with Painlevé transcendents. RH analysis was also pushed far for orthogonal polynomials on the unit circle (OPUC). 
Finkelshtein, McLaughlin, and Saff \cite{Finkelshtein_McLaughlin_Saff_2006} obtained strong asymptotics for Szegő orthogonal polynomials with analytic weights. 
For nonanalytic weights on the circle, McLaughlin and Miller \cite{McLaughlin_Miller_2006} developed the $\overline{\partial}$-steepest-descent extension of RH analysis and applied it to OPUC. For a recent work on generalized semiclassical OPUC for modified Jacobi and Bessel-type weights on the unit circle, we refer to \cite{branquinho2025generalized}. RH analysis was also adapted to discrete orthogonal polynomials \cite{Baik_Kriecherbauer_McLaughlin_Miller_2003}. Borodin’s work \cite{Borodin_Boyarchenko_2003} is also important in this branch because it connected discrete orthogonal polynomials, RH-type formulations, and discrete Painlevé structures. A separate and now very large branch is the RH analysis of MOPs. In this regard, strong asymptotics for multiple Laguerre polynomials \cite{Lysov_Wielonsky_2008}, strong asymptotics for Cauchy biorthogonal polynomials \cite{Bertola_Gekhtman_Szmigielski_2013}, etc., have been studied in the past two decades. Bertola and Bothner \cite{Bertola_Bothner_2015} analyzed a model of several coupled positive-definite matrices via Cauchy biorthogonal polynomials and a $(p+1)\times(p+1)$ Riemann--Hilbert problem, obtaining strong asymptotics and new universality classes \cite{Bertola_Bothner_2015}. In the Muttalib--Borodin setting, Kuijlaars and Molag established local universality for $\theta=\tfrac{1}{2}$ \cite{Kuijlaars_Molag_2019}, and Molag later extended the analysis to $\theta=1/r$ \cite{Kuijlaars_Molag_2019}, both through higher-order Riemann--Hilbert problems and Meijer-$G$ parametrices \cite{Kuijlaars_Molag_2019,Molag_Nonlinearity_2021}. Silva and Zhang \cite{Silva_Zhang_2020} extended this circle of ideas to the large
$n$ analysis of the product of two coupled random matrices In recent years, the RH approach has been extended beyond the real-line and circle settings to the matrix orthogonal polynomials \cite{Cassatella_Manas_2012, Branquinho_Moreno_Manuel_2024}. There are also newer extensions to $q-$orthogonal polynomials and planar orthogonal polynomials. Joshi and Latimer \cite{Joshi_Latimer_2021} formulated and solved a $q-$Riemann–Hilbert problem for a class of $q-$orthogonal polynomials. Hedenmalm and Wennman \cite{Hedenmalm_2024, Hedenmalm_Wennman_2024} developed RH methods for planar orthogonal polynomials. 

Despite the extensive study of various structural and asymptotic properties of  JP polynomials in the literature, the derivation of their strong  asymptotics has remained an open problem. Motivated by the success of the  Riemann--Hilbert method in analyzing related orthogonal polynomial ensembles, the present work aims to fill this gap by establishing strong asymptotics with respect to a system of two Jacobi weights. The remainder of this paper is organized as follows:

Section 2 formulates the RHP that characterizes the Jacobi–Piñeiro polynomials. Section 3 analyzes the weak asymptotics of the underlying measures, which leads to a vector equilibrium problem with a Nikishin interaction matrix. Section 4 states that the solution of this equilibrium problem can be expressed explicitly in terms of an algebraic function, providing the necessary ingredients for the subsequent steepest-descent analysis. Section 5 describes the auxiliary $H$-functions for later use.
Section 6 constitutes the core of the analysis and is divided into several subsections. In Section 6.1, we perform the first transformation which introduces a jump on the negative real axis. Section 6.2 introduces the second transformation, which normalizes the problem at infinity. Section 6.3 performs the second transformation, opening the lenses to convert oscillatory behavior into exponential decay. Section 6.4 constructs the outer (global) parametrix that captures the leading-order behavior away from the endpoints. Section 6.5 builds the local parametrix at the endpoint 
$0$ using Meijer-G function. To match the local parametrix with the global one, we employ a double-matching technique introduced in \cite{Kuijlaars_Molag_2019}. Moreover, we go a step further by re-establishing and explicitly verifying this matching through the determination of the appropriate constant values required for Theorem 1.2 in \cite{Molag_2021} to hold. Section 6.6 constructs the local parametrix at the endpoint $1$ with the help of Bessel functions. Section 6.7 constructs the local parametrix at the endpoint $x^*$ with the help of Airy functions. Section 6.8 applies the final / ratio transformation. The strong asymptotics for the Jacobi–Piñeiro polynomials in all the regions are computed in Section 7.


\section{Riemann-Hilbert Problem}
Let $\vec{n} = (n_1, n_2) \in \mathbb{N}^2$ be a multi-index of size $|\vec{n}| = n_1+n_2$, where $n_1+1\ge n_2$.  \emph{The Jacobi-Pi\~neiro polynomials} $P_{\vec{n}}$, with parameters $\alpha_1, \alpha_2$, and $\beta$ are monic MOPs of degree $|\vec{n}|$ on $\Delta = [0,1]$ with weights 
\begin{align*}
    w_j(x) = x^{\alpha_j}(1-x)^{\beta},
\end{align*}
and satisfy orthogonality condition
\begin{equation}\label{Jacobi Pineiro}
\int_0^1 P_{\vec{n}}(x)x^k x^{\alpha_j}(1-x)^{\beta} dx = 0, \ \ k = 0,1, \ldots, n_j-1, 
\end{equation}
for $j=1,2$, where $\alpha_1,\alpha_2, \beta>-1$, and $\alpha_1-\alpha_2 \notin \mathbb{Z}$. 
Associated to the Jacobi-Pi\~neiro polynomials $P_{\vec{n}}$, there are second kind polynomials denoted by $\mathcal{R}_{\vec{n},j}$ and defined as
\begin{align}\label{Rn1 and 2}
\mathcal{R}_{\vec{n},j}(z)
&=\frac{1}{2\pi i}\widehat{P_{\vec{n}}w_j}(z) = \frac{-1}{2\pi i} \int_0^1 \frac{P_{\vec{n}}(x)w_j(x)}{z-x}dx.
\end{align}

The problem is to find a $3 \times 3$ matrix-valued function $Y: \mathbb{C} \backslash \Delta \rightarrow \mathbb{C}^{3 \times 3}$ such that the following three properties are satisfied:
\begin{enumerate}
\item $Y$ is analytic in $\mathbb{C} \backslash \Delta $ where $\Delta = [0,1]$.
\item $Y$ has boundary values $Y_+$ and $Y_-$ such that $Y_+ = Y_- \mathcal{J}_Y$, where
\begin{align}\label{RH Condition 2}
 \mathcal{J}_Y= \begin{pmatrix}
1 & w_1 & w_2 \\
0 & 1 & 0 \\
0 & 0 & 1 
\end{pmatrix} 
\end{align}
on $\Delta$.
\item As $z \rightarrow \infty$, we have
\begin{align}\label{RH Condition 3}
Y(z)=\left(\mathbb{I}+O\left(\frac{1}{z}\right) \right) \begin{pmatrix}
z^{n_1+n_2} & 0 & 0 \\
0 & z^{-n_1} & 0 \\
0 & 0 & z^{-n_2}
\end{pmatrix} ,
\end{align}
where $n_1+n_2=2n$.
\item $Y$ has the following behavior at the endpoints $0$ and $1$:
\begin{align*}
&Y(z)= O\begin{pmatrix}
1 & t(z) & t(z) \\
1 & t(z) & t(z) \\
1 & t(z) & t(z)
\end{pmatrix} ~ z \rightarrow 1,~ {\rm where}~ t(z) = \begin{cases}
  |z-1|^{\beta},  & \beta < 0,\\
   \log|z-1|, & \beta = 0,\\
    1, &\beta > 0.
\end{cases} \\
&Y(z)= O\begin{pmatrix}
1 & u(z) & v(z) \\
1 & u(z) & v(z) \\
1 & u(z) & v(z)
\end{pmatrix} ~ z \rightarrow 0, ~{\rm where}~ u(z) = \begin{cases}
  |z|^{\alpha_1},  & \alpha_1< 0,\\
   \log|z|, & \alpha_1= 0,\\
    1, &\alpha_1 > 0,
\end{cases}  \nonumber\\
& {\rm and}~ v(z) = \begin{cases}
  |z|^{\alpha_2},  & \alpha_2< 0,\\
   \log|z|, & \alpha_2= 0,\\
    1, &\alpha_2 > 0.
\end{cases}  
\end{align*}

\end{enumerate} 
\begin{theorem}
Let $P_{n_1,n_2}(z)$, $\mathcal{R}_{\vec{n},1}(z)$ and $\mathcal{R}_{\vec{n},2}(z)$ be given by \eqref{Jacobi Pineiro} and \eqref{Rn1 and 2}, respectively. Then the above Riemann-Hilbert problem for $Y$ has a unique solution given by
\begin{align*}
Y(z) = \begin{pmatrix}
P_{n_1,n_2}(z) & \mathcal{R}_{\vec{n},1}(z) & \mathcal{R}_{\vec{n},2}(z) \\
\widehat{c_1} P_{n_1-1,n_2}(z) &\widehat{c_1} \mathcal{R}_{\vec{n}-e_1,1}(z) &  \widehat{c_1} \mathcal{R}_{\vec{n}-e_1,2}(z) \\
\widehat{c_2} P_{n_1,n_2-1}(z) &\widehat{c_2}  \mathcal{R}_{\vec{n}-e_2,1}(z) &\widehat{c_2}  \mathcal{R}_{\vec{n}-e_2,2}(z)
\end{pmatrix},
\end{align*}
\end{theorem}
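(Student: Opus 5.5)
The proof follows the Fokas--Its--Kitaev/Van Assche--Geronimo--Kuijlaars argument for type II multiple orthogonal polynomials: first show that the displayed matrix solves the problem, then show uniqueness via a Liouville argument. Two facts are needed up front. The Jacobi--Pi\~neiro system is an AT-system when $\alpha_1-\alpha_2\notin\mathbb{Z}$, so every multi-index is normal. Consequently $P_{\vec n}$, $P_{\vec n-e_1}$ and $P_{\vec n-e_2}$ exist uniquely with exact degrees $2n$, $2n-1$, $2n-1$. Moreover, the constants
\[
\widehat{c_j}^{-1}=-\tfrac{1}{2\pi i}\int_0^1 P_{\vec n-e_j}(x)\,x^{n_j-1}w_j(x)\,dx
\]
are nonzero, since otherwise $P_{\vec n-e_j}$ would satisfy the orthogonality conditions for $\vec n$ while having degree below $|\vec n|$.

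\textbf{Existence.} Analyticity off $[0,1]$ is immediate, since each entry is a polynomial or a Cauchy transform of an integrable function on $[0,1]$. The jump condition follows from the Sokhotski--Plemelj formula $\widehat{f}_+-\widehat{f}_-=-2\pi i f$ on $(0,1)$. Hence each row $(Q,\mathcal R_1,\mathcal R_2)$ satisfies $(\mathcal R_j)_+=(\mathcal R_j)_-+Qw_j$, which is exactly right-multiplication by $\mathcal J_Y$.

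For the behaviour at infinity, expand $1/(z-x)=\sum_k x^k z^{-k-1}$. The orthogonality conditions \eqref{Jacobi Pineiro} kill the first $n_j$ moments, so $\mathcal R_{\vec n,j}(z)=O(z^{-n_j-1})$. In the second row, $P_{\vec n-e_1}$ is orthogonal to $x^k$ for $k<n_1-1$ with respect to $w_1$, which gives $\widehat{c_1}\mathcal R_{\vec n-e_1,1}=z^{-n_1}(1+O(1/z))$ by the choice of $\widehat{c_1}$. It also gives $\widehat{c_1}\mathcal R_{\vec n-e_1,2}=O(z^{-n_2-1})$, because $P_{\vec n-e_1}$ still satisfies all $n_2$ conditions with respect to $w_2$. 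The same holds for the third row with the roles of $1$ and $2$ swapped. In the first column, $\widehat{c_j}P_{\vec n-e_j}=O(z^{2n-1})$. Altogether this gives $Y=(\mathbb I+O(1/z))\,\mathrm{diag}(z^{2n},z^{-n_1},z^{-n_2})$.

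The endpoint conditions are standard local estimates for Cauchy transforms of $x^{\alpha_j}(1-x)^\beta$ times a polynomial. Near $0$ the transform is $O(|z|^{\alpha_j})$ if $\alpha_j<0$, $O(\log|z|)$ if $\alpha_j=0$, and bounded if $\alpha_j>0$; the endpoint $1$ behaves the same way with $\beta$.

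\textbf{Uniqueness.} First, $\det Y$ has no jump because $\det\mathcal J_Y=1$, so $\det Y$ is analytic off $\{0,1\}$ and tends to $1$ at infinity. At $0$ and $1$ the endpoint bounds give $\det Y=O(|z|^{\alpha_1}+|z|^{\alpha_2}+\log)$, and the analogous bound at $1$. Since all exponents exceed $-1$, these singularities are removable, and Liouville gives $\det Y\equiv1$.

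Next, for another solution $\widetilde Y$, set $X=\widetilde Y Y^{-1}$. The entries of $Y^{-1}$ are $2\times2$ cofactors of $Y$. The jumps cancel, so $X$ is analytic off $\{0,1\}$ with $X\to\mathbb I$ at infinity.

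\textbf{Main obstacle.} The delicate step is showing that the isolated singularities of $X$ at $0$ and $1$ are removable. The first column of $Y$ is bounded and the other columns carry $u$, $v$ (respectively $t$). I would compute $Y^{-1}$ via cofactors, which have the column structure $O(1)$ in the first row and $u$, $v$-type growth in the last two rows. The product $\widetilde Y\,Y^{-1}$ then has entries $O(|z|^{\alpha_1}+|z|^{\alpha_2}+\log)$ near $0$. Here one must check that only one singular factor appears in each product term, never $u\cdot v$. This holds because each term pairs a column of $\widetilde Y$ with a row of $Y^{-1}$, and the singular factor of $Y^{-1}$ sits in its first column, which meets the bounded first column of $\widetilde Y$.

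Since $|z|^{\alpha}$ with $\alpha>-1$ is $o(1/|z|)$, the singularities are removable, and the same argument works at $1$. Liouville then gives $X\equiv\mathbb I$.
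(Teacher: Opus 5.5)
Your existence half is sound, and it is more complete than the paper's, which never justifies normality, $\widehat{c_j}\neq 0$, or the endpoint estimates. The uniqueness half, however, has a genuine gap exactly at the step you flagged.

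\textbf{The gap.} Each term in the expansion of $\det Y$ contains one entry from column $2$ and one from column $3$. So the endpoint conditions only give $\det Y=O(uv)$ near $0$ and $O(t^2)$ near $1$, not $O(|z|^{\alpha_1}+|z|^{\alpha_2}+\log)$. For, say, $\alpha_1=-0.9$, $\alpha_2=-0.6$, or for $\beta\le-\frac12$, these bounds permit a simple pole, and Liouville yields only $\det Y=1+a/z+b/(z-1)$.

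The same defect recurs for $X=\widetilde Y Y^{-1}$. The first row of the adjugate of $Y$ consists of $2\times 2$ minors built from columns $2$ and $3$, so it is $O(uv)$ (resp.\ $O(t^2)$), not $O(1)$. Rows $2$ and $3$ are $O(v)$ and $O(u)$. Moreover, in $\widetilde YY^{-1}$ column $j$ of $\widetilde Y$ multiplies row $j$ of $Y^{-1}$. The mechanism you describe, a singular first column of $Y^{-1}$ meeting the bounded first column of $\widetilde Y$, is therefore not how the product pairs up. With the bounds actually available, every term of $X$ is $O(uv)$ near $0$ and $O(t^2)$ near $1$. That is not enough for removability on the full range $\alpha_1,\alpha_2,\beta>-1$.

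\textbf{The missing idea.} The singular parts of columns $2$ and $3$ are multiples of column $1$. You can make this precise by factoring out the jump. Set
\[
\Lambda(z)=\begin{pmatrix}1&\chi_1(z)&\chi_2(z)\\0&1&0\\0&0&1\end{pmatrix},\qquad \chi_j(z)=\frac{1}{2\pi i}\int_0^1\frac{w_j(x)}{x-z}\,dx .
\]
Then $\Lambda_+=\Lambda_-\mathcal J_Y$. For any solution $\widetilde Y$, the matrix $\widetilde Y\Lambda^{-1}$ has columns $\widetilde Y_{\cdot1}$, $\widetilde Y_{\cdot2}-\chi_1\widetilde Y_{\cdot1}$, $\widetilde Y_{\cdot3}-\chi_2\widetilde Y_{\cdot1}$ and no jump on $(0,1)$. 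Each single entry is $O(1)$, $O(u)$ or $O(v)$ near $0$, and $O(t)$ near $1$. Hence every singularity is removable and $\widetilde Y\Lambda^{-1}$ is entire. Consequently $\det\widetilde Y=\det(\widetilde Y\Lambda^{-1})$ is entire and tends to $1$. Likewise $\widetilde YY^{-1}=(\widetilde Y\Lambda^{-1})(Y\Lambda^{-1})^{-1}$ is entire and tends to $\mathbb I$. This also shows that $Y^{-1}=\Lambda^{-1}(Y\Lambda^{-1})^{-1}$ is singular, of size $O(u+v)$, only in its first row, with rows $2$ and $3$ bounded --- the reverse of what you wrote.

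\textbf{Alternative fix.} You can instead follow the paper's entry-by-entry route. Each $\widetilde Y_{i1}$ has no jump and is bounded at $0$ and $1$, so it is a polynomial. Then $\widetilde Y_{ij}$ minus the Cauchy transform of $\widetilde Y_{i1}w_{j-1}$ is entire and vanishes at infinity. The decay at infinity encodes the orthogonality and normalization conditions, and normality of $\vec n$, $\vec n-e_1$, $\vec n-e_2$ pins down each row. This route only ever uses single-entry bounds, which is why it never meets the $u\cdot v$ problem.
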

where $e_1=(1,0)$ and $e_2=(0,1)$.
\begin{proof}
From the conditions \eqref{RH Condition 2} and \eqref{RH Condition 3} for the entry $(1,1)$, we have
\begin{align*}
(Y_{11})_+=(Y_{11})_- \quad {\rm and} \quad Y_{11}(z)=z^{n_1+n_2}+O(z^{n_1+n_2-1}).
\end{align*}
These conditions are satisfied for $P_{n_1,n_2}(z)$ since it is a monic polynomial of degree $n_1+n_2$.

For the entry $(1,2)$, we get
\begin{align*}
(Y_{12})_+=(Y_{12})_- +(Y_{11})_-w_1\quad {\rm and} \quad Y_{12}(z)=O(z^{-n_1-1}).
\end{align*}
But, as shown in the previous step, $Y_{11}(z)=P_{n_1,n_2}(z)$. Using this fact along with Sokhotskii-Plemelj formula and the formula \eqref{Rn1 and 2} for $\mathcal{R}_{\vec{n},1}(z)$ suggests $Y_{12}(z)=\mathcal{R}_{\vec{n},1}(z)$.

For the entry $(1,3)$, we obtain
\begin{align*}
(Y_{13})_+=(Y_{13})_- +(Y_{11})_- w_2\quad {\rm and} \quad Y_{13}(z)=O(z^{-n_2-1}).
\end{align*}
Again using the fact from the previous step that $Y_{11}(z)=P_{n_1,n_2}(z)$, Sokhotskii-Plemelj formula and the formula \eqref{Rn1 and 2} for $\mathcal{R}_{\vec{n},2}(z)$ yields $Y_{13}(z)=\mathcal{R}_{\vec{n},2}(z)$.

To verify the asymptotic conditions $Y_{1k}(z)=O(z^{-n_{k-1}-1})$ for $k=2,3$, first we use
\begin{align}\label{1 over x-z}
    \frac{1}{x-z} = -\sum_{i=0}^{n_{k-1}-1}\frac{x^i}{z^{i+1}}+\frac{x^{n_{k-1}}}{z^{n_{k-1}}} \frac{1}{x-z} 
\end{align}
for $k=2$ in \eqref{Rn1 and 2} to observe
\begin{align*}
 &\mathcal{R}_{\vec{n},1}(z)  =\frac{1}{2\pi i} \int_0^1 \frac{P_{\vec{n}}w_1(x)}{x-z}dx \\
 & = -\sum_{i=0}^{n_{1}-1} \left(\frac{1}{2\pi i} \int_0^1 P_{\vec{n}} x^i w_1(x)\right) z^{-i-1}+\left( \frac{1}{2\pi i}  \int_0^1 \frac{P_{\vec{n}} x^{n_{1}} w_1(x)}{x-z}dx\right) z^{-n_{1}}.
\end{align*}
By the virtue of \eqref{Jacobi Pineiro}, the above expression for $\mathcal{R}_{\vec{n},1}(z)$ reduces to
\begin{align*}
\mathcal{R}_{\vec{n},1}(z)  = \left( \frac{1}{2\pi i}  \int_0^1 \frac{P_{\vec{n}} x^{n_{1}} w_1(x)}{x-z}dx\right) z^{-n_{1}}
\end{align*}
which shows that the asymptotic condition $Y_{12}(z)=O(z^{-n_{1}-1})$ is satisfied. If we start from $\mathcal{R}_{\vec{n},2}(z)$ and \eqref{1 over x-z} for $k=3$, then a similar analysis verifies the asymptotic condition for $Y_{13}(z)$.
For the entry $(2,1)$, we arrive at
\begin{align*}
(Y_{21})_+=(Y_{21})_- \quad {\rm and} \quad Y_{21}(z)=O(z^{n_1+n_2-1})
\end{align*}
which is satisfied for a polynomial of degree $n_1+n_2-1$, i.e. $Y_{21}(z)=\widehat{c_1}P_{n_1-1,n_2}(z)$. Note that the asymptotic condition in this step is different from the one used for obtaining the entry $Y_{11}(z)$. Hence, the polynomials obtained in this step need not be monic. It necessitates the multiplication by a suitable factor $\widehat{c_1}$. 

For the entry, $(2,2)$, we get
\begin{align*}
(Y_{22})_+=(Y_{22})_- +(Y_{21})_-w_1\quad {\rm and} \quad Y_{22}(z)=z^{-n_1}+O(z^{-n_1-1}),
\end{align*}
where $Y_{21}(z)=\widehat{c_1}P_{n_1-1,n_2}(z)$. Hence, Sokhotskii-Plemelj formula and the formula \eqref{Rn1 and 2} for $\mathcal{R}_{\vec{n},1}(z)$ imply $Y_{22}(z)=\widehat{c_1}\mathcal{R}_{(n_1-1,n_2),1}(z) =\widehat{c_1}\mathcal{R}_{\vec{n}-e_1,1}(z)$. Further, expanding $\mathcal{R}_{\vec{n}-e_1,1}(z)$ using \eqref{1 over x-z} leads to 
\begin{align*}
&\mathcal{R}_{\vec{n}-e_1,1}(z)=\frac{1}{2\pi i} \int_0^1 \frac{P_{(n_1-1,n_2)}w_1(x)}{x-z}dx= -\sum_{i=0}^{n_{1}-2} \left(\frac{1}{2\pi i} \int_0^1 P_{(n_1-1,n_2)} x^i w_1(x)\right) z^{-i-1} \\
 &-\left(\frac{1}{2\pi i} \int_0^1 P_{(n_1-1,n_2)} x^{n_1-1} w_1(x)\right) z^{-n_1} +\left( \frac{1}{2\pi i}  \int_0^1 \frac{P_{\vec{n}} x^{n_{1}} w_1(x)}{x-z}dx\right) z^{-n_{1}}.
\end{align*}
The orthogonality condition \eqref{Jacobi Pineiro} suggests
\begin{equation*}
\int_0^1 P_{(n_1-1,n_2)}(x)x^i w_1dx = 0, \ \ k = 0,1, \ldots, n_1-2.
\end{equation*}
Further, to make the leading coefficient of $Y_{22}(z)$ monic, we must have
\begin{align*}
\int_0^1 P_{(n_1-1,n_2)}(x)x^{n_1-1} w_1dx = -2 \pi i,
\end{align*}
which suggests the choice $\widehat{c_1} = -1/ 2 \pi i$, and the asymptotic condition for $Y_{22}(z)$ is satisfied. 

A similar line of arguments can be used to find $Y_{23}(z)$ and the third row of the matrix $Y(z)$.
\end{proof}

\section{Vector equilibrium problem of logarithmic potential}\label{Vector equilibrium problem}
For a closed set $K$ in $\mathbb{C}$, the class $\mu_t(K)$ represents the set of finite Borel measures $\mu$ on $K$ having finite energy, i.e.,
\begin{align*}
    \mu_t(K)=\{\mu: \mu \text{ is a positive measure on $K$},~\mu(K)=t,~I(\mu) < \infty \},
\end{align*}
where
\begin{align*}
I(\mu) = \int V^\mu(x) d\mu(x) \quad ~ {\rm and} \quad~ V^\mu(x) = \int \ln \frac{1}{|x-t|}d\mu(t)
\end{align*}
is the logarithmic energy and logarithmic potential, respectively (see e.g. \cite{Saff_Totik_1997}). Let $\mu_2(\Delta)$ and $\mu_\theta(\Gamma)$ denote the subsets in $\mu_t(K)$ of measures of mass $2$ and $\theta$, respectively. For a pair of closed intervals $\Delta$ and $\Gamma$, the problem is to find unique extremal measure $\vec{\lambda}=(\lambda_1,\lambda_2)$ minimizing the energy functional $J$ given as 
\begin{align*}
J(\mu_1,\mu_2)= I(\mu_1)+I(\mu_2)+I(\mu_1-\mu_2).
\end{align*}
Due to the convexity of the problem, there exists a unique extremal measure $\vec{\lambda}=(\lambda_1,\lambda_2)$ such that $J(\vec{\lambda}) < J(\vec{\mu})$, $\forall \mu \in \mu_2(\Delta)\times \mu_\theta(\Gamma)$. By Kuhn–Tucker theorem \cite{boyd2004convex}, the vector measure $\vec{\lambda}$ is completely characterized by the following equilibrium relations 
\begin{align}
&(2V^{\lambda_1}-V^{\lambda_2})(x) \begin{cases} =\omega_1, & x \in S(\lambda_1),\\
\geq \omega_2, & x \in \Delta,
\end{cases} \label{equilibrium problem}\\
&(2V^{\lambda_2}-V^{\lambda_1})(x) \begin{cases} =\omega_1, & x \in S(\lambda_2),\\
\geq \omega_2, & x \in \Gamma,
\end{cases} \label{equilibrium problem 2}
\end{align}
where $\omega_1$ and $\omega_2$ are some equilibrium constants. Note that the problem of finding the equilibrium measure $\vec{\lambda}$ that minimizes the energy functional $J$ is equivalent to the vector problem of logarithmic potentials, $V^\mu$, with Nikishin interaction matrix $A_N$ given by
\begin{align*}
    A_N:= \begin{pmatrix}
       2 & -1\\
        -1& 2
    \end{pmatrix}.
\end{align*}
A solution to the above-mentioned problem will be provided in the next section. For a recent relevant studies in this direction, we refer to \cite{Aptekarev_Lagomasino_Andre_2017,Lapik_2012} and references therein.
The dependence of supports of the components of the equilibrium measure $\vec{\lambda}$ on the value of $\theta$ is as follows \cite{Bogolyubskii_Lysov_2020}:
\begin{align*}
&\theta \in (0,1) \Rightarrow S(\lambda_1)=\Delta, \quad S(\lambda_2)=\Gamma^*=[x^*,0], \quad  x^*<0, \quad  \frac{d}{d\theta} x^*<0;\\
&\theta = 1 \Rightarrow S(\lambda_1)=\Delta, \quad S(\lambda_2)=\Gamma, \quad \omega_2 =0,
\end{align*}
where
\begin{align}\label{eq_x*}
x^*(\theta) = \frac{-27\theta^2(2-\theta)^2}{(2+\theta)^2(4-\theta)^2(1-\theta)^2}.
\end{align}

It can be observed that for all $\theta \in (0,1]$, the support of $\lambda_1$ coincides with $\Delta$ and the support of $\lambda_2$, which we denote by $\Gamma^*$, monotonically increases with $\theta$. Thus, the supports always have a point of tangency at the origin, and for $\theta = 1$, one of them is not compact. This case is the least studied, and strong asymptotics of Hermite-Pade approximants for this case will be obtained in an upcoming work of the same authors.

\section{Weak Asymptotics}
A method for studying asymptotics of Hermite-Pade approximations based on the vector equilibrium problem was developed in \cite{Gonchar_Rakhmanov_1987}. It follows from \cite{Gonchar_Rakhmanov_Sorokin_1997} that the weak asymptotics of Hermite-Pade approximations for the Nikishin system in the case where $\Delta$ and $\Gamma$ are non-overlapping segments of the real line is described by the solution of a certain equilibrium problem. For the problem under consideration, the capacitors $\Delta$ and $\Gamma$ are touching. However, it turns out that the result of \cite{Gonchar_Rakhmanov_Sorokin_1997} is still valid. 
It is convenient to write the weak asymptotics in terms of the equilibrium potential for a given sequence of orthogonal polynomials. The weak asymptotics of Hermite-Pade approximants under consideration is described as follows:
\begin{theorem}\cite[Theorem 1]{Lys17}
If $\dfrac{n_2}{n_1+n_2} \rightarrow \dfrac{\theta}{2} \in (0,1/2]$, then the following asymptotic formulas hold:
\begin{align*}
&\frac{2}{|\vec{n}|}\log |P_{\vec{n}}(x)| \rightarrow -V^{\lambda_1}(x), \quad x \in \mathbb{C} \backslash \Delta, \\
& \frac{2}{|\vec{n}|}\log |\mathcal{R}_{\vec{n},1}(x)| \rightarrow V^{\lambda_1}(x)-V^{\lambda_2}(x)-\omega_1, \quad x \in \mathbb{C} \backslash (\Delta \cup \Gamma^*),\\
& \frac{2}{|\vec{n}|} \log|\tilde{\mathcal{R}}_{\vec{n},2}(x)| \rightarrow V^{\lambda_2}(x)-\omega_1-\omega_2, \quad x \in \mathbb{C} \backslash \Gamma^*.
\end{align*}
\end{theorem}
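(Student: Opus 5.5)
The plan is the Gonchar--Rakhmanov method for Nikishin systems, adapted to the case where the two intervals touch at the origin. The key input is that the JP weights form a Nikishin system. For $x\in(0,1)$,
\[
\frac{w_2(x)}{w_1(x)}=x^{\alpha_2-\alpha_1}=\frac{\sin\pi(\alpha_1-\alpha_2)}{\pi}\int_{-\infty}^0\frac{|t|^{\alpha_2-\alpha_1}}{x-t}\,dt \qquad(\text{suitably regularized}),
\]
so the second measure is the Cauchy transform of a measure supported on $\Gamma=(-\infty,0]$. The hypothesis $\alpha_1-\alpha_2\notin\mathbb Z$ makes this representation nondegenerate.

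\textbf{Step 1: zeros of $P_{\vec n}$.} From the orthogonality relations \eqref{Jacobi Pineiro} (normality of the multi-index, $n_1+1\ge n_2$), $P_{\vec n}$ has all $|\vec n|$ zeros simple and in $(0,1)$.

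\textbf{Step 2: the function of the second kind.} Build a combination $\tilde{\mathcal R}_{\vec n,2}$ of $\mathcal R_{\vec n,1}$ and $\mathcal R_{\vec n,2}$ that is analytic off $\Gamma$. Concretely, $\mathcal R_{\vec n,1}(z)\,q(z)$ minus a suitable integral over $\Gamma$, where $q$ is a polynomial of degree about $n_2$. Show that it satisfies $n_2$ (or $n_1$) orthogonality conditions on $\Gamma$ with respect to the varying measure $|\mathcal R_{\vec n,1}(t)|\,|t|^{\alpha_2-\alpha_1}dt$. It then has exactly about $n_2$ zeros $Q_{\vec n}$ on $(-\infty,0)$. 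Equivalently, $\mathcal R_{\vec n,1}$ has its zeros off $\Delta$ on $\Gamma$, and $P_{\vec n}$ is orthogonal on $\Delta$ with respect to $w_1/|Q_{\vec n}|$.

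\textbf{Step 3: limit equations.} Let $\nu_1,\nu_2$ be the normalized zero-counting measures of $P_{\vec n}$ and $Q_{\vec n}$, scaled by $2/|\vec n|$ so that their masses are $2$ and tend to $\theta$. Pass to weak-$*$ convergent subsequences. Then apply the standard result on $n$-th root asymptotics of orthogonal polynomials with varying weights, $\mathrm{e}^{-nV}$-type (Gonchar--Rakhmanov). The polynomial $P_{\vec n}$ orthogonal with weight $1/|Q_{\vec n}|$ yields
\[
2V^{\nu_1}-V^{\nu_2}=\mathrm{const}\quad\text{on }S(\nu_1),
\]
with inequality on $\Delta$. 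Likewise $Q_{\vec n}$, orthogonal with weight $|\mathcal R_{\vec n,1}|$ whose $n$-th root is governed by $V^{\nu_1}-V^{\nu_2}$, yields the second relation. These are exactly \eqref{equilibrium problem}--\eqref{equilibrium problem 2}. By uniqueness of the extremal $\vec\lambda$ the full sequence converges, so $\nu_j\to\lambda_j$.

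The formulas then follow. Outside $\Delta$, $\frac{2}{|\vec n|}\log|P_{\vec n}|\to -V^{\lambda_1}$ directly. For $\mathcal R_{\vec n,1}$, write it as $Q_{\vec n}^{-1}$ times the Cauchy transform of $P_{\vec n}^2w_1/(P_{\vec n}Q_{\vec n})$ and use the norm asymptotics, which contribute the constant $\omega_1$. The statement for $\tilde{\mathcal R}_{\vec n,2}$ is analogous, picking up $\omega_1+\omega_2$.

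\textbf{Main obstacle.} The hard step is the touching of $\Delta$ and $\Gamma$ at $0$, together with the unboundedness of $\Gamma$. The varying weights $1/|Q_{\vec n}|$ are not uniformly controlled near $0$, since the zeros of $Q_{\vec n}$ may accumulate at the origin. So the usual Gonchar--Rakhmanov lemma, which needs the weight to converge uniformly on $\Delta$ to a continuous function, does not apply directly.

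To handle this, I would first show that the limiting potentials are continuous. This should hold because the equilibrium measures have bounded densities away from $0$ and integrable singularities at $0$. Then I would use a regularity argument in the spirit of Stahl--Totik: lower envelope theorem plus the principle of descent, with a capacity-zero exceptional set at $0$. This should be enough to pass to the limit in the equilibrium inequalities. For $\theta<1$, confinement of $\nu_2$ to a compact set is secured by the constraint in the equilibrium problem together with $x^*>-\infty$.
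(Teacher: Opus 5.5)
The paper gives no proof of this statement. It quotes \cite[Theorem 1]{Lys17} and remarks only that the Gonchar--Rakhmanov--Sorokin result for Nikishin systems stays valid even though $\Delta$ and $\Gamma$ touch. Your skeleton is that standard route: Stieltjes representation of $x^{\alpha_2-\alpha_1}$, full orthogonality of $P_{\vec n}$ with respect to $w_1/|Q_{\vec n}|$, $n_2$ zeros of $\mathcal R_{\vec n,1}$ on $(-\infty,0)$, Gonchar--Rakhmanov for varying weights, then uniqueness. So the whole content of the cited theorem lies in the two features you flag as the main obstacle, and that is where your argument has genuine gaps.

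\textbf{The origin.} You propose to show the limiting potentials are continuous ``because the equilibrium measures have bounded densities away from $0$ and integrable singularities at $0$''. At that stage you only have subsequential weak limits $\nu_1,\nu_2$ of zero-counting measures, and identifying them with $\lambda_1,\lambda_2$ is what you are trying to prove, so this is circular. What is missing is an argument about the $\nu_j$ themselves. You need a Gonchar--Rakhmanov-type lemma in which the varying weights ($e^{nV^{\nu_2^{(n)}}}$ on $\Delta$, $|\mathcal R_{\vec n,1}/Q_{\vec n}|$ on $\Gamma$) converge only locally uniformly on $\Delta\setminus\{0\}$ and $\Gamma\setminus\{0\}$, with separate control near $0$. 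You also need to show the limits have finite energy and do not charge $0$, so that the quasi-everywhere Kuhn--Tucker relations and uniqueness of the Nikishin problem apply. The norm asymptotics producing $\omega_1$ and $\omega_1+\omega_2$ need the same care.

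\textbf{Infinity.} Your confinement claim at $-\infty$ is circular in the same way, since $x^*>-\infty$ is a property of $\lambda_2$. Tightness of the zeros of $Q_{\vec n}$ must come from an a priori bound, for instance $|(\mathcal R_{\vec n,1}/Q_{\vec n})(t)|\le (2\pi)^{-1}|t|^{-|\vec n|-1}\int_\Delta P_{\vec n}^2w_1/|Q_{\vec n}|\,dx$ for $t<0$. This is an external field of size about $2\log|t|$ acting on mass $\theta$, which is strongly admissible only for $\theta<1$. The statement also covers $\theta=1$, where $S(\lambda_2)=\Gamma$ is unbounded and the field is only weakly admissible, and you do not treat that case at all.

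\textbf{Concrete errors in Steps 2--3.}
\begin{itemize}
\item The varying weight for $Q_{\vec n}$ is $|\mathcal R_{\vec n,1}/Q_{\vec n}|\,|t|^{\alpha_2-\alpha_1}$; note $\mathcal R_{\vec n,1}/Q_{\vec n}$ has constant sign on $\Gamma$. Its $n$-th root tends to $e^{V^{\lambda_1}-\omega_1}$. With the weight $|\mathcal R_{\vec n,1}|$ that you use, governed as you say by $V^{\nu_1}-V^{\nu_2}$, the Gonchar--Rakhmanov relation would read $3V^{\nu_2}-V^{\nu_1}=\mathrm{const}$ instead of $2V^{\nu_2}-V^{\nu_1}=\mathrm{const}$.
\item It is $\mathcal R_{\vec n,1}$, not $\tilde{\mathcal R}_{\vec n,2}$, that satisfies $\int_\Gamma t^k\mathcal R_{\vec n,1}(t)|t|^{\alpha_2-\alpha_1}dt=0$ for $k<n_2$, which is where its $n_2$ zeros on $(-\infty,0)$ come from.
\item $\tilde{\mathcal R}_{\vec n,2}$ is the combination of $z^{\alpha_2-\alpha_1}\mathcal R_{\vec n,1}$ and $\mathcal R_{\vec n,2}$ with no jump on $\Delta$ (cf.\ the transformation $Y\mapsto X$). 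It is a multiple of $\int_\Gamma \mathcal R_{\vec n,1}(t)|t|^{\alpha_2-\alpha_1}(t-z)^{-1}dt=Q_{\vec n}(z)^{-1}\int_\Gamma Q_{\vec n}(t)\mathcal R_{\vec n,1}(t)|t|^{\alpha_2-\alpha_1}(t-z)^{-1}dt$, and no polynomial $q$ is involved.
\item The correct formula is $\mathcal R_{\vec n,1}(z)=\frac{Q_{\vec n}(z)}{P_{\vec n}(z)}\cdot\frac{1}{2\pi i}\int_\Delta\frac{P_{\vec n}^2(x)w_1(x)}{Q_{\vec n}(x)(x-z)}\,dx$, with $Q_{\vec n}$ rather than $Q_{\vec n}^{-1}$. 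As you wrote it, the $V^{\lambda_2}$ term would come out with the wrong sign.
\end{itemize}
These slips can be repaired. The gaps described above are the real substance of the theorem, and they are missing from your proposal.
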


\subsection{Explicit solution of the vector equilibrium problem in terms of algebraic function}
The solution to the problem discussed in section \ref{Vector equilibrium problem} can be given in terms of algebraic function. In this section, first we construct the algebraic function and then the solution to the equilibrium problem is formulated as proposition \ref{prop_equi_algebraic}. For the corresponding construction, we need a three sheeted Riemann surface $\mathcal{R}$ of genus $0$ and its conformal mapping onto the sphere $\bar{\mathbb{C}}_s$. This Riemann surface is realized by gluing three copies of the complex sphere along the cuts $\Delta$ and $\Gamma^*$ (see Figure \ref{Fig_Riemann surface}):
\begin{align*}
\mathcal{R}_0:= \bar{\mathbb{C}}\backslash \Delta, \quad \mathcal{R}_1:=\bar{\mathbb{C}}\backslash (\Delta \cup \Gamma^*),\quad \mathcal{R}_2:=\bar{\mathbb{C}}\backslash \Gamma^*.
\end{align*} 

\begin{figure}[htbp!]
\begin{center}
\begin{tikzpicture}
\filldraw (-2,-4)node[below=5pt,red]{$x^*$} circle [radius=2pt];
\filldraw (3,-4)node[below=5pt,red]{$0$} circle [radius=2pt];
\filldraw (3,2)node[above=5pt,red]{$0$} circle [radius=2pt];
\filldraw (5,2)node[above=5pt,red]{$1$} circle [radius=2pt];
\coordinate (A) at (-3,3);
\coordinate (B) at (7,3);
\coordinate (C) at (6,1);
\coordinate (D) at (-4,1);
\draw[thick] (A)node[below=15pt]{$\mathcal{R}_0$} -- (B) -- (C) -- (D) -- cycle;
\coordinate (E) at (-3,0);
\coordinate (F) at (7,0);
\coordinate (G) at (6,-2);
\coordinate (H) at (-4,-2);
\draw[thick] (E)node[below=15pt]{$\mathcal{R}_1$} -- (F) -- (G) -- (H) -- cycle;
\coordinate (I) at (-3,-3);
\coordinate (J) at (7,-3);
\coordinate (K) at (6,-5);
\coordinate (L) at (-4,-5);
\draw[thick] (I)node[below=15pt]{$\mathcal{R}_2$} -- (J) -- (K) -- (L) -- cycle;
\draw[thick] (-2,-4) -- (3,-4);    
    \draw[thick] (-2,-1) -- (5,-1);      
    \draw[thick] (3,2) -- (5,2);       
\draw[dashed] (-2,-4) -- (-2,-1);
\draw[dashed] (3,-4) -- (3,-1) -- (3,2);
\draw[dashed] (5,-1) -- (5,2);
\end{tikzpicture}
\end{center}
\caption{The three-sheeted Riemann surface $\mathcal{R}$}
\label{Fig_Riemann surface}
\end{figure}

Then, for $\theta_1 =\frac{2-\theta}{4-\theta}$, $\theta_2 = \frac{\theta}{\theta+2}$, and $c_0= \frac{4}{(2+\theta)(4-\theta)}$, the following cubic equation with respect to the variable $s$
\begin{align}\label{Mapping R(s)}
z=\pi \circ \phi(s)=R(s)=\frac{4s^3}{((2+\theta)s-\theta)((4-\theta)s-2+\theta)}= \frac{c_0s^3}{(s-\theta_1)(s-\theta_2)}
\end{align}
defines an algebraic function of $3^{rd}$ degree, which is single-valued on some compact three-sheet Riemann surface $\mathcal{R}$. The rational function $R$ is a conformal mapping of the complex sphere $\bar{\mathbb{C}}_s$ onto the Riemann surface $\mathcal{R}$. Precisely, $R=\pi \circ \phi$ where $\pi: \mathcal{R} \rightarrow \bar{\mathbb{C}}_z$ is the canonical projection and $\phi:\bar{\mathbb{C}}_s \rightarrow \mathcal{R}$ is the conformal mapping. The inverse images of the three sheets $\mathcal{R}_j$ under the mapping $R$ are shown in Fig. \ref{Fig_Inverse of R}. 

\begin{figure}[htbp!]
\begin{center}
  \begin{tikzpicture}
\filldraw (0,0)node[below=5pt,red]{$0$} circle [radius=2pt];
\filldraw (3,0)node[below=5pt,red]{$s^*$} circle [radius=2pt];
\filldraw (6,0)node[below=5pt,red]{$1$} circle [radius=2pt];
\draw[arc arrow=to pos .5 with length 2mm] (3,0) to[out=90,in=60] (0,0);
\draw[arc arrow=to pos .5 with length 2mm] (3,0) to[out=-90,in=-60] node[midway,above=5pt]{$R^{-1}(\mathcal{R}_2)$} (0,0);
\draw[arc arrow=to pos .5 with length 2mm] (0,0) to[out=-120,in=-90]node[pos=0.4, above]{$R^{-1}(\mathcal{R}_1)$} (6,0);
\draw[arc arrow=to pos .5 with length 2mm] (0,0) to[out=120,in=90] (6,0);
\draw[arc arrow=to pos .5 with length 2mm] (0,0) to[out=-90,in=-120] (0,0);
\end{tikzpicture}
\end{center} 
\caption{Inverse images of sheets $\mathcal{R}_j$ under the mapping $R$}
\label{Fig_Inverse of R}
\end{figure}

The critical points of $\mathcal{R}$ are $0,0,1,s^*$ and values at these points are $0,0,1,x^*$, where
\begin{align*}
s^* =\frac{3\theta(2-\theta)}{(2+\theta)(4-\theta)}=3\theta_1\theta_2,
\end{align*}
and $x^*=-3\left(\frac{s^\ast}{1-\theta}\right)^2$ as defined by \eqref{eq_x*}. It is easy to check that
\[
\theta\in(0,1):\quad \frac12>\theta_1>\frac13>s^\ast>\theta_2>0,\quad \frac12>c_0>\frac49.
\]
As $z \rightarrow \infty$, the three branches $s_0(z), s_1(z), s_2(z)$ to \eqref{Mapping R(s)} which we choose are given as
\begin{align*}
& s_0(z)= \frac{z}{c_0} -(\theta_1+\theta_2)-\frac{c_0 (\theta_1^2+\theta_1\theta_2+\theta_2^2)}{z}+O\left(\frac{1}{z^2}\right),\\
& s_1(z)=\theta _1
+\frac{c_1}{ z}+
\frac{c^2 \left(2 \theta _1-3 \theta _2\right) \theta _1^5}{\left(\theta _1-\theta _2\right){}^3 z^2}
+O\left(z^{-3}\right),\\
& s_2(z)=\theta _2-\frac{c_2}{ z}+
\frac{c^2 \left(2 \theta _2-3 \theta _1\right) \theta _2^5}{\left(\theta _2-\theta _1\right){}^3 z^2}+O\left(z^{-3}\right)
\end{align*} 
with
$$
c_1=\frac{c_0 \theta _1^3}{\theta _1-\theta _2}=
\frac{(2-\theta)^3}{(4-\theta)^3(1-\theta)},\quad c_2=
\frac{c_0 \theta _2^3}{\theta _1-\theta _2}=\frac{\theta^3}{(2+\theta)^3(1-\theta)}.
$$
Similarly, as $z \to 0$, we have
\[
s_{0,1,2}(z)=
\left(\frac{\theta _1 \theta _2}{c_0}\right)^{1/3} z^{1/3} -
\frac{\theta _1 +\theta _2}{3c_0^{2/3}(\theta _1 \theta _2)^{1/3}} z^{2/3} + \frac{z}{3c_0}
+O\left(z^{4/3}\right),\quad z\to0,
\]
where we define the branches of \( z^{1/3} \) as follows:
\[
\begin{aligned}
z^{1/3} &:= |z|^{1/3} e^{i(\arg z - 2\pi)/3}, && \arg z \in (0,\pi), \quad \text{for } s_1;\\[4pt]
z^{1/3} &:= |z|^{1/3} e^{i(\arg z + 2\pi)/3}, && \arg z \in (-\pi,0), \quad \text{for } s_1;\\[4pt]
z^{1/3} &:= |z|^{1/3} e^{i(\arg z + 2\pi)/3}, && \arg z \in (0,2\pi), \quad \text{for } s_0, \text{ so that } s_0(x)<0 \text{ for } x<0;\\[4pt]
z^{1/3} &:= |z|^{1/3} e^{i\arg z/3}, && \arg z \in (-\pi,\pi), \quad \text{for } s_2, \text{ so that } s_2(x)>0 \text{ for } x>0.
\end{aligned}
\]
For $z \to 1$, we get
\begin{align*}
&s_{0,1}(z)=1+
\frac{2(z-1)^{1/2}}{(\theta^2-2\theta+4)^{1/2}}  +
\frac{6(\theta^2-2\theta+8/3) }{(\theta^2-2\theta+4)^{2}}(z-1)+O((z-1)^{3/2}) ,\quad z\to1,\\
&s_{2}(z)=\frac{\theta (2-\theta)}{4}+
	\frac{\theta^3 (2-\theta)^3}{4(\theta^2-2\theta+4)^{2}}(z-1)+
	O\left((z-1)^{2}\right),\quad z\to1, 
\end{align*}
where $(z-1)^{1/2}:=(-1)^j|z-1|e^{i \arg(z-1)/2}$, $\arg(z-1)\in (-\pi,\pi)$ for $s_{j}$, $j=0,1$. 

Further, as $z \to x^\ast$:
\begin{align*}
&s_{0}(z)=-\frac{3 \theta (2-\theta) }{4 (1-\theta )^2}
-\frac{(2+\theta)^4(4-\theta)^4  }{324\, \theta (2-\theta)  (\theta^2 -2 \theta +4)^2}(x^\ast-z)+O\left((x^\ast-z)^{2}\right),\quad z\to x^\ast,\\
&s_{1,2}(z)=s^\ast+
	\frac{2(1-\theta)^2}{3(\theta^2-2\theta+4)^{1/2}}(x^\ast-z)^{1/2}
	+
	O\left(x^\ast-z\right),\quad z\to x^\ast,
\end{align*}
where $(x^\ast-z)^{1/2}:=(-1)^{j-1}|x^\ast-z|e^{i \arg(x^\ast-z)/2}$, $\arg(x^\ast-z)\in (-\pi,\pi)$ for $s_{j}$, $j=1,2$. 
The critical points on the sheets of Riemann surface $\mathcal{R}$ and their inverse images on $s$-plane are tabulated in Table \ref{tab:z-s-correspondence}.
\begin{table}[h]
\centering
\caption{Correspondence between $z$ and $s$ values for branch points of $\mathcal{R}$}
\label{tab:z-s-correspondence}
\begin{tabular}{c c}
\toprule
\textbf{Branch points of $\mathcal{R}$ ($z$-values)} & \textbf{Points on $s$-sphere} \\
\midrule
$1_{01}$      & $1$        \\
$1_2$         & $\theta(2-\theta)/4$      \\
$0$           & $0$        \\
$\infty_{1}$ & $\theta_1$      \\
$\infty_{2}$ & $\theta_2$      \\
$\infty_0$    & $\infty$   \\
\bottomrule
\end{tabular} 
\end{table}

\noindent Define a meromorphic function $h$ on $\mathcal{R}$ by the divisor 
\begin{align*}
    \frac{\infty_0, \infty_1, \infty_2}{1_{01}, (0_{012})^2}
\end{align*}
and normalize it by 
\begin{align*}
    zh_0(z) \rightarrow 2, \quad z\rightarrow \infty.
\end{align*}
The motivation of this function is given by  Proposition~\ref{prop_equi_algebraic} below. By $h_j$ for $j=0,1,2$, we will mean the restriction of the function $h$ to the sheet $\mathcal{R}_j$ without the corresponding cut.
Then, there exist unique rational functions (up to a constant factor) $h$ on $\mathcal{R}$ given by
\begin{align}\label{function h}
h_j(z)  = \frac{((2+\theta)s_j(z)-\theta)((4-\theta)s_j(z)-2+\theta)}{2s_j(z)^2(s_j(z)-1)} = \frac{2s_j(z)}{z(s_j(z)-1)}.   
\end{align}
They are the solutions of the cubic equation:
$$
h^3+\frac{\left(\theta ^2-2 \theta +4\right) h}{(1-z)z}+\frac{2 \theta(2-\theta)  }{(1-z) z^2}=0.
$$
The three branches at $\infty$ are given by
\begin{align*}
&    h_0(z)=\frac{2}{z}+\frac{8}{(4-\theta) (2+\theta) z^2}+O\left(z^{-3}\right), \quad z \rightarrow \infty,\\
& h_1(z)=-\frac{2-\theta}{z}-\frac{(2-\theta)^3}{2 (4-\theta ) (1-\theta ) z^2}+O\left(z^{-3}\right), \quad z \rightarrow \infty,\\
& h_2(z)=-\frac{\theta}{z}-\frac{\theta ^3}{2 (\theta +2) (1-\theta)z^2}+O\left(z^{-3}\right), \quad z \rightarrow \infty.
\end{align*}
At $z \to 0$, we obtain
\begin{align}\label{eq: h0 h1 h2 near 0}
&h_{0,1,2}(z)=\frac{e_2(z)}{z^{2/3}}+\frac{e_1(z)}{z^{1/3}},\quad
e_j\in\mathcal O(0),\notag \\
& e_2(0)=-(2\theta(2-\theta))^{1/3},\quad e_1(0)=\frac{\theta^2 -2\theta +4}{3(2\theta(2-\theta))^{1/3}}.
\end{align}
For $z \to 1$, we have
\begin{align}
&h_{0,1}(z)=
	\frac{(\theta ^2-2 \theta +4)^{1/2}}{(z-1)^{1/2}}+
	\frac{\theta(2 -\theta)}{\theta ^2-2 \theta +4}+
		O\left((z-1)^{1/2}\right),\quad z\to1, \label{eq: h0 h1 near 1}\\
&	h_{2}(z)=\frac{-2\theta(2 -\theta)}{\theta ^2-2 \theta +4}+
	O\left(z-1\right),\quad z\to1. \notag
\end{align}
Further, as $z \to x^\ast$, we get
\begin{equation}
	h_{1,2}(z)=
	\frac{(4-\theta)^2 (1-\theta )^2 (2+\theta )^2}{9 \theta  (2-\theta) (\theta ^2-2 \theta +4)}+
	\frac{(4-\theta)^4 (1-\theta )^4 (2+\theta )^4 }{81\theta ^2  (2-\theta )^2 (\theta ^2-2 \theta +4)^{5/2}}(x^\ast-z)^{1/2}
	+
	O(x^\ast-z). \label{eq: h1 h2 near x*}
\end{equation}
We now formulate the main result of this section.
\begin{proposition}\label{prop_equi_algebraic}
Let $h$ be the function defined by \eqref{function h} and let $h_0$ and $h_2$ be its holomorphic branches on $\mathbb{C} \backslash \Delta$ and $\mathbb{C} \backslash \Gamma^*$, respectively. Then the solution to the equilibrium problem \eqref{equilibrium problem} can be expressed in terms of $h_0$ and $h_2$ as follows:
\begin{align} 
&d\lambda_1(x) = -\frac{1}{2\pi i} (h_0^+-h_0^-)(x), \quad x \in \Delta, \label{eq: D lambda1 in h0}\\
& d\lambda_2(x) = \frac{1}{2\pi i} (h_2^+-h_2^-)(x), \quad x \in \Gamma^*. \label{eq: D lambda2 in h2}
\end{align}
\end{proposition}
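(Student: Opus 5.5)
We identify the two measures in \eqref{eq: D lambda1 in h0}--\eqref{eq: D lambda2 in h2} through their Cauchy transforms. We then show that the equilibrium relations \eqref{equilibrium problem}--\eqref{equilibrium problem 2} follow from the sheet structure of $h$ on $\mathcal{R}$. Uniqueness of the extremal vector measure (convexity, as stated in Section~\ref{Vector equilibrium problem}) then identifies the resulting pair with $\vec\lambda$.

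\textbf{Step 1: positivity, integrability and masses.} Define $\lambda_1,\lambda_2$ by the formulas in the statement. On $\Delta$ the boundary values satisfy $h_0^\pm=h_1^\mp$; these are complex conjugate values, because $h$ is real on the real $z$-axis off the cuts. So $h_0^+-h_0^-=2i\,\im h_0^+$ and the density is real. Its sign is fixed by which half of the $s$-plane the curve $R^{-1}(\Delta)$ bounding $R^{-1}(\mathcal{R}_0)$ traverses (Figure~\ref{Fig_Inverse of R}), using $h=2s/(z(s-1))$. Since the density has no zeros in the open interval, checking one point suffices, and the same argument applies to $\lambda_2$ on $\Gamma^*$.
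Near the endpoints, \eqref{eq: h0 h0 near 0}-type behaviour \eqref{eq: h0 h1 h2 near 0} gives at most $|x|^{-2/3}$, \eqref{eq: h0 h1 near 1} gives $|x-1|^{-1/2}$, and \eqref{eq: h1 h2 near x*} gives a square-root vanishing at $x^*$. All of these are integrable and give finite energy.
The function $h_0$ is holomorphic in $\bar{\mathbb{C}}\setminus\Delta$ with $h_0(z)\sim 2/z$, and $h_2$ is holomorphic off $\Gamma^*$ with $h_2\sim-\theta/z$. By Cauchy's theorem (contour deformation onto the cut) we therefore get
\[
h_0(z)=\int\frac{d\lambda_1(x)}{z-x},\qquad -h_2(z)=\int\frac{d\lambda_2(x)}{z-x}.
\]
Consequently $|\lambda_1|=2$ and $|\lambda_2|=\theta$.

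\textbf{Step 2: equality on the supports.} The cubic equation for $h$ has no $h^2$ term, so $h_0+h_1+h_2\equiv0$. Write $C^\mu(z)=\int d\mu(x)/(z-x)$, so that $\frac{d}{dx}V^\mu(x)=-\frac12(C^\mu_++C^\mu_-)(x)$ on the support and $-C^\mu(x)$ off it.

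On $\Delta$, the gluing gives $h_0^\pm=h_1^\mp$, hence $h_0^++h_0^-=h_0^-+h_1^-=-h_2=C^{\lambda_2}$. It follows that
\[
\frac{d}{dx}\bigl(2V^{\lambda_1}-V^{\lambda_2}\bigr)=-(h_0^++h_0^-)+C^{\lambda_2}=0
\]
on $\Delta$. This holds on $(0,1)$, and continuity of the potentials extends the constancy to all of $\Delta$.

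On $\Gamma^*$ we have $h_2^\pm=h_1^\mp$, so $h_2^++h_2^-=-h_0=-C^{\lambda_1}$. This gives
\[
\frac{d}{dx}\bigl(2V^{\lambda_2}-V^{\lambda_1}\bigr)=(h_2^++h_2^-)+C^{\lambda_1}=0.
\]
Both combined potentials are therefore constant on their supports. Since $\Delta$ is the full support of $\lambda_1$, the first relation holds with equality throughout.

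\textbf{Step 3 (main obstacle): the inequality on $\Gamma\setminus\Gamma^*=(-\infty,x^*)$.} On this set the derivative is
\[
\frac{d}{dx}\bigl(2V^{\lambda_2}-V^{\lambda_1}\bigr)=2h_2+h_0=h_2-h_1,
\]
which is real. To show the combination is at least its value at $x^*$, I would prove $h_2-h_1<0$ on $(-\infty,x^*)$. The function $h_1-h_2$ vanishes only at branch points of the two sheets. By \eqref{eq: h1 h2 near x*}, it behaves like a nonzero multiple of $(x^*-x)^{1/2}$ with a definite sign just left of $x^*$. Moreover $h_2-h_1\sim(2-2\theta)/x<0$ as $x\to-\infty$.
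An absence-of-zeros argument then fixes the sign on the whole ray. I would phrase it via the $s$-plane: a zero would correspond to two distinct preimages $s_1(x)\neq s_2(x)$ with equal $h$-values, and this can be excluded using the explicit cubic. This sign bookkeeping is the step I expect to require the most care.
Finally, $\Gamma$ is noncompact. The behaviour $2V^{\lambda_2}-V^{\lambda_1}\sim -(2\theta-2)\log|x|$ at infinity is consistent with the inequality since $\theta<1$, and the mass-$\theta$ extremal problem is still well posed.

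With both relations verified, the characterization of $\vec\lambda$ by \eqref{equilibrium problem}--\eqref{equilibrium problem 2} yields the proposition.
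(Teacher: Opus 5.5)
Your proposal is correct and follows the same route as the paper. Like the paper, you identify $h_0$ and $-h_2$ as the Cauchy transforms of $\lambda_1$ and $\lambda_2$, read the masses off the behaviour at infinity, and get equality on the supports from $h_0^\pm=h_1^\mp$, $h_2^\pm=h_1^\mp$ and $h_0+h_1+h_2\equiv 0$.

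Your argument is in fact more complete than the paper's. The paper infers positivity from the masses alone and never checks the inequality on $(-\infty,x^*)$. Your Step 3 closes immediately: for fixed $z$ the map $s\mapsto 2s/(z(s-1))$ is injective, so $h_1(x)=h_2(x)$ would force $s_1(x)=s_2(x)$. That cannot happen on $(-\infty,x^*)$, because the only finite branch points of $\mathcal{R}$ are $0$, $1$ and $x^*$. Hence the negative sign of $h_2-h_1\sim(2-2\theta)/x$ at $-\infty$ persists on the whole ray.
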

\begin{proof}
For $\theta \in (0,1)$, the function $h_0$ and $h_2$ are holomorphic in the neighbourhood of $\infty$ and have residues $-2$ and $\theta$. Then, for mass $\lambda_1$, we have 
\begin{align*}
|\lambda_1| = -\frac{1}{2\pi i} \int_\Delta (h_0^+-h_0^-)(x) dx= \frac{1}{2\pi i} \int_{\gamma_\Delta} h_0(z) dz= 2,
\end{align*}
where $\gamma_\Delta$ is a contour that goes around $\Delta$ counterclockwise. Similarly, $|\lambda_2| = \theta $. The case for $\theta = 1$ can be proved by passing the limit. Thus, the measures $\lambda_1$ and $\lambda_2$ are positive. Next, note that by the Sokhotskii-Plemelj formula, the functions $h_0$ and $h_2$ are the Cauchy transforms of the measures $\lambda_1$ and $\lambda_2$:
\begin{align*}
h_0(z):= -\int \frac{d\lambda_1(x)}{x-z}, \quad z \in \mathbb{C}\backslash\Delta, \qquad h_2(z):= \int \frac{d\lambda_2(x)}{x-z}, \quad z \in \mathbb{C}\backslash\Gamma^*.
\end{align*}
Representing the Cauchy transform of the measures $\lambda_j$, $j=1,2$ as $\widehat{\lambda}_j$, $j=1,2$, i.e.,
\begin{align*}
    \widehat{\lambda}_j(z) = \int \frac{d\lambda_j(x)}{x-z}, \quad j=1,2
\end{align*}
implies
\begin{align*}
\widehat{\lambda}_1(z)=-h_0(z), \quad z \in \mathbb{C} \backslash \Delta, \qquad
\widehat{\lambda}_2(z)=h_2(z), \quad z \in \mathbb{C} \backslash \Gamma^*.
\end{align*}
The complex potentials $U^{\lambda_j}$, $j=1,2$ are defined as 
\begin{align}\label{eq: U complex potential}
U^{{\lambda_j}}(z) = -\int \ln (z-x) d\lambda_j(x). 
\end{align}
Further, the real and the complex potentials are related by the identity $V^{\lambda_j}(x) = \frac{1}{2}(U_+^{\lambda_j}+U_-^{\lambda_j})$, $j=1,2$. Thus, for the equality relation to hold in \eqref{equilibrium problem}, we must have
\begin{align*}
&2V^{\lambda_1}-V^{\lambda_2} = \omega_1, \quad~ {\rm on}~ (0,1) \\
\Rightarrow& U_+^{\lambda_1}+U_-^{\lambda_1}-V^{\lambda_2} = \omega_1 \\
\Rightarrow&\frac{d}{dx}(U_+^{\lambda_1}+U_-^{\lambda_1}-V^{\lambda_2}) = 0 \\
\Rightarrow& \widehat{\lambda}_1^+ + \widehat{\lambda}_1^- - \widehat{\lambda}_2 = 0 \\
\Rightarrow& -h_0^+ - h_0^- -h_2 = 0 \\
\Rightarrow& h_0^+ + h_0^- +h_2 = 0 
\end{align*}
which holds true since the boundary values of the branches $h_0$, $h_1$, and $h_2$ satisfy the relations
\begin{align*}
h_0^\pm (x) =  h_1^\mp (x), \quad x \in (0,1), \qquad h_1^\pm (x) =  h_2^\mp (x), \quad x \in (x^*,0)
\end{align*}
and $h_0+h_1+h_2 = 0$. Similarly, one can prove the equality relation in \eqref{equilibrium problem 2} by showing $h_2^+ + h_2^- +h_0 = 0 $ also holds in view of the above relations.
\end{proof}

\begin{proposition}
The measures $\lambda_1$ and $\lambda_2$ defined on $\Delta$ and $\Gamma^*$ have the following asymptotic behavior near the end points:
\begin{align}
&\lim_{x\to 1-}(1-x)^{1/2}\lambda'_1(x)=\frac{(\theta ^2-2 \theta +4)^{1/2}}{\pi}, \label{eq: measure behave at 1}\\
&\lim_{x\to x^\ast+}(x-x^\ast)^{-1/2}\lambda'_2(x)=\frac{(4-\theta)^4 (1-\theta )^4 (2+\theta )^4 }{81\pi\theta ^2  (2-\theta )^2 (\theta ^2-2 \theta +4)^{5/2}}, \label{eq: measure behave at x*}\\
&\lim_{x\to 0-}x^{2/3}\lambda'_2(x)=\lim_{x\to 0+}x^{2/3}\lambda'_1(x)=\frac{-\sin\frac{2\pi}{3}e_2(0)}{\pi }=\frac{{3}^{1/2}(2\theta(2-\theta))^{1/3}}{2\pi } \label{eq: measure behave at 0}.
\end{align}
\end{proposition}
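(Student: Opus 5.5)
The plan is to read off the endpoint behaviour of the densities from Proposition~\ref{prop_equi_algebraic} and the local expansions of the branches $h_j$. By \eqref{eq: D lambda1 in h0} and \eqref{eq: D lambda2 in h2},
\[
\lambda_1'(x)=-\tfrac{1}{2\pi i}(h_0^+-h_0^-)(x) \quad\text{on } \Delta,
\qquad
\lambda_2'(x)=\tfrac{1}{2\pi i}(h_2^+-h_2^-)(x) \quad\text{on } \Gamma^*.
\]
So at each endpoint it suffices to substitute the Puiseux expansions \eqref{eq: h0 h1 near 1}, \eqref{eq: h1 h2 near x*}, \eqref{eq: h0 h1 h2 near 0}, evaluate both boundary values, and keep the leading singular term.

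\emph{Endpoint $1$.} For $x\in(0,1)$ near $1$, we have $(z-1)^{1/2}_\pm = \pm i(1-x)^{1/2}$ in the principal branch. The holomorphic part of \eqref{eq: h0 h1 near 1} cancels in $h_0^+-h_0^-$, which leaves
\[
(h_0^+-h_0^-)(x)=\mp 2i(\theta^2-2\theta+4)^{1/2}(1-x)^{-1/2}+O\big((1-x)^{1/2}\big).
\]
Dividing by $-2\pi i$ gives \eqref{eq: measure behave at 1}. The overall sign is fixed by positivity of $\lambda_1$, which was established in the proof of Proposition~\ref{prop_equi_algebraic}.

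\emph{Endpoint $x^*$.} Here \eqref{eq: h1 h2 near x*} holds for $h_2$, and the constant term cancels in the jump. For $x\in(x^*,0)$ we have $(x^*-z)^{1/2}_\pm=\mp i(x-x^*)^{1/2}$, so the jump is $\pm 2i$ times the coefficient of $(x^*-z)^{1/2}$ times $(x-x^*)^{1/2}$. Dividing by $2\pi i$ gives \eqref{eq: measure behave at x*}, again up to a sign fixed by positivity.

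\emph{Endpoint $0$.} This step needs the most care, because the three branches meet at $0$ and the correct determination of $z^{1/3}$ on each side of each cut matters. By \eqref{eq: h0 h1 h2 near 0}, the leading term is $e_2(0)z^{-2/3}$. Using the branch conventions for $z^{1/3}$ attached to $s_0$ and $s_2$ (recall $h_j=2s_j/(z(s_j-1))$, so $h_j$ inherits them), I compute $z^{-2/3}$ on both sides of each cut.

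\begin{itemize}
\item For $h_0$ on $(0,1)$: the argument is $0$ from above and $2\pi$ from below, so $z_\pm^{-2/3}=x^{-2/3}\cdot\{1,\ e^{-4\pi i/3}\}$, and the difference is $x^{-2/3}(1-e^{-4\pi i/3})$.
\item Each of these unimodular factors must be multiplied by the branch-dependent cube root of unity relating $h_0$'s determination to the principal one.
\item The net outcome should be $h_0^+-h_0^-\sim \pm 2i\sin(2\pi/3)\,e_2(0)\,x^{-2/3}$, giving $x^{2/3}\lambda_1'\to -\sin(2\pi/3)e_2(0)/\pi$.
\item The analogous computation for $h_2$ on $(x^*,0)$, with $\arg z=\pm\pi$, gives the same modulus, since $|x|^{-2/3}$ times $(e^{-2\pi i/3}-e^{2\pi i/3})=-2i\sin(2\pi/3)$.
\end{itemize}

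Finally, I substitute $e_2(0)=-(2\theta(2-\theta))^{1/3}$ and $\sin(2\pi/3)=\sqrt3/2$ to obtain \eqref{eq: measure behave at 0}. The subleading term $e_1 z^{-1/3}$ contributes only $O(|x|^{-1/3})$ and so does not affect the limit.

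The main obstacle is the bookkeeping of branches of $z^{1/3}$ at the origin, in particular checking that the branch conventions for $s_0,s_1,s_2$ are consistent with the sheet structure in Figure~\ref{Fig_Inverse of R}. I will check the resulting sign against positivity of both measures, and against the identity $h_0^++h_0^-+h_2=0$ at leading order, as a consistency test.
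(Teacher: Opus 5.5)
Your proposal is correct and follows the same route as the paper. You substitute the local expansions \eqref{eq: h0 h1 near 1}, \eqref{eq: h1 h2 near x*} and \eqref{eq: h0 h1 h2 near 0} into the jump formulas of Proposition~\ref{prop_equi_algebraic} and read off the leading singular term of $h_0^+-h_0^-$ and $h_2^+-h_2^-$ at each endpoint. Your branch computation at $0$ (argument in $(0,2\pi)$, multiplied by $e^{2\pi i/3}$) reproduces the paper's factor $e^{-4\pi i/3}-e^{-8\pi i/3}=i\sqrt3$. Fixing signs via positivity of the densities is a slightly more robust substitute for the paper's explicit branch tracking, since the modulus of the leading jump does not depend on the cube-root branch.
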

\begin{proof}
Using \eqref{eq: h0 h1 near 1}, \eqref{eq: D lambda1 in h0} implies
\begin{align*}
&d\lambda_1(x) = -\frac{1}{2\pi i} (h_0^+-h_0^-)(x)=-\frac{1}{2\pi i}\bigg[\frac{(\theta ^2-2 \theta +4)^{1/2}}{|z-1|^{1/2}e^{i\pi/2}}-\frac{(\theta ^2-2 \theta +4)^{1/2}}{|z-1|^{1/2}e^{-i\pi/2}} \bigg]\\
&=-\frac{1}{2\pi i}\frac{(\theta ^2-2 \theta +4)^{1/2}}{|z-1|^{1/2}} (e^{-i\pi/2}-e^{i\pi/2}) = \frac{(\theta ^2-2 \theta +4)^{1/2}}{\pi |z-1|^{1/2}} .
\end{align*}
Hence, the behavior $\eqref{eq: measure behave at 1}$ follows. Similarly, using \eqref{eq: h1 h2 near x*}, \eqref{eq: D lambda2 in h2} becomes
\begin{multline*}
d\lambda_2(x) = \frac{1}{2\pi i} (h_2^+-h_2^-)(x) = \frac{1}{2\pi i}\frac{(4-\theta)^4 (1-\theta )^4 (2+\theta )^4 }{81\theta ^2  (2-\theta )^2 (\theta ^2-2 \theta +4)^{5/2}}|x^\ast-z|^{1/2}(e^{i\pi/2} -  e^{-i\pi/2}) \\= \frac{(4-\theta)^4 (1-\theta )^4 (2+\theta )^4 }{81 \pi \theta ^2  (2-\theta )^2 (\theta ^2-2 \theta +4)^{5/2}} |x^\ast-z|^{1/2}
\end{multline*}
which implies the behavior \eqref{eq: measure behave at x*}. Now, using \eqref{eq: h0 h1 h2 near 0} in \eqref{eq: D lambda1 in h0} and \eqref{eq: D lambda2 in h2}, we have
\begin{align*}
&d\lambda_1(x)=-\frac{e_2(0)}{2\pi i } \bigg[ \bigg(\frac{1}{z_+^{1/3}} \bigg)^2- \bigg(\frac{1}{z_-^{1/3}} \bigg)^2\bigg]=-\frac{e_2(0)}{2\pi i |z|^{2/3}}\bigg[ \bigg(\frac{1}{|z|^{1/3}e^{2i\pi/3}} \bigg)^2- \bigg(\frac{1}{|z|^{1/3}e^{4i\pi/3}} \bigg)^2\bigg]\\
&= -\frac{e_2(0)}{2\pi i |z|^{2/3}} (e^{-4i\pi/3}-e^{-8i\pi/3}) = -\frac{e_2(0)} {2\pi i |z|^{2/3}} \times i \sqrt{3} = \frac{{3}^{1/2}(2\theta(2-\theta))^{1/3}}{2\pi }.
\end{align*}
Similarly, one can find $d\lambda_1(x)$ near $0$ and hence, we arrive at \eqref{eq: measure behave at 0}.
\end{proof}


\section{The $H$ functions} 
We define three functions $H_j$ such that
\begin{enumerate}
\item $H_0,H_1$ are analytic in $\mathbb C\setminus (-\infty,1]$, $H_2$ is analytic in $\mathbb C\setminus (-\infty,0]$; 
\item $H'_j(z)=h_j(z)$; 
\item $H_j$ are real on $(1,\infty)$;
\item $\mathrm{Re}\, H_j(0)$=0.
\end{enumerate}
Then 
\begin{multline*}
	H_j(z)=\int^z \frac{2s_j(z)d\zeta}{(s_j(z)-1)\zeta}
	=\int^{s_j(z)} \frac{2sR'(s)ds}{(s-1)R(s)}\\=
	\int^{s_j(z)} \left(\frac3{s}-\frac1{s-\theta_1}-\frac1{s-\theta_2}\right)\frac{2sds}{(s-1)}\\=(2-\theta)\ln(s_j(z)-\theta_1)+
	\theta\ln(s_j(z)-\theta_2)-\mathrm{const}_j,
\end{multline*} 
$$\mathrm{Res}_{s=1}\left(\frac3{s}-\frac1{s-\theta_1}-\frac1{s-\theta_2}\right)\frac{2sds}{(s-1)} = 6-(4-\theta)-(2+\theta)=0.
$$
If we introduce a constant 
$$
C=(2-\theta)\ln\frac1\theta_1+
\theta\ln\frac1\theta_2>0, 
$$
then we get the following explicit formulae for $H_j$:
\begin{align*}
&H_j(z)=(2-\theta)\ln(s_j(z)-\theta_1)+
	\theta\ln(s_j(z)-\theta_2)+C; \quad  j=0,1, 	\\
&	H_2(z)=(2-\theta)\ln(\theta_1-s_2(z))+
	\theta\ln(\theta_2-s_2(z))+C.
\end{align*}
The expansions at $\infty$ are the following:
\begin{equation}\label{eq: H012 at infty}
\left.
\begin{aligned}
&H_0(z)=2\ln z+C_0-
\frac{8}{(4-\theta) (2+\theta) z}
+O\left(z^{-2}\right),\\
& H_1(z)=(\theta -2)\ln z+C_1
+
\frac{(2-\theta)^3}{2 (4-\theta ) (1-\theta ) z}
+O\left(z^{-2}\right),\\
& H_2(z)=-\theta \ln z+C_2
-\frac{\theta ^3}{2 (\theta +2) (1-\theta)z}
+O\left(z^{-2}\right),
\end{aligned}
\right\}
\end{equation}
with 
\begin{align*}
&C_0=C+2\ln \frac{1}{c_0}=
\theta\ln\frac{2+\theta}{\theta}-(2-\theta)\ln\frac{2-\theta}{4-\theta}+2\ln\frac{(2+\theta)(4-\theta)}{4},\\
&C_1=C+(2-\theta)\ln c_1+\theta\ln(\theta_1-\theta_2)=
\theta\ln\frac{4(1-\theta)}{\theta(4-\theta)}-(2-\theta)\ln\frac{(1-\theta)(4-\theta)^2}{(2-\theta)^2},\\
&C_2=C+(2-\theta)\ln (\theta_1-\theta_2)+\theta\ln c_2=\theta\ln\frac{\theta^2}{(2+\theta)^2(1-\theta)}-
(2-\theta)\ln\frac{(2+\theta)(2-\theta)}{4(1-\theta)}.
\end{align*}
Since $h_0+h_1+h_2\equiv0$,  it follows that $H_0+H_1+H_2 \equiv {\rm Const}$. It can be verified that $C_0+C_1+C_2=0$ so we have
$$(H_0+H_1+H_2)(z)=\lim_{z\to\infty}(H_0+H_1+H_2)(z)=C_0+C_1+C_2=0.$$ 
It is easy to see that
$$
H_j(z)=\pm 2\pi i (-1)^j+\int_{0\pm i0}^z h_j(\zeta)d\zeta=H_{01}(1)+\int_{1}^z h_j(\zeta)d\zeta,\quad z\in \mathbb C\setminus (-\infty,1], \quad j=0,1, 
$$
$$
H_2(z)=\int_{0}^z h_2(\zeta)d\zeta,\quad z\in \mathbb C\setminus (-\infty,0],
$$
where
$$
H_{01}(1)=(2-\theta)\ln\left(\frac{1}{\theta_1}-1\right)+
\theta\ln\left(\frac{1}{\theta_2}-1\right).
$$
Further, the expansions of $H$-functions as $z \to 1$ are as follows:
\begin{equation}\label{eq: H012 at 1}
H_{01}(z)=H_{01}(1)+
	2b_1(z-1)^{1/2}+
	\frac{\theta(2 -\theta)}{\theta ^2-2 \theta +4}(z-1)+
	O\left((z-1)^{3/2}\right),\quad z\to1.
\nonumber
\end{equation}
Similarly, we have the following as $z \to x^\ast$:
\begin{equation}\label{eq: H012 at x*}
\left.
\begin{aligned}
&	H_{1}(z)=\mathrm{Re}\,H_{12}(x^\ast)\mp\pi i(2-\theta)+
\frac{(4-\theta)^2 (1-\theta )^2 (2+\theta )^2}{9 \theta  (2-\theta) (\theta ^2-2 \theta +4)}(z-x^\ast)\\
&-\frac23 b_\ast(x^\ast-z)^{3/2}
+
O(z-x^\ast)^2,\quad z\to x^\ast,\quad \pm\mathrm{Im}\,z>0,\\
& 	H_{2}(z)=\mathrm{Re}\,H_{12}(x^\ast)\mp\pi i\theta+
	\frac{(4-\theta)^2 (1-\theta )^2 (2+\theta )^2}{9 \theta  (2-\theta) (\theta ^2-2 \theta +4)}(z-x^\ast)\\
&-\frac23 b_\ast(x^\ast-z)^{3/2}+
	O(z-x^\ast)^2,\quad z\to x^\ast,\quad \pm\mathrm{Im}\,z>0,
\end{aligned}
\right\}
\end{equation}
where
$$
\mathrm{Re}\,H_{12}(x^\ast)=C+(2-\theta)\ln(\theta_1-s^\ast)+
\theta\ln(s^\ast-\theta_2).
$$
We also get the following expansions as $z \to 0$: 
\begin{align*}
&H_{j}(z)=\pm 2\pi i (-1)^j+3e_2(0)z^{1/3}+\frac32e_1(0)z^{2/3}+O(z^{4/3}),\quad j=0,1,\quad z\to 0,\quad\pm\mathrm{Im}\,z>0,\\
&H_{2}(z)=3e_2(0)z^{1/3}+\frac32e_1(0)z^{2/3}+O(z^{4/3}),\quad z\to 0.
\end{align*}
If $\tilde{H}_j:=H_j-C_0$, then, in the view of \eqref{eq: H012 at infty}, we have
\begin{align*}
&e^{\tilde{H}_0(z)} = z^2 (1+O(1)), \quad z \to \infty, \\
&e^{\tilde{H}_1(z)} = e^{C_1-C_0}z^{\theta-2}  (1+O(1)), \quad z \to \infty,\\
& e^{\tilde{H}_2(z)} = e^{C_2-C_0} z^{-\theta}  (1+O(1)), \quad z \to \infty.
\end{align*}
\begin{lemma} The following relations between the $H$-functions and complex vector potentials exist:
\begin{align*}
\tilde{H}_0 = -U^{\lambda_1},
    \quad\tilde{H}_1 = U^{\lambda_1}-U^{\lambda_2},
    \quad\tilde{H}_2 = U^{\lambda_2}.
\end{align*}
\end{lemma}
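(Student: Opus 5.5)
The plan is to compare derivatives first and then fix the additive constants by the behaviour at infinity. Differentiating \eqref{eq: U complex potential} gives $(U^{\lambda_j})'(z)=-\int \frac{d\lambda_j(x)}{z-x}=\widehat{\lambda}_j(z)$. By the proof of Proposition~\ref{prop_equi_algebraic} we have $\widehat{\lambda}_1=-h_0$ off $\Delta$ and $\widehat{\lambda}_2=h_2$ off $\Gamma^*$. Since $H_j'=h_j$, it follows that
\begin{align*}
(\tilde H_0+U^{\lambda_1})'=h_0+\widehat{\lambda}_1=0,\qquad (\tilde H_2-U^{\lambda_2})'=h_2-\widehat{\lambda}_2=0 .
\end{align*}
For the middle identity I will use $h_0+h_1+h_2\equiv 0$, which gives
\begin{align*}
(\tilde H_1-U^{\lambda_1}+U^{\lambda_2})'=h_1-\widehat{\lambda}_1+\widehat{\lambda}_2=h_1+h_0+h_2=0 .
\end{align*}
Hence each difference is locally constant on the common domain of analyticity. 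I take that domain to be $\mathbb C\setminus(-\infty,1]$ for $j=0,1$ and $\mathbb C\setminus(-\infty,0]$ for $j=2$. In each case it is connected, and the branches of $\ln(z-x)$ in $U^{\lambda_j}$ are taken with the cut along $(-\infty,x]$, so that they are compatible with the cuts of $H_j$. Each difference is therefore a single constant.

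To fix the constants I let $z\to+\infty$ along the real axis, where everything is real by property (3) of the $H_j$. Since $|\lambda_1|=2$ and $|\lambda_2|=\theta$, we have
\begin{align*}
U^{\lambda_1}(z)=-2\ln z+O(z^{-1}),\qquad U^{\lambda_2}(z)=-\theta\ln z+O(z^{-1}).
\end{align*}
From \eqref{eq: H012 at infty}, $\tilde H_0=H_0-C_0=2\ln z+O(z^{-1})$, so $\tilde H_0=-U^{\lambda_1}$ exactly. For $j=1,2$ the logarithmic terms also match: $(\theta-2)\ln z$ for $\tilde H_1$ against $U^{\lambda_1}-U^{\lambda_2}$, and $-\theta\ln z$ for $\tilde H_2$ against $U^{\lambda_2}$. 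The remaining constants are $C_1-C_0$ and $C_2-C_0$.

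The main obstacle is precisely these constants, i.e.\ reconciling the normalization $\tilde H_j=H_j-C_0$ with the normalization of $U^{\lambda_j}$. I will handle them by reading the complex potentials $U^{\lambda_1}-U^{\lambda_2}$ and $U^{\lambda_2}$ in the identities for $\tilde H_1,\tilde H_2$ as normalized on their sheets $\mathcal R_1$ and $\mathcal R_2$. That is, the additive constants coming from the $\log$-branches on $\mathcal R_1,\mathcal R_2$ are absorbed, in the same way as the $\pm2\pi i$ branch terms already appearing in the formulas for $H_j$ near $0$ and $x^*$. I will then identify these constants with the equilibrium constants in \eqref{equilibrium problem}--\eqref{equilibrium problem 2}. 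To do this, take real parts on $(0,1)$ and use $V^{\lambda}=\frac12(U^{\lambda}_++U^{\lambda}_-)$ together with $h_0^\pm=h_1^\mp$. Together with $\tilde H_0=-U^{\lambda_1}$, this gives $\mathrm{Re}(\tilde H_0^\pm-\tilde H_1^\mp)=-(2V^{\lambda_1}-V^{\lambda_2})$ up to the constant in question. Comparing with the value $\omega_1$ produced by \eqref{equilibrium problem} shows that the constant is exactly the one chosen in the normalization. The analogous computation on $(x^*,0)$ with $h_1^\pm=h_2^\mp$ and \eqref{equilibrium problem 2} does the same for $\tilde H_2$.

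Finally, I will check consistency with $H_0+H_1+H_2\equiv0$. Summing the three identities gives $\tilde H_0+\tilde H_1+\tilde H_2=0$ up to the absorbed constants, since $C_0+C_1+C_2=0$. This confirms that the constants chosen for $j=1,2$ are compatible with the exact identity for $j=0$.
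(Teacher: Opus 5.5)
Your derivative comparison and the $j=0$ case are correct, and they go further than the paper, whose proof only matches derivatives and never addresses additive constants. The gap is the ``absorption'' step for $j=1,2$. With $U^{\lambda_j}(z)=-\int\ln(z-x)\,d\lambda_j(x)$ there is no free real constant. As you computed yourself, $U^{\lambda_j}(z)=-|\lambda_j|\ln z+O(z^{-1})$ on $(1,\infty)$, where $U^{\lambda_j}$ and the $H_j$ are all real, and changing branches of the logarithm can only add purely imaginary constants. Hence $\tilde H_1-(U^{\lambda_1}-U^{\lambda_2})\equiv C_1-C_0$ and $\tilde H_2-U^{\lambda_2}\equiv C_2-C_0$ are genuine real constants that cannot be renormalized away. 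Your comparison with the equilibrium relations on $(0,1)$ and $(x^*,0)$ identifies them as $C_1-C_0=-\omega_1$ and $C_2-C_0=-(\omega_1+\omega_2)$; it does not show that they vanish.

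Your final consistency check actually runs the wrong way. Since $H_0+H_1+H_2\equiv 0$ and $\tilde H_j=H_j-C_0$ for every $j$, we get $\tilde H_0+\tilde H_1+\tilde H_2=-3C_0$. The three right-hand sides, however, sum to $0$. Moreover $C_0=C+2\ln(1/c_0)>0$, because $C>0$ and $c_0<\tfrac12$. So the three identities as written cannot all hold.

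What your argument does prove is
\[
\tilde H_0=-U^{\lambda_1},\qquad \tilde H_1=U^{\lambda_1}-U^{\lambda_2}-\omega_1,\qquad \tilde H_2=U^{\lambda_2}-\omega_1-\omega_2 .
\]
This corrected form is also what the paper relies on downstream. The factor $\mathrm{diag}(1,e^{-n\omega_1},e^{-n\omega_1-n\omega_2})$ in the transformation $X\to U$ compensates exactly these constants. They match the weak asymptotics $V^{\lambda_1}-V^{\lambda_2}-\omega_1$ and $V^{\lambda_2}-\omega_1-\omega_2$. The remark computing $\omega_1,\omega_2$ uses $U^{\lambda_2}=H_2-C_2$, not $H_2-C_0$. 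Finally, the inequality $\mathrm{Re}\,(H_2-H_1)=2V^{\lambda_2}-V^{\lambda_1}-\omega_2$ only holds with the constants included. You were right to isolate the constants $C_1-C_0$ and $C_2-C_0$, but the correct conclusion is to state the lemma with them, not to redefine the potentials so that they disappear.
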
 
\begin{proof}
Note that $\tilde{H}'_0=H'_0=h_0=-\widehat{\lambda}_1=(-U^{\lambda_1})'$, where $U^{\lambda_1}$ is defined by \eqref{eq: U complex potential}. From this, we achieve $\tilde{H}_0 = -U^{\lambda_1}$. Similarly, $\tilde{H}'_1=H'_1=h_1=-h_0-h_2=\widehat{\lambda}_1-\widehat{\lambda}_2=(U^{\lambda_1})'-(U^{\lambda_2})' = (U^{\lambda_1}-U^{\lambda_2})'$. Similarly, $\tilde{H}'_2=H'_2=h_2=\widehat{\lambda}_2=(U^{\lambda_2})'$.
\end{proof}
Further, the boundary conditions for $H$-functions are given by
\begin{equation}\label{eq: Boundary conditions H}
\left.
\begin{aligned}
&H_0^\pm(x)=H_1^\mp(x), \quad x\in (0,1),\\
&
H_2^\pm(x)-H_1^\mp(x)=\mp2\pi i,\quad x\in (x^\ast,0),\\
&
H_0^+-H_0^-=4\pi i,\,\,
H_1^+-H_1^-=-2\pi i(2-\theta),\,\,
H_2^+-H_2^-=-2\pi i\theta,\quad x\in (-\infty,x^\ast).
\end{aligned}
\right\}
\end{equation}
Moreover on $(-\infty,0)$ we have
$$
\mathrm{Im}\,H_0^\pm=\pm2\pi i,
$$
and on $(-\infty,x^\ast)$ we have
$$
\mathrm{Im}\,H_1^\pm=\mp\pi i(2-\theta), \quad
\mathrm{Im}\,H_2^\pm=\mp\pi i\theta. 
$$


\begin{remark}
The equilibrium constants $\omega_1$ and $\omega_2$ present in \eqref{equilibrium problem} and \eqref{equilibrium problem 2} can be represented in terms of the constants $C_0$, $C_1$, and $C_2$. Since on $\Delta $, we have
\begin{multline*}
0=2V^{\lambda_1}-V^{\lambda_2}-\omega_1=
U_+^{\lambda_1}+U_-^{\lambda_1}-U^{\lambda_2}-\omega_1\\=
-(H_0^++H_0^-+H_2)+2C_0+C_2-\omega_1=-(H_0^++H_1^++H_2)+2C_0+C_2-\omega_1=2C_0+C_2-\omega_1,
\end{multline*}
then
$$
\omega_1=2C_0+C_2=C_0-C_1.
$$
At the same time on $\Gamma^\ast$, we have
 \begin{multline*}
 	0=2V^{\lambda_2}-V^{\lambda_1}-\omega_2=
 	U_+^{\lambda_2}+U_-^{\lambda_2}-\mathrm{Re}\,U^{\lambda_1}-\omega_2=
 	(H_2^+ +H_2^-+ \mathrm{Re}\, H_0)-2C_2-C_0-\omega_2 \\=(H_2^++H_1^++2\pi i+H_0^+-2\pi i)-2C_2-C_0-\omega_2=-2C_2-C_0-\omega_2,
 \end{multline*}
 so,
 $$
 \omega_2=-2C_2-C_0=C_1-C_2.
 $$
 Moreover on $(-\infty,x^\ast)$ we have
 \begin{equation}\label{eq: Real H2-H1 +ve}
	0<2V^{\lambda_2}-V^{\lambda_1}-\omega_2=
\mathrm{Re}\,(H_2-H_1).
\end{equation}
\end{remark}

\section{Steepest Descent Analysis of RH Problem}
In the upcoming sections, we apply the Deift-Zhou steepest descent method to the RH problem for $Y$. The method consists of a series of explicit transformation $Y \to X \to U \to T \to R$. We will do it for the case when $\theta \in \mathbb{Q}$ and $\theta < 1$. 
\subsection{The first transformation}
Define the first transformation $Y \rightarrow X$ as
\begin{align*}
    X(z) = Y(z) \begin{pmatrix}
        1 &0 &0\\
        0 &1 & \dfrac{-z^{\alpha_2-\alpha_1}}{1-e^{2 \pi i (\alpha_2-\alpha_1)}}\\ 
         0& 0& 1 
    \end{pmatrix}.
\end{align*}
This transformation introduces a jump on the negative real axis, and one can easily compute the jump matrices as
\begin{align}\label{eq: Jump matrices for X}
 \mathcal{J}_X= \begin{pmatrix}
1 & w_1 & 0 \\
0 & 1 & 0 \\
0 & 0 & 1 
\end{pmatrix} ~~{\rm and}~ \mathcal{J}_X= \begin{pmatrix}
1 & 0 & 0 \\
0 & 1 & c_\alpha\rho \\
0 & 0 & 1 
\end{pmatrix} 
\end{align}
on $\Delta=(0,1) $ and $\Gamma=(-\infty,0) $, respectively, and where $\rho = (-z)^{\alpha_2-\alpha_1} > 0$, $\arg z =\pi$ and $c_\alpha = e^{ \pi i (\alpha_2-\alpha_1)}\neq 0$. Thus, $X$ satisfies the following RH problem:
\begin{itemize}
\item $X$ is analytic in $\mathbb{C} \backslash (\Delta \cup \Gamma)$.
\item $X$ satisfies jump conditions $X_+ = X_- J_X$ where $J_X$ is given by \eqref{eq: Jump matrices for X}.
\item $X $ has the following behavior at infinity:
\begin{align*}
&    X(z) =\bigg(\mathbb{I}+O\bigg(\frac{1}{z} \bigg) \bigg) \begin{pmatrix}
z^{n_1+n_2} & 0 & 0 \\
0 & z^{-n_1} & -z^{-n_1+\alpha_2-\alpha_1} \\
0 & 0 & z^{-n_2}
\end{pmatrix} , \quad z \rightarrow \infty.\\
&=\bigg(\mathbb{I}+O\bigg(\frac{1}{z} \bigg) \bigg) \begin{pmatrix}
1 & 0 & 0 \\
0 & 1 & -z^{n_2-n_1+\alpha_2-\alpha_1} \\
0 & 0 & 1
\end{pmatrix} \begin{pmatrix}
z^{n_1+n_2} & 0 & 0 \\
0 & z^{-n_1} & 0 \\
0 & 0 & z^{-n_2}
\end{pmatrix} \\
&=\bigg(\mathbb{I}+O\bigg(\frac{1}{z} \bigg) \bigg) \begin{pmatrix}
z^{n_1+n_2} & 0 & 0 \\
0 & z^{-n_1} & 0 \\
0 & 0 & z^{-n_2}
\end{pmatrix}
\end{align*}
since it is possible to choose $n_1$ and $n_2$ such that $n_2-n_1+\alpha_2-\alpha_1 \leq -1$ and $n_2-n_1 \to -\infty$ since $\dfrac{n_2}{n_1+n_2} \rightarrow \dfrac{\theta}{2} \in (0,1/2)$.
\item The behavior of $X$ at the endpoints 0 and 1 is as follows:
\begin{align*}
&X(z)= O\begin{pmatrix}
1 & t(z) & t(z) \\
1 & t(z) & t(z) \\
1 & t(z) & t(z)
\end{pmatrix} ~ z \rightarrow 1,~ {\rm where}~ t(z) = \begin{cases}
  |z-1|^{\beta},  & \beta < 0\\
   \log|z-1|, & \beta = 0\\
    1, &\beta > 0.
\end{cases} 
\end{align*}
If $\alpha_1 < 0$,
\begin{align*}
&X(z)= O\begin{pmatrix}
1 & |z|^{\alpha_1} & w(z) \\
1 & |z|^{\alpha_1} & w(z) \\
1 & |z|^{\alpha_1} & w(z)
\end{pmatrix} ~ z \rightarrow 0, ~{\rm where}~ w(z) = \begin{cases}
  |z|^{\alpha_2},  & \alpha_2< 0\\
   \log|z|, & \alpha_2= 0\\
    1, &\alpha_2 > 0.
\end{cases} 
\end{align*}
If $\alpha_1 = 0$,
\begin{align*}
&X(z)= O\begin{pmatrix}
1 & \log|z| & w(z) \\
1 & \log|z| & w(z) \\
1 & \log|z| & w(z)
\end{pmatrix} ~ z \rightarrow 0, ~{\rm where}~ w(z) = \begin{cases}
  |z|^{\alpha_2},  & \alpha_2< 0\\
    1, &\alpha_2 > 0.
\end{cases} 
\end{align*}
If $\alpha_1 > 0$,
\begin{align*}
&X(z)= O\begin{pmatrix}
1 & 1 & w(z) \\
1 & 1 & w(z) \\
1 & 1 & w(z)
\end{pmatrix} ~ z \rightarrow 0, ~{\rm where}~ w(z) = \begin{cases}
  |z|^{\alpha_2-\alpha_1},  & \alpha_2< 0~or~ 0<\alpha_2 < \alpha_1\\
  |z|^{-\alpha_1},  & \alpha_2= 0\\
    1, &\alpha_2 > 0, \alpha_2 > \alpha_1.
\end{cases} 
\end{align*}

\end{itemize}

\subsection{The Second Transformation}
Define the second transformation $X \rightarrow U$ as 
\begin{align}\label{Transformation U}
  U(z)=  \begin{pmatrix}
        1 &0 &0\\
        0 &e^{-n\omega_1} & 0\\ 
         0& 0& e^{-n\omega_1-n\omega_2} 
    \end{pmatrix} X(z)     \begin{pmatrix}
        e^{-n\tilde{H}_0} &0 &0\\
        0 &e^{-n\tilde{H}_1} & 0\\
         0& 0& e^{-n\tilde{H}_2} \\
    \end{pmatrix}.
\end{align}
This transformation normalizes the RH problem at $\infty$. Now, using \eqref{RH Condition 2} and \eqref{Transformation U}, one can easily obtain the jump matrices
\begin{align}
   & J_U(z) = \begin{pmatrix}
        e^{n(H_0^- - H_0^+)} & w_1e^{n(\tilde{H}_0^- - \tilde{H}_1^+)} &0\\
        0 &e^{n(H_1^- - H_1^+)} & 0\\
         0& 0& e^{n(H_2^- - H_2^+)} \\
    \end{pmatrix}, \quad z \in \Delta, \label{1st Jump Matrix for U}\\
&J_U(z) = \begin{pmatrix}
         e^{n(H_0^- - H_0^+)}& 0 &0\\
        0 &e^{n(H_1^- - H_1^+)} & c_\alpha\rho(z) e^{n(\tilde{H}_1^- - \tilde{H}_2^+)}\\
         0& 0& e^{n(H_2^- - H_2^+)} \\
    \end{pmatrix} , \quad z \in \Gamma^* , \label{2nd Jump Matrix for U}\\
&J_U(z) = \begin{pmatrix}
         1& 0 &0\\
        0 &e^{2 \pi i(2-\theta)n} & c_\alpha\rho(z) e^{n(\tilde{H}_1^- - \tilde{H}_2^+)}\\
         0& 0& e^{2 \pi i \theta n} \\
    \end{pmatrix}, \quad z \in (-\infty,x^*) \notag.
\end{align}
We assume that $\theta n$ and $(2-\theta)n$ are integers.
Thus, $U$ satisfies the following RH problem:
\begin{itemize}
\item $U$ is analytic in $\mathbb{C} \backslash (\Delta \cup \Gamma^*)$.
\item $U$ satisfies jump conditions $U_+ = U_- J_U$ where $J_U$ is given by \eqref{1st Jump Matrix for U} and \eqref{2nd Jump Matrix for U}.
\item $U$ is normalized at infinity:
\begin{align*}
&  U(z) =\mathbb{I}+O(1/z), \quad z \rightarrow \infty. 
\end{align*}
\item The behavior of $U$ at the endpoints 0 and 1 is the same as that of $X$. 
\end{itemize}
\subsection{The Third Transformation: Opening of the lenses} The third transformation is based on the factorization of jump matrices given by \eqref{1st Jump Matrix for U} and \eqref{2nd Jump Matrix for U} as the product of three matrices. By the virtute of \eqref{eq: Boundary conditions H}, we get
\begin{align*}
&\begin{pmatrix}
   e^{n(H_0^- - H_0^+)} & w_1\\
   0 & e^{n(H_1^- - H_1^+)}\\
\end{pmatrix} 
= \begin{pmatrix}
    1& 0\\
   \frac{1}{w_1}e^{n(H_1^- - H_0^-)} & 1\\
\end{pmatrix} \begin{pmatrix}
    0& w_1\\
   -\frac{1}{w_1} & 0\\
\end{pmatrix} \begin{pmatrix}
    1& 0\\
   \frac{1}{w_1}e^{n(H_1^+ - H_0^+)} & 1\\
\end{pmatrix}, \\
&\begin{pmatrix}
   e^{n(H_1^- - H_1^+)} & c_\alpha\rho\\
   0 & e^{n(H_2^- - H_2^+)}\\
\end{pmatrix} 
= \begin{pmatrix}
    1& 0\\
   \frac{1}{c_\alpha\rho}e^{n(H_2^- - H_1^-)} & 1\\
\end{pmatrix} \begin{pmatrix}
    0& c_\alpha\rho\\
   -\frac{1}{c_\alpha\rho} & 0\\
\end{pmatrix} \begin{pmatrix}
    1& 0\\
   \frac{1}{c_\alpha\rho}e^{n(H_2^+ - H_1^+)} & 1\\
\end{pmatrix}.
 \end{align*}

\begin{center}
\begin{figure}[htbp!]
\begin{tikzpicture}
\filldraw (1.5,1.7)node[below=5pt,red]{$\Delta^+$};
\filldraw (1.5,-0.7)node[below=5pt,red]{$\Delta^-$};
\filldraw (-3,2)node[below=5pt,red]{$\Gamma^{*+}$};
\filldraw (-3,-1)node[below=5pt,red]{$\Gamma^{*-}$};
\filldraw (-3,0.7)node[below=5pt,red]{$\Gamma^*$};
\filldraw (1.5,0.7)node[below=5pt,red]{$\Delta$};
\filldraw (0,0)node[below=5pt,red]{$0$} circle [radius=2pt];
\filldraw (3,0)node[below=5pt,red]{$1$} circle [radius=2pt];
\filldraw (-8,0)node[below=5pt,red]{$\infty$} circle [radius=2pt];
\filldraw (-5,0)node[below=5pt,red]{$x^*$} circle [radius=2pt];
\draw[thick] (0,0) -- (3,0);    
    \draw[thick] (0,0) -- (-8,0);
\draw[arc arrow=to pos .5 with length 2mm] (-7,0) to[out=0,in=0] (-5,0) ;
\draw[arc arrow=to pos .5 with length 2mm] (-5,0) to[out=-270,in=120] (0,0);
\draw[arc arrow=to pos .5 with length 2mm] (-5,0) to[out=270,in=-120] (0,0);
\draw[arc arrow=to pos .5 with length 2mm] (-6,0) to[out=0,in=0] (0,0);
\draw[arc arrow=to pos .5 with length 2mm] (0,0) to[out=60,in=90] (3,0);
\draw[arc arrow=to pos .5 with length 2mm] (0,0) to[out=-60,in=-90] (3,0);
\draw[arc arrow=to pos .5 with length 2mm] (0,0) to[out=180,in=180] (3,0);
\end{tikzpicture}
\caption{Open lenses}
\label{Fig: Open lenses}
\end{figure}
\end{center} 

Choose smooth paths $\Delta^{\pm}$ connecting $0$ and $1$ with $\Delta^+$ in the upper half plane and $\Delta^-$ in the lower half plane. Similarly, choose $\Gamma^{*+}$ and $\Gamma^{*-}$ enclosing 0 and $x^*$. The paths $\Delta^{\pm}$ and $\Gamma^{*\pm}$ around the intervals $[0,1]$ and $[x^*,0]$, respectively, constitute four bounded regions called the lenses around $[0,1]$, $[x^*,0]$ as shown in Figure \ref{Fig: Open lenses}. We define the next transformation $U \rightarrow T$ as

 \begin{align}\label{Transformtion T}
T:= U \cdot \begin{cases}
    \begin{pmatrix}
      1& 0 &0\\
        -w_1^{-1}e^{n(H_1 - H_0)} &1 & 0\\
         0& 0& 1 \\  
    \end{pmatrix}, & {\rm between}~ \Delta^+ ~{\rm and}~ \Delta, \\
    \begin{pmatrix}
      1& 0 &0\\
        w_1^{-1}e^{n(H_1 - H_0)} &1 & 0\\
         0& 0& 1 \\  
    \end{pmatrix}, & {\rm between}~ \Delta^- ~{\rm and}~ \Delta ,\\
    \begin{pmatrix}
      1& 0 &0\\
        0 &1 & 0\\
         0& -(c_\alpha\rho)^{-1}e^{n(H_2 - H_1)}& 1 \\  
    \end{pmatrix}, & {\rm between}~ \Gamma^* ~{\rm and}~ \Gamma^{*+}, \\
    \begin{pmatrix}
      1& 0 &0\\
        0 &1 & 0\\
         0& (c_\alpha\rho)^{-1}e^{n(H_2 - H_1)}& 1 \\  
    \end{pmatrix}, & {\rm between}~ \Gamma^* ~{\rm and}~ \Gamma^{*-}, \\
    \hspace{2cm}I, & {\rm elsewhere}.
\end{cases}
 \end{align}
Note that this transformation also does not affect the behavior at infinity. The transformation induces following jumps $J_T$ on $[0,1]$, $[x^*,0]$, $\Delta^{\pm}$ and $\Gamma^{^*\pm}$.
 \begin{align}
&J_T= \begin{pmatrix}
      0& w_1 &0\\
        -\frac{1}{w_1} &0 & 0\\
         0& 0& 1 \\  
    \end{pmatrix} {\rm on}~\Delta, \quad J_T= \begin{pmatrix}
      1& 0 &0\\
        \frac{1}{w_1}e^{n(H_1 - H_0)} &1 & 0\\
         0& 0& 1 \\  
    \end{pmatrix} {\rm on}~\Delta^+ ~{\rm and}~ \Delta^- ,\label{First Jump for T}\\
&J_T= \begin{pmatrix}
      1& 0 &0\\
        0 &0 & c_\alpha\rho\\
         0&-\frac{1}{c_\alpha\rho} & 0 \\  
    \end{pmatrix} {\rm on}~\Gamma^*, \quad J_T= \begin{pmatrix}
      1& 0 &0\\
        0 &1 & 0\\
         0& \frac{1}{c_\alpha\rho}e^{n(H_2 - H_1)}& 1 \\  
    \end{pmatrix} {\rm on}~\Gamma^{*+} ~{\rm and}~ \Gamma^{*-}. \label{Second Jump for T}
 \end{align}
 The behavior at end points $0$ and $1$ also changes. The RH problem for $T$ can be written as
 \begin{itemize}
\item $T$ is analytic in $\mathbb{C} \backslash (\Delta \cup \Gamma^* \cup \Gamma^{*\pm} \cup \Delta^\pm)$.
\item $T$ has jumps $T_+ = T_- J_T$, where $J_T$ is given by \eqref{First Jump for T} and \eqref{Second Jump for T}.
\item $T(z) =\mathbb{I}+O(1/z)$, $\quad z \rightarrow \infty$.
\item The behavior of $T$ near the end points $0$ and $1$ is as follows:
\begin{align*}
&T(z)= O\begin{pmatrix}
t_1(z) & t_2(z) & t_2(z) \\
t_1(z) & t_2(z) & t_2(z) \\
t_1(z) & t_2(z) & t_2(z)
\end{pmatrix} ~ z \rightarrow 1,
\end{align*}
where
\begin{align*}
t_1(z),t_2(z) = \begin{cases}
  1,|z-1|^{\beta},  & \beta < 0\\
   \log|z-1|,\log|z-1|, & \beta = 0\\
    |z-1|^{-\beta},1, &\beta > 0, z~{\rm inside~ the ~lens}\\
     1,1 & \beta > 0, z~{\rm outside~ the ~lens}.
\end{cases} 
\end{align*}
If $\alpha_1 < 0$ and $z \rightarrow 0$ from the positive real axis 
\begin{align*}
&T(z)= O\begin{pmatrix}
1 & |z|^{\alpha_1} & w(z) \\
1 & |z|^{\alpha_1} & w(z) \\
1 & |z|^{\alpha_1} & w(z)
\end{pmatrix} , ~{\rm where}~ w(z) = \begin{cases}
  |z|^{\alpha_2},  & \alpha_2< 0\\
   \log|z|, & \alpha_2= 0\\
    1, &\alpha_2 > 0.
\end{cases} 
\end{align*}
If $\alpha_1 < 0$ and $z \rightarrow 0$ from the negative real axis
\begin{align*}
&T(z)= O\begin{pmatrix}
1 & w_1(z) & w(z) \\
1 & w_1(z) & w(z) \\
1 & w_1(z) & w(z)
\end{pmatrix} , ~{\rm where}~ w_1(z),w(z) = \begin{cases}
  |z|^{\alpha_1},|z|^{\alpha_2},  & \alpha_2< 0\\
   |z|^{\alpha_1},\log|z|, & \alpha_2= 0\\
    |z|^{\alpha_1-\alpha_2}, 1, &\alpha_2 > 0.
\end{cases}
\end{align*}
If $\alpha_1 = 0$ and $z \rightarrow 0$ from positive real axis
\begin{align*}
&T(z)= O\begin{pmatrix}
\log|z| & \log|z| & w(z) \\
\log|z| & \log|z| & w(z) \\
\log|z| & \log|z| & w(z)
\end{pmatrix} ~ z \rightarrow 0, ~{\rm where}~ w(z) = \begin{cases}
  |z|^{\alpha_2},  & \alpha_2< 0\\
    1, &\alpha_2 > 0.
\end{cases} 
\end{align*}
If $\alpha_1 = 0$ and $z \rightarrow 0$ from negative real axis
\begin{align*}
&T(z)= O\begin{pmatrix}
1 & w_2(z) & w(z) \\
1 & w_2(z) & w(z) \\
1 & w_2(z) & w(z)
\end{pmatrix} ~ z \rightarrow 0, ~{\rm where}~ w_2(z),w(z) = \begin{cases}
  \log|z|,|z|^{\alpha_2},  & \alpha_2< 0\\
    |z|^{-\alpha_2}, 1, &\alpha_2 > 0.
\end{cases} 
\end{align*}
If $\alpha_1 > 0$ and $z \rightarrow 0$ from positive real axis
\begin{align*}
&T(z)= O\begin{pmatrix}
|z|^{-\alpha_1} & 1 & w(z) \\
|z|^{-\alpha_1} & 1 & w(z) \\
|z|^{-\alpha_1} & 1 & w(z)
\end{pmatrix} ~ z \rightarrow 0, ~{\rm where}~ w(z) = \begin{cases}
  |z|^{\alpha_2-\alpha_1},  & \alpha_2< 0~or~ 0<\alpha_2 < \alpha_1\\
  |z|^{-\alpha_1},  & \alpha_2= 0\\
    1, &\alpha_2 > 0, \alpha_2 > \alpha_1. 
\end{cases} 
\end{align*}
If $\alpha_1 > 0$ and $z \rightarrow 0$ from negative real axis
\begin{align*}
&T(z)= O\begin{pmatrix}
1 & w_3(z) & w(z) \\
1 & w_3(z) & w(z) \\
1 & w_3(z) & w(z)
\end{pmatrix} ~ z \rightarrow 0, ~{\rm where}~ w_3(z) = \begin{cases}
  1,  & \alpha_2< \alpha_1\\
    |z|^{\alpha_1-\alpha_2}, &\alpha_2> \alpha_1.
\end{cases} \\
&w(z) = \begin{cases}
  |z|^{\alpha_2-\alpha_1},  & \alpha_2< 0~or~ 0<\alpha_2 < \alpha_1\\
  |z|^{-\alpha_1},  & \alpha_2= 0\\
    1, &\alpha_2 > 0, \alpha_2 > \alpha_1 .
\end{cases}
\end{align*}
\end{itemize}
\subsection{The Riemann Hilbert Problem for $N$: Outer Parametrix} This parametrix gives approximation away from the end points, $x^*$, $0$ and $1$. It should satisfy the following RH problem:
\begin{itemize}
\item $N$ is analytic in $\mathbb{C} \backslash (\Delta \cup \Gamma^*)$.
\item $N$ has jumps given by 
\begin{align}
&N_+ = N_- \begin{pmatrix}
      0& w_1 &0\\
        -\frac{1}{w_1} &0 & 0\\
         0& 0& 1 \\  
    \end{pmatrix}, \quad z \in \Delta, \label{1st jump matrix for N}\\
&N_+ = N_- \begin{pmatrix}
      1& 0 &0\\
        0 &0 & c_\alpha\rho\\
         0& -\frac{1}{c_\alpha\rho}& 0 \\  
    \end{pmatrix}, \quad z \in \Gamma^*. \label{2nd jump matrix for N}
\end{align}
\item $N(z) =\mathbb{I}+O(1/z)$, $\quad z \rightarrow \infty$. 
\end{itemize}
\begin{theorem}
Let $\theta_1 =\frac{2-\theta}{4-\theta}$, $\theta_2 = \frac{\theta}{\theta+2}$ and $\sqrt{\mathcal{D}(s)} = \sqrt{s^2(s-1)(s-3\theta_1\theta_2)}$ being positive for large $s$. If 
\begin{align}
&\mathcal{F}_0(s)= \bigg(\frac{4}{(2+\theta)(4-\theta)}\bigg)^{\alpha_1+\beta}(s-\theta_1)(s-\theta_2) \frac{\mathcal{G}(s)}{\sqrt{\mathcal{D}(s)}}, \label{Function F1 general case}\\
&\mathcal{F}_1(s)=\dfrac{(s-\theta_2)\sqrt{\mathcal{D}(\theta_1)}}{(\theta_1-\theta_2)^{\alpha_2-\alpha_1+1} \theta_1^{2\alpha_1-\alpha_2}(1-\theta_1)^{\beta}}  \frac{\mathcal{G}(s)}{\sqrt{\mathcal{D}(s)}},\label{Function F2 general case}\\
&\mathcal{F}_2(s)=\dfrac{(\theta_2-\theta_1)^{\alpha_2-\alpha_1} \sqrt{\mathcal{D}(\theta_2)} [(2+\theta)(4-\theta)]^{\alpha_2-\alpha_1} }{\theta_2^{-\alpha_1+2\alpha_2+1}e^{(\alpha_2-\alpha_1)i\pi}4^{\alpha_2-\alpha_1}(1-\theta_2)^{\beta} } (s-\theta_1) \frac{\mathcal{G}(s)}{\sqrt{\mathcal{D}(s)}}, \label{Function F3 general case}
\end{align}
is defined and analytic in $\mathbb{C} \backslash (\varphi_1^-(\Delta) \cup \varphi_1^+(\Gamma)$. Further, let the function $\mathcal{G}(s)$ is given by
\begin{align}\label{Function G general case}
\mathcal{G}(s) = \begin{cases}
  \dfrac{(s-\theta_2)^{\alpha_2-\alpha_1}s^{2\alpha_1-\alpha_2}(1-s)^{\beta}}{z^{\alpha_1}(1-z)^\beta},  & s \in \varphi(\mathcal{R}_0),\\
   (s-\theta_2)^{\alpha_2-\alpha_1} s^{2\alpha_1-\alpha_2}(1-s)^{\beta},& s \in \varphi(\mathcal{R}_1),\\
   \dfrac{e^{(\alpha_2-\alpha_1)i\pi}4^{\alpha_2-\alpha_1}  s^{-\alpha_1+2\alpha_2}(1-s)^{\beta}}{[(2+\theta)(4-\theta)]^{\alpha_2-\alpha_1}(s-\theta_1)^{\alpha_2-\alpha_1} },& s \in \varphi(\mathcal{R}_2),
\end{cases}
\end{align}
then, a solution to the Riemann-Hilbert problem for N is given by
\begin{align}\label{Solution matrix N general case}
N(z) = \begin{pmatrix}
    \mathcal{F}_0(\varphi_0(z)) & \mathcal{F}_0(\varphi_1(z)) & \mathcal{F}_0(\varphi_2(z))\\
    \mathcal{F}_1(\varphi_0(z)) & \mathcal{F}_1(\varphi_1(z)) & \mathcal{F}_1(\varphi_2(z))\\
    \mathcal{F}_2(\varphi_0(z)) & \mathcal{F}_2(\varphi_1(z)) & \mathcal{F}_2(\varphi_2(z))
\end{pmatrix}.
\end{align}
\end{theorem}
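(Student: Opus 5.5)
The plan is to lift the RH problem for $N$ to the $s$-sphere via the conformal map $R$ of \eqref{Mapping R(s)}, i.e. via the inverse branches $\varphi_j=s_j$. Row $k$ of $N$ is the triple $(\mathcal F_k(\varphi_0(z)),\mathcal F_k(\varphi_1(z)),\mathcal F_k(\varphi_2(z)))$, so everything reduces to properties of the single scalar functions $\mathcal F_k$ on $\bar{\mathbb C}_s$. Three things must be checked: the jumps \eqref{1st jump matrix for N}, \eqref{2nd jump matrix for N}; analyticity of $N$ off $\Delta\cup\Gamma^*$, including the absence of singularities at the branch points that are not allowed; and the normalization $N=\mathbb I+O(1/z)$.

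\textbf{Jumps.} On $\Delta$ the boundary values satisfy $\varphi_0^\pm=\varphi_1^\mp$, since the sheets $\mathcal R_0,\mathcal R_1$ are glued crosswise there, while $\varphi_2$ is continuous. The required relations $N_{+,1}=-N_{-,2}/w_1$ and $N_{+,2}=w_1N_{-,1}$ in each row therefore become a statement about the single function $\mathcal F_k$ on the curve $\varphi_0^\pm(\Delta)$, i.e. on the boundary between $R^{-1}(\mathcal R_0)$ and $R^{-1}(\mathcal R_1)$. Because $\mathcal F_k$ is single-valued and analytic across this curve except through $\mathcal G$ and $\sqrt{\mathcal D}$, we need:
\begin{itemize}
\item[(a)] the ratio of the two pieces of $\mathcal G$ across the curve equals $w_1(z)=z^{\alpha_1}(1-z)^\beta$, which is exactly the factor $1/(z^{\alpha_1}(1-z)^\beta)$ inserted in \eqref{Function G general case} on $\varphi(\mathcal R_0)$;
\item[(b)] $\sqrt{\mathcal D}$ changes sign across $\varphi_1^-(\Delta)$, which is one of its cuts; this produces the minus sign in $-1/w_1$.
\end{itemize}
On $\Gamma^*$ the argument is the same with $\varphi_1^\pm=\varphi_2^\mp$. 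Here the ratio of the $\mathcal R_1$ and $\mathcal R_2$ pieces of $\mathcal G$ should reproduce $c_\alpha\rho=e^{\pi i(\alpha_2-\alpha_1)}(-z)^{\alpha_2-\alpha_1}$. To see this, use $z=c_0s^3/((s-\theta_1)(s-\theta_2))$ to rewrite $(-z)^{\alpha_2-\alpha_1}$ through $s$; the factors $4^{\alpha_2-\alpha_1}[(2+\theta)(4-\theta)]^{-(\alpha_2-\alpha_1)}=c_0^{\alpha_2-\alpha_1}$ and $e^{(\alpha_2-\alpha_1)i\pi}$ are designed precisely for this. On the remaining cut $(-\infty,x^*)$ the powers $s^{\cdot}$ and $(s-\theta_j)^{\cdot}$ have to be checked to be continuous, given the branch conventions for $\varphi_j$.

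\textbf{Analyticity and endpoints.} Check that $\mathcal F_k$ has no poles on the $s$-sphere other than those cancelled by construction. The zeros of $\sqrt{\mathcal D}$ at $s=0$ (double), $1$ and $s^*$ correspond to $z=0,1,x^*$. The resulting singularities of $N$ are quarter-root singularities at $1$ and $x^*$ (locally $\mathcal G$ is bounded and $\sqrt{\mathcal D}\sim (s-1)^{1/2}\sim (z-1)^{1/4}$), and at $0$ a singularity of order $|z|^{-1/3}$ times powers coming from $\mathcal G$. These are acceptable for an outer parametrix, which the statement does not require to be bounded there.

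\textbf{Normalization.} At $z=\infty$ the three branches go to $s=\infty,\theta_1,\theta_2$. In row $0$, the factor $(s-\theta_1)(s-\theta_2)$ kills $\mathcal F_0$ at $\theta_1,\theta_2$, giving $O(1/z)$ because $\varphi_{1,2}-\theta_{1,2}=O(1/z)$. At $s=\infty$, $\mathcal F_0\to 1$: there $\sqrt{\mathcal D}\sim s^2$, $z\sim s/c_0$, and $\mathcal G\sim s^{\alpha_1+\beta}e^{i\pi\beta}/z^{\alpha_1+\beta}\cdot(\text{sign})$, so the prefactor $c_0^{\alpha_1+\beta}$ is forced. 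Row $1$ vanishes at $\theta_2$ and at $\infty$ (degree count: $s\cdot s^{\alpha_1+\beta}/(s^2 z^{\alpha_1+\beta})=O(1/s)$), and the constant in \eqref{Function F2 general case} is exactly $1/(\mathcal G\cdot(s-\theta_2)/\sqrt{\mathcal D})$ evaluated at $\theta_1$. Row $2$ is symmetric, normalized at $\theta_2$.

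The main obstacle I expect is bookkeeping of branches: the consistent choice of arguments of $s^{2\alpha_1-\alpha_2}$, $(s-\theta_j)^{\alpha_2-\alpha_1}$, $(1-s)^\beta$ and $\sqrt{\mathcal D}$ on the three regions of Fig.~\ref{Fig_Inverse of R}, so that all phases ($e^{i\pi\beta}$ at infinity, $c_\alpha$ on $\Gamma^*$) cancel. I would fix the cuts of these powers along the curves $\varphi_1^\pm(\Delta)$, $\varphi_1^\pm(\Gamma^*)$ and verify each phase on the real $s$-axis, where every quantity is explicit.
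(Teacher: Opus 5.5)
Your proposal follows essentially the paper's own argument. The paper also reads each row of $N$ as a single function $\mathcal F_k$ on the $s$-sphere and turns the jumps on $\Delta$ and $\Gamma^*$ into multiplicative jumps of $\mathcal G$, with the extra minus sign coming from the cut of $\sqrt{\mathcal D}$ along $\varphi_1^-(\Delta)\cup\varphi_1^+(\Gamma)$. It fixes the constants from the values at $\infty,\theta_1,\theta_2$, presenting this as a derivation and leaving the branch check to the reader.

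Two corrections for when you execute it. First, as $s\to\infty$ one has $z\sim c_0 s$, not $s/c_0$. So $\mathcal G\to c_0^{-(\alpha_1+\beta)}$, with the phases of $(1-s)^\beta$ and $(1-z)^\beta$ cancelling, and this is why the prefactor $c_0^{\alpha_1+\beta}$ gives $\mathcal F_0(\infty)=1$. Second, your claim that row 2 is symmetric fails when the normalization is carried out: the printed constant yields $\mathcal F_2(\theta_2)=(\theta_2-\theta_1)/\theta_2$ rather than $1$. The factor $\theta_2^{-\alpha_1+2\alpha_2+1}$ in $\mathcal F_2$ should read $(\theta_2-\theta_1)\,\theta_2^{-\alpha_1+2\alpha_2}$; this is a typo in the statement, not a defect of your method.
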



\begin{proof}
Considering the rows of \eqref{1st jump matrix for N} and \eqref{2nd jump matrix for N}, we get
\begin{align}
&N_{k0}^+ = -w_1^{-1} N_{k1}^-, \nonumber\\
&  N_{k1}^+ = w_1 N_{k0}^-,  \label{N on delta}\\
&  N_{k2}^+ = N_{k2}^- \nonumber
\end{align}
on $\Delta$ and 
\begin{align}
&N_{k0}^+ =  N_{k0}^-, \nonumber\\
&  N_{k1}^+ = -(c_\alpha\rho)^{-1} N_{k2}^-,  \label{N on gamma}\\
&  N_{k2}^+ =  c_\alpha\rho N_{k1}^- \nonumber
\end{align}
on $\Gamma^*$ for $k=0,1,2$. Clearly, $N_{k2}$ and $N_{k0}$ are analytic on $\Delta$ and $\Gamma^*$, respectively. Thus, $N_{k0}$, $N_{k1}$ and $N_{k2}$ can be seen as functions on $\mathcal{R}_0$, $\mathcal{R}_1$ and $\mathcal{R}_2$, respectively. We then use the mapping $\varphi: \mathcal{R} \rightarrow \mathbb{C}$ to transform the problem from the Riemann surface to the $s$-plane. For $\theta < 1$, define $\mathcal{F}_k$ as 
\begin{align}\label{Function F}
\mathcal{F}_k(s) = \begin{cases}
N_{k0} \bigg[\dfrac{4s^3}{((2+\theta)s-\theta)((4-\theta)s-2+\theta)}\bigg],& s \in \varphi(\mathcal{R}_0),\\
N_{k1} \bigg[\dfrac{4s^3}{((2+\theta)s-\theta)((4-\theta)s-2+\theta)}\bigg],& s \in \varphi(\mathcal{R}_1),\\
N_{k2} \bigg[\dfrac{4s^3}{((2+\theta)s-\theta)((4-\theta)s-2+\theta)}\bigg],& s \in \varphi(\mathcal{R}_2).
\end{cases}
\end{align}
Then, $\mathcal{F}_k$ is analytic in $\mathbb{C} \backslash (\varphi_1^-(\Delta) \cup \varphi_1^+(\Gamma))$ and the jumps of $\mathcal{F}_k$ can be determined using \eqref{N on delta} and \eqref{N on gamma}. They are given as
\begin{align}\label{Jumps of F}
&\begin{cases}
  \mathcal{F}^+_k(s)= -w_1^{-1} \mathcal{F}^-_k(s),\quad &s \in \varphi_1^-(\Delta),\\
\mathcal{F}^+_k(s)= w_1 \mathcal{F}^-_k(s),\quad &s \in \varphi_1^+(\Delta), \\
\mathcal{F}^+_k(s)= -c_\alpha\rho \mathcal{F}^-_k(s),\quad &s \in \varphi_1^+(\Gamma), \\
\mathcal{F}^+_k(s)= (c_\alpha\rho)^{-1} \mathcal{F}^-_k(s),\quad &s \in \varphi_1^-(\Gamma).  
\end{cases}
\end{align}

The behavior of N as $z \rightarrow \infty$ imposes the following asymptotic conditions for $\mathcal{F}_0$:
\begin{align}\label{Normalization F}
\mathcal{F}_0(\infty) = 1, \quad \mathcal{F}_0(\theta_1)=0, \quad \mathcal{F}_0(\theta_2) = 0.
\end{align}
This enforces $\mathcal{F}_1$ to have the following form \eqref{Function F1 general case}:
\begin{align*}
\mathcal{F}_0(s)= (s-\theta_1)(s-\theta_2) \frac{\mathcal{G}(s)}{\sqrt{\mathcal{D}(s)}} , 
\end{align*}
where $\mathcal{D}(s) = s^2(s-1)(s-3\theta_1\theta_2)$ and $\mathcal{G}$ analytic in $\mathbb{C} \backslash (\varphi_1^-(\Delta) \cup \varphi_1^+(\Gamma))$ with jumps:
\begin{align}\label{Jumps of G}
&\begin{cases}
  \mathcal{G}^+(s)= w_1^{-1} \mathcal{G}^-(s),\quad &s \in \varphi_1^-(\Delta),\\
\mathcal{G}^+(s)= w_1 \mathcal{G}^-(s),\quad &s \in \varphi_1^+(\Delta), \\
\mathcal{G}^+(s)= (c_\alpha\rho)^{-1} \mathcal{G}^-(s),\quad &s \in \varphi_1^-(\Gamma), \\
\mathcal{G}^+(s)= c_\alpha\rho \mathcal{G}^-(s),\quad &s \in \varphi_1^+(\Gamma).  
\end{cases}
\end{align}
The normalization for $\mathcal{G}$ is $\mathcal{G}(\infty) = 1$. Then, it is easy to check that $\mathcal{G}$ given by \eqref{Function G general case} satisfies the jump conditions \eqref{Jumps of G}. From this, it is straightforward to verify the jump conditions \eqref{Jumps of F} and normalization \eqref{Normalization F} for $\mathcal{F}_1$. Using \eqref{Function F}, we recover the following representation for $N_{00}$, $N_{01}$, and $N_{02}$:
\begin{align*}
N_{00}(z) = \mathcal{F}_0(\varphi_0(z)), \quad N_{01}(z) = \mathcal{F}_0(\varphi_1(z)), \quad N_{02}(z) = \mathcal{F}_0(\varphi_2(z)),
\end{align*}
where $\varphi_0(z)$, $\varphi_1(z)$, and $\varphi_2(z)$ are the three solutions of \eqref{Mapping R(s)}. Thus, the jumps \eqref{N on delta} and \eqref{N on gamma} are satisfied, and the asymptotic conditions for $N$ are correct. Thus, we have obtained the first row of \eqref{Solution matrix N general case}.

To obtain the second and third row of \eqref{Solution matrix N general case}, we use the following normalization at infinity:
\begin{align*}
\mathcal{F}_1(\infty) = 0, \quad \mathcal{F}_1(\theta_1)=1, \quad \mathcal{F}_1(\theta_2) = 0  ,
\end{align*}
and 
\begin{align*}
\mathcal{F}_2(\infty) = 0, \quad \mathcal{F}_2 (\theta_1)=0, \quad \mathcal{F}_2(\theta_2) = 1 .
\end{align*}
This leads to the construction of two new functions $\mathcal{F}_1$ and $\mathcal{F}_2$ satisfying the same jump conditions as \eqref{Jumps of F}. These functions, which can also be expressed in terms of $\mathcal{G}$, are given by \eqref{Function F2 general case} and \eqref{Function F3 general case}. 
\end{proof}
We have the following behavior as $z \rightarrow 0$ (since $s \sim  z^{\frac{1}{3}}$ in view of \eqref{Mapping R(s)}) :
\begin{align}\label{Nz behavior at z=0}
&\mathcal{F}_j(s) [\mathcal{G}(s)]^{-1} = O(1/s)= O(z^{-\frac{1}{3}}) , \quad j=0,1,2 \notag\\
\Rightarrow & \quad N(z) = O (z^{-\frac{1}{3}} z^\frac{A}{3}),  
\end{align}
where $A = \diag[A_1,A_2,A_3]= \diag [-\alpha_1-\alpha_2,2\alpha_1-\alpha_2,-\alpha_1+2\alpha_2]$.

To carry out the local analysis at $0$, we have assumed $\beta=0$, i.e., $w_1 = z^{\alpha_1}$. Hence, a more detailed representation of $N(z)$ as $z \rightarrow 0$ is given by the following proposition.
\begin{proposition}\label{Prop: N hat}
The matrix
\begin{align}\label{N hat def}
\widehat{N(z)} =   N(z)   z^{-\frac{A}{3}} [\mathcal{U}^\pm]^{-1} z^{\frac{M}{3}},
\end{align}
where
\begin{align}
& M= \diag [1,0,-1],\quad \mathcal{W}= \diag[\omega^{-A_1}, \omega^{A_1}, 1],\quad \mathcal{U}^+=\widehat{\mathcal{U}} \mathcal{W}, \quad \mathcal{U}^- = \mathcal{U}^+ J, \quad \omega = e^{\frac{i\pi}{3}}, \notag\\
&\widehat{\mathcal{U}}  = \begin{pmatrix}
      \omega^{-2}& \omega^{2} & 1 \\
        -1 & -1 & -1\\
        \omega^{2} & \omega^{-2} & 1 \\  
    \end{pmatrix}, \quad [\widehat{\mathcal{U}}]^{-1} =\frac{1}{3}\begin{pmatrix}
      \omega^{2}& -1 & \omega^{-2} \\
        \omega^{-2} & -1 & \omega^{2}\\
        1 & -1 & 1 \\  
    \end{pmatrix}  , \quad J = \begin{pmatrix}
      0& -1 & 0 \\
        1 & 0 & 0\\
        0 & 0 & 1 \\  
    \end{pmatrix}, \label{U plus and U minus} 
\end{align}
is analytic in the neighbourhood of $z=0$ and has an analytic inverse as well. Further, $z=0$ is a removable singularity of $\widehat{N(z)}$.
\end{proposition}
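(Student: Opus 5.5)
The argument has three parts. First I show that $\widehat{N}$ has no jump on a punctured neighbourhood of $0$. Then I show that it is at most mildly singular at $0$, so the singularity is removable. Finally I show that $\det\widehat{N}$ is a nonzero constant, which gives an analytic inverse.

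\emph{Step 1: no jumps.} Near $0$, $N$ has jumps only on $(0,\epsilon)$ and $(-\epsilon,0)$, given by \eqref{1st jump matrix for N} and \eqref{2nd jump matrix for N} with $\beta=0$. Set $E(z)=z^{-\frac{A}{3}}[\mathcal{U}^\pm]^{-1}z^{\frac{M}{3}}$, with $\mathcal{U}^+$ in the upper half plane and $\mathcal{U}^-$ in the lower. The requirement is
\[
E_-^{-1}J_NE_+=I
\]
on both half-axes, i.e. $E_+=J_N^{-1}E_-$. On $(0,\epsilon)$ the factor $z^{\frac{M}{3}}$ has no jump, so the condition reduces to
\[
[\mathcal{U}^+]^{-1}=x^{\frac{A}{3}}J_N^{-1}x^{-\frac{A}{3}}[\mathcal{U}^-]^{-1}.
\]
Here the entries $w_1=x^{\alpha_1}$ in $J_N$ must be absorbed by conjugating with $x^{\pm A/3}$. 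This forces the combination $A_1-A_2=-3\alpha_1$, which indeed holds since $A_1-A_2=(-\alpha_1-\alpha_2)-(2\alpha_1-\alpha_2)$. What remains is the constant identity $\mathcal{U}^-=\mathcal{U}^+J$ (up to the sign and ordering conventions). On $(-\epsilon,0)$ the analogous computation uses $\rho=|x|^{\alpha_2-\alpha_1}$ and $A_2-A_3=3(\alpha_1-\alpha_2)$. The boundary values of $z^{\pm A/3}$ and $z^{M/3}$ now differ by $e^{\pm 2\pi i A/3}$ and $e^{2\pi i M/3}$, together with the factor $c_\alpha$. The condition becomes a constant matrix identity involving $\widehat{\mathcal{U}}$, $\mathcal{W}=\diag[\omega^{-A_1},\omega^{A_1},1]$, $\omega^{2M}$ and the permutation block. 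I would check it entrywise, using $\omega^3=-1$ and the fact that $\widehat{\mathcal{U}}$ is essentially a Vandermonde matrix in cube roots of unity.

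\emph{Step 2: removable singularity.} Conceptually, $N$ restricted to a neighbourhood of $0$ is built from $\mathcal{F}_k(s_j(z))$ with $s_j\sim z^{1/3}$ on three sheets. The columns transform under the $\mathbb{Z}_3$ monodromy. Multiplying by $[\mathcal{U}^\pm]^{-1}$ recombines the three columns into discrete Fourier components, so each entry of $\widehat{N}$ is single-valued near $0$. The factor $z^{-A/3}$ removes the row-dependent power behaviour produced by $\mathcal{G}$. The factor $z^{M/3}$ compensates the powers $z^{-1/3},z^{0},z^{1/3}$ of the Fourier modes of $s^{-1}$ coming from $1/\sqrt{\mathcal{D}(s)}\sim s^{-1}$. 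By \eqref{Nz behavior at z=0}, the entries of $N z^{-A/3}$ are $O(z^{-1/3})$. After the constant matrix and $z^{M/3}$ they are $O(z^{-2/3})$, so $\widehat{N}$ is a single-valued analytic function with $\widehat{N}=o(z^{-1})$. Riemann's theorem then gives removability.

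\emph{Step 3: invertibility.} $\det N$ has no jumps, since the jump matrices have determinant $1$. Also $\det N\to1$ at infinity, and the endpoint singularities are at most $O(|z-1|^{-1/2})$ and $O(|z-x^*|^{-1/2})$ per entry. The $3\times3$ determinant could in principle be $O(|z-1|^{-1})$ or $O(|z-x^*|^{-1})$, but the standard argument with square-root branches shows it is bounded there. At $0$ it is $O(z^{-1+\dots})$, which is weaker than a pole, so Liouville's theorem gives $\det N\equiv1$. Then
\[
\det\widehat{N}=\det(z^{-A/3})\det(z^{M/3})/\det\mathcal{U}^\pm .
\]
Here $\operatorname{tr}M=0$ and $\operatorname{tr}A=0$, so this is the constant $1/\det\mathcal{U}^\pm\neq0$, and $\widehat{N}^{-1}$ is analytic at $0$.

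The main obstacle is Step 1 on $(-\epsilon,0)$. There the exact phases $\omega^{\pm A_1}$, $c_\alpha$, and the branch choices of $z^{1/3}$ for $s_0,s_1,s_2$ fixed earlier must all match. I would first verify it for $\alpha_1=\alpha_2=0$, a pure permutation check, and then track the $\alpha$-dependent phases separately.
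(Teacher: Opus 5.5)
Your plan matches the paper's: no jump on the two half-axes, the bound $\widehat{N}=O(z^{-2/3})$ for removability, and a constant determinant for invertibility. Your check on $(0,\epsilon)$ is also correct. The gap is the step you defer on $(-\epsilon,0)$. If you carry it out, it fails for the matrices given in the statement.

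Take $x_\pm=|x|e^{\pm i\pi}$ and $c_\alpha\rho=e^{i\pi(\alpha_2-\alpha_1)}|x|^{\alpha_2-\alpha_1}$, and let $J_N$ be the jump of $N$ on $\Gamma^*$. Using $A_2-A_3=3(\alpha_1-\alpha_2)$ and $A_2+A_3=-A_1$, one finds
\[
x_-^{\frac{A}{3}}J_N\,x_+^{-\frac{A}{3}}=\begin{pmatrix}\omega^{-2A_1}&0&0\\0&0&c_\alpha\omega^{A_1}\\0&-c_\alpha^{-1}\omega^{A_1}&0\end{pmatrix},\qquad
Y:=\mathcal{W}J\,x_-^{\frac{A}{3}}J_N\,x_+^{-\frac{A}{3}}\,\mathcal{W}^{-1}=\begin{pmatrix}0&0&-c_\alpha\\1&0&0\\0&-c_\alpha^{-1}&0\end{pmatrix}.
\]
Then $\widehat{N}_-^{-1}\widehat{N}_+=x_-^{-\frac{M}{3}}\widehat{\mathcal{U}}\,Y\,\widehat{\mathcal{U}}^{-1}x_+^{\frac{M}{3}}$. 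So the no-jump condition reads $\widehat{\mathcal{U}}Y\widehat{\mathcal{U}}^{-1}=x_-^{\frac{M}{3}}x_+^{-\frac{M}{3}}=\diag[\omega^{-2},1,\omega^{2}]$.

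Let $\Pi$ be the cyclic permutation with unit entries at $(1,3),(2,1),(3,2)$. Since $\widehat{\mathcal{U}}\Pi\widehat{\mathcal{U}}^{-1}=\diag[\omega^{-2},1,\omega^{2}]$, the condition holds if and only if $Y=\Pi$. That means $c_\alpha=-1$, i.e. $\alpha_2-\alpha_1$ is an odd integer, which the hypothesis $\alpha_1-\alpha_2\notin\mathbb{Z}$ excludes.

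Your own test case $\alpha_1=\alpha_2=0$ already shows this: $JJ_N$ has entries $-1,1,-1$ in those three positions, not $\Pi$. So with $\mathcal{W}=\diag[\omega^{-A_1},\omega^{A_1},1]$, $\widehat{N}$ has a nontrivial constant jump on $(-\epsilon,0)$. No entrywise phase bookkeeping can close Step 1, and the statement as written cannot be proved.

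The paper's proof does not help here. It reaches $\Pi$ only by omitting $c_\alpha$ from the middle matrix and then reversing the signs of its lower $2\times 2$ block between two consecutive displays.

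The missing idea is that the normalization must absorb the phase $-c_\alpha$. With $\mathcal{W}=\diag[\omega^{-A_1},\omega^{A_1},-c_\alpha]$, the matrix $Y$ becomes exactly $\Pi$. Your check on $(0,\epsilon)$ is unaffected, since it uses only $\mathcal{U}^-=\mathcal{U}^+J$ and $x^{\frac{A}{3}}J_Nx^{-\frac{A}{3}}=J^{-1}$.

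There is a secondary issue in Step 3. At $0$, the bound $Nz^{-\frac{A}{3}}=O(z^{-1/3})$ together with $\operatorname{tr}A=0$ gives only $\det N=O(z^{-1})$. That does not exclude a simple pole, so your claim that it is "weaker than a pole" is unjustified.

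Reordering fixes this. Once Steps 1 and 2 (with the corrected $\mathcal{W}$) make $\widehat{N}$ analytic at $0$, $\det N=\det\mathcal{U}^\pm\cdot\det\widehat{N}$ is analytic there.

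At $1$ and $x^*$ no special square-root argument is needed. The only singular factor is $1/\sqrt{\mathcal{D}(s)}$, with $s-1\sim(z-1)^{1/2}$ and $s-s^*\sim(x^*-z)^{1/2}$. So the entries are $O(|z-1|^{-1/4})$ and $O(|z-x^*|^{-1/4})$, and only two columns are singular at each point. This gives $\det N=O(|z-1|^{-1/2})$ and $O(|z-x^*|^{-1/2})$.

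With these repairs, your Liouville argument for $\det N\equiv 1$ is more complete than the paper, which simply asserts that $\det N$ is constant.
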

\begin{proof}
It is clear that we need to check the analyticity across $\Delta$ and $\Gamma^*$ in the neighbourhood of $z=0$. For $x \in \Delta$, it follows from \eqref{1st jump matrix for N} that
\begin{align*}
&[\widehat{N_-(x)}]^{-1} \widehat{N_+(x)} = x^{-\frac{M}{3}} \mathcal{U}^-  x^\frac{A}{3} J_N(x) x^{-\frac{A}{3}} \mathcal{(U^+)}^{-1} x^{\frac{M}{3}}\\
&=x^{-\frac{M}{3}} \mathcal{U}^- \begin{pmatrix}
      0& 1 & 0 \\
        -1 & 0 & 0\\
        0 & 0 & 1 \\  
    \end{pmatrix} \mathcal{(U^+)}^{-1} x^{\frac{M}{3}}\\
&=x^{-\frac{M}{3}} \mathcal{U}^- J^{-1} \mathcal{(U^+)}^{-1} x^{\frac{M}{3}}=x^{-\frac{M}{3}} \mathcal{U}^+ \mathcal{(U^+)}^{-1} x^{\frac{M}{3}} = I.
\end{align*}
Similarly, when $x \in \Gamma^*$, it follows from \eqref{2nd jump matrix for N} that 
\begin{align*}
&[\widehat{N_-(x)}]^{-1} \widehat{N_+(x)} = x_-^{-\frac{M}{3}} \mathcal{U}^- (x) x_-^\frac{A}{3} J_N(x) x_+^{-\frac{A}{3}} \mathcal{(U^+)}^{-1}x_+^{\frac{M}{3}}\\
&=x_-^{-\frac{M}{3}} \mathcal{U}^-  \begin{pmatrix}
      \omega^{-2A_1}&0 & 0 \\
        0 & 0 & \omega^{A_1}\\
        0 & -\omega^{A_1} & 0 \\  
    \end{pmatrix}  \mathcal{(U^+)}^{-1} x_+^{\frac{M}{3}}\\
&=x_-^{-\frac{M}{3}}\widehat{\mathcal{U}}  \mathcal{W}\begin{pmatrix}
      0& -1 & 0 \\
        1 & 0 & 0\\
        0 & 0 & 1 \\  
    \end{pmatrix} \begin{pmatrix}
      \omega^{-2A_1}&0 & 0 \\
        0 & 0 & -\omega^{A_1}\\
        0 & \omega^{A_1} & 0 \\  
    \end{pmatrix}  \mathcal{W}^{-1}\widehat{\mathcal{U}}^{-1} x_+^{\frac{M}{3}}\\
&=x_-^{-\frac{M}{3}} \widehat{\mathcal{U}}  \begin{pmatrix}
      0&0 & 1\\
        1 & 0 & 0\\
        0 & 1 & 0 \\  
    \end{pmatrix} \widehat{\mathcal{U}}^{-1} x_+^{\frac{M}{3}}= x_-^{-\frac{M}{3}} \diag[\omega^{-2},1, \omega^{2} ]  x_+^{\frac{M}{3}} = I.
\end{align*}
Thus, we conclude that $\widehat{N(z)}$ is analytic in a neighborhood of the origin. Moreover, $z=0$ is an isolated singularity. Further, it can be inferred immediately from \eqref{Nz behavior at z=0} and \eqref{N hat def} that singularity in $\widehat{N(z)}$ can be, atmost, of the order $z^{-2/3}$, i.e,
\begin{align*}
\widehat{N(z)} = O(z^{-2/3}), \quad z \to 0,
\end{align*}
which shows that this singularity is indeed removable. At last, the existence of inverse of $\widehat{N(z)}$ follow from the fact that the determinants of $N$, $\mathcal{U}^\pm$, $z^{\frac{M}{3}}$, and $z^{-\frac{A}{3}}$ are all constant and non zero in the neighborhood of $z=0$.
\end{proof}



\subsection{Local parametrix at the edge $0$}\label{subsec:Local parametrix near 0}
We consider a disk $D(0,r_0)$ around $z=0$ of radius $r_0$. The {\it initial local parametrix problem} is as follows: We look for a matrix-valued function $\mathring{P}$ that satisfies the following RH problem:
\begin{itemize}
\item $\mathring{P}$ is analytic in $D(0,r_0) \backslash (\Delta \cup \Gamma^* \cup \Gamma^{*\pm} \cup \Delta^\pm)$.
\item $\mathring{P}$ satisfies the following jump conditions:
\begin{align*}
\mathring{P}_+ = \mathring{P}_-J_{\mathring{P}}, \quad z \in (\Delta \cup \Gamma^* \cup \Gamma^{*\pm} \cup \Delta^\pm) \cap D(0,r_0),
\end{align*}
where
\begin{align*}
&J_{\mathring{P}} = \begin{pmatrix}
      0& w_1 &0\\
        -\frac{1}{w_1} &0 & 0\\
         0& 0& 1 \\  
    \end{pmatrix}, \quad z \in \Delta \cap D(0,r_0), \\
&J_{\mathring{P}} = \begin{pmatrix}
      1& 0 &0\\
        \frac{1}{w_1}e^{n(H_1 - H_0)} &1 & 0\\
         0& 0& 1 \\  
    \end{pmatrix}, \quad z \in \Delta^\pm \cap D(0,r_0), \\
&J_{\mathring{P}} = \begin{pmatrix}
      1& 0 &0\\
        0 &0 & c_\alpha\rho\\
         0&-\frac{1}{c_\alpha\rho} & 0 \\  
    \end{pmatrix}, \quad z \in \Gamma^* \cap D(0,r_0), \\
&J_{\mathring{P}} = \begin{pmatrix}
      1& 0 &0\\
        0 &1 & 0\\
         0& \frac{1}{c_\alpha\rho}e^{n(H_2 - H_1)}& 1 \\  
    \end{pmatrix}, \quad z \in \Gamma^{*\pm} \cap D(0,r_0).
\end{align*}
\item $\mathring{P}(z)$ behaves similar to $T(z)$ near the origin.
\end{itemize}
The matching condition is usually given as: On $\partial D(0,r_0)$ we have that as $n \rightarrow \infty$, $\mathring{P}$ matches N in the sense
\begin{align*}
\mathring{P}(z) = (\mathbb{I}+O (1/n)) N(z), \quad {\rm uniformly ~for}  z \in \partial D(0,r_0).
\end{align*}
However, in larger size RHPs obtaining such a matching condition is a major technical issue, and ours is not exception. We will therefore use double matching instead of ordinary matching. Then, there is also a jump on a shrinking circle inside $D(0,r_0)$. In our case, this circle is $\partial D(0,r_n)$ where
\begin{align*}
    r_n = n^{-3/2}, \quad n=1,2,\ldots
\end{align*}
On the other hand, in the annulus $r_n<|z|<r_0$, denoted by $A(0,r_n,r_0)$, the local parametrix will have no jump on the lips of the lens, i.e., the actual local parametrix $P$ that we will use in the final transformation, satisfies an altered version of RHP for $\mathring{P}$. Indeed this is why we called the local parametrix problem for $\mathring{P}$ the initial local parametrix problem. We will first find out a solution $\mathring{P}$ to the RHP for $\mathring{P}$ and then, using the double matching approach, we will construct $P$ as 
\begin{align}\label{definition P(z)}
P(z)= \begin{cases}
    E_n^0(z) \mathring{P}(z), & z \in D(0,r_n),\\
   E_n^\infty(z) N(z), & z \in A(0,r_n,r_0).
\end{cases}
\end{align}
where $E_n^0(z)$ and $E_n^\infty(z)$ are analytic prefactors. Then, it will turn out that $P$ satisfies a double matching of the form
\begin{align}
  &  P(z)N(z)^{-1} = \mathbb{I}+O\left(\frac{1}{n} \right),  \quad uniformly~for ~ z \in \partial D(0,r_0), \label{double matching of P 1}\\
  &P_+(z) P_-(z)^{-1} = \mathbb{I}+O\left(\frac{1}{n} \right),  \quad uniformly~for ~ z \in \partial D(0,r_n), \label{double matching of P 2}
\end{align}
as $n \to \infty$ .

\subsubsection{A local parametrix problem}\label{subsec: making n-independent}
The first step to solving a local parametrix problem is to transform it to a problem that does not depend on $n$. To that end, we define
\begin{align}\label{eq: def Dn z}
    D_n(z) = \begin{pmatrix}
      e^{-nH_0}  & 0 & 0 \\
      0  & e^{-nH_1} & 0 \\
      0  & 0 & e^{-nH_2} 
    \end{pmatrix}, \quad z \in D(0,r_0).
\end{align}

\begin{proposition}
Suppose that
\begin{align}\label{eq: hat P equal Dn}
\widehat{\mathbf{P}(z)} = \mathring{P}(z)[D_n(z)]^{-1}.
\end{align}
Then $\mathring{P}$ satisfies the RHP for $\mathring{P}$ iff $\widehat{\mathbf{P}(z)}$ satisfies RHP for $\widehat{\mathbf{P}}$ as defined below.
\end{proposition}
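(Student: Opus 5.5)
The plan is to compute directly how right multiplication by $D_n^{-1}$ transforms each jump of $\mathring{P}$, and to check that the resulting jumps of $\widehat{\mathbf{P}}$ do not depend on $n$. Since $D_n$ is diagonal and invertible, with analytic entries off the cuts, the map $\mathring{P}\mapsto \mathring{P}D_n^{-1}$ is a bijection. So the equivalence reduces to a conjugation identity on each contour. On any contour $\Sigma$ where $\mathring{P}_+=\mathring{P}_-J$, we have
\begin{equation*}
\widehat{\mathbf{P}}_+=\widehat{\mathbf{P}}_-\,D_{n,-}\,J\,D_{n,+}^{-1}.
\end{equation*}
The RHP for $\widehat{\mathbf{P}}$ is therefore the one whose jump matrix is $J_{\widehat{\mathbf{P}}}:=D_{n,-}JD_{n,+}^{-1}$ on each piece of $(\Delta\cup\Gamma^*\cup\Gamma^{*\pm}\cup\Delta^\pm)\cap D(0,r_0)$. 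The endpoint condition at $0$ is obtained by multiplying the behaviour of $T$ by $D_n^{-1}$; this only affects it by bounded factors, since the $H_j$ are bounded near $0$ by their $z^{1/3}$ expansions.

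The first computation is on the lens lips $\Delta^\pm$, where $D_n$ has no jump. Conjugating the lower-triangular entry gives
\begin{equation*}
e^{-nH_1}\,w_1^{-1}e^{n(H_1-H_0)}\,e^{nH_0}=w_1^{-1}.
\end{equation*}
The same cancellation on $\Gamma^{*\pm}$ leaves $(c_\alpha\rho)^{-1}$. Both jumps are thus $n$-free.

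On $\Delta\cap D(0,r_0)$ we must use the boundary relations \eqref{eq: Boundary conditions H}, namely $H_0^\pm=H_1^\mp$, together with the fact that $H_2$ has no jump on $(0,1)$. The $(1,2)$ entry of the conjugated jump becomes $w_1e^{-n(H_0^--H_1^+)}=w_1$. Likewise the $(2,1)$ entry becomes $-w_1^{-1}e^{-n(H_1^--H_0^+)}=-w_1^{-1}$, and the $(3,3)$ entry is $1$.

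On $\Gamma^*\cap D(0,r_0)$ we use $H_2^\pm-H_1^\mp=\mp2\pi i$. This turns the off-diagonal entries into $c_\alpha\rho\,e^{\mp 2\pi i n}=c_\alpha\rho$ and $-(c_\alpha\rho)^{-1}$. For the $(1,1)$ entry we need $e^{-n(H_0^--H_0^+)}$, and the jump of $H_0$ on the negative axis is $4\pi i$. Since $n$ is an integer, $e^{4\pi i n}=1$, so this entry is $1$. The integrality assumptions made in the second transformation ($\theta n,(2-\theta)n\in\mathbb Z$) are exactly what make these exponentials trivial.

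I expect the main obstacle to be bookkeeping rather than any real difficulty. It consists of tracking the $\pm$ sides and the branch cuts of $H_0,H_1$ along $(-\infty,0)$, which lie inside $D(0,r_0)$ on $\Gamma^*$, and checking that every $e^{2\pi i k n}$ factor equals $1$. Once this is done, both directions of the equivalence follow, because $D_n$ is invertible and the jump identities can be read in either direction.
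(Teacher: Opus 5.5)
Your proposal is correct and takes the same approach as the paper. The paper states this proposition without a written proof; the intended argument is exactly your direct computation of $D_{n,-}JD_{n,+}^{-1}$ on each piece of the contour, using $H_0^\pm=H_1^\mp$ on $\Delta$, $H_2^\pm-H_1^\mp=\mp2\pi i$ and the $4\pi i$ jump of $H_0$ on $\Gamma^*$, together with the boundedness of the $H_j$ near $0$.

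One small correction: inside $D(0,r_0)$ only the factors $e^{4\pi i n}$ and $e^{\mp 2\pi i n}$ occur, so it is $n\in\mathbb{Z}$ that makes them trivial. The conditions $\theta n,(2-\theta)n\in\mathbb{Z}$ matter only on $(-\infty,x^*)$, which lies outside the disk.
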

\begin{itemize}
\item $\widehat{\mathbf{P}}$ is analytic on $D(0,r_0) \backslash (\Delta \cup \Gamma^* \cup \Gamma^{*\pm} \cup \Delta^\pm)$.
\item The jumps for $\widehat{\mathbf{P}}$ are $\widehat{\mathbf{P}}_+ =\widehat{\mathbf{P}}_- J_{\widehat{\mathbf{P}}}$ where
\begin{align}\label{eq: jump condition P hat}
&J_{\widehat{\mathbf{P}}} =  \begin{cases}
    \begin{pmatrix}
      0& w_1 &0\\
        -\frac{1}{w_1} &0 & 0\\
         0& 0& 1 \\  
    \end{pmatrix}, \quad z \in \Delta \cap D(0,r_0), \\
    \begin{pmatrix}
      1& 0 &0\\
        \frac{1}{w_1} &1 & 0\\
         0& 0& 1 \\  
    \end{pmatrix}, \quad z \in \Delta^\pm \cap D(0,r_0), \\
    \begin{pmatrix}
      1& 0 &0\\
        0 &0 & c_\alpha\rho\\
         0&-\frac{1}{c_\alpha\rho} & 0 \\  
    \end{pmatrix}, \quad z \in \Gamma^* \cap D(0,r_0), \\
    \begin{pmatrix}
      1& 0 &0\\
        0 &1 & 0\\
         0& \frac{1}{c_\alpha\rho}& 1 \\  
    \end{pmatrix}, \quad z \in \Gamma^{*\pm} \cap D(0,r_0).
\end{cases}
\end{align}
\item Near the origin, $\widehat{\mathbf{P}}$ has the similar behavior as in RHP for $T(z)$.
\end{itemize}
We will eventually find a solution to RHP for $\widehat{\mathbf{P}}$ in the form $\widehat{\mathbf{P}(z)} = \Psi(n^3 f(z))$. Here, $\Psi$ is a solution to so called bare-parametrix problem that we define explicitly in section \ref{Bare Parametrix}. This problem has the same behavior as in RHP for $\widehat{\mathbf{P}}$, except that the jump contours are extended to infinity, i.e., we have jumps on the positive and negative real axis and on the rays inclined at $\pm \pi/4$ and $\pm 3\pi/4$ (see Figure \ref{Fig: 6 rays}). We denote these by $\mathbb{R}_+$, $-\mathbb{R}_+$, $e^{\pm \frac{i\pi}{4}} \mathbb{R}_+$, and $e^{\pm \frac{3i\pi}{4}} \mathbb{R}_+$. The function $f(z)$ is a conformal map with $f(0)=0$. We are allowed to slightly deform the lips of the lens so as to match the aforementioned six rays.

\begin{center}
\begin{figure}[htbp!]
\begin{tikzpicture}[thick]
\filldraw (1.5,1.5)node[below=5pt,red]{$\Delta^+$};
\filldraw (1.5,-0.5)node[below=5pt,red]{$\Delta^-$};
\filldraw (-2,2)node[below=5pt,red]{$\Gamma^+$};
\filldraw (-2,-1)node[below=5pt,red]{$\Gamma^-$};
\filldraw (-2,0.7)node[below=5pt,red]{$\Gamma^*=\Gamma$};
\filldraw (1.5,0.7)node[below=5pt,red]{$\Delta$};
\filldraw (0,2.2)node[below=5pt,red]{$\widehat{\Psi}^{+}$};
\filldraw (0,-1.5)node[below=5pt,red]{$\widehat{\Psi}^{-}$};

\foreach \angle in {0,45,135,180,225,315} {
    \draw (0,0) -- (\angle:3);  
}

\filldraw (0,0) circle (1.5pt);
\draw[arc arrow=to pos .5 with length 3mm] (-2,2) to[out=135,in=135] (0,0);
\draw[arc arrow=to pos .5 with length 3mm] (-2,-2) to[out=225,in=225] (0,0);
\draw[arc arrow=to pos .5 with length 3mm] (-3,0) to[out=0,in=0] (0,0);
\draw[arc arrow=to pos .5 with length 3mm] (0,0) to[out=45,in=45] (2,2);
\draw[arc arrow=to pos .5 with length 3mm] (0,0) to[out=315,in=315] (2,-2);
\draw[arc arrow=to pos .5 with length 3mm] (0,0) to[out=180,in=180] (3,0);
\end{tikzpicture}
\caption{The jump contours for the model RH problem for $\Psi$}
\label{Fig: 6 rays}
\end{figure}
\end{center}

\subsubsection{Bare local parametrix problem}\label{Bare Parametrix}
The bare parametrix problem has the following form:
\begin{itemize}
    \item $\Psi$ is analytic on $\mathbb{C} \backslash \{\mathbb{R} \cup e^{\pm \frac{i\pi}{4}} \mathbb{R}_+ \cup e^{\pm \frac{3i\pi}{4}} \mathbb{R}_+ \}$.
    \item $\Psi$ has the following jump behavior: $\Psi_+ = \Psi_- J_\Psi$ where
 \begin{align}\label{eq: Jump condition for psi}
 &J_\Psi= \begin{cases}
     \begin{pmatrix}
      0& w_1 &0\\
        -\frac{1}{w_1} &0 & 0\\
         0& 0& 1 \\  
    \end{pmatrix}, \quad \xi \in \mathbb{R}_+, \\
    \begin{pmatrix}
      1& 0 &0\\
        0 &0 & c_\alpha\rho\\
         0& -\frac{1}{c_\alpha\rho}& 0 \\  
    \end{pmatrix}, \quad \xi \in -\mathbb{R}_+, \\
    \begin{pmatrix}
      1& 0 &0\\
        \frac{1}{w_1} &1 & 0\\
         0& 0& 1 \\  
    \end{pmatrix}, \quad \xi \in e^{\pm \frac{i\pi}{4}} \mathbb{R}_+, \\
    \begin{pmatrix}
      1& 0 &0\\
        0 &1 & 0\\
         0& \frac{1}{c_\alpha\rho}& 1 \\  
    \end{pmatrix}, \quad \xi \in e^{\pm \frac{3i\pi}{4}} \mathbb{R}_+.
 \end{cases}
\end{align}   
    \item $\Psi$ has a specific asymptotic behavior at $\infty$.
    \item  $\Psi$ has similar behavior as $T$ at the origin.
\end{itemize}
Next, we construct the solution. The RHP for $\Psi$ shows great similarity with the bare Meijer-G parametrix for $p$-chain \cite[p. 47]{Bertola_Bothner_2015}. Its solution is constructed using Meijer-G functions and ours will also be constructed using Meijer-G functions as we will see in the theorem \ref{G function theorem}. See \cite{Beals_Szmigielski_2013} for an introduction on Meijer-G functions. They can be defined through the Mellin-Barnes integral formula
\begin{align*}
G^{m,n}_{p,q}\left(\begin{array}{c} a_1, a_2, \ldots, a_p \\ b_1, b_2, \ldots, b_q \end{array} \middle| z\right) = \frac{1}{2\pi i} \int_{\mathcal{L}} \frac{\prod\limits_{j=1}^m \Gamma(b_j + s) \prod\limits_{j=1}^n \Gamma(1 - a_j - s)}{\prod\limits_{j=m+1}^q \Gamma(1 - b_j - s) \prod\limits_{j=n+1}^p \Gamma(a_j + s)} z^{-s} \, ds,
\end{align*}
where $\Gamma$ denotes the gamma function and $\mathcal{L}$ is a suitable integration contour separating the poles of $\Gamma(b_j - s)$ from those of $\Gamma(1 - a_j + s)$.

\begin{theorem}\label{G function theorem}
Let 
\begin{align*}
    \widehat{\Psi}(\xi) = \begin{pmatrix}
    g_1(\xi) & g^{\pm}_2(\xi) & g_3(\xi)  \\
    z \partial_z g_1(\xi) & \big(z \partial_z -\alpha_1 \big)g^{\pm}_2(\xi) &\big(z \partial_z -\alpha_2 \big)g_3(\xi)  \\
    \big(z \partial_z\big)^2 g_1(\xi) &\big(z \partial_z -\alpha_1 \big)^2 g^{\pm}_2(\xi) & \big(z \partial_z -\alpha_2 \big)^2 g_3(\xi)
\end{pmatrix}, \quad \pm \im \xi > 0 ,
\end{align*}
with 
\begin{align*}
g_j^{\pm}(\xi) = \frac{c_j}{2\pi i} \int_L \frac{\prod_{\ell=1}^j \Gamma(s+a_{\ell,j-1})}{\prod_{\ell=j}^3 \Gamma(1+a_{j\ell}-s)} e^{\pm i\pi s \Sigma_j} \xi^{-s} ds, \quad c_j=(2\pi i)^{3-j}\sqrt{\frac{3}{(2\pi)^2}} \quad 1 \leq j\leq 3,
\end{align*}
where $\Sigma_j =(j+1)$ mod $2$.
With this, the solution $\Psi(\xi)$ to the RH problem for $\Psi$ is given by
\begin{align}\label{G 2 xi jump relations}
\Psi(\xi) = \begin{cases}
\widehat{\Psi}(\xi),   & \arg \xi \in (-\frac{3\pi}{4},-\frac{\pi}{4}) \cup (\frac{\pi}{4},\frac{3\pi}{4}),\\
\widehat{\Psi}(\xi) \left[\begin{pmatrix}
      1& 0 \\
        -\frac{1}{w_1} &1 
    \end{pmatrix}\oplus 1\right],& \arg \xi \in (0,\frac{\pi}{4}),\\
\widehat{\Psi}(\xi) \left[\begin{pmatrix}
      1& 0 \\
        \frac{1}{w_1} &1 
    \end{pmatrix}\oplus 1\right],& \arg \xi \in (-\frac{\pi}{4},0),\\
\widehat{\Psi}(\xi)\left[1\oplus\begin{pmatrix}
      1& 0 \\
        -\frac{1}{c_\alpha\rho} &1 
    \end{pmatrix}\right], & \arg \xi \in (\frac{3\pi}{4},\pi),\\
\widehat{\Psi}(\xi)\left[1\oplus \begin{pmatrix}
      1& 0 \\
        \frac{1}{c_\alpha\rho} &1 
    \end{pmatrix}\right], & \arg \xi \in (-\pi,-\frac{3\pi}{4}).
\end{cases}
\end{align}
Moreover, as $\xi \rightarrow \infty$ with $\epsilon > 0$ fixed,
\begin{align}\label{G 2 xi}
\Psi(\xi) = \xi^{-\frac{M}{3}} \mathcal{U}^\pm \tilde{K}(\xi)   \xi^\frac{A}{3} \exp \big({ -3\xi^{\frac{1}{3}} \Omega_\pm} \big), \quad \pm \im \xi > 0 ,
\end{align}
where $\mathcal{U}^\pm(z)$, $M$ and A have already appeared in Proposition \ref{Prop: N hat}, $K$ admits an expansion of the form
\begin{align*}
    \tilde{K}(\xi) = I+\sum_{j=1}^{\infty} \frac{\tilde{K}_j}{\xi^{\frac{j}{3}}}, 
\end{align*}
where $\tilde{K}_j$'s are the $3 \times 3$ matrix valued coefficients and
\begin{align}\label{Omega and its tilde}
\Omega_\pm = \diag [\omega^{\pm 2}, \omega^{\mp 2},1].
\end{align}
\end{theorem}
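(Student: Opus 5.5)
The plan is to verify the three parts of Theorem~\ref{G function theorem} in turn. First, that the columns of $\widehat{\Psi}$ solve a common third-order ODE, so that the jump relations reduce to connection formulas between Meijer-G functions. Second, that these formulas produce exactly the jumps \eqref{eq: Jump condition for psi} after the triangular corrections in \eqref{G 2 xi jump relations}. Third, that the large-$\xi$ behaviour \eqref{G 2 xi} follows from steepest descent on the Mellin--Barnes integrals.

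\emph{The ODE.} Each $g_j$ is a Meijer-G function of type $G^{j,\cdot}_{\cdot,3}$ with bottom parameters built from $0,\alpha_1,\alpha_2$ (the $a_{\ell k}$ being the differences of these). The operator $\vartheta=\xi\partial_\xi$ acts on $\xi^{-s}$ as multiplication by $-s$. Hence $\prod_{k}(\vartheta-b_k)g=\pm\xi g$, a single third-order equation with a regular singularity at $0$ with exponents $0,\alpha_1,\alpha_2$ and an irregular one at $\infty$ of Poincar\'e rank $1/3$. The rows of $\widehat{\Psi}$ are obtained by applying $\vartheta$, twisted by $-\alpha_j$ so that the weights $w_1=\xi^{\alpha_1}$ and $\rho$ get absorbed. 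Consequently $\det\widehat{\Psi}$ satisfies a first-order equation and is $\xi^{c}$ times a constant, nonzero by the Wronskian of the local Frobenius solutions; the constants $c_j=(2\pi i)^{3-j}\sqrt{3}/(2\pi)$ are chosen to normalise this.

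\emph{The jumps.} Shifting $\xi\to\xi e^{\pm 2\pi i}$ multiplies the Mellin integrand by $e^{\mp 2\pi i s}$. Combining this with the reflection formula $\Gamma(s)\Gamma(1-s)=\pi/\sin\pi s$ gives linear relations of the form $g_2^+-g_2^-=\kappa\,\xi^{\alpha_1}g_1$, which is how the $w_1$ entries arise. On $\mathbb{R}_+$ I would check $\widehat{\Psi}_+=\widehat{\Psi}_-\begin{pmatrix}1&0\\ \pm 1/w_1&1\end{pmatrix}\oplus1$-type relations, and then compose with the lens factors in \eqref{G 2 xi jump relations} to obtain the anti-diagonal jump. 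On $-\mathbb{R}_+$ the same is done for the pair $(g_2,g_3)$, using $\alpha_1-\alpha_2\notin\mathbb{Z}$ to keep the poles of the two gamma products separated; this produces $c_\alpha\rho$. On the four rays, the jumps are by construction just the triangular factors, since $\widehat{\Psi}$ is analytic off $\mathbb{R}$. The behaviour at $0$ follows from residue expansions at the poles $s=-b_k$, which give terms $\xi^{0},\xi^{\alpha_1},\xi^{\alpha_2}$, with logarithms when exponents coincide. These match the $O$-conditions listed for $T$.

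\emph{Asymptotics and the main obstacle.} For \eqref{G 2 xi} I would apply Stirling's formula to the integrand. The phase $-s\log\xi+3(s\log s-s)$ has saddle points at $s^3\sim-\xi$, and along the chosen branches this gives growth $\exp(-3\xi^{1/3}\omega^{k})$. The prefactors $\xi^{A_j/3}\cdot\xi^{-1/3}$ come from the Gaussian integral at the saddle, and the applications of $\vartheta$ produce the factors $\xi^{-M/3}$. Assembling the three columns column by column, the leading constants must arrange into $\widehat{\mathcal U}\mathcal W$. That this holds with exactly the matrices of Proposition~\ref{Prop: N hat}, in both half-planes and compatibly with $\mathcal U^-=\mathcal U^+J$, is the main obstacle. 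I would first check it in the special case $\alpha_1=\alpha_2=0$, where it reduces to the known asymptotics in \cite{Bertola_Bothner_2015}. I would then track the $\omega^{\mp A_1}$ phases coming from $e^{\pm i\pi s\Sigma_j}$. The full series $\tilde K=I+\sum\tilde K_j\xi^{-j/3}$ follows from the complete Stirling expansion; its form is forced by the ODE, so only the leading term needs explicit computation.
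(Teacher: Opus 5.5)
Your route is the paper's in substance. The paper also reduces \eqref{G 2 xi jump relations} to scalar monodromy relations for $g_1,g_2^\pm,g_3$ on $\mathbb{R}_\pm$, and takes both these relations and the large-$\xi$ behaviour from Bertola--Bothner ((4.82) and (4.77)); deriving them directly from the Mellin--Barnes integrals is a fine substitute. The jump part, however, contains two errors.

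First, the jump of $\widehat\Psi$ on $\mathbb{R}_+$ is not of the form $\begin{pmatrix}1&0\\ \pm 1/w_1&1\end{pmatrix}\oplus1$. A lower-triangular jump composed with the lower-triangular lens factors stays lower triangular, so it can never produce the anti-diagonal jump of \eqref{eq: Jump condition for psi}. What you need is $\widehat\Psi_+=\widehat\Psi_-\bigl[\begin{pmatrix}1&w_1\\0&1\end{pmatrix}\oplus1\bigr]$. That is, $g_1$ and $g_3$ are continuous across $\mathbb{R}_+$ and $g_2^+-g_2^-=w_1g_1$ with coefficient exactly $1$. On $\mathbb{R}_-$ you need $g_1$ and $g_2$ continuous, which holds because the factor $e^{\pm i\pi s}$ compensates the two boundary values of $\xi^{-s}$. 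Your relation $g_2^+-g_2^-=\kappa\,\xi^{\alpha_1}g_1$ is of the right type, and $\Gamma(s)\sin\pi s=\pi/\Gamma(1-s)$ gives $\kappa=2\pi i\,c_2/c_1=1$. So the ratios $c_1:c_2:c_3$ are dictated by the jump coefficients, and $c_3$ by $\tilde K\to I$; they do not come from normalising $\det\widehat\Psi$.

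Second, and more seriously, on $\mathbb{R}_-$ the same computation does not give $c_\alpha\rho$. There $\xi_\pm^{-s}=|\xi|^{-s}e^{\mp i\pi s}$. Applying the reflection formula, the shift $s\mapsto s+\alpha_1-\alpha_2$ and $c_2=2\pi i\,c_3$, you get $g_3^+-g_3^-=-|\xi|^{\alpha_2-\alpha_1}g_2$. So the third column of $\widehat\Psi$ jumps by $-\rho$ times the second, whereas \eqref{eq: Jump condition for psi} with the stated lens factors requires $c_\alpha\rho$.

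This is not an artefact of the $c_j$: the jump $c_\alpha\rho$ is incompatible with \eqref{G 2 xi} itself. Insert \eqref{G 2 xi} into $\Psi_-^{-1}\Psi_+$, using $\mathcal U^-=\mathcal U^+J$ and the boundary values of $\xi^{\pm M/3}$, $\xi^{A/3}$, $\xi^{1/3}$. The $(2,3)$ entry comes out as $-\rho\,(1+O(\xi^{-1/3}))$. Since $c_\alpha=e^{i\pi(\alpha_2-\alpha_1)}\neq-1$, the theorem as stated cannot be proved. The third column of $\widehat\Psi$ has to be multiplied by $-c_\alpha$, and correspondingly the $(3,3)$ entry of $\mathcal W$ becomes $-c_\alpha$. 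The paper's proof passes over the same point: it obtains $-\rho$ from (4.82), then asserts the relation with $+\rho$, and $c_\alpha$ never appears.

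Finally, your saddle-point sketch leaves the decisive constant-matching open. The cleanest way to close it is to use the reflection formula once more. Write $g_1$, $\xi^{-\alpha_1}g_2^\pm$ and $\xi^{-\alpha_2}g_3$ as explicit combinations of $G^{3,0}_{0,3}\bigl(-;0,-\alpha_1,-\alpha_2\,\big|\,\xi e^{2\pi ik}\bigr)$, $k\in\{-1,0,1\}$. Then only the classical asymptotics of $G^{3,0}_{0,3}$ is needed. The dominant exponential in each half-plane gives the entries of $\widehat{\mathcal U}\mathcal W$ and $\widehat{\mathcal U}\mathcal W J$; for example, $\omega^{-2+\alpha_1+\alpha_2}$ for $g_1$ and $\omega^{2-\alpha_1-\alpha_2}$ for $g_2^+$ when $\im\xi>0$.

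Note that on $\mathbb{R}_\pm$ two exponentials have equal modulus ($g_1$ oscillates on $\mathbb{R}_+$). So the single-exponential form holds up to the real axis only for $\Psi$, not for $\widehat\Psi$. For $0<\arg\xi<\pi/4$ the competing exponential cancels exactly in $g_1-w_1^{-1}g_2^+$.
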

\begin{remark}
The functions $g_m^{\pm}(\xi)$, $m=1,2,3$, involved in the above construction are all Meijer G-functions and are defined as
\begin{align}
&g_1^{\pm}(\xi) = c_1  G_{0,3}^{1,0} \left(\begin{array}{c}
 - \\ 0,-\alpha_1,-\alpha_2
\end{array} \middle| \xi \right) = \frac{c_1}{2\pi i} \int_L \frac{\Gamma(s)}{\Gamma(1+\alpha_1-s)\Gamma(1+\alpha_2-s)} \xi^{-s} ds, \label{def g_1 xi}\\
&g_2^{\pm}(\xi) = c_2  G_{0,3}^{2,0} \left(\begin{array}{c}
 - \\ 0,\alpha_1,\alpha_1-\alpha_2
\end{array} \middle| e^{\mp i \pi }\xi \right)=\frac{c_2}{2\pi i} \int_L \frac{\Gamma(s)\Gamma(s+\alpha_1) }{\Gamma(1+\alpha_2-\alpha_1-s)} e^{\pm i \pi s}\xi^{-s} ds, \label{def g_2 xi}\\
&g_3^{\pm}(\xi) = c_3 G_{0,3}^{3,0} \left(\begin{array}{c}
 - \\ 0,\alpha_2,\alpha_2-\alpha_1
\end{array} \middle| \xi \right) =\frac{c_3}{2\pi i} \int_L \Gamma(s)\Gamma(s+\alpha_2)\Gamma(s+\alpha_2-\alpha_1) \xi^{-s} ds, \label{def g_3 xi}
\end{align}
where $c_1 = -2\pi \sqrt{3}$, $c_2 = i\sqrt{3}$, and $c_3 = \frac{\sqrt{3}}{2\pi}$.
\end{remark}

\begin{proof}
Recall that $w_1 = z^{\alpha_1}$ and $\rho= (-z)^{\alpha_2-\alpha_1} = z^{\alpha_2-\alpha_1}e^{\pm i\pi ({\alpha_2-\alpha_1})}$. For the jump relations \eqref{G 2 xi jump relations} to be true, the following monodromy relations should hold on the negative side:
\begin{align}
&g_1^{+}(\xi) = g_1^{-}(\xi), \label{negative side 1}\\
& g_3^{+}(\xi) =\rho g_2^{+}(\xi)+ g_3^{-}(\xi), \label{negative side 2}\\
& g_3^{+}(\xi) = \rho g_2^{-}(\xi)+ g_3^{-}(\xi), \label{negative side 3}
\end{align}
and the following should hold on the positive real axis:
\begin{align}
&g_2^{+}(\xi) = w_1g_1^{-}(\xi)+ g_2^{-}(\xi), \label{positive side 1}\\
& g_2^{+}(\xi) =w_1g_1^{+}(\xi)+ g_2^{-}(\xi), \label{positive side 2} \\
& g_3^{+}(\xi) = g_3^{-}(\xi). \label{positive side 3}
\end{align}
The relations \eqref{negative side 2} and \eqref{negative side 3} hold if $g_2^{+}(\xi) = g_2^{-}(\xi)$ and \eqref{negative side 3} hold on negative real axis and the relations \eqref{positive side 1} and \eqref{positive side 2} hold if $g_1^{+}(\xi) = g_1^{-}(\xi)$ and \eqref{positive side 2} hold on positive real axis.
Since the Bare parametrix problem is similar to the one considered in \cite{Bertola_Bothner_2015}, we can make use of the relations (4.82) for our purpose. We rewrite the relations for the purpose of completeness:
\begin{align*}
 g_j^{+}(\xi e^{2\pi i})  - g_j^{+}(\xi) = -\xi^{a_{j-1}} e^{i\pi a_{j-1}\Sigma_{j-1}}g_{j-1}^{+}(\xi e^{2\pi i \Sigma_{j-1}}), \quad j=2,3.
\end{align*}
In our case, $a_1 = \alpha_1$ and $a_2 = \alpha_2-\alpha_1$. Now, for $j=2$, we have
\begin{align*}
\Rightarrow& g_2^{+}(\xi e^{2\pi i})  - g_2^{+}(\xi) = -\xi^{a_{1}} e^{i\pi a_{1}\Sigma_{1}}g_{1}^{+}(\xi e^{2\pi i \Sigma_{1}}), \quad \Sigma_{1} = 2 ~mod ~2 =0\notag\\
\Rightarrow& g_2^{+}(\xi e^{2\pi i})  - g_2^{+}(\xi) = -\xi^{\alpha_1}  g_{1}^{+}(\xi), \notag \\
\Rightarrow& g_2^{+}(\xi e^{2\pi i})  - g_2^{+}(\xi) = -w_1g_{1}^{+}(\xi),
\end{align*}
which implies \eqref{positive side 2}.
For $j=3$, we have
\begin{align*}
\Rightarrow& g_3^{+}(\xi e^{2\pi i})  - g_3^{+}(\xi) = -\xi^{a_{2}}e^{i\pi a_{2}\Sigma_{2}}g_{2}^{+}(\xi e^{2\pi i \Sigma_{2}}), \quad \Sigma_{2} = 3 ~mod ~2 = ~1\notag\\
\Rightarrow& g_3^{+}(\xi e^{2\pi i})  - g_3^{+}(\xi) = -\xi^{\alpha_2-\alpha_1}e^{ i\pi (\alpha_2-\alpha_1))}g_{2}^{+}(\xi e^{\pm 2\pi i }),   \notag\\
\Rightarrow& g_3^{+}(\xi e^{2\pi i})  - g_3^{+}(\xi) = -\rho g_{2}^{+}(\xi e^{ 2\pi i}),
\end{align*}
which implies \eqref{negative side 3}.
Relations \eqref{negative side 1} and \eqref{positive side 3} hold immediately from the definitions \eqref{def g_1 xi} and \eqref{def g_3 xi}, respectively. It remains to show $g_2^{+}(\xi) = g_2^{-}(\xi)$ on the negative real axis, which holds when we use the relation $\xi_+^{-s} = \xi_-^{-s} e^{-2\pi i s}$ in \eqref{def g_2 xi}. The asymptotic behavior at infinity \eqref{G 2 xi} is a consequence of \cite[eq. 4.77]{Bertola_Bothner_2015}. Thus, all the assertions made in the theorem hold true.
\end{proof}

\subsubsection{The conformal map and definition of initial local parametrix problem}
We use the following locally conformal change of variables 
\begin{align}\label{Conformal map}
\xi = \xi(z) = \frac{1}{27}n^3z (f_0(z))^3, \quad -\pi <\arg z < \pi, \quad z \in U_\delta \Rightarrow \xi^\frac{1}{3}(z) = \frac{n}{3} z^\frac{1}{3} f_0(z),
\end{align}
where $f_0(z)$ is obtained on integrating \eqref{eq: h0 h1 h2 near 0} and collecting the polynomial coefficient of $z^{1/3}$. Now, define 
\begin{align}\label{eq: def P hat}
\widehat{\mathbf{P}(z)} = \widehat{N(z)} \bigg(\frac{\xi}{z}\bigg)^{M/3}  \Psi(\xi) \bigg(\frac{\xi}{z}\bigg)^{-A/3}.
\end{align}
\begin{proposition}
The matrix-valued function $\widehat{\mathbf{P}(z)}$ solves the RH problem proposed in Section \ref{subsec: making n-independent}.
\end{proposition}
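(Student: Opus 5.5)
The verification has three parts: analyticity away from the contours, the jumps, and the behavior near the origin. We treat each factor in \eqref{eq: def P hat} separately.

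\emph{Analyticity and left factors.} By Proposition \ref{Prop: N hat}, $\widehat{N(z)}$ is analytic, with analytic inverse, in $D(0,r_0)$. It therefore contributes no jumps, and left multiplication by it does not change the jump matrices. Next, $\xi(z)/z = \frac{1}{27}n^3 f_0(z)^3$ is analytic and nonvanishing in $D(0,r_0)$, since $f_0(0)\neq 0$ by \eqref{eq: h0 h1 h2 near 0} (indeed $f_0(0)=3e_2(0)\ne 0$). So, after choosing the principal branch, $(\xi/z)^{\pm M/3}$ and $(\xi/z)^{\pm A/3}$ are analytic there as well. This uses that $\xi(z)$ is conformal near $0$, maps $D(0,r_0)\cap\mathbb{R}_\pm$ onto $\pm\mathbb{R}_+$, and that the lens lips $\Delta^\pm,\Gamma^{*\pm}$ may be deformed inside $D(0,r_0)$ so that $\xi$ maps them onto $e^{\pm i\pi/4}\mathbb{R}_+$ and $e^{\pm 3i\pi/4}\mathbb{R}_+$. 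Consequently $\widehat{\mathbf{P}}$ is analytic off $\Delta\cup\Gamma^*\cup\Delta^\pm\cup\Gamma^{*\pm}$, and its jump there equals
\[
(\xi/z)^{A/3}\,J_\Psi(\xi(z))\,(\xi/z)^{-A/3}.
\]

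\emph{Jumps.} This is the main computational step. The jumps of $\Psi$ in \eqref{eq: Jump condition for psi} are written with $w_1=\xi^{\alpha_1}$ and $\rho$ in the $\xi$-variable, whereas $J_{\widehat{\mathbf{P}}}$ in \eqref{eq: jump condition P hat} uses $w_1=z^{\alpha_1}$ (recall $\beta=0$) and $\rho=(-z)^{\alpha_2-\alpha_1}$. The entries of $J_\Psi$ sit at positions $(1,2),(2,1),(2,3),(3,2)$. Conjugating by $(\xi/z)^{A/3}$ multiplies the $(i,j)$ entry by $(\xi/z)^{(A_i-A_j)/3}$. We have
\[
A_1-A_2=-3\alpha_1, \qquad A_2-A_3=3(\alpha_1-\alpha_2).
\]
Thus the $(1,2)$ entry becomes $\xi^{\alpha_1}(\xi/z)^{-\alpha_1}=z^{\alpha_1}$, and the $(2,3)$ entry becomes $c_\alpha(-\xi)^{\alpha_2-\alpha_1}(\xi/z)^{\alpha_1-\alpha_2}=c_\alpha(-z)^{\alpha_2-\alpha_1}$. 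The $(2,1)$ and $(3,2)$ entries transform reciprocally. This reproduces \eqref{eq: jump condition P hat} exactly on all six pieces. One must check that the branches of $\xi^{\alpha_j}$ and $(\xi/z)^{\alpha_j}$ are compatible, i.e. that $\arg\xi$ and $\arg z$ agree on each side of the cuts; this holds because $\arg(\xi/z)$ is small near $0$.

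\emph{Behavior at the origin.} This is where I expect the real obstacle, since $\Psi$ is only characterized there by reference to $T$. I would use the Mellin--Barnes representations of $g_1,g_2^\pm,g_3$ and close the contour to the left, picking up residues at the poles of $\Gamma(s)$, $\Gamma(s+\alpha_1)$, $\Gamma(s+\alpha_2)$ and $\Gamma(s+\alpha_2-\alpha_1)$. This gives expansions of the form $\sum \xi^{k}$, $\xi^{k+\alpha_1}$, $\xi^{k+\alpha_2}$, $\xi^{k+\alpha_2-\alpha_1}$; the hypothesis $\alpha_1-\alpha_2\notin\mathbb{Z}$ ensures the poles are simple, and logarithms appear only when some $\alpha_j=0$. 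Comparing column by column with the case list for $T$ (including the corrections inside the sectors from the lens factors in \eqref{G 2 xi jump relations}) yields the same $O$-bounds. Right multiplication by $(\xi/z)^{-A/3}$, which is bounded with bounded inverse near $0$, preserves these bounds, and so does left multiplication by the analytic factors. Hence $\widehat{\mathbf{P}}$ has the required behavior at the origin, and by the preceding proposition $\mathring{P}=\widehat{\mathbf{P}}D_n$ solves the initial local problem.
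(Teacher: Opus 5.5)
Your proposal is correct and is essentially the paper's proof: the prefactors in \eqref{eq: def P hat} are analytic, the jumps come from conjugating $J_\Psi$ by $(\xi/z)^{A/3}$, and the behavior at $0$ is inherited from $\Psi$ because the remaining factors are bounded. The paper writes out only the $\Delta$ case, which your bookkeeping $A_1-A_2=-3\alpha_1$, $A_2-A_3=3(\alpha_1-\alpha_2)$ extends to the other pieces; your Mellin--Barnes check goes beyond the paper, which simply builds that behavior into the bare problem for $\Psi$.

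Two small slips. First, $e_2(0)<0$, so your branch argument needs $\xi/z>0$ on the real axis, which requires $f_0(0)^3>0$, e.g.\ $f_0(0)=-3e_2(0)$ rather than $3e_2(0)$. Second, simple poles also need $\alpha_1,\alpha_2\notin\mathbb{Z}$, although an integer $\alpha_j\neq 0$ only produces subleading logarithms.
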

\begin{proof}
Clearly, $\widehat{\mathbf{P}(z)}$ is analytic on $D(0,r_0) \backslash (\Delta \cup \Gamma^* \cup \Gamma^{*\pm} \cup \Delta^\pm)$. Next, we check the jump condition. From \eqref{eq: Jump condition for psi}, if $z \in \Delta$, we have
\begin{align*}
&J_{\widehat{\mathbf{P}}}(z) = [\widehat{\mathbf{P}_-(z)}]^{-1}\widehat{\mathbf{P}_+(z)}=\bigg(\frac{\xi}{z}\bigg)^{A/3} \begin{pmatrix}
      0& \xi^{\alpha_1}&0\\
        -\xi^{-\alpha_1} &0 & 0\\
         0& 0& 1 \\  
    \end{pmatrix} \bigg(\frac{\xi}{z}\bigg)^{-A/3}=\begin{pmatrix}
      0& z^{\alpha_1} &0\\
        -z^{-\alpha_1}&0 & 0\\
         0& 0& 1 \\  
    \end{pmatrix}.
\end{align*}
Similarly, the other jump matrices of $\widehat{\mathbf{P}(z)}$ as in \eqref{eq: jump condition P hat} can be verified. Finally, since all other terms in \eqref{eq: def P hat} remain bounded as $z \to 0$, the behavior of $\widehat{\mathbf{P}}$ near the origin will be same as that of $\Psi$.
\end{proof}
We further set  
\begin{align}\label{First guess of local parametrix}
&\mathbf{P}(z)= \widehat{N(z)} \bigg(\frac{\xi}{z}\bigg)^{M/3}  \Psi(\xi) \bigg(\frac{\xi}{z}\bigg)^{-A/3} \exp \big({ 3\xi^{\frac{1}{3}} \Omega_\pm } \big) D_n(z).
\end{align}
By virtue of \eqref{eq: hat P equal Dn}, it can be easily verified that $\mathbf{P}(z)$ satisfies the same jump conditions as $\mathring{P}(z)$. As will be shown, we will solve the RH problem for $\mathring{P}(z)$ with the help of $\mathbf{P}(z)$. To this end, using the expression \eqref{G 2 xi} for $\Psi(\xi)$, we can write
\begin{align}
&\mathbf{P}(z)=\widehat{N(z)} \bigg(\frac{\xi}{z}\bigg)^{\frac{M}{3}} \xi^{-\frac{M}{3}} \mathcal{U}^\pm  \tilde{K}(\xi)  \xi^\frac{A}{3} \exp \big({ -3\xi^{\frac{1}{3}} \Omega_\pm} \big) \bigg(\frac{\xi}{z}\bigg)^{-\frac{A}{3}} \exp \big({ 3\xi^{\frac{1}{3}} \Omega_\pm } \big) D_n(z) \notag \\
&=\widehat{N(z)} z^{-\frac{M}{3}} \mathcal{U}^\pm  \tilde{K}(\xi) z^\frac{A}{3} D_n(z). \notag
\end{align}
Further, as $n \rightarrow \infty$ for $0 < |z| < r$ with $r$ sufficiently small,
\begin{align}\label{asymptotic expansion Kj}
\mathbf{P}(z)(N(z))^{-1}& =\widehat{N(z)} z^{-\frac{M}{3}} \mathcal{U}^\pm  \tilde{K}(\xi) z^\frac{A}{3} D_n(z)z^{-\frac{A}{3}} [\mathcal{U}^\pm]^{-1} z^{\frac{M}{3}}[\widehat{N(z)}]^{-1} \notag\\
&=\widehat{N(z)} z^{-\frac{M}{3}} \mathcal{U}^\pm  \tilde{K}(\xi) [\mathcal{U}^\pm]^{-1}\mathcal{U}^\pm D_n(z) [\mathcal{U}^\pm]^{-1} z^{\frac{M}{3}}[\widehat{N(z)}]^{-1} \notag\\
&=\widehat{N(z)}z^{-\frac{M}{3}} \mathcal{K}(z) H(z) z^{\frac{M}{3}} [\widehat{N(z)}]^{-1}, \quad \pm \im z > 0 ,
\end{align}
where the function $\mathcal{K}(z)$ and $H(z)$ are given by 
\begin{align}
&\mathcal{K}(z)=\mathcal{U}^\pm \tilde{K}(\xi) [\mathcal{U}^\pm]^{-1}, \quad H(z)=  \mathcal{U}^\pm D_n(z)[\mathcal{U}^\pm]^{-1}, \quad \pm \im z > 0, \label{HU = UH}
\end{align}
with $\mathcal{U}^\pm$ as in \eqref{U plus and U minus}.

\begin{proposition}\label{Prop_conjugate H entire}
Let $z^\gamma$ be defined for $-\pi <\arg z < \pi$ such that $z^\gamma > 0$ for $z>0$. 
Then $\mathbb{H}(z)=z^{-\frac{M}{3}}H(z)z^{\frac{M}{3}}$ is analytic and non-singular in the neighborhood of $z=0$.
\end{proposition}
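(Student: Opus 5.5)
The plan is to imitate the proof of Proposition~\ref{Prop: N hat}. First I show that $\mathbb{H}$ has no jumps on $(-r,0)$ and $(0,r)$, so that $z=0$ is an isolated singularity. Then I show the singularity is removable by a growth estimate. Finally I compute the determinant.

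\emph{No jumps.} For $x\in\Delta\cap D(0,r)$ I use the boundary relations \eqref{eq: Boundary conditions H}, namely $H_0^\pm=H_1^\mp$, with $H_2$ continuous. They give $D_{n,+}(x)=J^{-1}D_{n,-}(x)J$ up to the ordering convention, where $J$ is the permutation-type matrix from \eqref{U plus and U minus}. Together with $\mathcal{U}^-=\mathcal{U}^+J$ this yields
\[
H_+=\mathcal{U}^+D_{n,+}(\mathcal{U}^+)^{-1}=\mathcal{U}^-D_{n,-}(\mathcal{U}^-)^{-1}=H_-.
\]
Since $z^{\pm M/3}$ has no jump on $(0,\infty)$, $\mathbb{H}_+=\mathbb{H}_-$ there.

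For $x\in\Gamma^*\cap D(0,r)$ I use $H_2^\pm-H_1^\mp=\mp2\pi i$, the continuity of $H_0$ across $(x^*,0)$, and the standing assumption that $n$, $\theta n$ and $(2-\theta)n$ are integers, so that $e^{\mp 2\pi i n}=1$. Then $D_{n,+}$ equals $D_{n,-}$ with its last two diagonal entries swapped. Writing $\mathcal{U}^\pm$ through $\widehat{\mathcal{U}}$ and $\mathcal{W}$, the diagonal matrix $\mathcal{W}$ commutes with $D_n$ and drops out. As in the computation in Proposition~\ref{Prop: N hat}, the jump of $H$ is conjugation by $\widehat{\mathcal{U}}\,\sigma\,\widehat{\mathcal{U}}^{-1}=\diag[\omega^{-2},1,\omega^{2}]$, where $\sigma$ is the cyclic permutation appearing there. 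The jump of $z^{M/3}$ across the negative axis, $x_+^{M/3}=x_-^{M/3}\diag[\omega^2,1,\omega^{-2}]$, cancels exactly this diagonal factor. Hence $\mathbb{H}_+=\mathbb{H}_-$ on $(-r,0)$ as well.

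\emph{Removability, and a more transparent check.} The expansions of $H_j$ at $0$ show that $H_j(z)=\kappa_j+F(\zeta_j(z))$. Here $\kappa_j\in\{0,\pm2\pi i\}$, $F(t)=3e_2(0)t+\tfrac32e_1(0)t^2+\dots$ is analytic near $t=0$, and $\zeta_j$ are the three branches of $z^{1/3}$ fixed in Section~4. This representation follows from $h_j=\frac{d}{dz}H_j$ being the three branches of one algebraic function that is Puiseux in $z^{1/3}$, as in \eqref{eq: h0 h1 h2 near 0}. Because $n\kappa_j\in2\pi i\mathbb{Z}$, we get $D_n=\diag[e^{-nF(\zeta_j)}]$.

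The entry $(a,b)$ of $\mathbb{H}$ then has the form $\sum_{j}c_{abj}\,z^{(m_b-m_a)/3}e^{-nF(\zeta_j)}$, where the $c_{abj}$ are cube roots of unity coming from $\widehat{\mathcal{U}}$ and $m=(1,0,-1)$. I expect, and will verify directly from the explicit $\widehat{\mathcal{U}}$ and $\widehat{\mathcal{U}}^{-1}$, that this equals $\frac13\sum_j \zeta_j^{\,m_b-m_a}e^{-nF(\zeta_j)}$ up to sign, i.e.\ a discrete Fourier projection. Such a sum retains only the powers $\zeta^{3k}=z^k$ of the Laurent series $t^{m_b-m_a}e^{-nF(t)}$. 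Since $m_b-m_a\ge-2$, no negative powers of $z$ survive, so each entry is analytic at $0$. In any case the cruder bound $\mathbb{H}(z)=O(z^{-2/3})$, which follows from boundedness of $D_n$ near $0$ and $|z^{\pm M/3}|\le C|z|^{-1/3}$, already suffices for removability once the jumps are gone.

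\emph{Nonsingularity, and the main obstacle.} $\det\mathbb{H}=\det D_n=e^{-n(H_0+H_1+H_2)}=1$, because $H_0+H_1+H_2\equiv0$. Hence $\mathbb{H}^{-1}$ is analytic too. I expect the main obstacle to be the bookkeeping on $\Gamma^*$: tracking which branch of $z^{1/3}$ each $H_j^\pm$ uses, and confirming that the permutation induced on $D_n$ matches $\mathcal{U}^-=\mathcal{U}^+J$ and the factor $\diag[\omega^{-2},1,\omega^2]$ with the correct orientation, consistently with the branch choices fixed for $s_0,s_1,s_2$.
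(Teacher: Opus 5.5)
Your proposal is correct and follows essentially the paper's proof: you cancel the jumps of $\mathbb{H}$ on $\Delta$ via $\mathcal{U}^-=\mathcal{U}^+J$, cancel those on $\Gamma^*$ by playing the cyclic permutation (diagonalized by $\widehat{\mathcal{U}}$) against the jump of $z^{M/3}$, and get nonsingularity from $\det\mathbb{H}=\det D_n=1$; your explicit removability step at $z=0$ (the $O(z^{-2/3})$ bound, or the discrete Fourier projection, which is consistent with the paper's branch choices for $s_0,s_1,s_2$) fills in a point the paper leaves implicit. One minor correction: $H_0$ is not continuous across $(x^*,0)$ but jumps by $4\pi i$ there (as your own constants $\kappa_j\in\{0,\pm2\pi i\}$ show), so only $e^{-nH_0}$ is continuous, and that is by the integrality of $n$.
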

\begin{proof}
We prove it by showing that the function $\mathbb{H}(z)$ has no jumps on $\Delta$ and $\Gamma^*$. If $z \in \Delta$, then
\begin{align*}
&\mathbb{H}^+(z)=z^{-\frac{M}{3}}H^+(z)z^{\frac{M}{3}}=z^{-\frac{M}{3}} \mathcal{U}^+ D^+_n(z)[\mathcal{U}^+]^{-1} z^{\frac{M}{3}}=z^{-\frac{M}{3}} \widehat{\mathcal{U}} D^+_n(z)[\widehat{\mathcal{U}}]^{-1} z^{\frac{M}{3}}\\
&\mathbb{H}^-(z)=z^{-\frac{M}{3}} \mathcal{U}^- D^-_n(z)[\mathcal{U}^-]^{-1} z^{\frac{M}{3}}= z^{-\frac{M}{3}} \mathcal{U}^+J D^-_n(z)J^{-1} [\mathcal{U}^+]^{-1}z^{\frac{M}{3}}.
\end{align*}
To prove, $[\mathbb{H}^-(z)]^{-1} \mathbb{H}^+(z) = \mathbb{I}$, it is sufficient to verify 
\begin{align*}
JD^-_n(z)J^{-1} = D^+_n(z)
\end{align*}
which, on using \eqref{eq: def Dn z} simplifies to 
\begin{align*}
&\begin{pmatrix}
      0& -1 & 0 \\
        1 & 0 & 0\\
        0 & 0 & 1 \\  
    \end{pmatrix} \begin{pmatrix}
      e^{-nH_0^-}  & 0 & 0 \\
      0  & e^{-nH_1^-} & 0 \\
      0  & 0 & e^{-nH_2^-} 
    \end{pmatrix}\begin{pmatrix}
      0& 1 & 0 \\
        -1 & 0 & 0\\
        0 & 0 & 1 \\  
    \end{pmatrix} = \begin{pmatrix}
      e^{-nH_0^+}  & 0 & 0 \\
      0  & e^{-nH_1^+} & 0 \\
      0  & 0 & e^{-nH_2^+} 
    \end{pmatrix}\\
\Longrightarrow & \begin{pmatrix}
      e^{-nH_1^-}  & 0 & 0 \\
      0  & e^{-nH_0^-} & 0 \\
      0  & 0 & e^{-nH_2^-} 
    \end{pmatrix} = \begin{pmatrix}
      e^{-nH_0^+}  & 0 & 0 \\
      0  & e^{-nH_1^+} & 0 \\
      0  & 0 & e^{-nH_2^+} 
    \end{pmatrix}
\end{align*}
which hold true since $H_0^\pm = H_1^\mp$ and $H_2^+=H_2^-$ on $\Delta$. Next, if $z \in \Gamma^*$ and $\widehat{\Omega} = \diag [\omega^2,1, \omega^{-2}]$, then
\begin{align*}
&\mathbb{H}^+(z)=z_+^{-\frac{M}{3}}H^+(z)z_+^{\frac{M}{3}}=z_+^{-\frac{M}{3}} \mathcal{U}^+ D^+_n(z)[\mathcal{U}^+]^{-1} z_+^{\frac{M}{3}}=z_+^{-\frac{M}{3}} \widehat{\mathcal{U}} D^+_n(z)[\widehat{\mathcal{U}}]^{-1} z_+^{\frac{M}{3}}.\\
&\mathbb{H}^-(z)=z_-^{-\frac{M}{3}} \mathcal{U}^- D^-_n(z)[\mathcal{U}^-]^{-1} z_-^{\frac{M}{3}}= z_+^{-\frac{M}{3}} \widehat{\Omega} \mathcal{U}^+J D^-_n(z) J^{-1}  [\mathcal{U}^+]^{-1} \widehat{\Omega}^{-1} z_+^{\frac{M}{3}}\\
&=z_+^{-\frac{M}{3}} \widehat{\Omega} \widehat{\mathcal{U}} \mathcal{W} J D^-_n(z) J^{-1} \mathcal{W}^{-1} \widehat{\mathcal{U}}^{-1} \widehat{\Omega}^{-1} z_+^{\frac{M}{3}} = z_+^{-\frac{M}{3}} \widehat{\Omega} \widehat{\mathcal{U}} J D^-_n(z) J^{-1}  \widehat{\mathcal{U}}^{-1} \widehat{\Omega}^{-1} z_+^{\frac{M}{3}}.
\end{align*}
The last equality follows since it can be easily verified that $J D^-_n(z) J^{-1}$ is a diagonal matrix. Next, we compute
\begin{align*}
&\widehat{\mathcal{U}}^{-1}\widehat{\Omega} \widehat{\mathcal{U}} J D^-_n(z) J^{-1}  \widehat{\mathcal{U}}^{-1} \widehat{\Omega}^{-1}\widehat{\mathcal{U}}\\
=& \begin{pmatrix}
      0& 1 & 0 \\
        0 & 0 & 1\\
        1 & 0 & 0 \\  
    \end{pmatrix}\begin{pmatrix}
      e^{-nH_1^-}  & 0 & 0 \\
      0  & e^{-nH_0^-} & 0 \\
      0  & 0 & e^{-nH_2^-} 
    \end{pmatrix} \begin{pmatrix}
      0& 0 & 1 \\
        1 & 0 & 0\\
        0 & 1 & 0 \\  
    \end{pmatrix}=\begin{pmatrix}
      e^{-nH_0^-}& 0 & 0 \\
        0 & e^{-nH_2^-} & 0\\
        0 & 0 & e^{-nH_1^-} \\  
    \end{pmatrix}.
\end{align*}
Again, to show that $[\mathbb{H}^-(z)]^{-1} \mathbb{H}^+(z) = \mathbb{I}$, it is sufficient to verify
\begin{align*}
&\widehat{\Omega} \widehat{\mathcal{U}} J D^-_n(z) J^{-1}  \widehat{\mathcal{U}}^{-1} \widehat{\Omega}^{-1} = \widehat{\mathcal{U}} D^+_n(z)[\widehat{\mathcal{U}}]^{-1}\\
{\rm or,~ equivalently,} \quad & \widehat{\mathcal{U}}^{-1}\widehat{\Omega} \widehat{\mathcal{U}} J D^-_n(z) J^{-1}  \widehat{\mathcal{U}}^{-1} \widehat{\Omega}^{-1}\widehat{\mathcal{U}} = D^+_n(z)
\end{align*}
which holds since $H_2^\pm - H_1^\mp= \mp 2 \pi i$ and $H_0^+=H_0^-$ on $\Gamma^*$ and we achieve the desired result. The non-singularity of $\mathbb{H}(z)$ follows from the fact that the determinant of $H(z)$ given by \eqref{HU = UH} is $1$ and hence $\det \mathbb{H}(z) = 1$.
\end{proof}
\begin{proposition}\label{Remark: H_1 entire}
Let $z^\gamma$ be defined for $-\pi <\arg z < \pi$ such that $z^\gamma > 0$ for $z>0$. 
Then 
\begin{align*}
\mathbb{H}_1(z) = z^{-\frac{M}{3}} \mathcal{U}^\pm  \exp \big({ -3\xi^{\frac{1}{3}} \Omega_\pm} \big) [\mathcal{U}^\pm]^{-1} z^{\frac{M}{3}}
\end{align*}
is also entire.
\end{proposition}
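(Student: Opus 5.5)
The plan is to repeat the argument of Proposition~\ref{Prop_conjugate H entire} with $D_n(z)$ replaced by $\exp(-3\xi^{1/3}\Omega_\pm)$. By \eqref{Conformal map} we have $3\xi^{1/3}=n z^{1/3}f_0(z)$ with $f_0$ analytic near $0$. Hence
\[
\exp\big(-3\xi^{1/3}\Omega_\pm\big)=\diag\big[e^{-n\omega^{\pm2}z^{1/3}f_0},\,e^{-n\omega^{\mp2}z^{1/3}f_0},\,e^{-nz^{1/3}f_0}\big],
\]
which is a diagonal matrix whose entries are exponentials of the three determinations $\omega^{\pm 2}z^{1/3}$, $\omega^{\mp2}z^{1/3}$, $z^{1/3}$ of the cube root. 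A natural check is to compare with the expansions of $H_j$ at $0$: each $H_j$ behaves like $3e_2(0)z^{1/3}$ on a suitable branch, up to the constants $\pm2\pi i$. The diagonal entries are therefore $e^{-nH_j}$-type functions, with the constants and the $z^{2/3}$ terms stripped off.

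So I would first record the boundary behaviour of the diagonal matrix $\mathcal{E}_\pm(z):=\exp(-3\xi^{1/3}\Omega_\pm)$. On $\Delta$ (where $\arg z=0$ from both sides), $\xi^{1/3}$ is continuous, and $\Omega_+$ and $\Omega_-$ differ by swapping the first two entries. This gives $J\mathcal{E}_-J^{-1}=\mathcal{E}_+$, exactly the identity $JD_n^-J^{-1}=D_n^+$ used before. Then
\[
\mathcal{U}^-\mathcal{E}_-(\mathcal{U}^-)^{-1}=\mathcal{U}^+J\mathcal{E}_-J^{-1}(\mathcal{U}^+)^{-1}=\mathcal{U}^+\mathcal{E}_+(\mathcal{U}^+)^{-1}.
\]
Here the diagonal $\mathcal{W}$ commutes with diagonal matrices, and $z^{\pm M/3}$ has no jump on $\Delta$.

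On $\Gamma^*$ (more precisely on the whole negative axis near $0$), I would use $z_-^{1/3}=z_+^{1/3}\omega^{-2}$. This turns $\mathcal{E}_-$ into the cyclically permuted version of $\mathcal{E}_+$: the entries $\omega^{-2}z_-^{1/3}$, $\omega^{2}z_-^{1/3}$, $z_-^{1/3}$ become $\omega^{-4}z_+^{1/3}=\omega^{2}z_+^{1/3}$, $z_+^{1/3}$, $\omega^{-2}z_+^{1/3}$. I would then replay the computation with $\widehat\Omega=\diag[\omega^2,1,\omega^{-2}]$ and the cyclic permutation from Proposition~\ref{Prop_conjugate H entire}. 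That computation reduces to
\[
\widehat{\mathcal U}^{-1}\widehat\Omega\widehat{\mathcal U}\,J\mathcal{E}_-J^{-1}\,\widehat{\mathcal U}^{-1}\widehat\Omega^{-1}\widehat{\mathcal U}=\mathcal{E}_+ .
\]
The permuted-diagonal identity for $J\mathcal E_-J^{-1}$ is exactly what the permutation matrix produces, now exactly and without the $\pm2\pi i$ correction. The main obstacle is bookkeeping the branches so that the cyclic order of $\Omega_\pm$ matches the permutation $\widehat{\mathcal U}^{-1}\widehat\Omega\widehat{\mathcal U}$. I would verify this entrywise, checking that $\widehat{\mathcal U}$ diagonalises the cyclic shift with eigenvalues $\omega^{-2},1,\omega^2$, as already used in the proof of Proposition~\ref{Prop: N hat}.

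It follows that $\mathbb H_1$ is analytic in a punctured neighbourhood of $0$; in fact it is analytic on $\mathbb C\setminus\{0\}$ wherever $f_0$ is defined. It remains to rule out a singularity at $0$. Each entry of $\mathcal U^\pm\mathcal E_\pm(\mathcal U^\pm)^{-1}$ is bounded near $0$, since the exponents tend to $0$, and conjugation by $z^{\pm M/3}$ costs at most $|z|^{-2/3}$. Therefore $\mathbb H_1=O(z^{-2/3})$, and the singularity at $0$ is removable.

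As a sharper alternative, one can observe that $\mathcal U\,\mathcal E\,\mathcal U^{-1}$ is a symmetric function of the three cube-root branches. It is then a power series in $z^{1/3}$ whose $(i,j)$ entry contains only powers $z^{k/3}$ with $k\equiv i-j \pmod 3$, and the conjugation by $z^{\mp M/3}$ exactly cancels the fractional part. This gives a genuine power series in $z$ and hence entireness in the variable $z$; I would use this to justify the word ``entire'' in the statement. Finally, $\det\mathbb H_1=\exp(-3\xi^{1/3}\operatorname{tr}\Omega_\pm)=1$, because $\omega^2+\omega^{-2}+1=0$, so $\mathbb H_1$ is nonsingular.
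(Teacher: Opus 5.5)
Your proposal is correct and follows the paper's proof. It uses the same swap identity $J\mathcal{E}_-J^{-1}=\mathcal{E}_+$ on $\Delta$, and on $\Gamma^*$ the same reduction via $z_-^{1/3}=\omega^{-2}z_+^{1/3}$, $\widehat{\Omega}$ and the cyclic permutation $\widehat{\mathcal{U}}^{-1}\widehat{\Omega}\widehat{\mathcal{U}}$. Your $O(z^{-2/3})$ removability bound and $\det\mathbb{H}_1=1$ add points the paper leaves implicit.

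There is one small slip in your optional power-series remark. The $(i,j)$ entry of $\mathcal{U}^\pm\mathcal{E}_\pm(\mathcal{U}^\pm)^{-1}$ contains only powers $\xi^{k/3}$ with $k\equiv j-i \pmod 3$, not $i-j$. The congruence $k\equiv j-i$ is what lets the factor $z^{(i-j)/3}$ produced by $z^{-M/3}(\cdot)\,z^{M/3}$ cancel the fractional part.
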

\begin{proof}
Similar to the proof of Proposition \ref{Prop_conjugate H entire}, we show that $\mathbb{H}_1(z)$ has no jumps on $\Delta$ and $\Gamma^*$. On $\Delta$, we have
\begin{align*}
[\mathbb{H}_1^-(z)]^{-1} \mathbb{H}_1^+(z) &= z^{-\frac{M}{3}}\mathcal{U}^- \exp \big({ 3\xi^{\frac{1}{3}} \Omega_-} \big) [\mathcal{U}^-]^{-1} \mathcal{U}^+ \exp \big({- 3\xi^{\frac{1}{3}} \Omega_+}\big)[\mathcal{U}^+]^{-1} z^{\frac{M}{3}}\\
&=z^{-\frac{M}{3}}\mathcal{U}^- \exp \big({ 3\xi^{\frac{1}{3}} \Omega_-} \big) J^{-1} \exp \big({- 3\xi^{\frac{1}{3}} \Omega_+}\big)[\mathcal{U}^+]^{-1} z^{\frac{M}{3}} \\
&= z^{-\frac{M}{3}}\mathcal{U}^- J^{-1} [\mathcal{U}^+]^{-1} z^{\frac{M}{3}}\\
&=z^{-\frac{M}{3}}\mathcal{U}^+ [\mathcal{U}^+]^{-1} z^{\frac{M}{3}} = \mathbb{I}.
\end{align*}
On $\Gamma^*$, we have
\begin{align*}
&\mathbb{H}_1^+(z)=z_+^{-\frac{M}{3}} \mathcal{U}^+ \exp\big({- 3\xi_+^{\frac{1}{3}} \Omega_+}\big)[\mathcal{U}^+]^{-1} z_+^{\frac{M}{3}}=z_+^{-\frac{M}{3}} \widehat{\mathcal{U}} \exp\big({- 3\xi_+^{\frac{1}{3}} \Omega_+}\big)[\widehat{\mathcal{U}}]^{-1} z_+^{\frac{M}{3}}.\\
&\mathbb{H}_1^-(z)=z_-^{-\frac{M}{3}} \mathcal{U}^- \exp\big({- 3\xi_-^{\frac{1}{3}} \Omega_-}\big)[\mathcal{U}^-]^{-1} z_-^{\frac{M}{3}}= z_+^{-\frac{M}{3}} \widehat{\Omega} \mathcal{U}^+J \exp\big({- 3\xi_+^{\frac{1}{3}} \omega^{-2} \Omega_-}\big) J^{-1}  [\mathcal{U}^+]^{-1} \widehat{\Omega}^{-1} z_+^{\frac{M}{3}}\\
&=z_+^{-\frac{M}{3}} \widehat{\Omega} \widehat{\mathcal{U}} \mathcal{W} J \exp\big({- 3\xi_+^{\frac{1}{3}} \widehat{\Omega}}\big) J^{-1} \mathcal{W}^{-1} \widehat{\mathcal{U}}^{-1} \widehat{\Omega}^{-1} z_+^{\frac{M}{3}} = z_+^{-\frac{M}{3}} \widehat{\Omega} \widehat{\mathcal{U}} J \exp\big({- 3\xi_+^{\frac{1}{3}} \widehat{\Omega}}\big) J^{-1}  \widehat{\mathcal{U}}^{-1} \widehat{\Omega}^{-1} z_+^{\frac{M}{3}}.
\end{align*}
To show that $[\mathbb{H}_1^-(z)]^{-1} \mathbb{H}_1^+(z) = \mathbb{I}$, it is sufficient to verify
\begin{align*}
&\widehat{\Omega} \widehat{\mathcal{U}} J \exp\big({- 3\xi_+^{\frac{1}{3}} \widehat{\Omega}}\big) J^{-1}  \widehat{\mathcal{U}}^{-1} \widehat{\Omega}^{-1} = \widehat{\mathcal{U}} \exp\big({- 3\xi_+^{\frac{1}{3}} \Omega_+}\big)[\widehat{\mathcal{U}}]^{-1}\\
{\rm or,~ equivalently,} \quad & [\widehat{\mathcal{U}}]^{-1}\widehat{\Omega} \widehat{\mathcal{U}} J \exp\big({- 3\xi_+^{\frac{1}{3}} \widehat{\Omega}}\big) J^{-1}  \widehat{\mathcal{U}}^{-1} \widehat{\Omega}^{-1} \widehat{\mathcal{U}}= \exp\big({- 3\xi_+^{\frac{1}{3}} \Omega_+}\big)
\end{align*}
Now, we compute
\begin{align*}
&[\widehat{\mathcal{U}}]^{-1}\widehat{\Omega} \widehat{\mathcal{U}} J \exp\big({- 3\xi^{\frac{1}{3}} \widehat{\Omega}}\big) J^{-1}  \widehat{\mathcal{U}}^{-1} \widehat{\Omega}^{-1} \widehat{\mathcal{U}} \\
=& \begin{pmatrix}
      0& 1 & 0 \\
        0 & 0 & 1\\
        1 & 0 & 0 \\  
    \end{pmatrix} J\begin{pmatrix}
      e^{- 3\xi_+^{\frac{1}{3}} \omega^2} & 0 & 0 \\
        0 & e^{- 3\xi_+^{\frac{1}{3}} } & 0\\
        0 & 0 & e^{- 3\xi_+^{\frac{1}{3}} \omega^{-2}} \\  
    \end{pmatrix}J^{-1} \begin{pmatrix}
      0& 0 & 1 \\
        1 & 0 & 0\\
        0 & 1 & 0 \\  
    \end{pmatrix}\\
=& \begin{pmatrix}
      0& 1 & 0 \\
        0 & 0 & 1\\
        1 & 0 & 0 \\  
    \end{pmatrix} \begin{pmatrix}
      e^{- 3\xi_+^{\frac{1}{3}} }  & 0 & 0 \\
        0 & e^{- 3\xi_+^{\frac{1}{3}} \omega^2} & 0\\
        0 & 0 & e^{- 3\xi_+^{\frac{1}{3}} \omega^{-2}} \\  
    \end{pmatrix} \begin{pmatrix}
      0& 0 & 1 \\
        1 & 0 & 0\\
        0 & 1 & 0 \\  
    \end{pmatrix}=\begin{pmatrix}
      e^{- 3\xi_+^{\frac{1}{3}} \omega^2}  & 0 & 0 \\
        0 & e^{- 3\xi_+^{\frac{1}{3}} \omega^{-2}} & 0\\
        0 & 0 & e^{- 3\xi_+^{\frac{1}{3}} }  \\  
    \end{pmatrix}
\end{align*}
which is clearly equal to $\exp\big({- 3\xi_+^{\frac{1}{3}} \Omega_+}\big)$. This completes the proof.
\end{proof}

\begin{proposition}
For the following structure of the matrix coefficients $\{\mathcal{K}_j \}_{j=1}^\infty$, appearing in the expansion \eqref{asymptotic expansion Kj}, 
\begin{align}\label{K structure}
\mathcal{K}_j =\begin{cases}
 \begin{pmatrix}
    0 &0 & *\\
    * & 0& 0 \\
    0 & *& 0
 \end{pmatrix}, & j = 1 \mod 3,\\
 \begin{pmatrix}
    0 &* & 0\\
    0 & 0& * \\
    * & 0& 0
 \end{pmatrix}, & j = 2 \mod 3,  \\ 
 \begin{pmatrix}
    * &0 & 0\\
     0& * & 0 \\
    0 & 0& *
 \end{pmatrix},&  j = 3 \mod 3,
\end{cases}
\end{align}
the function $z^{-\frac{M}{3}}\mathcal{K}(z)z^{\frac{M}{3}} = z^{-\frac{M}{3}} \bigg[I+\sum_{j=1}^{\infty}\mathcal{K}_j \xi^{-\frac{j}{3}}\bigg]  z^{\frac{M}{3}} $ admits a formal asymptotic expansion of the form 
\begin{align}\label{K formal series in prop}
z^{-\frac{M}{3}}\mathcal{K}(z)z^{\frac{M}{3}} = I+ \sum_{j=1}^{\infty} z^{-\frac{M}{3}} \mathcal{K}_j z^{\frac{M}{3}} \frac{1}{n^j z^{j/3}},
\end{align}
and contains only integer powers of $z$.
\end{proposition}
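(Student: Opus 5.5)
The plan is to split the claim into two parts. First, rewrite $\xi^{-j/3}$ using the conformal map \eqref{Conformal map}, which gives \eqref{K formal series in prop} up to a harmless factor. Second, show by a direct entrywise computation that the sparsity pattern \eqref{K structure} exactly cancels the fractional powers produced by conjugation with $z^{\mp M/3}$.

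\textbf{Step 1: the $\xi$-expansion.} By \eqref{Conformal map}, $\xi^{1/3}=\frac{n}{3}z^{1/3}f_0(z)$, where $f_0$ is analytic and nonvanishing near $0$. Hence
\[
\xi^{-j/3}=\frac{3^j}{n^j z^{j/3}}f_0(z)^{-j}.
\]
We absorb the analytic scalar $3^jf_0(z)^{-j}$ into the coefficient $\mathcal{K}_j$; this preserves its zero pattern and keeps it analytic in $z$. The formal series $\mathcal{K}(z)=\mathcal{U}^\pm\tilde K(\xi)[\mathcal{U}^\pm]^{-1}=I+\sum_j\mathcal{K}_j\xi^{-j/3}$, with $\mathcal{K}_j=\mathcal{U}^\pm\tilde K_j[\mathcal{U}^\pm]^{-1}$, then becomes the series in \eqref{K formal series in prop} after conjugating each term by $z^{-M/3}(\cdot)z^{M/3}$.

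\textbf{Step 2: the entrywise computation.} Since $M=\diag[1,0,-1]$, conjugation acts entrywise by
\[
\big(z^{-M/3}\mathcal{K}_jz^{M/3}\big)_{kl}=z^{(m_l-m_k)/3}(\mathcal{K}_j)_{kl},
\]
where $(m_1,m_2,m_3)=(1,0,-1)$. Each term in the series therefore carries the exponent $(m_l-m_k-j)/3$, and we need this to be an integer for every nonzero entry. We treat the three cases of \eqref{K structure} in turn.
\begin{itemize}
\item For $j\equiv1 \pmod 3$ the nonzero positions are $(1,3),(2,1),(3,2)$. Here $m_l-m_k=-2,-1,-1$, so $m_l-m_k\equiv1 \pmod 3$ in every case, and $m_l-m_k-j\equiv0 \pmod 3$.
\item For $j\equiv2 \pmod 3$ the positions are $(1,2),(2,3),(3,1)$. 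Here $m_l-m_k=-1,-1,2$, each $\equiv2 \pmod 3$, so again the exponent is an integer.
\item For $j\equiv0 \pmod 3$ the matrix is diagonal, so $m_l=m_k$ and the exponent is $-j/3\in\mathbb Z$.
\end{itemize}
Thus every term is $n^{-j}$ times an integer power of $z$ times an analytic function, and only integer powers of $z$ appear, as claimed.

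\textbf{Main obstacle: establishing \eqref{K structure}.} The computation above assumes the sparsity pattern rather than proving it; the statement is phrased conditionally, but I would still justify it.

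The plan is to use the symmetry of the bare parametrix under $\xi\mapsto e^{2\pi i}\xi$. Equivalently, one can show that $z^{-M/3}\mathcal{K}(z)z^{M/3}$ has no jump on $\Delta\cup\Gamma^*$, arguing as in Propositions \ref{Prop_conjugate H entire} and \ref{Remark: H_1 entire}. The jumps of $\Psi$ and of $\exp(-3\xi^{1/3}\Omega_\pm)$ combine so that $\mathcal{U}^\pm\tilde K[\mathcal{U}^\pm]^{-1}$ transforms under the cyclic permutation $\widehat{\mathcal{U}}^{-1}\widehat\Omega\widehat{\mathcal{U}}$ computed in those proofs. Conjugation by $\widehat\Omega=\diag[\omega^2,1,\omega^{-2}]$, combined with the phase $e^{-2\pi ij/3}$ of $\xi^{-j/3}$, forces each $\mathcal{K}_j$ to live on the diagonal (or cyclic off-diagonal) indexed by $j \bmod 3$.

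This invariance argument is essentially the one in \cite{Bertola_Bothner_2015} for the Meijer-G parametrix. I expect tracking the phases $\omega^{\pm A_1}$ coming from $\mathcal{W}$ to be the delicate part. Once the pattern is established, Step 2 finishes the proof.
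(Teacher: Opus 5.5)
Your proof is correct for the statement as phrased, but it runs in the opposite logical direction from the paper's.

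The paper treats the absence of jumps as the primary fact. It rewrites $z^{-M/3}\mathcal{K}(z)z^{M/3}$ as $(\tfrac{n}{3}f_0)^{M}\Psi(\xi)\xi^{-A/3}[\mathcal{U}^\pm]^{-1}\xi^{M/3}[\mathbb{H}_1]^{-1}(\tfrac{n}{3}f_0)^{-M}$. Arguing as for $\widehat{N}$ in Proposition~\ref{Prop: N hat} and using Proposition~\ref{Remark: H_1 entire}, it asserts that this has no jump on $\mathbb{R}\setminus\{0\}$, and concludes that only integer powers of $z$ can occur. Only then does it \emph{derive} the pattern \eqref{K structure}, by your Step~2 exponent count read backwards: every entry whose exponent $(m_l-m_k-j)/3$ is fractional must vanish.

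You instead take \eqref{K structure} as the hypothesis, so Step~2 by itself proves the claim as a purely algebraic check. The analytic input moves into your ``main obstacle'' paragraph. Your monodromy argument there is the paper's no-jump argument in different language: $z^{-M/3}\mathcal{K}z^{M/3}$ is single-valued because $[\widehat{\mathcal{U}}]^{-1}\widehat\Omega\,\widehat{\mathcal{U}}$ is a cyclic permutation, and this forces $e^{2\pi i(m_l-m_k-j)/3}=1$ on the support of $\mathcal{K}_j$.

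What each route buys:
\begin{itemize}
\item Your route gives a clean, self-contained proof of the conditional statement.
\item Your Step~1 is more careful than the paper, which silently drops the factor $3^jf_0(z)^{-j}$ in \eqref{K formal series in prop}.
\item The paper's route is what actually establishes the pattern \eqref{K structure}, and hence the integer-power expansion that the double-matching step builds on. Your Step~2 alone does not establish it; only your sketched monodromy argument would.
\end{itemize}

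One arithmetic slip in Step~2. For $j\equiv 1\pmod 3$, the values at $(2,1)$ and $(3,2)$ are $m_1-m_2=1$ and $m_2-m_3=1$, not $-1$. As written, $-1\equiv 2\pmod 3$ contradicts your own congruence. The correct values $-2,1,1$ are all $\equiv 1\pmod 3$, so your conclusion stands.
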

\begin{proof}
Proceeding similar to the proof of Proposition \ref{Prop: N hat}, we can observe that $\Psi(\xi) \xi^\frac{A}{3} [\mathcal{U}^\pm ]^{-1} \xi^{\frac{M}{3}}$ has no jump on $\mathbb{R}\backslash \{ 0\}$. Further, from \eqref{G 2 xi} and Remark \ref{Remark: H_1 entire}, we derive that
\begin{align*}
&\Psi(\xi) \xi^{-\frac{A}{3}} [\mathcal{U}^\pm ]^{-1} \xi^{\frac{M}{3}} = \xi^{-\frac{M}{3}} \mathcal{U}^\pm  \tilde{K}(\xi) \exp \big({ -3\xi^{\frac{1}{3}} \Omega_\pm} \big) [\mathcal{U}^\pm ]^{-1} \xi^{\frac{M}{3}}  \\
&=\xi^{-\frac{M}{3}} \mathcal{U}^\pm  \tilde{K}(\xi)[\mathcal{U}^\pm ]^{-1}\xi^{\frac{M}{3}}\xi^{-\frac{M}{3}}\mathcal{U}^\pm \exp \big({ -3\xi^{\frac{1}{3}} \Omega_\pm} \big) [\mathcal{U}^\pm ]^{-1} \xi^{\frac{M}{3}}\\
&=\xi^{-\frac{M}{3}} \mathcal{U}^\pm  \tilde{K}(\xi)[\mathcal{U}^\pm ]^{-1}\xi^{\frac{M}{3}}\mathbb{H}_1(\xi), \quad \xi \to \infty, \quad \pm ~Im(\xi) > 0.\\
\Longrightarrow&\bigg(\frac{n}{3}  f_0(z)\bigg)^{M} \Psi(\xi) \xi^{-\frac{A}{3}}  [\mathcal{U}^\pm ]^{-1} \xi^{\frac{M}{3}} [\mathbb{H}_1(\xi) ]^{-1} \bigg(\frac{n}{3}  f_0(z)\bigg)^{-M}=  z^{-\frac{M}{3}} \mathcal{K}(z) z^{\frac{M}{3}} .
\end{align*}
Now, the functions appearing on the LHS have no jump on the real axis. Thus, the LHS admits an asymptotic expansion in integer powers of $z$. Since $\xi^{-1/3} = 3[n f_0(z)]^{-1} z^{-\frac{1}{3}} $, the RHS admits an asymptotic expansion in powers of $z^{-\frac{1}{3}}$. But since the LHS has no jumps $\mathbb{R}\backslash \{ 0\}$, the expansion on the RHS must involve only inverse integer powers of $z$. Now, \eqref{K formal series in prop} follows from the expression of $\xi$ in \eqref{Conformal map}. Further, to identify the structure of $\mathcal{K}_j$'s in \eqref{K structure}, we calculate 
\begin{align*}
z^{-\frac{1}{6}\lambda_3} \mathcal{K}_j z^{\frac{1}{6}\lambda_3} z^{-j/3} = \begin{pmatrix}
   \mathcal{K}_{11}z^{-j/3} & \mathcal{K}_{12} z^{-(j+1)/3} & \mathcal{K}_{13}z^{-(j+2)/3} \\
   \mathcal{K}_{21} z^{(1-j)/3} & \mathcal{K}_{22}z^{-j/3} & \mathcal{K}_{23}z^{-(j+1)/3}  \\
   \mathcal{K}_{31} z^{(2-j)/3} & \mathcal{K}_{32} z^{(1-j)/3} &  \mathcal{K}_{33} z^{-j/3}
\end{pmatrix}.
\end{align*}
If $j \equiv 1 \pmod{3}$, the monomials corresponding to the entries 
$\mathcal{K}_{13}$, $\mathcal{K}_{21}$, and $\mathcal{K}_{32}$ have inverse integer powers of~$z$, 
while monomials for the remaining entries involve fractional powers of~$z$. 
Consequently, all coefficients except those of $\mathcal{K}_{13}$, $\mathcal{K}_{21}$, and $\mathcal{K}_{32}$ 
must vanish. An analogous argument determines the structure of $\mathcal{K}_j$ for 
$j \equiv 2 \pmod{3}$ and $j \equiv 0 \pmod{3}$.
\end{proof}

\subsubsection{Determination of $\mathcal{K}_1$} From \eqref{G 2 xi}, it is evident that
\begin{align}\label{eq: K z to Psi}
    \tilde{K}(z) = [\mathcal{U}^\pm]^{-1} z^{\frac{M}{3}} \Psi(z) \exp \big({ 3z^{\frac{1}{3}} \Omega_\pm} \big) z^{-\frac{A}{3}}. 
\end{align}

We have the following asymptotic expansions using \cite[Lemma 4.25]{Bertola_Bothner_2015}:
\begin{align*}
& g_1(z) = z^{-\gamma_1}\omega^{-2+\alpha_1+\alpha_2}e^{-3z^{\frac{1}{3}}\omega^2}[1+t_{12}\omega^{-2} z^{-\frac{1}{3}}+t_{13}\omega^{2} z^{-\frac{2}{3}}+O(z^{-1})],\\
& g_2^+(z) = z^{-\gamma_2}\omega^{2-\alpha_1-\alpha_2}e^{-3z^{\frac{1}{3}}\omega^{-2}}[1+t_{12}\omega^2 z^{-\frac{1}{3}}+t_{13}\omega^{-2} z^{-\frac{2}{3}}+O(z^{-1})],\\
&g_3(z) = z^{-\gamma_3}e^{-3z^{\frac{1}{3}}}[1+t_{12} z^{-\frac{1}{3}}+t_{13} z^{-\frac{2}{3}}+O(z^{-1})],
\end{align*}
where $\gamma_1 = \frac{1+\alpha_1+\alpha_2}{3}$, $\gamma_2= \frac{1-2\alpha_1+\alpha_2}{3}$, $\gamma_3= \frac{1-2\alpha_2+\alpha_1}{3}$, and $t_{12}=M_1 = \frac{\alpha_1^2+\alpha_2^2-\alpha_1 \alpha_2}{3} - \frac{1}{9}$. Differentiating the above equations to obtain the entries of the matrix $\widehat{\Psi}(z)$, we get
\begin{align*}
&z \partial_z g_1(z)= -z^{-\gamma_1}\omega^{-2+\alpha_1+\alpha_2}e^{-3z^{\frac{1}{3}}\omega^2}[z^{\frac{1}{3}}\omega^2+t_{22}+t_{23}z^{-\frac{1}{3}}\omega^{-2}+O(z^{-\frac{2}{3}})],\\
&(z \partial_z)^2 g_1(z)=z^{-\gamma_1}\omega^{-2+\alpha_1+\alpha_2}e^{-3z^{\frac{1}{3}}\omega^2} [z^{\frac{2}{3}}\omega^{-2}+t_{32}z^{\frac{1}{3}}\omega^{2}+t_{33}+O(z^{-\frac{1}{3}})],\\
&\big(z \partial_z -\alpha_1 \big)g^{+}_2(z) = -z^{-\gamma_2}\omega^{2-\alpha_1-\alpha_2}  e^{-3\omega^{-2}z^{\frac{1}{3}}}  [z^{\frac{1}{3}}\omega^{-2}+t_{22}+t_{23}z^{-\frac{1}{3}} \omega^2+O(z^{-\frac{2}{3}})],\\
&\big(z \partial_z -\alpha_1 \big)^2 g^{+}_2(z)=z^{-\gamma_2} \omega^{2-\alpha_1-\alpha_2}e^{-3z^{\frac{1}{3}}\omega^{-2}}  [z^{\frac{2}{3}} \omega^{2}+t_{32}z^{\frac{1}{3}}\omega^{-2}+t_{33}+O(z^{-\frac{1}{3}})],\\ 
&\big(z \partial_z -\alpha_2 \big)g_3(z)=-z^{-\gamma_3}e^{-3z^{\frac{1}{3}}} [z^{\frac{1}{3}}+t_{22}+t_{23}z^{-\frac{1}{3}}+O(z^{-\frac{2}{3}})],\\
&\big(z \partial_z -\alpha_2 \big)^2 g_3(z)= z^{-\gamma_3}e^{-3z^{\frac{1}{3}}} [z^{\frac{2}{3}}+t_{32}z^{\frac{1}{3}}+ t_{33}+O(z^{-\frac{1}{3}})],
\end{align*}
where $t_{22}=M_1+\gamma_1$, $t_{23}=M_2+M_1 (\gamma_1+\frac{1}{3})$, $t_{32}=M_1+2\gamma_1-\frac{1}{3}$, and $t_{33}=M_2+M_1(2\gamma_1 +\frac{1}{3})+\gamma_1^2$. Substituting all the entries of the matrix \eqref{G 2 xi} and then writing a matrix representation, we get
\begin{align*}
&z^{\frac{M}{3}} \Psi(z)=\Bigg[\begin{pmatrix}
\omega^{-2} & \omega^{2} & 1 \\
-1 & -1 & -1 \\
\omega^{2} & \omega^{-2} & 1
\end{pmatrix}+\begin{pmatrix}
t_{12}\omega^{2} & t_{12}\omega^{-2} & t_{12} \\
-t_{22} \omega^{-2}& -t_{22}\omega^{2} & -t_{22} \\
t_{32} & t_{32} & t_{32}
\end{pmatrix}z^{-\frac{1}{3}}+O(z^{-\frac{2}{3}}) \Bigg]\\
&\times\begin{pmatrix}
      \omega^{-A_1}& 0 &0\\
        0 &\omega^{A_1}  & 0\\
         0& 0& 1 \\  
    \end{pmatrix} \begin{pmatrix}
      z^{A_1/3}& 0 &0\\
        0 &z^{A_2/3}  & 0\\
         0& 0& z^{A_3/3} \\  
    \end{pmatrix} \begin{pmatrix}
      e^{-3z^{\frac{1}{3}}\omega^{2}}& 0 &0\\
        0 &e^{-3z^{\frac{1}{3}}\omega^{-2}} & 0\\
         0& 0& e^{-3z^{\frac{1}{3}}} \\  
    \end{pmatrix},
\end{align*}
which implies (in view of \eqref{eq: K z to Psi})
\begin{align*}
&\tilde{K}(z) = [\mathcal{U}^+]^{-1} \Bigg[\widehat{\mathcal{U}}+\begin{pmatrix}
t_{12}\omega^{2} & t_{12}\omega^{-2} & t_{12} \\
-t_{22} \omega^{-2}& -t_{22}\omega^{2} & -t_{22} \\
t_{32} & t_{32} & t_{32}
\end{pmatrix}z^{-\frac{1}{3}}+O(z^{-\frac{2}{3}}) \Bigg] \mathcal{W}.
\end{align*}
Hence, using \eqref{HU = UH}, we have
\begin{align}\label{eq: Round K1}
&\mathcal{K}_1 =\mathcal{U}^+ [\mathcal{U}^+]^{-1} \begin{pmatrix}
t_{12}\omega^{2} & t_{12}\omega^{-2} & t_{12} \\
-t_{22} \omega^{-2}& -t_{22}\omega^{2} & -t_{22} \\
t_{32} & t_{32} & t_{32}
\end{pmatrix} \mathcal{W} [\mathcal{U}^+]^{-1}=\frac{1}{3}\begin{pmatrix}
0 & 0 & t_{12} \\
-t_{22} & 0& 0 \\
0& -t_{32} & 0
\end{pmatrix} .
\end{align}

\subsubsection{The double matching} We will apply the double matching procedure from \cite{Molag_2021}. In this section, we will show that the conditions of \cite[Theorem 2.1]{Molag_2021} can be met; we repeat this theorem for convenience.

\begin{theorem}\label{Molag theorem}
Let $\mathring{P}$ and $N$ be defined in the neighborhood of $\overline{D(0,\rho)}$ for some $\rho > 0$. These are matrix-valued functions of size $m \times m$ that may vary with $n$. Let $a,b,c,d,e \geq 0$ satisfy
\begin{align*}
    a \leq e < b, \quad and \quad d < \min(b,c).
\end{align*}
Suppose that uniformly for $z \in \partial D(0,n^{-a})$ as $n \to \infty$
\begin{align*}
  \mathring{P}(z)N(z)^{-1} E(z) = \mathbb{I}+\frac{C(z)}{n^b z} + O(n^{-c}) ,
\end{align*}
where $C$ and $E$ are $m \times m$ functions in a neighborhood of $\overline{D(0,\rho)}$ that may vary with $n$ and
\begin{enumerate}
    \item $C$ is meromorphic with only a possible pole at $z=0$, whose order is bounded by some non-negative integer $p$ for all $n$, and $C$ is uniformly bounded for $z \in \partial D(0,n^{-a})$ as $n \to \infty$,
    \item $E$ is non-singular, analytic, and uniformly for $z,w \in \partial D(0,n^{-a})$, we have as $n \to \infty$ ,
\begin{align*}
    E(z) = O(n^{d/2}), \quad E(z)^{-1} = O(n^{d/2}), \quad and \quad E(z)^{-1}E(w)= \mathbb{I}+O(n^e (z-w)).
\end{align*}
\end{enumerate}
Then there are non-singular analytic functions $E_n^0:\overline{D(0,n^{-a})} \to \mathbb{C}^{m \times m}$ and $E_n^\infty:\overline{A(0,n^{-a},\infty)} \to \mathbb{C}^{m \times m}$ such that as $n \to \infty$,
\begin{align*}
 & E_n^0(z)   \mathring{P}(z) = [\mathbb{I}+O(n^{d-c})] E_n^\infty(z) N(z), \quad uniformly~for ~ z \in \partial D(0,n^{-a}), \\
  & E_n^\infty(z) = \mathbb{I}+O(n^{d-b}), \quad uniformly~for ~ z \in \partial D(0,\rho).
\end{align*}

\end{theorem}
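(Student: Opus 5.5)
The plan is to build $E_n^\infty$ as an explicit Laurent polynomial in $z^{-1}$, obtained by removing the pole of the leading error term, and then to \emph{define} $E_n^0$ so that the first matching relation holds by construction. Write
\[
Q(z)=\mathring{P}(z)N(z)^{-1}E(z),\qquad Q_0(z)=\mathbb{I}+\frac{C(z)}{n^b z},
\]
so that $Q=Q_0+O(n^{-c})$ on $\partial D(0,n^{-a})$. Since $C$ is uniformly bounded there and $|z|=n^{-a}$ with $a<b$, we have $C/(n^b z)=O(n^{a-b})=o(1)$. Hence $Q_0$ is invertible on the circle with $Q_0^{-1}=\mathbb{I}+O(n^{a-b})$. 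Moreover $Q_0$ is meromorphic in $D(0,\rho)$ with a pole at $0$ of order at most $p+1$.

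\textbf{Step 1: the construction.} I would look for
\[
E_n^\infty(z)=\mathbb{I}+\sum_{k=1}^{p+1}A_k z^{-k},
\]
which is analytic on $\overline{A(0,n^{-a},\infty)}$ and equals $\mathbb{I}$ at $\infty$. The coefficients $A_k=A_k(n)$ are to be chosen so that
\[
E_n^0(z):=E_n^\infty(z)\,E(z)\,Q_0(z)^{-1}E(z)^{-1}
\]
is analytic in $D(0,n^{-a})$. The analyticity condition says that the principal part at $0$ of this product vanishes, which is a finite linear system for $A_1,\dots,A_{p+1}$. With this choice the first matching relation is automatic. Indeed, $\mathring{P}N^{-1}=QE^{-1}$, so
\[
E_n^0\mathring{P}N^{-1}(E_n^\infty)^{-1}=E_n^\infty E\,Q_0^{-1}Q\,E^{-1}(E_n^\infty)^{-1}=E_n^\infty E\bigl(\mathbb{I}+O(n^{-c})\bigr)E^{-1}(E_n^\infty)^{-1}.
\]
Using $E,E^{-1}=O(n^{d/2})$, this equals $\mathbb{I}+O(n^{d-c})$, provided $E_n^\infty$ and its inverse are $O(1)$ on $\partial D(0,n^{-a})$. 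Non-singularity of $E_n^0$ then follows from the determinant identity away from $0$ together with analyticity.

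\textbf{Step 2: solving the system (the main obstacle).} I would conjugate by $E(0)$ and write
\[
E(z)=E(0)\bigl(\mathbb{I}+F(z)\bigr),\qquad F(z)=E(0)^{-1}\bigl(E(z)-E(0)\bigr).
\]
Applying the hypothesis $E(z)^{-1}E(w)=\mathbb{I}+O(n^e(z-w))$ on $\partial D(0,n^{-a})$ together with Cauchy estimates, the Taylor coefficients of $F$ satisfy $F^{(j)}(0)/j!=O(n^{e}n^{a(j-1)})$. On the disk of radius $n^{-a}$ this makes $F$ effectively of size $O(n^{e-a})$ after rescaling $z=n^{-a}\zeta$. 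In the rescaled variable, the principal-part equation becomes a perturbation of the trivial system. The unperturbed solution is $\sum_k A_k z^{-k}\approx$ the principal part of $E(0)\,C(z)E(0)^{-1}/(n^b z)$, and the perturbation is controlled by $n^{e-b}$ and $n^{a-b}$, both $o(1)$ because $a\le e<b$. A Neumann series then gives a unique solution with $A_k n^{ak}=O(n^{d+e-b}\cdot\ldots)$. This is the step I expect to be delicate: the gains from the $E^{-1}E$ condition must exactly compensate the $n^{d/2}$ growth of $E$ and $E^{-1}$. If the Neumann argument loses powers, I would instead compute $A_k$ by Cauchy integrals $\oint_{|w|=n^{-a}}(\cdots)w^{k-1}\,dw$ and iterate.

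\textbf{Step 3: size of $E_n^\infty$.} Each term satisfies $|A_k z^{-k}|\le |A_k|\rho^{-k}$, and the bound from Step 2 has the form $|A_k|=O(n^{d-b}n^{-a(k-1)})$ or better. Since $d<b$, this gives $E_n^\infty=\mathbb{I}+O(n^{d-b})$ uniformly on $\partial D(0,\rho)$. It also gives $E_n^\infty$, and hence its inverse, bounded on $\partial D(0,n^{-a})$, which is what Step 1 needed. This completes both asserted relations.
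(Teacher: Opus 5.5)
Your Step~1 sets up the wrong analyticity condition. First, a slip: with $E_n^0:=E_n^\infty E Q_0^{-1}E^{-1}$ your displayed identity does not follow. You need $E_n^0:=E_n^\infty E Q_0^{-1}$; this is harmless, since $E$ is analytic and invertible. The substantive problem is that $Q_0^{-1}$ is not singular only at $0$. It has poles at the zeros of $\det Q_0$, and by Rouch\'e $\det\bigl(n^bz^{p+1}Q_0\bigr)$ has $m(p+1)$ zeros in $D(0,n^{-a})$, in general away from the origin. Take $m=1$, $p=0$, $E\equiv 1$, $C\equiv c\neq 0$. Then $(1+A_1/z)Q_0^{-1}=n^b(z+A_1)/(n^bz+c)$ has no principal part at $0$ for any $A_1$, so your ``finite linear system'' imposes nothing, while $E_n^0$ is analytic only for $A_1=cn^{-b}$. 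If you instead mean all negative Laurent coefficients in the annulus where the Neumann series for $Q_0^{-1}$ converges, you get infinitely many conditions for finitely many unknowns. The correct requirement is that $\mathbb{I}+C_n(z)/(n^bz)$, with $C_n:=ECE^{-1}$, admit a canonical factorization $(\widetilde{E}^0)^{-1}E_n^\infty$ on $\partial D(0,n^{-a})$, where $\widetilde{E}^0=E_n^0E^{-1}$. This symbol need not be close to $\mathbb{I}$ there: it can deviate by up to $n^{d+a-b}$. Solving this factorization with the right bounds is the whole content of the theorem. Your Step~2 only asserts that a Neumann series converges, without exhibiting the small parameter.

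Second, the bound you aim for is false in the theorem's regime. You need $E_n^\infty$ and $(E_n^\infty)^{-1}$ to be $O(1)$ on $\partial D(0,n^{-a})$. But your own estimate $A_k=O(n^{d-b-a(k-1)})$ gives $A_kz^{-k}=O(n^{d+a-b})$ there, and $d+a>b$ is allowed by $a\le e<b$, $d<\min(b,c)$. It is exactly this paper's case: $a=3/2$, $b=3$, $d=2$ give $n^{1/2}$. Even for constant $E$ and $C$, the exact answer $E_n^\infty=\mathbb{I}+ECE^{-1}/(n^bz)$, $E_n^0=E$, has this size. Also, a bound on a matrix gives no bound on its inverse. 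What must be controlled is $E_n^0$ and $(E_n^0)^{-1}$, each by $O(n^{d/2})$ on the inner circle, because the first matching error equals $E_n^0\,O(n^{-c})\,Q_0^{-1}(E_n^0)^{-1}$.

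The missing idea is the product estimate that follows from $E(z)^{-1}E(w)=\mathbb{I}+O(n^e(z-w))$. For $z_j\in\overline{D(0,n^{-a})}$ one has $C_n(z_1)\cdots C_n(z_k)=O(n^{d+(k-1)(e-a)})$. So each additional factor $C_n/(n^bz)$ costs only $n^{e-b}\to 0$, not $n^{d+a-b}$. The paper quotes the theorem from Molag and builds $E_n^0$ and $E_n^\infty$ exactly this way. Inside the disk it repeatedly multiplies by $\mathbb{I}-\bigl(C_n^{(k)}(z)-C_n^{(k)}(0)\bigr)/(n^{b_k}z)$. This is a difference quotient, hence analytic, so the zeros of $\det Q_0$ never need to be located. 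Outside it multiplies by $\bigl(\mathbb{I}-C_n^{(k)}(0)/(n^{b_k}z)\bigr)^{-1}$. The product estimate shows that each round improves the inner jump by $n^{e-b}$, here $n^{-1/2}$: from $O(n^{1/2})$ to $O(1)$, $O(n^{-1/2})$ and $O(n^{-1})$. Hence finitely many rounds suffice because $e<b$.
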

Our present situation requires that one takes $m=3$. The main objective of the current section is to show that assumptions of the theorem \ref{Molag theorem}, and in particular the estimates in (2), hold for some choice of the constants $a,b,c,d$ and $e$. We shall determine the explicit values of constants $a,b,c,d$ and $e$ as we go along. We want a double matching on the circle with radius $\rho=r_0$ and the circle of radius $r_n$ such that
\begin{align*}
    r_n = n^{-\frac{3}{2}}.
\end{align*}
As mentioned earlier, we can use the analytic prefactors $E_n^0$ and $E_n^\infty$ from the theorem \ref{Molag theorem} and construct local parametrix $P$ as
\begin{align*}
P(z)= \begin{cases}
    E_n^0(z) \mathring{P}(z), & z \in D(0,r_n),\\
   E_n^\infty(z) N(z), & z \in A(0,r_n,r_0),
\end{cases}
\end{align*}
and $P$ will then satisfy a matching condition on both the inner and outer circle.

\begin{definition}
With $\Psi$, $\xi(z)$, $\Omega$, $\widehat{N(z)}$, and $P(z)$ as in \eqref{First guess of local parametrix}, \eqref{G 2 xi}, \eqref{Conformal map} and \eqref{Omega and its tilde}, we define the initial local parametrix by 
\begin{align*}
\mathring{P}(z) = \left(\frac{z}{\xi}\right)^{\frac{M}{3}}\widehat{N(z)}^{-1}\mathbf{P}(z).
\end{align*}
\end{definition}

\begin{definition}\label{Def_E z}
We define the function
  \begin{align}\label{eq: E(z)}
    E(z) = \widehat{N(z)} z^{-\frac{M}{3}} H(z)^{-1} \xi^{\frac{M}{3}}.
\end{align}  
\end{definition}
We have to prove some properties of $E$.
\begin{proposition}
 E(z) is an analytic and non-singular function in the neighborhood of $z=0$.
\end{proposition}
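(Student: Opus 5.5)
Write $E(z)=\widehat{N(z)}\,\mathbb{H}(z)^{-1}\,z^{-\frac{M}{3}}\xi^{\frac{M}{3}}$. This uses $\mathbb{H}(z)=z^{-\frac{M}{3}}H(z)z^{\frac{M}{3}}$ from Proposition \ref{Prop_conjugate H entire}, which gives $z^{-\frac{M}{3}}H(z)^{-1}=\mathbb{H}(z)^{-1}z^{-\frac{M}{3}}$. The plan is to show that each of the three factors is analytic and non-singular near $z=0$. Then so is their product, and it is not necessary to track the jumps of $E$ separately.

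\emph{First factor.} By Proposition \ref{Prop: N hat}, $\widehat{N(z)}$ is analytic in a neighbourhood of the origin, $z=0$ is a removable singularity, and it has an analytic inverse. So $\widehat{N(z)}$ is analytic and non-singular there.

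\emph{Second factor.} By Proposition \ref{Prop_conjugate H entire}, $\mathbb{H}(z)$ is analytic near $0$ with $\det\mathbb{H}\equiv 1$. Hence $\mathbb{H}(z)^{-1}$, which is its adjugate, is also analytic with determinant $1$.

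One point needs care here: Proposition \ref{Prop_conjugate H entire} shows only that $\mathbb{H}$ has no jumps across $\Delta$ and $\Gamma^*$, so it remains to rule out an essential or polar singularity at $0$. I would do this from the local expansions of $H_j$ at $0$. Those give $H_j=\pm2\pi i(-1)^j+3e_2(0)z^{1/3}+\tfrac32e_1(0)z^{2/3}+O(z^{4/3})$, and $e^{\pm 2\pi i n}=1$ because $n\in\mathbb{Z}$. Hence $D_n(z)$ is bounded near $0$ for fixed $n$, and so $\mathbb{H}(z)=O(z^{-2/3})$ as $z\to0$. An isolated singularity with this growth is removable, exactly as in the proof of Proposition \ref{Prop: N hat}.

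\emph{Third factor.} By \eqref{Conformal map}, $\xi/z=\frac{n^3}{27}f_0(z)^3$. Since $f_0$ is analytic with $f_0(0)\neq0$ (the conformality of $\xi$ at $0$), the branch $(\xi/z)^{1/3}=\frac n3 f_0(z)$ is analytic and nonvanishing near $0$. Therefore $z^{-\frac{M}{3}}\xi^{\frac{M}{3}}=\diag[\tfrac n3 f_0(z),1,(\tfrac n3 f_0(z))^{-1}]$ is analytic, with determinant $1$.

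The step I expect to be the main obstacle is the branch consistency in this third factor. The branches of $\xi^{1/3}$ and $z^{1/3}$ must be chosen compatibly, both with $-\pi<\arg z<\pi$ as in \eqref{Conformal map}, so that their ratio has no jump on $\Gamma^*$. I would check this directly: both $z^{1/3}$ and $\xi^{1/3}$ pick up the same factor $e^{2\pi i/3}$ across the negative axis, so the ratio $\frac n3 f_0(z)$ is single-valued. With that settled, combining the three factors gives that $E$ is analytic near $0$ and that $\det E=\det\widehat{N}\neq0$, which is the non-singularity claimed.
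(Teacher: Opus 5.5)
Your proposal is correct and follows essentially the same route as the paper. The paper also rewrites $E(z)=\widehat{N(z)}\,\bigl[z^{-\frac{M}{3}}H(z)^{-1}z^{\frac{M}{3}}\bigr]\,(\tfrac{n}{3})^{M}f_0(z)^{M}$ and concludes factor by factor from Proposition \ref{Prop: N hat}, Proposition \ref{Prop_conjugate H entire} and the analyticity of $f_0$. Your additional checks, that $\mathbb{H}(z)=O(z^{-2/3})$ so its isolated singularity at $0$ is removable, and that $\det E=\det\widehat{N}\neq0$, make explicit two points the paper's short argument takes for granted.
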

\begin{proof}
Using \eqref{Conformal map} and \eqref{HU = UH}, we can rewrite $E(z)$ as
  \begin{align*}
    E(z) &= \widehat{N(z)} z^{-\frac{M}{3}} \Bigg[ \mathcal{U}^\pm  D_n(z) [\mathcal{U}^\pm]^{-1} \Bigg]^{-1} z^{\frac{M}{3}}  \bigg(\frac{n}{3}\bigg)^{M} [f_0(z)]^{M}.
\end{align*}
Since all the matrices in the above expression of $E(z)$ are non-singular, therefore, $E(z)$ is also non-singular. 
Now, $\widehat{N(z)}$ is analytic with $z=0$ as a removable singularity by Proposition \ref{Prop: N hat}, $z^{-\frac{M}{3}} H(z)^{-1} z^{\frac{M}{3}}$ is analytic and non-singular in the neighborhood of $z=0$ by Proposition \ref{Prop_conjugate H entire}, and $f_0(z)$ is a conformal mapping and hence analytic. Further, we know that the product of analytic functions is analytic and hence, the proposition follows. 
\end{proof}

\begin{lemma}\label{Lm: Ez approx}
Uniformly for $z \in \overline{D(0,r_n)}$, we have as $n \to \infty$
\begin{align*}
E(z) = O(n), \quad E(z)^{-1} = O(n).
\end{align*}
\end{lemma}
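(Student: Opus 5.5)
We will bound $E(z)=\widehat{N(z)}\, z^{-\frac{M}{3}} H(z)^{-1} \xi^{\frac{M}{3}}$ by splitting it into three factors:
\begin{align*}
E(z)=\widehat{N(z)}\cdot \mathbb{H}(z)^{-1}\cdot \left(\frac{\xi}{z}\right)^{\frac{M}{3}},\qquad \mathbb{H}(z)=z^{-\frac{M}{3}}H(z)z^{\frac{M}{3}}.
\end{align*}
We then estimate each factor separately on $\overline{D(0,r_n)}$ with $r_n=n^{-3/2}$. The inverse has the same structure,
\begin{align*}
E(z)^{-1}=\left(\frac{\xi}{z}\right)^{-\frac{M}{3}}\mathbb{H}(z)\,\widehat{N(z)}^{-1},
\end{align*}
so the same bounds apply to it.

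\textbf{Outer factors.} By Proposition \ref{Prop: N hat}, $\widehat{N}$ and $\widehat{N}^{-1}$ are analytic near $0$ and independent of $n$. Hence they are $O(1)$ uniformly on $\overline{D(0,r_0)}$, and in particular on the shrinking disk. By \eqref{Conformal map}, $(\xi/z)^{1/3}=\frac n3 f_0(z)$, where $f_0$ is analytic with $f_0(0)\neq0$. Since $M=\diag[1,0,-1]$, we get
\begin{align*}
\left(\frac{\xi}{z}\right)^{\pm\frac M3}=\diag\left[\left(\tfrac n3 f_0\right)^{\pm1},1,\left(\tfrac n3 f_0\right)^{\mp1}\right]=O(n),
\end{align*}
uniformly near $0$. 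This factor is the source of the $O(n)$.

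\textbf{Middle factor, the main point.} It remains to show that $\mathbb{H}(z)$ and $\mathbb{H}(z)^{-1}$ are $O(1)$ uniformly for $|z|\le n^{-3/2}$. By Proposition \ref{Prop_conjugate H entire}, $\mathbb{H}$ is analytic with $\det\mathbb{H}=1$, so $\mathbb{H}^{-1}$ is the adjugate and is bounded whenever $\mathbb{H}$ is. Because $\mathbb{H}$ depends on $n$, we bound it directly.

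\emph{Step 1: the exponents are uniformly small.} By the expansions at $0$ in the section on the $H$ functions, $H_j(z)=\pm2\pi i(-1)^j\,\delta_{j\le1}+3e_2(0)z^{1/3}+O(z^{2/3})$, with the cube-root branch depending on $j$. The constants $\pm 2\pi i n$ contribute nothing after exponentiation, since $n$ is an integer. On $|z|\le n^{-3/2}$ we have $n z^{1/3}=O(n^{1/2})$, which is not small. So $D_n$ itself is not $O(1)$, and we must exploit cancellation in the conjugation.

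\emph{Step 2: cancellation in the conjugation.} Write $D_n=\exp(-nG)$, where $G=\diag[H_0,H_1,H_2]$ with the $2\pi i$ constants removed. By the same branch bookkeeping as in Propositions \ref{Prop_conjugate H entire} and \ref{Remark: H_1 entire}, $z^{-\frac M3}\mathcal U^\pm G(z)[\mathcal U^\pm]^{-1}z^{\frac M3}=:\mathbb G(z)$ is analytic at $0$. Its entries are then single-valued combinations $\sum_j z^{-m_k/3}u_{kj}H_j(z)\tilde u_{jl}z^{m_l/3}$. The leading $z^{1/3}$-term of $G$ is $3e_2(0)z^{1/3}\,\diag$ of cube-root branches, so it is proportional to $z^{1/3}\Omega_\pm$. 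As in Proposition \ref{Remark: H_1 entire}, conjugating it gives a nilpotent-shift-type matrix with entries of order $z^{k/3}\cdot z^{-(m_k-m_l)/3}$, which are analytic polynomial entries. Hence $\mathbb G(z)$ is analytic with $\mathbb G(z)=O(1)$ near $0$. It follows that $\mathbb H(z)=\exp(-n\mathbb G(z))$.

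We need $n\mathbb G(z)$ to be bounded on $|z|\le n^{-3/2}$. This is where the choice of $r_n$ enters. We check that $\mathbb G(0)$ is diagonal with entries in $2\pi i\mathbb Z$ or zero, and that the nonzero entries of $\mathbb G(z)-\mathbb G(0)$ come with a factor of at least $z^{1/3}\cdot z^{1/3}$ relative to the $O(n)$ scale after conjugation. Concretely, the $(2,1)$-type entries behave like $z^{1/3}z^{-1/3}$ times a power series in $z^{1/3}$ that is forced to be integral. We then verify that the entry growth $n|z|^{2/3}$ (respectively $n|z|$) is $O(1)$ for $|z|\le n^{-3/2}$. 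If this direct entrywise bound turns out to be delicate, the fallback is a cruder estimate: $\|\mathbb H\|\le \|z^{\mp M/3}\|\,\|\exp(-n\mathcal U G\mathcal U^{-1})\|$. On the boundary circle this still gives a power of $n$, and it can be balanced against the $(\xi/z)^{M/3}$ factor. Combining the three factor bounds then yields $E=O(n)$ and $E^{-1}=O(n)$. We expect this middle-factor estimate to be the main obstacle.
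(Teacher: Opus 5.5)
Your factorization $E=\widehat N\,\mathbb H^{-1}(\xi/z)^{M/3}$ and your bounds on the two outer factors are correct. So is your Step 1 remark that $D_n$ is not $O(1)$ on $|z|\le n^{-3/2}$.

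Step 2 cannot succeed, because conjugation does not change eigenvalues. $\mathbb H(z)$ is similar to $D_n(z)$, so its eigenvalues are $e^{-nH_j(z)}$. On $|z|=r_n$ one has $nH_j=3e_2(0)\,nz_j^{1/3}+O(1)$ modulo $2\pi i\mathbb Z$, with $|nz^{1/3}|=n^{1/2}$ and $n|z|^{2/3}=1$. The three branches $z_j^{1/3}$ are $2\pi/3$ apart and $e_2(0)\neq0$. Hence at every point of the circle some $\mathrm{Re}(-nH_j)\ge\tfrac32|e_2(0)|n^{1/2}-C$, and some other one is $\le-\tfrac32|e_2(0)|n^{1/2}+C$. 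So $\|\mathbb H^{\pm1}(z)\|\ge e^{c\sqrt n}$, and $n\mathbb G$ is not bounded.

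Concretely, $\mathbb G(0)$ is not diagonal. The $z^{1/3}$-part of $G$ conjugates to $3e_2(0)$ times a constant nilpotent matrix with entries in positions $(1,2),(2,3)$, and the $z^{2/3}$-part gives a constant $(1,3)$ entry. Your fallback fails for the same reason: $\exp(-n\mathcal U^\pm G[\mathcal U^\pm]^{-1})=H$ is itself of size $e^{c\sqrt n}$, not a power of $n$.

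Since $\widehat N^{\pm1}=O(1)$ and $(\xi/z)^{\pm M/3}=O(n)$, this forces $\|E(z)\|\ge e^{c\sqrt n}/(Cn)$. So with $E$ built literally from $H=\mathcal U^\pm D_n[\mathcal U^\pm]^{-1}$, the estimate $E=O(n)$ is false, and no bound on the middle factor can exist. The paper's proof does not get around this. It asserts that $H$ and $H^{-1}$ are uniformly bounded on $\overline{D(0,r_n)}$, which is exactly the claim you rightly doubted. It also treats $z^{-M/3}$ as bounded on $|z|=r_n$, where it is of size $n^{1/2}$.

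The missing idea is that the $nz^{1/3}$-growth must be cancelled by the exponential of $\Psi$ before any conjugation. Insert $\Psi(\xi)\sim\xi^{-M/3}\mathcal U^\pm\tilde K(\xi)\xi^{A/3}e^{-3\xi^{1/3}\Omega_\pm}$ into $\widehat{\mathbf P}D_n$. The matrix that then appears in place of $H$ is $\mathcal U^\pm e^{-3\xi^{1/3}\Omega_\pm}D_n[\mathcal U^\pm]^{-1}$. With $f_0(0)=-3e_2(0)$, which is what the choice of $f_0$ is for, the $z^{1/3}$-terms cancel. What remains is $\diag\big(\exp(-\tfrac32e_1(0)nz_j^{2/3}+O(n|z|^{4/3}))\big)$, which is $O(1)$ precisely because $nr_n^{2/3}=1$.

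With this $H$ the lemma holds, but the maximum principle must be applied in the rescaled variable $\zeta=n^{3/2}z$. The matrix $\zeta^{-M/3}H^{\mp1}\zeta^{M/3}=n^{-M/2}(\mathbb H\mathbb H_1)^{\mp1}n^{M/2}$ is analytic on $|\zeta|\le1$, by the analyticity of $\mathbb H$ and $\mathbb H_1$. It is bounded on $|\zeta|=1$, hence $O(1)$ on the whole disk. Writing $(\xi/z)^{M/3}=n^{M}\diag(f_0/3,1,3/f_0)$ gives $E=\widehat N\,n^{M/2}\,O(1)\,n^{M/2}\,O(1)$. Its $(i,j)$ entries carry at most $n^{(m_i+m_j)/2}\le n$, and in the same way $E^{-1}=O(n)$.
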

\begin{proof}
$\widehat{N(z)}$ is analytic at $z=0$ by Proposition \ref{Prop: N hat} and does not depend on $n$. Therefore, it is certainly uniformly bounded for $z \in \overline{D(0,r_n)}$. From \eqref{HU = UH}, it can easily be seen that $H$ and $H^{-1}$ are uniformly bounded for $z \in \overline{D(0,r_n)}$. Thus, each of the factors in $z^{-\frac{M}{3}} H(z)^{-1} z^{\frac{M}{3}} [f_0(z)]^{M}$ remains bounded for $|z|=r_n$, and since the product is analytic, the product remains uniformly bounded for $D(0,r_n)$ by maximum modulus principle. So the only factor that grows comes from the matrix $n^{M}$, which due to specific form of $M$ is $O(n)$, that leads to $E(z) = O(n)$. Since $\det E(z) \neq 0$ near $z=0$, $E(z)^{-1} = O(n)$.
\end{proof}

\begin{lemma}
Uniformly for $z,w \in \overline{D(0,r_n)}$, we have as $n \to \infty$
\begin{align*}
E(z)^{-1}E(w)  = \mathbb{I}+O(n^{5/2} (z-w)).
\end{align*}
\end{lemma}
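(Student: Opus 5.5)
We split $E$ into an $n$-independent analytic part and an $n$-dependent part, and estimate each separately. Write
\begin{align*}
E(z) = F(z)\,\Big(\frac{n}{3}\Big)^{M}, \qquad F(z) := \widehat{N(z)}\,\mathbb{H}(z)^{-1}\, f_0(z)^{M},
\end{align*}
where $\mathbb{H}(z)=z^{-\frac{M}{3}}H(z)z^{\frac{M}{3}}$ is the function of Proposition~\ref{Prop_conjugate H entire}. This splitting uses $\xi^{M/3}=(n/3)^{M} z^{M/3} f_0(z)^{M}$ from \eqref{Conformal map}, together with the fact that diagonal matrices commute. The factor $(n/3)^M=\diag[n/3,1,3/n]$ is constant in $z$, so
\begin{align*}
E(z)^{-1}E(w) = \Big(\frac{n}{3}\Big)^{-M} F(z)^{-1}F(w) \Big(\frac{n}{3}\Big)^{M}.
\end{align*}
It therefore suffices to show
\begin{align*}
F(z)^{-1}F(w) = \mathbb{I} + O\big(n^{1/2}(z-w)\big)
\end{align*}
uniformly on $\overline{D(0,r_n)}$. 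Conjugation by $(n/3)^{\pm M}$ multiplies the $(i,j)$ entry by $(n/3)^{m_j-m_i}$, and $|m_j-m_i|\le 2$, so it costs at most a factor $n^2$. That gives $O(n^{5/2}(z-w))$, as claimed.

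\textbf{Step 1: bound $F'$.} Write $F(z)^{-1}F(w) = \mathbb{I} + F(z)^{-1}\big(F(w)-F(z)\big)$ and estimate $F(w)-F(z)=\int_z^w F'(t)\,dt$ along the segment, which lies in the convex disc. We need $F$, $F^{-1}=O(1)$ and $F'=O(n^{1/2})$ uniformly on $\overline{D(0,r_n)}$. The factors $\widehat{N}$ (Proposition~\ref{Prop: N hat}) and $f_0^M$ are analytic, $n$-independent and non-singular near $0$, so they and their derivatives are $O(1)$. The only $n$-dependence sits in $\mathbb{H}(z)^{-1}$, whose entries are combinations of $z^{(m_j-m_i)/3}$ and $e^{nH_k(z)}$, by the definitions \eqref{HU = UH} and \eqref{eq: def Dn z}.

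\textbf{Step 2 (the main obstacle): bound $\mathbb{H}^{-1}$ and its derivative.} By the expansions of $H_k$ at $0$ from the section on $H$ functions,
\begin{align*}
nH_k(z) = \pm 2\pi i n(\cdots) + 3n e_2(0) z^{1/3}(\text{branch}) + O(n z^{2/3}).
\end{align*}
On $|z|\le r_n=n^{-3/2}$ we have $|n z^{1/3}|\le n^{1/2}$, so the exponentials are not uniformly bounded and the argument must exploit structure. The plan is to use that $\mathbb{H}^{-1}$ is analytic (Proposition~\ref{Prop_conjugate H entire}) and is a symmetric function of the three branches. Concretely, it equals $\mathbb{G}(\zeta)$ with $\zeta = n z^{1/3}$ (up to the $f_0$-normalisation), where $\mathbb{G}$ is an entire function built from $\widehat{\mathcal{U}}\exp(3\zeta\,\Omega)\widehat{\mathcal{U}}^{-1}$, in direct analogy with Proposition~\ref{Remark: H_1 entire}. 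This explains the choice $r_n=n^{-3/2}$: the argument $\zeta$ stays in a bounded set, $|\zeta|\lesssim n^{1/2}\cdot n^{-1/2}$ after correct scaling. If instead $\zeta$ is only $O(n^{1/2})$, then $e^{O(n^{1/2})}$ would break the bound, so we must check that $n\,r_n^{1/3}=n^{1/2}$ is compatible with the $O(1)$ claim of Lemma~\ref{Lm: Ez approx}. We take that lemma as given: it asserts $F$, $F^{-1}=O(1)$ on $\overline{D(0,r_n)}$.

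\textbf{Step 3: derivative via Cauchy estimates.} Obtain the derivative bound from the $O(1)$ bound on a slightly larger disc, $D(0,2r_n)$, where the same argument as in Lemma~\ref{Lm: Ez approx} applies. The Cauchy estimate gives only $F'=O(r_n^{-1})=O(n^{3/2})$, which is too crude. Use instead the chain rule in the variable $\zeta$: the entries of $\mathbb{H}^{-1}$ are $z^{j/3}$-weighted series whose $z$-derivative is $O(n\,|z|^{-2/3})\cdot(\text{terms vanishing like } z^{2/3})$. Analyticity forces the fractional powers to combine into integer powers, and the leading linear term of $\mathbb{G}(\zeta)$ contributes $n^3 z\cdot$(bounded), whose derivative is $O(n^3 r_n\cdot r_n^{-1})$. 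Balancing this against the $n^{-M}$ weights is where the exponent $1/2$ must be extracted. If the bookkeeping does not close, fall back on a direct entrywise derivative estimate on $\mathbb{H}^{-1}$, then multiply by the $n^2$ conjugation loss to reach $n^{5/2}$.
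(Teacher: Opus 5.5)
Your factorization $E(z)=F(z)(n/3)^{M}$ is correct, but the proof stops exactly where the content lies. Steps 2--3 never establish $F(z)^{-1}F(w)=\mathbb{I}+O(n^{1/2}(z-w))$; they only sketch a hoped-for mechanism plus a fallback. The input ``$F,F^{-1}=O(1)$'' is also not what Lemma~\ref{Lm: Ez approx} asserts. That lemma gives $E,E^{-1}=O(n)$, hence only $F=E\,(n/3)^{-M}=O(n^{2})$.

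More importantly, your intermediate estimate is false, so no sharper bookkeeping of $F'$ can rescue it. Your remark in Step 2 is right. With $H=\mathcal{U}^\pm D_n(\mathcal{U}^\pm)^{-1}$ taken literally, $|nH_j|\asymp n|z|^{1/3}$ reaches $n^{1/2}$ on $|z|=r_n$. Since the real parts of the $nH_j$ sum to $0$, some entries of $D_n$ and of $D_n^{-1}$ are of size $e^{cn^{1/2}}$, and analyticity of $\mathbb{H}$ does not remove this. The paper's own proof passes over this point by asserting that $H,H^{-1}$ are bounded.

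The growth has to be cancelled by the factor $\exp(-3\xi^{1/3}\Omega_\pm)$ from \eqref{G 2 xi}. Set $\Lambda(z)=\exp(-3\xi^{1/3}\Omega_\pm)D_n(z)$, with $f_0$ chosen to cancel the $z^{1/3}$-terms of $nH_j$. Then the exponents of $\Lambda$ are $O(n|z|^{2/3})=O(1)$ on $\overline{D(0,2r_n)}$; this is what $r_n=n^{-3/2}$ is for. Even then $F$ is not bounded. In the variable $u=n^{3/2}z$,
\[
z^{-\frac{M}{3}}\,\mathcal{U}^\pm\Lambda(z)^{-1}(\mathcal{U}^\pm)^{-1}z^{\frac{M}{3}}=n^{\frac{M}{2}}\,\mathcal{G}(u)\,n^{-\frac{M}{2}} .
\]
Here $\mathcal{G}$ is analytic: it is $\mathbb{H}^{-1}\mathbb{H}_1^{-1}$ from Propositions~\ref{Prop_conjugate H entire} and \ref{Remark: H_1 entire}, rescaled. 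It is bounded, with bounded $u$-derivative, on $|u|\le 1$ uniformly in $n$. But it has nonzero off-diagonal entries; its first-order term comes from the coefficient $\tfrac32 e_1(0)\neq0$ of $z^{2/3}$ in $H_j$. Hence $F$ has entries of size $n$. The $(i,j)$ entry of $F(z)^{-1}F(w)-\mathbb{I}$ is generically of order $n^{(m_i-m_j)/2}\,n^{3/2}|z-w|$, e.g.\ $n^{2}|z-w|$ in position $(1,2)$.

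What closes the argument is to keep the weights entrywise. Do not bound $F(z)^{-1}F(w)$ by a scalar and then pay $n^{2}$ for the conjugation by $(n/3)^{\pm M}$. The constant $(n/3)^{M}$ partially cancels the internal weight. With $\Lambda$ in place of $D_n$ and $v=n^{3/2}w$, one has $E(z)=\widehat{N}(z)\,n^{M/2}\mathcal{G}(u)\,n^{M/2}(f_0(z)/3)^{M}$, so
\[
E(z)^{-1}E(w)=\Big(\tfrac{f_0(z)}{3}\Big)^{-M}n^{-\frac{M}{2}}\Big[\mathcal{G}(u)^{-1}\big(\mathbb{I}+n^{-\frac{M}{2}}O(z-w)\,n^{\frac{M}{2}}\big)\mathcal{G}(v)\Big]n^{\frac{M}{2}}\Big(\tfrac{f_0(w)}{3}\Big)^{M}.
\]
The bracket is $\mathbb{I}+O(u-v)+O(n(z-w))=\mathbb{I}+O(n^{3/2}(z-w))$. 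This is just the Cauchy estimate in $u$, so the rate $n^{3/2}$ in $z$ that you dismissed as too crude is the correct one for $\mathcal{G}$. The single remaining conjugation by $n^{\mp M/2}$ costs one factor $n$, giving $n^{5/2}$.

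This $\tfrac32+1$ count is also where the paper's exponent comes from. The paper conjugates separately by $z^{\pm M/3}$ and $\xi^{\pm M/3}$, which on $|z|=r_n$ are of size $n^{\mp M/2}$ and $n^{\pm M/2}$ respectively. There, the relative variations $(w/z)^{M/3}$ and $(\xi(w)/\xi(z))^{M/3}$ each equal $\mathbb{I}+O(n^{3/2}(z-w))$.
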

\begin{proof}
We have $[\widehat{N(z)}]^{-1}\widehat{N(w)}  = \mathbb{I}+O(z-w) $ if $z,w \in \overline{D(0,r_n)}$. applying these estimates and using \eqref{Def_E z} implies for $z,w \in \overline{D(0,r_n)}$,
\begin{align*}
E(z)^{-1}E(w) = \xi(z)^{-\frac{M}{3}} H(z) z^{\frac{M}{3}}(\mathbb{I}+O(z-w))w^{-\frac{M}{3}}H(w)^{-1}\xi(w)^{\frac{M}{3}}.
\end{align*}
We first estimate $z^{\frac{M}{3}}(\mathbb{I}+O(z-w))w^{-\frac{M}{3}}$ as
\begin{align*}
&z^{\frac{M}{3}}(\mathbb{I}+O(z-w))w^{-\frac{M}{3}} = z^{\frac{M}{3}}(\mathbb{I}+O(z-w)) z^{-\frac{M}{3}} \bigg(\mathbb{I}+\frac{w-z}{z} \bigg)^{-\frac{M}{3}}\\
&=\diag[n^{-1/2},1,n^{1/2}](\mathbb{I}+O(z-w))\diag[n^{1/2},1,n^{-1/2}](\mathbb{I}+O(n^{3/2}(z-w)))\\
&=(\mathbb{I}+O(n(z-w)))(\mathbb{I}+O(n^{3/2}(z-w)))=\mathbb{I}+O(n^{3/2}(z-w)),
\end{align*}
where the last equality follows since $|z-w|\leq n^{-3/2}$. Further, owing to the boundedness of $H$ and $H^{-1}$, it remains to estimate $\xi(z)^{-\frac{M}{3}}(\mathbb{I}+O(n^{3/2}(z-w)))\xi(w)^{\frac{M}{3}}$ as
\begin{align*}
&\xi(z)^{-\frac{M}{3}}(\mathbb{I}+O(n^{3/2}(z-w)))\xi(w)^{\frac{M}{3}}=\xi(z)^{-\frac{M}{3}}(\mathbb{I}+O(n^{3/2}(z-w)))\xi(z)^{\frac{M}{3}}\bigg(\mathbb{I}+\frac{\xi(w)-\xi(z)}{\xi(z)} \bigg)^{\frac{M}{3}}\\
&=\diag[n^{-1/2},1,n^{1/2}](\mathbb{I}+O(n^{3/2}(z-w)))\diag[n^{1/2},1,n^{-1/2}](\mathbb{I}+O(n^{3/2}(z-w)))\\
&=(\mathbb{I}+O(n^{5/2}(z-w)))(\mathbb{I}+O(n^{3/2}(z-w)))=\mathbb{I}+O(n^{5/2}(z-w)),
\end{align*}
where the last equality follows since $|z-w|\leq n^{-5/2} < n^{-3/2} $ and the proof is complete.
\end{proof}

With the definitions described above, we obtain by \eqref{asymptotic expansion Kj} the asymptotic expansion
\begin{align}\label{asymptotic expansion in Kj}
\mathring{P}(z)(N(z))^{-1}E(z) = \xi^{-\frac{M}{3}} \bigg[I+\sum_{j=1}^{\infty}\mathcal{K}_j \xi^{-\frac{j}{3}}\bigg]  \xi^{\frac{M}{3}}.
\end{align}
In particular, taking the asymptotic series up to $j=7$, we find ($n$-independent) constants $\mathcal{K}_1$ and $\mathcal{K}_2$ such that 
\begin{align}\label{asymptotic expantion in tilde K}
\mathring{P}(z)(N(z))^{-1}E(z) = \mathbb{I}+\frac{\mathcal{K}_1}{\xi} +\frac{\mathcal{K}_2}{\xi^2}+O(\xi^{-3}) = \mathbb{I}+\frac{\mathcal{K}_1}{\xi} +O(\xi^{-2}).
\end{align}
In line with Theorem \ref{Molag theorem}, we define the constants 
\begin{align*}
a=\frac{3}{2}, \quad b=3, \quad d=2, \quad e=\frac{5}{2}.
\end{align*}
We still need to find a convenient value for $c$. If we take two terms of integer powers of $\xi$ in the above expansion, then this seems to be enough for the reasons that will become clearer soon, i.e., we identify
\begin{align}\label{eq: C(z)}
C(z) = \frac{n^3z}{\xi(z)}\mathcal{K}_1 = \frac{27 \mathcal{K}_1}{f_0(z)},
\end{align}
where $\mathcal{K}_1$ is given by \eqref{eq: Round K1}. Note that such a $C(z)$ is uniformly bounded on $\partial D(0,r_n)$ as $n \to \infty$. Infact, using \eqref{asymptotic expantion in tilde K}, we now have uniformly for $z \in \partial D(0,r_n)$ that
\begin{align}\label{asymptotic expansion in Cz}
\mathring{P}(z)(N(z))^{-1}E(z) = \mathbb{I}+\frac{C(z)}{n^3z}+O(n^{-3}),
\end{align}
as $n \to \infty$, and we identify $c=3 (<d)$. Now, all the requirements of the theorem \eqref{Molag theorem} are met. Hence, we obtain analytic prefactors $E_n^0:\overline{D(0,r_n)} \to \mathbb{C}$ and $E_n^\infty:\overline{A(0,r_n,r_0)} \to \mathbb{C}$ such that
\begin{align}
& E_n^0(z)   \mathring{P}(z) = [\mathbb{I}+O(n^{-1})] E_n^\infty(z) N(z), \quad uniformly~for ~ z \in \partial D(0,r_n), \label{PRefactor 1}\\
& E_n^\infty(z) = \mathbb{I}+O(n^{-1}), \quad uniformly~for ~ z \in \partial D(0,r_0). \label{PRefactor 2}
\end{align}
We are now ready to fix the definition of local parametrix $P$ at the origin as given by \eqref{definition P(z)} and based on our discussion in section \ref{Bare Parametrix}, \eqref{PRefactor 1} and \eqref{PRefactor 2}, we achieve the double matching as described in \eqref{double matching of P 1} and \eqref{double matching of P 2}.

\subsubsection{Construction of analytic prefactors $E_n^0(z)$ and $E_n^\infty(z)$}
In this section, we iteratively construct the analytic prefactors $E_n^0(z)$ and $E_n^\infty(z)$ as defined in \eqref{definition P(z)} and which follow the asymptotic condition as imposed by Theorem \ref{Molag theorem}.
\begin{definition}
We define
\begin{align*}
P^{(i)}(z) = \begin{cases}
    E_n^{(i)}(z) \mathring{P}(z), & z \in D(0,r_n),\\
   E_n^{\infty(i)}(z) N(z), & z \in A(0,r_n,r_0),
\end{cases}    
\end{align*}    
with $E_n^{(i)}(z) = E(z) $ and $E_n^{\infty(i)}(z) = \mathbb{I}$.
\end{definition}
\subsubsection{First Iteration}
\begin{lemma}\label{Lm: First iteration}
For $|z|=r_n$, 
\begin{align}\label{eq: First iteration}
P^{(i)}_+(z) = (\mathbb{I}+O(n^{1/2})) P^{(i)}_-(z).
\end{align}
\end{lemma}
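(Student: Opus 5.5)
On the circle $|z|=r_n$, with $P^{(i)}=E\mathring P$ inside and $P^{(i)}=N$ outside, the jump to be estimated is
\[
P^{(i)}_+(z)\,P^{(i)}_-(z)^{-1}=E(z)\mathring{P}(z)N(z)^{-1}
\]
(up to the choice of orientation). So the lemma reduces to bounding $E\mathring P N^{-1}$ on $\partial D(0,r_n)$.

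The plan is to conjugate the known expansion \eqref{asymptotic expansion in Cz}, which controls $\mathring P N^{-1}E$, by $E$. That is, write
\[
E\mathring P N^{-1}=E\,\bigl(\mathring P N^{-1}E\bigr)\,E^{-1}=E\Bigl(\mathbb I+\frac{C(z)}{n^3z}+O(n^{-3})\Bigr)E^{-1}.
\]
The identity term gives exactly $\mathbb I$, so everything rests on the correction terms.

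For these, I would use:
\begin{itemize}
\item $C(z)=27\mathcal K_1/f_0(z)$ from \eqref{eq: C(z)}, which is uniformly bounded since $f_0(0)\neq0$;
\item $|n^3z|=n^{3/2}$ on $|z|=r_n$, so $\frac{C(z)}{n^3z}=O(n^{-3/2})$.
\end{itemize}
A crude estimate using $E=O(n)$ and $E^{-1}=O(n)$ from Lemma \ref{Lm: Ez approx} would only give $O(n^{2}\cdot n^{-3/2})=O(n^{1/2})$. That is exactly the claimed bound, so this argument already suffices.

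For the sharper structural version, I would exploit that $E$ is essentially $\widehat N\,\mathbb H(z)^{-1}(n/3)^{M}f_0^{M}$, with the outer factors bounded. The $n$-growth then comes only from conjugation by $n^{M}=\diag[n,1,n^{-1}]$, which scales entry $(j,k)$ by $n^{m_j-m_k}$, where $m_1=1,\ m_2=0,\ m_3=-1$.

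By \eqref{eq: Round K1}, $\mathcal K_1$ has nonzero entries only at $(1,3)$, $(2,1)$ and $(3,2)$. Under this conjugation they are scaled by $n^{2}$, $n^{-1}$ and $n^{-1}$ respectively. The worst term is the $(1,3)$ entry, giving $n^{2}\cdot n^{-3/2}=n^{1/2}$; this is the source of the $O(n^{1/2})$.

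The remainder $O(n^{-3})$ conjugated by $n^{M}$ is at most $O(n^{-1})$, and the bounded conjugators $\widehat N$, $\mathbb H$, $f_0^{M}$ do not change orders.

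The main obstacle is legitimacy rather than arithmetic. One must check that the remainder $O(\xi^{-2})$ in \eqref{asymptotic expantion in tilde K} is uniform on $|z|=r_n$. Here $|\xi|\asymp n^{3}r_n=n^{3/2}\to\infty$, so the large-$\xi$ asymptotics \eqref{G 2 xi} do apply uniformly in the sectors. One must also keep track of the $\pm$ choice of $\mathcal U^\pm$ across the real axis. I would handle the latter by noting that $E$ and $\mathring P N^{-1}$ are single-valued near the circle (Propositions \ref{Prop_conjugate H entire} and \ref{Prop: N hat}), so the sectorwise estimates combine into a uniform bound.
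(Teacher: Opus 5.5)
Your argument is correct and is essentially the paper's proof. You conjugate the expansion $\mathring P N^{-1}E=\mathbb I+\frac{C(z)}{n^3z}+O(n^{-3})$ by $E$, use Lemma \ref{Lm: Ez approx} to get $C_n^{(i)}=ECE^{-1}=O(n^2)$, and divide by $|n^3z|=n^{3/2}$ to obtain $O(n^{1/2})$, with the remainder contributing only $O(n^{-1})$. Your extra observation that the growth comes from the $(1,3)$ entry of $\mathcal K_1$ under conjugation by $n^{M}$ is a consistent refinement of the same estimate, but the lemma does not need it.
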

\begin{proof}
Using \eqref{asymptotic expansion in Kj} and \eqref{asymptotic expansion in Cz} gives us
\begin{align*}
 & P^{(i)}_+(z) P^{(i)}_-(z)^{-1} = E_n^{(i)}(z) \mathring{P}(z) N(z)^{-1} \\
& = E(z) \xi^{-\frac{M}{3}} \bigg[I+\sum_{j=1}^{\infty}\mathcal{K}_j \xi^{-\frac{1}{3}}\bigg] \xi^{\frac{M}{3}} [E(z)]^{-1}  \\
& = E(z) \bigg[\mathbb{I}+\frac{C(z)}{n^3z}+O(n^{-3}) \bigg]  [E(z)]^{-1}  \\
& = \mathbb{I}+\frac{C_n^{(i)}(z)}{n^3z}+ E(z) O(n^{-3}) [E(z)]^{-1}, ~{\rm where}~ C_n^{(i)}(z)=E(z) C(z) [E(z)]^{-1}.
\end{align*}
Since $E(z)$ and $[E(z)]^{-1}$ are $O(n)$ as $n \to \infty$ as shown in Lemma \ref{Lm: Ez approx}, it follows that $C_n^{(i)}(z)= O(n^2)$ and consequently, $\frac{C_n^{(i)}(z)}{n^3z} = O(n^{1/2})$ as $n \to \infty$ with $|z|=n^{-3/2}$. This completes the proof of Lemma \ref{Lm: First iteration}.
\end{proof}
It can be observed that \eqref{eq: First iteration} does not obey the asymptotic rules as desired by Theorem \ref{Molag theorem}. Hence, we need to modify the analytic prefactors $E_n^{(i)}(z)$ and $E_n^{\infty(i)}(z)$. We record \cite[Proposition 5.15]{Kuijlaars_Molag_2019} adapted to our setting for later use.
\begin{proposition}
Let $k=1,2,3\ldots$
\begin{enumerate}
    \item We have uniformly for $z \in \overline{D(0,r_n)}$ that
    \begin{align}\label{eq: Arno eq 1}
        [C_n^{(i)}(z)]^k = O(n^2).
    \end{align}
    \item We have uniformly for $z_1,z_2, \ldots,z_k \in \overline{ D(0,r_n)}$ that
    \begin{align}\label{eq: Arno eq 2}
        C_n^{(i)}(z_1) \ldots C_n^{(i)}(z_k) = O(n^{k+1}).
    \end{align}
    When $\ell$ indices are such that $z_j=z_{j+1}$ for $j \in \{1,2,\ldots, k-1 \}$, then \eqref{eq: Arno eq 2} is replaced by
    \begin{align}\label{eq: Arno eq 3}
        C_n^{(i)}(z_1) \ldots C_n^{(i)}(z_k) = O(n^{k+1-\ell}).
    \end{align}
    \item We have uniformly for $z_1,z_2 \in \overline{ D(0,r_n)}$ that
    \begin{align}
     &C_n^{(i)}(z_1)E_n(z_2)   = O(n^2) , \notag \\ 
     &E_n^{-1}(z_1) C_n^{(i)}(z_2) = O(n^2).  \label{eq: Arno eq 5}
    \end{align}
\end{enumerate}
\end{proposition}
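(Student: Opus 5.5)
We prove the three items by writing everything in terms of $E(z)$ from Definition \ref{Def_E z}:
\[
C_n^{(i)}(z)=E(z)C(z)E(z)^{-1}, \qquad C(z)=27\mathcal{K}_1/f_0(z)
\]
(see \eqref{eq: C(z)}). Here $C(z)$ is analytic and bounded on $\overline{D(0,r_n)}$, since $f_0$ is conformal with $f_0(0)\neq 0$. The crude bound $E,E^{-1}=O(n)$ from Lemma \ref{Lm: Ez approx} only gives $O(n^2)$ per factor, so the point is that the $n$-growth of $E$ is concentrated in the factor $\xi^{M/3}=(n/3)^M z^{M/3}f_0(z)^M$. We therefore decompose
\[
E(z)=F(z)\,n^{M}, \qquad F(z)=\widehat{N(z)}\,\mathbb{H}(z)^{-1}\,(f_0(z)/3)^{M},
\]
using $\mathbb{H}=z^{-M/3}Hz^{M/3}$. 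By Propositions \ref{Prop: N hat} and \ref{Prop_conjugate H entire}, $F$ and $F^{-1}$ are analytic and uniformly bounded on $\overline{D(0,r_n)}$. This needs $\mathbb{H}$ bounded there independently of $n$. Since $D_n$ contains $e^{-nH_j}$ and $H_j=O(z^{1/3})$ at $0$, we have $n|z|^{1/3}=O(n^{1/2})$ for $|z|\le r_n$. So the bound on $\mathbb{H}$ is not automatic, and I would check it first, by the maximum principle on $\partial D(0,r_n)$ together with the explicit form of $H$ in \eqref{HU = UH}. If $\mathbb{H}$ turns out not to be bounded, I would absorb $\mathbb{H}$ into the diagonal conjugation in the argument below.

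With this decomposition,
\[
C_n^{(i)}(z_1)\cdots C_n^{(i)}(z_k)=F(z_1)\,n^{M}C(z_1)\,n^{-M}F(z_1)^{-1}F(z_2)\,n^{M}C(z_2)\,n^{-M}\cdots F(z_k)^{-1}.
\]
Write $G_j=n^{-M}F(z_j)^{-1}F(z_{j+1})n^{M}$. Since $M=\diag[1,0,-1]$, conjugating by $n^{M}$ multiplies the $(p,q)$ entry by $n^{M_p-M_q}$. The key structural input is that $\mathcal{K}_1$ in \eqref{eq: Round K1} is supported on the positions $(1,3),(2,1),(3,2)$, where $M_p-M_q$ equals $2,-1,-1$ respectively. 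Hence
\[
n^{M}\mathcal{K}_1n^{-M}=O(n^2),
\]
and the $n^2$ appears only in the $(1,3)$ entry. Each $G_j$ is $O(n^2)$ in general. When $z_j=z_{j+1}$, however, $G_j=\mathbb{I}$ exactly.

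For item (1), take all $z_j=z$. Then
\[
[C_n^{(i)}(z)]^k=F(z)\,n^{M}C(z)^k n^{-M}F(z)^{-1}.
\]
The product $\mathcal{K}_1^k$ stays cyclically supported, in the pattern of $\mathcal{K}_j$ in \eqref{K structure}. Its conjugation by $n^M$ has entries of size at most $n^2$, which proves \eqref{eq: Arno eq 1}.

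For item (2), I would bound the chain by entrywise bookkeeping. Each $n^MC n^{-M}$ factor contributes weights $n^{2}$ on $(1,3)$ and $n^{-1}$ on the other two positions. Each generic $G_j=n^{-M}(\mathbb{I}+O(|z_j-z_{j+1}|))\,\cdots$ contributes weights $n^{M_q-M_p}$. I would then track the exponent along all index paths $p_0\to q_0\to p_1\to\cdots$. Consider a path that visits the $(1,3)$ position repeatedly: it must pass through an intermediate $G$ entry of type $(3,1)$, which costs $n^{-2}$. The net gain is therefore at most one power of $n$ per factor beyond an initial $n^2$, giving $O(n^{k+1})$. When $\ell$ consecutive pairs coincide, those $G_j=\mathbb{I}$ remove the corresponding free step, and each such removal costs one power. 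This gives \eqref{eq: Arno eq 3}.

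Item (3) is the case $k=1$ with one $E$ in place of $C$:
\[
C_n^{(i)}(z_1)E(z_2)=F(z_1)\,n^MC(z_1)\,n^{-M}F(z_1)^{-1}F(z_2)\,n^M.
\]
Its entries are bounded by the same path count, giving $O(n^2)$, and the bound for $E^{-1}(z_1)C_n^{(i)}(z_2)$ follows symmetrically.

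The main obstacle is the path-exponent counting in (2). I would carry it out as a lemma on products of matrices whose entries are weighted by $n^{\pm(M_p-M_q)}$ with prescribed supports, proved by induction on $k$. Alternatively, as in the proof of \cite[Proposition 5.15]{Kuijlaars_Molag_2019}, one can note that $(n^M\mathcal{K}_1n^{-M})\,X\,(n^M\mathcal{K}_1n^{-M})$ with $X=O(n^2)$ gains at most a factor $n$.
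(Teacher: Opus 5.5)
\textbf{There is a genuine gap in your items (2) and (3).}

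Your item (2), and with it item (3), does not go through as written. In your own factorization the chain is $F(z_1)\,n^{M}C(z_1)G_1C(z_2)G_2\cdots G_{k-1}C(z_k)\,n^{-M}F(z_k)^{-1}$. Only the outermost $n^{M}\cdots n^{-M}$ survives, so attaching the weights of $n^{M}Cn^{-M}$ to every $C$ \emph{and} the weights $n^{M_q-M_p}$ to every $G_j$ counts the conjugations twice.

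More seriously, $(G_j)_{31}=n^{M_1-M_3}[F(z_j)^{-1}F(z_{j+1})]_{31}=n^{2}[F(z_j)^{-1}F(z_{j+1})]_{31}$ is a gain of $n^{2}$, not a cost of $n^{-2}$. With only the weight pattern $(G_j)_{pq}=O(n^{M_q-M_p})$, the path that takes $C_{13}$ at every step and returns through $(G_j)_{31}$ has size $n\cdot(n^{2})^{k-1}\cdot n=n^{2k}$. So the support of $\mathcal{K}_1$ cannot give $O(n^{k+1})$ by itself. Your alternative claim, that sandwiching an $O(n^2)$ matrix between two copies of $n^M\mathcal{K}_1n^{-M}$ gains at most $n$, fails for the same reason: the $(1,3)$--$(3,1)$--$(1,3)$ term gains $n^{4}$.

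What actually controls the chain is that consecutive points are close, $|z_j-z_{j+1}|\le 2r_n=2n^{-3/2}$, and you never use this quantitatively. Granting $F$ bounded and analytic with $F(z)^{-1}F(w)=\mathbb{I}+O(z-w)$, one gets $G_j=\mathbb{I}+O(n^{1/2})$, and $G_j=\mathbb{I}$ when $z_j=z_{j+1}$. This gives $O(n^{2+(k-1-\ell)/2})\subseteq O(n^{k+1-\ell})$ in (2), and $O(n^{3/2})$ in (3).

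\textbf{The paper's route.} The paper gives no argument of its own; it quotes \cite[Proposition 5.15]{Kuijlaars_Molag_2019}. The underlying proof needs neither $E=Fn^{M}$ nor the sparsity of $\mathcal{K}_1$. One writes
\[ C_n^{(i)}(z_1)\cdots C_n^{(i)}(z_k)=E(z_1)C(z_1)\bigl[E(z_1)^{-1}E(z_2)\bigr]C(z_2)\cdots\bigl[E(z_{k-1})^{-1}E(z_k)\bigr]C(z_k)E(z_k)^{-1} \]
and uses two inputs. Lemma~\ref{Lm: Ez approx} gives $E,E^{-1}=O(n)$. The lemma after it gives $E(z)^{-1}E(w)=\mathbb{I}+O(n^{5/2}(z-w))=O(n)$ on $\overline{D(0,r_n)}$, with equality to $\mathbb{I}$ when $z=w$. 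This yields \eqref{eq: Arno eq 2}--\eqref{eq: Arno eq 3} directly. Item (1) is $E(z)C(z)^kE(z)^{-1}$, and item (3) is $E(z_1)C(z_1)[E(z_1)^{-1}E(z_2)]$ together with its mirror image.

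\textbf{Your concern about $\mathbb{H}$.} Your suspicion is correct. As literally defined, $\mathbb{H}$ is conjugate to $D_n(z)$, whose eigenvalues $e^{-nH_j(z)}$ have modulus up to $e^{c\sqrt{n}}$ on $|z|=r_n$, since $n|z|^{1/3}=\sqrt{n}$ there. So $F$ is not bounded. Because $\mathbb{H}$ is not diagonal, your fallback of absorbing it into the $n^{M}$ conjugation cannot recover polynomial bounds. That defect lies in Lemma~\ref{Lm: Ez approx}, which the paper simply asserts. The proposition is meant to be a formal consequence of that lemma and the next one, and the short argument above is exactly that consequence.
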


\subsubsection{Second Iteration} We eliminate the growing term in \eqref{eq: First iteration} by defining
\begin{definition}
 \begin{align*}
& E_n^{(ii)}(z) = \bigg(\mathbb{I}-\frac{C_n^{(i)}(z)-C_n^{(i)}(0)}{n^3 z} \bigg) E_n^{(i)}(z), \\
&E_n^{\infty(ii)}(z) = \bigg(\mathbb{I}-\frac{C_n^{(i)}(0)}{n^3 z} \bigg)^{-1} E_n^{\infty(i)}(z),
 \end{align*}   
and
\begin{align*}
P^{(ii)}(z) = \begin{cases}
    \bigg(\mathbb{I}-\dfrac{C_n^{(i)}(z)-C_n^{(i)}(0)}{n^3 z} \bigg) P^{(i)}(z), & z \in D(0,r_n),\\
   \bigg(\mathbb{I}-\dfrac{C_n^{(i)}(0)}{n^3 z} \bigg)^{-1} P^{(i)}(z), & z \in A(0,r_n,r_0).
\end{cases}    
\end{align*}
\end{definition}
For large $n$, the inverse $\bigg(\mathbb{I}-\dfrac{C_n^{(i)}(0)}{n^3 z} \bigg)^{-1}$ does exist since $\bigg(\dfrac{C_n^{(i)}(0)}{n^3 z} \bigg)^{2}= O(n^{-1})$ for $|z|> n^{-3/2}$ by \eqref{eq: Arno eq 1} which, in turn, implies that $\dfrac{C_n^{(i)}(0)}{n^3 z} $ does not have an eigenvalue equal to $1$.

\begin{lemma}\label{lm: Second iteration}
On $\partial D(0,r_n)$, we have the jump
\begin{align}\label{eq: Second iteration}
P^{(ii)}_+(z) = (\mathbb{I}+O(1)) P^{(ii)}_-(z).
\end{align}
\end{lemma}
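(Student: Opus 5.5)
We compute the jump of $P^{(ii)}$ on $\partial D(0,r_n)$ directly from the definition, in the same way as in the proof of Lemma \ref{Lm: First iteration}. For $|z|=r_n$, with the inside (``$+$'') and outside (``$-$'') sides as before, the definition of $P^{(ii)}$ gives
\begin{align*}
P^{(ii)}_+(z)P^{(ii)}_-(z)^{-1} = \bigg(\mathbb{I}-\frac{C_n^{(i)}(z)-C_n^{(i)}(0)}{n^3 z}\bigg) P^{(i)}_+(z)P^{(i)}_-(z)^{-1}\bigg(\mathbb{I}-\frac{C_n^{(i)}(0)}{n^3 z}\bigg).
\end{align*}
From the proof of Lemma \ref{Lm: First iteration} we insert
\begin{align*}
P^{(i)}_+P^{(i)}_-{}^{-1}=\mathbb{I}+\frac{C_n^{(i)}(z)}{n^3z}+E(z)O(n^{-3})E(z)^{-1}.
\end{align*}
By Lemma \ref{Lm: Ez approx}, the remainder $E(z)O(n^{-3})E(z)^{-1}$ is $O(n^{-1})$.

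Expanding the product gives the following terms:
\begin{itemize}
\item the identity;
\item the linear terms
\begin{align*}
-\frac{C_n^{(i)}(z)-C_n^{(i)}(0)}{n^3z}+\frac{C_n^{(i)}(z)}{n^3z}-\frac{C_n^{(i)}(0)}{n^3z},
\end{align*}
which cancel exactly, and this is the point of the construction;
\item quadratic terms of the form $\dfrac{C_n^{(i)}(z_1)C_n^{(i)}(z_2)}{n^6z^2}$ with $z_1,z_2\in\{z,0\}$;
\item cubic terms $\dfrac{C_n^{(i)}(z_1)C_n^{(i)}(z_2)C_n^{(i)}(z_3)}{n^9z^3}$;
\item products of the $O(n^{-1})$ remainder with the bracket factors.
\end{itemize}

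For the quadratic terms we use \eqref{eq: Arno eq 2} with $k=2$, which gives $O(n^3)$. With $|z|^2=n^{-3}$ this yields $O(n^3)/(n^6 n^{-3})=O(1)$. When $z_1=z_2$, \eqref{eq: Arno eq 1} improves the bound to $O(n^{-1})$. For the cubic terms, \eqref{eq: Arno eq 2} with $k=3$ gives $O(n^4)/(n^9n^{-9/2})=O(n^{-1/2})$.

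The main obstacle is the remainder term. The bracket factors are individually only $\mathbb{I}+O(n^{1/2})$, because $C_n^{(i)}=O(n^2)$ and $n^3|z|=n^{3/2}$. So we must not bound $E\,O(n^{-3})E^{-1}$ crudely and then multiply. Instead we keep it in the form $E(z)\,O(n^{-3})\,E(z)^{-1}$ and absorb the neighbouring $C_n^{(i)}$ factors with \eqref{eq: Arno eq 5}: $C_n^{(i)}(z_1)E(z)=O(n^2)$ and $E(z)^{-1}C_n^{(i)}(z_2)=O(n^2)$. A typical term then becomes
\begin{align*}
\frac{C_n^{(i)}E\,O(n^{-3})\,E^{-1}C_n^{(i)}}{n^6z^2}=\frac{O(n^{2})\,O(n^{-3})\,O(n^{2})}{n^{3}}=O(n^{-2}),
\end{align*}
and the terms with a single $C_n^{(i)}$ factor are $O(n^2)O(n^{-3})O(n)/n^{3/2}=O(n^{-1/2})$.

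Collecting all the terms, the jump is $\mathbb{I}+O(1)$ uniformly on $\partial D(0,r_n)$, which is \eqref{eq: Second iteration}. The only non-decaying contribution is the mixed quadratic term, with one factor evaluated at $z$ and one at $0$. That term is what the next iteration would have to remove.
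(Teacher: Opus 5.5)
Your argument is correct and is essentially the paper's: you expand $\bigl(\mathbb{I}-\tfrac{C_n^{(i)}(z)-C_n^{(i)}(0)}{n^3z}\bigr)P^{(i)}_+(z)P^{(i)}_-(z)^{-1}\bigl(\mathbb{I}-\tfrac{C_n^{(i)}(0)}{n^3z}\bigr)$, note that the linear terms cancel, and bound the quadratic, cubic and remainder terms with \eqref{eq: Arno eq 1}, \eqref{eq: Arno eq 2} and \eqref{eq: Arno eq 5}. As in \eqref{eq:second iteration proof}, this leaves $C_n^{(i)}(0)C_n^{(i)}(z)/(n^6z^2)=O(1)$ as the only non-decaying term; expanding all three factors at once and keeping the remainder as $E\,O(n^{-3})\,E^{-1}$ is slightly more careful than the paper's two-step bookkeeping, and the single-$C_n^{(i)}$ remainder terms are actually $O(n^{-3/2})$ rather than your stated $O(n^{-1/2})$, which is harmless.
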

\begin{proof}
First of all, we compute
\begin{align*}
&\bigg(\mathbb{I}+\frac{C_n^{(i)}(z)}{n^3z}+ E(z) O(n^{-3}) [E(z)]^{-1} \bigg) \bigg(\mathbb{I}-\dfrac{C_n^{(i)}(0)}{n^3 z} \bigg) \\
=& \mathbb{I}+\dfrac{C_n^{(i)}(z)-C_n^{(i)}(0)}{n^3 z}-\dfrac{C_n^{(i)}(z)C_n^{(i)}(0)}{n^6 z^2}+O(n^{-1})
\end{align*}
since the behavior of the last two terms is:
\begin{align*}
&E(z) O(n^{-3}) [E(z)]^{-1} = O(n^{-1})\\
&E(z) O(n^{-3}) [E(z)]^{-1} \dfrac{C_n^{(i)}(0)}{n^3 z} = \dfrac{E(z) O(n^{-3}) O(n^{-2})}{n^3 z} = O(n^{-3/2}) , 
\end{align*}
where we have used \eqref{eq: Arno eq 5}. Next, we calculate 
\begin{align}\label{eq:second iteration proof}
&P^{(ii)}_+(z) P^{(ii)}_-(z)^{-1} \notag\\
&= \bigg(\mathbb{I}-\dfrac{C_n^{(i)}(z)-C_n^{(i)}(0)}{n^3 z} \bigg) \bigg(\mathbb{I}+\frac{C_n^{(i)}(z)}{n^3z}+ E(z) O(n^{-3}) [E(z)]^{-1} \bigg) \bigg(\mathbb{I}-\dfrac{C_n^{(i)}(0)}{n^3 z} \bigg) \notag \\
&=\bigg(\mathbb{I}-\dfrac{C_n^{(i)}(z)-C_n^{(i)}(0)}{n^3 z} \bigg) \bigg(\mathbb{I}+\dfrac{C_n^{(i)}(z)-C_n^{(i)}(0)}{n^3 z}-\dfrac{C_n^{(i)}(z)C_n^{(i)}(0)}{n^6 z^2}+O(n^{-1}) \bigg) \notag \\
&=\mathbb{I}-\dfrac{C_n^{(i)}(z)C_n^{(i)}(0)}{n^6 z^2}-\bigg(\dfrac{C_n^{(i)}(z)-C_n^{(i)}(0)}{n^3 z} \bigg)^2+\frac{(C_n^{(i)}(z)-C_n^{(i)}(0))C_n^{(i)}(z)C_n^{(i)}(0)}{n^9z^3}+O(n^{-1})
\notag \\
&=\mathbb{I}+\dfrac{C_n^{(i)}(0)C_n^{(i)}(z)}{n^6 z^2}-\frac{C_n^{(i)}(0)C_n^{(i)}(z)C_n^{(i)}(0)}{n^9z^3}+O(n^{-1}).
\end{align}
The last equality follows using \eqref{eq: Arno eq 1}, \eqref{eq: Arno eq 2}, and \eqref{eq: Arno eq 3} to find $\bigg(\dfrac{C_n^{(i)}(z)}{n^3 z} \bigg)^2=\dfrac{[C_n^{(i)}(z)]^2}{n^6 z^2}  = \dfrac{O(n^2)}{n^3}= O(n^{-1})$, $\bigg(\dfrac{C_n^{(i)}(0)}{n^3 z} \bigg)^2 = O(n^{-1})$, $\dfrac{C_n^{(i)}(z)^2C_n^{(i)}(0)}{n^9z^3}=\dfrac{O(n^3)}{n^{9/2}} = O(n^{-3/2})$ and \eqref{eq: Second iteration} follows since $\dfrac{C_n^{(i)}(0)C_n^{(i)}(z)C_n^{(i)}(0)}{n^9z^3}=\dfrac{O(n^4)}{n^{9/2}} = O(n^{-1/2})$ and $\dfrac{C_n^{(i)}(0)C_n^{(i)}(z)}{n^6 z^2}=\dfrac{O(n^3)}{n^{3}} = O(1)$.
\end{proof}
We introduce new functions $C_n^{(ii)}(z)$ and $C_n^{(iii)}(z)$ to rewrite \eqref{eq:second iteration proof} in a convenient form. Let
\begin{align}
&C_n^{(ii)}(z) = \dfrac{C_n^{(i)}(0)(C_n^{(i)}(z)-C_n^{(i)}(0))}{z} = O(n^{9/2}). \label{eq: C n 2 z}\\
&C_n^{(iii)}(z) = \dfrac{(C_n^{(i)}(z)-C_n^{(i)}(0))C_n^{(i)}(z)(C_n^{(i)}(0)-C_n^{(i)}(z))}{z^2} = O(n^{7}). \label{eq: C n 3 z}
\end{align}
It is easy to see that they both are analytic near the origin. From similar arguments as at the end of the proof of Lemma \ref{lm: Second iteration}, these new functions can be re-written as
\begin{align*}
&\dfrac{C_n^{(i)}(0)C_n^{(i)}(z)}{n^6 z^2} = \dfrac{C_n^{(ii)}(z)}{n^6 z}+ \dfrac{C_n^{(i)}(0)^2}{n^6 z^2} = \dfrac{C_n^{(ii)}(z)}{n^6 z}+O(n^{-1}).\\
&-\frac{C_n^{(i)}(0)C_n^{(i)}(z)C_n^{(i)}(0)}{n^9z^3}=\dfrac{C_n^{(iii)}(z)}{n^9 z}+\frac{C_n^{(i)}(z)^3-C_n^{(i)}(z)^2 C_n^{(i)}(0)-C_n^{(i)}(0) C_n^{(i)}(z)^2}{n^9z^3}\\
&=\dfrac{C_n^{(iii)}(z)}{n^9 z}+O(n^{-3/2}).
\end{align*}
Inserting these estimates into \eqref{eq:second iteration proof}, we get
\begin{align}\label{eq: second iteration end}
P^{(ii)}_+(z) P^{(ii)}_-(z)^{-1} = \mathbb{I}+\dfrac{C_n^{(ii)}(z)}{n^6 z}+\dfrac{C_n^{(iii)}(z)}{n^9 z}+O(n^{-1})
\end{align}
uniformly valid for $z \in \partial D(0,r_n)$ as $n \to \infty$.

\subsubsection{Third Iteration} In a format already familiar to the reader, we define
\begin{definition}
 \begin{align*}
& E_n^{(iii)}(z) = \bigg(\mathbb{I}-\frac{C_n^{(ii)}(z)-C_n^{(ii)}(0)}{n^6 z} \bigg) E_n^{(ii)}(z), \\
&E_n^{\infty(iii)}(z) = \bigg(\mathbb{I}-\frac{C_n^{(ii)}(0)}{n^6 z} \bigg)^{-1} E_n^{\infty(ii)}(z),
 \end{align*}   
and
\begin{align*}
P^{(iii)}(z) = \begin{cases}
    \bigg(\mathbb{I}-\dfrac{C_n^{(ii)}(z)-C_n^{(ii)}(0)}{n^6 z} \bigg) P^{(ii)}(z), & z \in D(0,r_n),\\
   \bigg(\mathbb{I}-\dfrac{C_n^{(ii)}(0)}{n^6 z} \bigg)^{-1} P^{(ii)}(z), & z \in A(0,r_n,r_0).
\end{cases}    
\end{align*}
\end{definition}

\begin{lemma}
On $\partial D(0,r_n)$, we have the jump
\begin{align}\label{eq: third iteration}
P^{(iii)}_+(z) = (\mathbb{I}+O(n^{-1/2})) P^{(iii)}_-(z).
\end{align}
\end{lemma}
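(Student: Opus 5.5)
Since the annulus factor $\bigl(\mathbb{I}-\frac{C_n^{(ii)}(0)}{n^6 z}\bigr)^{-1}$ and the inner factor $\mathbb{I}-\frac{C_n^{(ii)}(z)-C_n^{(ii)}(0)}{n^6 z}$ are analytic on $\partial D(0,r_n)$, I will compute the new jump directly from \eqref{eq: second iteration end}:
\begin{align*}
P^{(iii)}_+(z) P^{(iii)}_-(z)^{-1} = \bigg(\mathbb{I}-\frac{C_n^{(ii)}(z)-C_n^{(ii)}(0)}{n^6 z} \bigg)\bigg(\mathbb{I}+\dfrac{C_n^{(ii)}(z)}{n^6 z}+\dfrac{C_n^{(iii)}(z)}{n^9 z}+O(n^{-1})\bigg)\bigg(\mathbb{I}-\frac{C_n^{(ii)}(0)}{n^6 z} \bigg).
\end{align*}
Expanding the product, the first-order terms $\frac{C_n^{(ii)}(z)}{n^6z}-\frac{C_n^{(ii)}(z)-C_n^{(ii)}(0)}{n^6z}-\frac{C_n^{(ii)}(0)}{n^6z}$ cancel exactly. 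This is the same mechanism as in Lemma \ref{lm: Second iteration}.

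What remains is a finite list of terms, which I will estimate one at a time on $|z|=n^{-3/2}$. The first is $\frac{C_n^{(iii)}(z)}{n^9 z}$. The second is $O(n^{-1})$ conjugated by the two outer factors. The others are the quadratic and cubic products of $\frac{C_n^{(ii)}(\cdot)}{n^6 z}$ and $\frac{C_n^{(ii)}(z)-C_n^{(ii)}(0)}{n^6 z}$, together with their mixed products with $\frac{C_n^{(iii)}(z)}{n^9 z}$.

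The crude bounds \eqref{eq: C n 2 z}, \eqref{eq: C n 3 z} are not sufficient on their own. With $C_n^{(ii)}=O(n^{9/2})$ one only gets $\frac{C_n^{(ii)}}{n^6z}=O(1)$, so the estimates must exploit products as in \eqref{eq: Arno eq 2}, \eqref{eq: Arno eq 3}. I will therefore rewrite every term in terms of $C_n^{(i)}$, using $C_n^{(ii)}(z)=C_n^{(i)}(0)\frac{C_n^{(i)}(z)-C_n^{(i)}(0)}{z}$. I will also use the Cauchy estimate $\frac{C_n^{(i)}(z)-C_n^{(i)}(0)}{z}=O(n^2/r_n)$, which follows from analyticity of $C_n^{(i)}$ in $\overline{D(0,r_n)}$ together with \eqref{eq: Arno eq 1}. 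I will then count powers of $n$ with the chain rule \eqref{eq: Arno eq 3}: a product of $k$ factors $C_n^{(i)}$ costs $n^{k+1-\ell}$, and each division by $n^3 z$ gains $n^{-3/2}$.

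With this bookkeeping I expect each quadratic term such as $\frac{C_n^{(ii)}(0)C_n^{(ii)}(z)}{n^{12}z^2}$ to be $O(n^{-1/2})$: it is a product of four $C^{(i)}$ factors with a repeated $C_n^{(i)}(0)$, divided by $(n^3z)^4$. The term $\frac{C_n^{(iii)}(z)}{n^9z}$ involves three factors divided by $(n^3z)^3$ and should likewise be $O(n^{-1/2})$ once the adjacent equal arguments are counted. Finally, the $O(n^{-1})$ remainder survives conjugation because the factors $\mathbb{I}-\frac{C_n^{(ii)}}{n^6z}$ are $\mathbb{I}+O(1)$ in the $E$-weighted sense of \eqref{eq: Arno eq 5}.

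The main obstacle is the term $\frac{C_n^{(iii)}(z)}{n^9z}$. Its crude bound $O(n^7)/n^{15/2}=O(n^{-1/2})$ is exactly borderline, and it is this term that fixes the final rate $n^{-1/2}$. I will first try to verify it directly from \eqref{eq: C n 3 z}. If that fails, I will rewrite the term using \eqref{eq: Arno eq 3} with the repeated index $C_n^{(i)}(z)C_n^{(i)}(z)$, which gives $O(n^{3})$ for the numerator times $z^{-2}$ and hence $O(n^{3}\cdot n^{3})/n^{15/2}=O(n^{-3/2})$. The leading error is then set by the quadratic $C^{(ii)}$ terms, which still yields the claimed $\mathbb{I}+O(n^{-1/2})$.
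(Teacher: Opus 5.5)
Your proposal matches the paper's proof. You expand the product coming from \eqref{eq: second iteration end}, note that the first-order $C_n^{(ii)}$ terms cancel, bound the quadratic and cubic remainders by counting factors of $C_n^{(i)}$ against powers of $n^3 z$, and let the surviving term $C_n^{(iii)}(z)/(n^9 z)=O(n^7)/n^{15/2}=O(n^{-1/2})$ fix the rate. The quadratic remainders are in fact $O(n^{-1})$ or smaller, not merely $O(n^{-1/2})$; for example $[C_n^{(ii)}(z)]^2/(n^{12}z^2)=O(n^{5})/(n^{12}|z|^{4})=O(n^{-1})$.

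Drop the fallback, however. Expanding $C_n^{(iii)}$ produces $-C_n^{(i)}(0)C_n^{(i)}(z)C_n^{(i)}(0)$, which has no adjacent equal arguments and is only $O(n^{4})$ by \eqref{eq: Arno eq 2}. So your $O(n^{-3/2})$ improvement does not follow from \eqref{eq: Arno eq 3}. The leading error is therefore this $C_n^{(iii)}$ term, not the quadratic $C_n^{(ii)}$ terms, which is precisely why the paper needs a further (final) iteration.
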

\begin{proof}
It can easily be seen from \eqref{eq: second iteration end} that 
\begin{align}\label{eq:third iteration proof}
&P^{(iii)}_+(z) P^{(iii)}_-(z)^{-1} \notag\\
&= \bigg(\mathbb{I}-\dfrac{C_n^{(ii)}(z)-C_n^{(ii)}(0)}{n^6 z} \bigg) \bigg(\mathbb{I}+\dfrac{C_n^{(ii)}(z)}{n^6 z}+\dfrac{C_n^{(iii)}(z)}{n^9 z}+O(n^{-1}) \bigg) \bigg(\mathbb{I}-\dfrac{C_n^{(ii)}(0)}{n^6 z} \bigg) \notag \\
&=\bigg(\mathbb{I}+\dfrac{C_n^{(ii)}(0)}{n^6 z}+\dfrac{C_n^{(iii)}(z)}{n^9 z}+O(n^{-1}) \bigg) \bigg(\mathbb{I}-\dfrac{C_n^{(ii)}(0)}{n^6 z} \bigg) \notag \\
&=\mathbb{I}+\dfrac{C_n^{(iii)}(z)}{n^9 z}+O(n^{-1}).
\end{align}
Since $[C_n^{(ii)}(z)]^2$, $[C_n^{(ii)}(0)]^2$ and $C_n^{(ii)}(0)C_n^{(ii)}(z)$ has 4 factors of $C_n^{(i)}(z)$ and $C_n^{(ii)}(z)C_n^{(iii)}(z)$ and $C_n^{(ii)}(0)C_n^{(iii)}(z)$ has 5 factors of $C_n^{(i)}(z)$. We have the estimates $\bigg(\dfrac{C_n^{(ii)}(z)}{n^6 z} \bigg)^2= \dfrac{O(n^{5})}{n^{12}z^4} = \dfrac{O(n^{5})}{O(n^{6})}=O(n^{-1})$, $\dfrac{C_n^{(ii)}(0)C_n^{(ii)}(z)}{n^{12} z^2} =O(n^{-1})$, $\dfrac{C_n^{(ii)}(z)C_n^{(iii)}(z)}{n^{15} z^2}= \dfrac{O(n^{6})}{n^{15}z^5}=\dfrac{O(n^{6})}{O(n^{15/2})}=O(n^{-3/2}) $, and $\dfrac{C_n^{(ii)}(0)C_n^{(iii)}(z)}{n^{15} z^2} =O(n^{-3/2})$ for the penultimate equality and $\bigg(\dfrac{C_n^{(ii)}(0)}{n^6 z} \bigg)^2=O(n^{-1})$ for the last equality. We further have $\dfrac{C_n^{(iii)}(z)}{n^9 z} = O(n^{-1/2})$ which yields \eqref{eq: third iteration}.
\end{proof}

\subsubsection{Final iteration}
As a final iteration, we define 
\begin{definition}
 \begin{align*}
& E_n^{(iv)}(z) = \bigg(\mathbb{I}-\frac{C_n^{(iii)}(z)-C_n^{(iii)}(0)}{n^9 z} \bigg) E_n^{(iii)}(z), \\
&E_n^{\infty(iv)}(z) = \bigg(\mathbb{I}-\frac{C_n^{(iii)}(0)}{n^9 z} \bigg)^{-1} E_n^{\infty(iii)}(z),
 \end{align*}   
and
\begin{align*}
P^{(iv)}(z) = \begin{cases}
    \bigg(\mathbb{I}-\dfrac{C_n^{(iii)}(z)-C_n^{(iii)}(0)}{n^9 z} \bigg) P^{(iii)}(z), & z \in D(0,r_n),\\
   \bigg(\mathbb{I}-\dfrac{C_n^{(iii)}(0)}{n^9 z} \bigg)^{-1} P^{(iii)}(z), & z \in A(0,r_n,r_0).
\end{cases}    
\end{align*}
\end{definition}

\begin{lemma}
On $\partial D(0,r_n)$, we have the jump
\begin{align}\label{eq: final iteration}
P^{(iv)}_+(z) = (\mathbb{I}+O(n^{-1})) P^{(iv)}_-(z).
\end{align}
\end{lemma}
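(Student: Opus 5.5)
The plan mirrors the third iteration. On $\partial D(0,r_n)$ the jump of $P^{(iv)}$ is
\[
P^{(iv)}_+ (P^{(iv)}_-)^{-1} = \Bigl(\mathbb{I}-\tfrac{C_n^{(iii)}(z)-C_n^{(iii)}(0)}{n^9 z}\Bigr)\, P^{(iii)}_+ (P^{(iii)}_-)^{-1}\, \Bigl(\mathbb{I}-\tfrac{C_n^{(iii)}(0)}{n^9 z}\Bigr).
\]
This holds because on the inner side the new prefactor multiplies $P^{(iii)}$ on the left. On the outer side $P^{(iv)}_- = \bigl(\mathbb{I}-C_n^{(iii)}(0)/(n^9 z)\bigr)^{-1}P^{(iii)}_-$, so its inverse contributes the right factor. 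Before anything else I would check that this inverse exists for $|z|\ge r_n$. This follows exactly as in the second iteration, from the bound $C_n^{(iii)}(0)/(n^9 z)=O(n^{-1/2})$ obtained below.

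Next I substitute \eqref{eq:third iteration proof}, namely $P^{(iii)}_+(P^{(iii)}_-)^{-1}=\mathbb{I}+C_n^{(iii)}(z)/(n^9z)+O(n^{-1})$, and expand the triple product. Write $X=C_n^{(iii)}(z)/(n^9z)$ and $X_0=C_n^{(iii)}(0)/(n^9z)$. The linear terms are $-(X-X_0)+X-X_0$, which vanish identically; this cancellation is the whole point of the construction. What remains are the products $(X-X_0)X$, $XX_0$, $(X-X_0)X_0$, $(X-X_0)XX_0$, and the $O(n^{-1})$ remainder multiplied by bounded factors.

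The key step is to show that every such product is $O(n^{-1})$. I do this by counting factors of $C_n^{(i)}$, using \eqref{eq: Arno eq 1}--\eqref{eq: Arno eq 3}. Each $C_n^{(iii)}$ is a product of three $C_n^{(i)}$'s divided by $z^2$, so a product of two $C_n^{(iii)}$ terms contains six $C_n^{(i)}$ factors. By \eqref{eq: Arno eq 2} these are $O(n^{7})$, and since the sequence contains repeated consecutive arguments, \eqref{eq: Arno eq 3} improves this further. The denominator is $n^{18}|z|^{6}=n^{18}n^{-9}=n^{9}$, so each quadratic term is $O(n^{-2})$. Cubic terms are smaller still. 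The cross terms between the $O(n^{-1})$ remainder and $X$ or $X_0$ are $O(n^{-1})\cdot O(n^{-1/2})$, using the estimate $X,X_0=O(n^{-1/2})$ from the third iteration's proof. This gives $P^{(iv)}_+=(\mathbb{I}+O(n^{-1}))P^{(iv)}_-$.

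The main obstacle is the bookkeeping of the $O(n^{-1})$ remainder inherited from \eqref{eq:third iteration proof}. It is a sandwiched matrix and not merely a scalar error, so I must make sure that conjugating or multiplying it by the prefactors, which are $\mathbb{I}+O(n^{-1/2})$, does not inflate it. Since both prefactors are uniformly bounded on $\partial D(0,r_n)$, this causes no loss.
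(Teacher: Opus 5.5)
Your proposal is correct and is essentially the paper's argument: you expand the conjugated third-iteration jump, observe that the terms linear in $C_n^{(iii)}$ cancel, and bound the surviving quadratic terms by counting six factors of $C_n^{(i)}$ to get $O(n^{-2})$. The differences are cosmetic: the paper first collapses the right two factors to $\mathbb{I}+\frac{C_n^{(iii)}(z)-C_n^{(iii)}(0)}{n^9z}+O(n^{-1})$ before multiplying by the left one, while you also check invertibility of $\mathbb{I}-C_n^{(iii)}(0)/(n^9z)$ and note that the crude bounds $X,X_0=O(n^{-1/2})$ already give $O(n^{-1})$ for the quadratic terms.
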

\begin{proof}
It can easily be seen from \eqref{eq:third iteration proof} that 
\begin{align*}
&P^{(iv)}_+(z) P^{(iv)}_-(z)^{-1} \notag\\
&= \bigg(\mathbb{I}-\dfrac{C_n^{(iii)}(z)-C_n^{(iii)}(0)}{n^9 z} \bigg) \bigg(\mathbb{I}+\dfrac{C_n^{(iii)}(z)}{n^9 z}+O(n^{-1}) \bigg) \bigg(\mathbb{I}-\dfrac{C_n^{(iii)}(0)}{n^9 z} \bigg) \notag \\
&=\bigg(\mathbb{I}-\dfrac{C_n^{(iii)}(z)-C_n^{(iii)}(0)}{n^9 z} \bigg) \bigg(\mathbb{I}+\dfrac{C_n^{(iii)}(z)-C_n^{(iii)}(0)}{n^9 z}+O(n^{-1}) \bigg) \notag \\
&=\mathbb{I}-\bigg(\dfrac{C_n^{(iii)}(z)-C_n^{(iii)}(0)}{n^9 z} \bigg)^2+O(n^{-1}).
\end{align*}
Since $C_n^{(iii)}(z)C_n^{(iii)}(0)$ has 6 factors of $C_n^{(i)}(z)$, therefore we have used $\dfrac{C_n^{(iii)}(z)}{n^9 z} \dfrac{C_n^{(iii)}(0)}{n^9 z} =O(n^{-2}) $ in the penultimate equality. For the same reason, we further have \break $\bigg(\dfrac{C_n^{(iii)}(z)-C_n^{(iii)}(0)}{n^9 z}\bigg)^2 = O(n^{-2})$ in the last equality, which yields \eqref{eq: final iteration}.
\end{proof}
We are now ready to fix the definitions of $E_n^0(z)$ and $E_n^\infty(z)$.
\begin{definition}\label{Def: En 0 and En inf}
We set $E_n^0(z)=E_n^{(iv)}(z)$ and $E_n^\infty(z)=E_n^{\infty(iv)}(z)$, i.e.,
\begin{align*}
&E_n^0(z)= \bigg(\mathbb{I}-\dfrac{C_n^{(iii)}(z)-C_n^{(iii)}(0)}{n^9 z} \bigg) \bigg(\mathbb{I}-\frac{C_n^{(ii)}(z)-C_n^{(ii)}(0)}{n^6 z} \bigg) \bigg(\mathbb{I}-\frac{C_n^{(i)}(z)-C_n^{(i)}(0)}{n^3 z} \bigg) E(z),\\
&E_n^\infty(z) = \bigg(\mathbb{I}-\frac{C_n^{(iii)}(0)}{n^9 z} \bigg)^{-1} \bigg(\mathbb{I}-\frac{C_n^{(ii)}(0)}{n^6 z} \bigg)^{-1} \bigg(\mathbb{I}-\frac{C_n^{(i)}(0)}{n^3 z} \bigg)^{-1} ,
\end{align*}
where $C_n^{(i)}(z)=E(z) C(z) [E(z)]^{-1}$ and $C_n^{(ii)}(z)$ and $C_n^{(iii)}(z)$ are defined in terms of $C_n^{(i)}(z)$ via \eqref{eq: C n 2 z} and \eqref{eq: C n 3 z}, respectively and $C(z)$ and $E(z)$ are defined by \eqref{eq: C(z)} and \eqref{eq: E(z)}.
\end{definition}


\subsection{Local parametrix near the edge point $1$}\label{subsec:Local parametrix near 1}
Consider a disc $D_1$ centered at $1$ of radius $\delta$. We look for a local parametrix $P^{(1)}$ within the disc $D_1$ such that:
\begin{itemize}
\item $P^{(1)}$ is analytic in $D_1 \backslash (\Delta \cup \Delta^\pm)$.
\item $P^{(1)}$ satisfies the following jump conditions:
\begin{align*}
P^{(1)}_+ = P^{(1)}_-J_P^{(1)}, \quad z \in (\Delta \cup \Delta^\pm) \cap D_1,
\end{align*}
where
\begin{align}
&J_{P^{(1)}}= \begin{pmatrix}
      0& w_1 &0\\
        -\frac{1}{w_1} &0 & 0 \\
         0& 0& 1 \\  
    \end{pmatrix}, \quad z \in \Delta \cap D_1, \label{1st Jump of P1} \\
&J_{P^{(1)}} = \begin{pmatrix}
      1& 0 &0\\
        \frac{1}{w_1}e^{n(H_1 - H_0)} &1 & 0\\
         0& 0& 1 \\  
    \end{pmatrix}, \quad z \in \Delta^\pm \cap D_1. \label{2nd Jump of P1}
\end{align}
\item On $D_1$ we have that as $n \rightarrow \infty$, $P^{(1)}$ matches $N$ in the sense
\begin{align}\label{Matching condition local para at 1}
P^{(1)}(z) = (\mathbb{I}+O (1/n)) N(z), \quad {\rm uniformly ~for}~  z \in \partial D_1.
\end{align}
\item $P^{(1)}(z)$ behaves similar to $T(z)$ near the $1$.
\end{itemize}

Let the function $W(z) = [z^{\alpha_1} (1-z)^\beta]^{1/2}$ be analytic for $z \in \mathbb{C} \backslash (-\infty,1]$. Thus, we have
\begin{align*}
W(z) = \begin{cases}
e^{\beta \pi i/2}z^{\alpha_1/2} (1-z)^{\beta/2 },& {\rm for~ Im~}~z > 0, \\
e^{-\beta \pi i/2}z^{\alpha_1/2} (1-z)^{\beta/2 },& {\rm for~ Im~}~z < 0 .
\end{cases}
\end{align*}
Precisely,
\begin{align}\label{Jump relation W}
W_+(z) W_-(z) = z^{\alpha_1} (1-z)^\beta = w_1, \quad z \in \Delta.
\end{align}
We seek $P^{(1)}$ in the form 
\begin{align}\label{P^1 = E_n}
P^{(1)}(z) = E_n^{(1)}(z)\tilde{P}^{(1)}(z) \begin{pmatrix}
      W^{-1}(z)e^{(n/2)(H_1 - H_0)}& 0 &0\\
        0 &W(z)e^{-(n/2)(H_1 - H_0)} & 0\\
         0& 0& 1 \\  
    \end{pmatrix},
\end{align}
where $E_n^{(1)}(z)$ is the analytic factor defined in the neighborhood of $D_1$. Then, it is easy to verify that $\tilde{P}^{(1)}$ satisfied following RH problem:
\begin{itemize}
\item $\tilde{P}^{(1)}$ is analytic in $D_1 \backslash \Delta \cup \Delta^\pm)$.
\item $\tilde{P}^{(1)}$ satisfies the following jump conditions:
\begin{align*}
\tilde{P}^{(1)}_+ = \tilde{P}^{(1)}_-\tilde{P}^{(1)}, \quad z \in (\Delta \cup \Delta^\pm) \cap D_1,
\end{align*}
where
\begin{align}
&J_{\tilde{P}^{(1)}} = \begin{pmatrix}
      0& 1 &0\\
        -1 &0 & 0\\
         0& 0& 1 \\  
    \end{pmatrix}, \quad z \in \Delta \cap D_1, \label{1st Jump of P tilde 1}\\
&J_{\tilde{P}^{(1)}} = \begin{pmatrix}
      1& 0 &0\\
        e^{\pm \beta \pi i}&1 & 0\\
         0& 0& 1 \\  
    \end{pmatrix}, \quad z \in \Delta^\pm \cap D_1. \label{2nd Jump of P tilde 1}
\end{align}
\item $\tilde{P}^{(1)}$ behaves near the end point $1$ like
\begin{align*}
&\tilde{P}^{(1)}(z)= O\begin{pmatrix}
v_1(z) & v_2(z) & v_2(z) \\
v_1(z) & v_2(z) & v_2(z) \\
v_1(z) & v_2(z) & v_2(z)
\end{pmatrix} ~ z \rightarrow 1,
\end{align*}
where
\begin{align*}
v_1(z),v_2(z) = \begin{cases}
  |z-1|^{\beta/2},|z-1|^{\beta/2},  & \beta < 0,\\
   \log|z-1|,\log|z-1|, & \beta = 0,\\
    |z-1|^{-\beta/2},|z-1|^{-\beta/2}, &\beta > 0, z~{\rm inside~ the ~lens},\\
     |z-1|^{\beta/2},|z-1|^{-\beta/2} & \beta > 0, z~{\rm outside~ the ~lens}.
\end{cases} 
\end{align*}
\end{itemize}
The jumps \eqref{1st Jump of P tilde 1} and \eqref{2nd Jump of P tilde 1} are computed using the jump relations \eqref{1st Jump of P1} and \eqref{2nd Jump of P1} for ${P}^{(1)}$ and the relation \eqref{Jump relation W} for $W(z)$. The behavior of $\tilde{P}^{(1)}$ near $1$ depends on the behavior of ${P}^{(1)}$ near $1$ along with the behavior of $W(z)$ near $1$ which is like $|z-1|^{\beta/2}$ and the fact that modulus of $H_1- H_0$ remains bounded in the neighborhood of $1$.

\begin{center}
\begin{figure}[htbp]
\begin{tikzpicture}[thick]
\filldraw (-3,2)node[below=10pt,red]{Region II};
\filldraw (-3,-1)node[below=5pt,red]{Region III};
\filldraw (1.5,0.7)node[below=5pt,red]{Region I};
\filldraw (0,0)node[below=5pt,red]{$0$};

\foreach \angle in {135,180,225} {
    \draw (0,0) -- (\angle:3);  
}

\filldraw (0,0) circle (1.5pt);
\draw[arc arrow=to pos .5 with length 3mm] (-2,2) to[out=135,in=135] (0,0);
\draw[arc arrow=to pos .5 with length 3mm] (-2,-2) to[out=225,in=225] (0,0);
\draw[arc arrow=to pos .5 with length 3mm] (-3,0) to[out=0,in=0] (0,0);
\end{tikzpicture}
\caption{The three regions of Bessel parametrix}
\label{Fig: bessel regions}
\end{figure}
\end{center}

The structure of the jump matrices simplifies the Riemann-Hilbert problem to a $2 \times 2$ case, as only the upper $2 \times 2$ block contains non-trivial entries. For hard edge scaling limits, the local parametrix construction typically employs modified Bessel functions as fundamental building blocks \cite[Section 6]{Kuijlaars_McLaughlin_Assche_2004}. Let $I_\beta$ and $K_\beta$ represent the modified Bessel functions of the first and second kind, respectively, of order $\beta$. Additionally, $H^{(1)}_\beta$ and $H^{(2)}_\beta$ denote the Hankel functions of the first and second kind for order $\beta$ \cite[Chapter 9]{Abramowitz_Stegun_1964}. Following \cite[Section 6]{Kuijlaars_McLaughlin_Assche_2004}, we define $\Psi_1$ by
\begin{align}\label{Bessel parametrix in different regions}
&\Psi_1(\eta)=\begin{pmatrix}
   I_\beta(2\eta^{1/2}) & (i/\pi)K_\beta(2\eta^{1/2}) & 0\\
   2\pi iI'_\beta(2\eta^{1/2}) & -2\eta^{1/2}K'_\beta(2\eta^{1/2}) & 0\\
   0 & 0&1
\end{pmatrix}, \quad \eta \in {\rm Region~ I}, \notag\\
&\Psi_1(\eta)=\begin{pmatrix}
   \frac{1}{2}H^{(1)}_\beta(2(-\eta)^{1/2}) & \frac{1}{2}H^{(2)}_\beta(2(-\eta)^{1/2}) & 0\\
   \pi \eta^{1/2} (H^{(1)}_\beta)'(2(-\eta)^{1/2}) & \pi \eta^{1/2}(H^{(2)}_\beta)'(2(-\eta)^{1/2}) & 0\\
   0 & 0&1
\end{pmatrix}e^{(1/2)\beta \pi i \sigma_3}, \quad \eta \in {\rm Region~ II}, \notag\\
&\Psi_1(\eta)=\begin{pmatrix}
   \frac{1}{2}H^{(2)}_\beta(2(-\eta)^{1/2}) & -\frac{1}{2}H^{(1)}_\beta(2(-\eta)^{1/2}) & 0\\
   -\pi \eta^{1/2} (H^{(2)}_\beta)'(2(-\eta)^{1/2}) & \pi \eta^{1/2}(H^{(1)}_\beta)'(2(-\eta)^{1/2}) & 0\\
   0 & 0&1
\end{pmatrix} e^{-(1/2)\beta \pi i \sigma_3}, \quad \eta \in {\rm Region~ III},
\end{align}
where $\eta^{1/2}$ has a branch cut on $(-\infty,0]$ and $\sigma_3 = diag(-1,1,0)$. From the definition \eqref{eq: H012 at 1}, we have
\begin{align*}
H_1-H_0 = 4b_1(z-1)^{1/2}+O(z-1), \quad z \to 1.
\end{align*}
The function $f_1(z)$ is then defined as
\begin{align*}
 f_1(z) = \left[\frac{1}{2} (H_1-H_0)(z)\right]^2.
\end{align*}
Since $f'_1(1) \neq 0$, we can choose $\delta$ small so that the function $f_1$ is a conformal map from $D_1$ onto a convex neighborhood of $0$. Further, we may deform the contours $\Delta^\pm$ near $1$ in such a way that $f$ maps $\Delta^\pm \cap D_1$ to the rays with angles $\pm 2\pi / 3$, respectively. Then, $\Delta^\pm$ and $\Delta$ divide the disk $D_1$ into three regions whose images under the map $f_1$ are the regions I, II and III as shown in Figure \ref{Fig: bessel regions}.
Further, one can show that $\tilde{P}^{(1)}(z)=\Psi_1(n^2 f_1(z))$ is the solution of the RH-problem for $\tilde{P}^{(1)}$ following the procedure proposed in \cite[Theorem 6.3]{Kuijlaars_McLaughlin_Assche_2004}. Then, the RH-problem for ${P}^{(1)}$ is satisfied by ${P}^{(1)}$ given by \eqref{P^1 = E_n} if we define
\begin{align}\label{E n 1 of bessel parametrix}
& E_n^{(1)}(z) = \frac{1}{\sqrt{2}} N(z) \begin{pmatrix}
    W(z) & 0& 0\\
    0 & W^{-1}(z) & 0\\
     0&0 &1
 \end{pmatrix}  \begin{pmatrix}
    1 & i& 0\\
    i & 1 & 0\\
     0&0 & \sqrt{2}
 \end{pmatrix} \notag\\ 
&\hspace{5cm} \times \begin{pmatrix}
    (2\pi n)^{1/2} (f_1(z))^{1/4}  & 0& 0\\
    0 & (2\pi n)^{-1/2} (f_1(z))^{-1/4}  & 0\\
     0&0 & 1
 \end{pmatrix}   ,
\end{align}
where $(f_1(z))^{1/4}$ is defined with a cut along $\Delta$. Hence, it is easy to verify that $E_n^{(1)} $ is analytic in $D_1$ and the matching condition for ${P}^{(1)}$ holds. Thus, the construction of the local parametrix near the edge point $1$ is complete.

\subsection{Local parametrix at $x^*$}\label{subsec:Local parametrix at $x^*$}
Let $D_{x^*}$ be a small disk centered at $x^*$ of radius $r > 0$. We look for a local parametrix $P^{(x^*)}$ inside $D_{x^*}$ such that 
\begin{itemize}
\item $P^{(x^*)}$ is analytic in $D_{x^*} \backslash (\Gamma^* \cup \Gamma^{*\pm})$.
\item $P^{(x^*)}$ satisfies the following jump conditions:
\begin{align*}
P^{(x^*)}_+ = P^{(x^*)}_-J_P^{(x^*)}, \quad z \in (\Gamma \cup \Gamma^{*\pm}) \cap D_{x^*},
\end{align*}
where
\begin{align}
&J_{P^{(x^*)}}= \begin{pmatrix}
      1& 0 &0\\
        0 &0 & c_\alpha\rho\\
         0&-\frac{1}{c_\alpha\rho} & 0 \\  
    \end{pmatrix}, \quad z \in \Gamma^* \cap D_{x^*}, \label{1st Jump of Px*} \\
&J_{P^{(x^*)}} = \begin{pmatrix}
      1& 0 &0\\
        0 &1 & 0\\
         0& \frac{1}{c_\alpha\rho}e^{n(H_2 - H_1)}& 1 \\  
    \end{pmatrix}, \quad z \in \Gamma^{*\pm} \cap D_{x^*}, \label{2nd Jump of Px*}\\
&J_{P^{(x^*)}} = \begin{pmatrix}
      1& 0 &0\\
        0 &1 & c_\alpha\rho e^{n(H_1^- - H_2^+)}\\
         0& 0& 1 \\  
    \end{pmatrix}, \quad z \in (-\infty, x^*) \cap D_{x^*}. \label{3nd Jump of Px*}
\end{align}
\item On $D_{x^*}$ we have that as $n \rightarrow \infty$, $P^{(x^*)}$ matches N in the sense
\begin{align}\label{Matching condition local para at x*}
P^{(x^*)}(z) = (\mathbb{I}+O (1/n)) N(z), \quad {\rm uniformly ~for}~  z \in \partial D_{x^*}.
\end{align}
\end{itemize}
From \eqref{eq: measure behave at x*}, it is evident that the measure vanishes like a square root at point $x^*$. From the definition \eqref{eq: H012 at x*} of $H_1$ and $H_2$, we obtain
\begin{align*}
H_2-H_1 = \pm 2\pi i+ \frac{4 b_{x^*}}{3} (z-x^*)^{3/2}+O(z-x^*)^{2}, \quad z \to x^*.
\end{align*}
The function $f_*(z)$ is then defined such that
\begin{align*}
    f_*(z) = \left[\frac{3}{4} ((H_2-H_1)(z) \mp 2\pi i)\right]^{2/3}
\end{align*}
and is analytic at $x^*$, real-valued on the real axis near $x^*$ and $f'_
*(x^*) \neq 0$. Hence, $f_*(z)$ is a conformal map of $D_{x^*}$ onto a convex neighborhood of $0$, if $r$ is sufficiently
small. We can deform the lenses $\Gamma^{*\pm}$ so that they are mapped by $f_*(z)$ onto the rays in the complex plane of constant arguments $-2\pi/3$ and $2\pi/3$. Thus, $\Gamma^{*\pm}$ and $\mathbb{R}$ divides the disk $D_{x^*}$ into 4 regions whose images by the conformal map $f_*(z)$ are contained in the four regions I, II, III, and IV as shown in Figure \ref{Fig: Airy regions}.
\begin{center}
\begin{figure}[htbp]
\begin{tikzpicture}[
    >={Stealth[length=3mm, width=2mm]},
    decoration={
        markings,
        mark=at position 0.5 with {\arrow{>}}  
    }
]
\filldraw (-3,2)node[below=10pt,red]{Region II};
\filldraw (-3,-1)node[below=5pt,red]{Region III};
\filldraw (2,2)node[below=5pt,red]{Region I};
\filldraw (2,-1)node[below=5pt,red]{Region IV};
\filldraw (0,0)node[below=5pt,red]{$0$};

\draw[postaction={decorate}] (4,0) -- (0,0);   
\draw[postaction={decorate}] (0,0) -- (-4,0);    


\draw[postaction={decorate}] (0,0) -- (120:3.5);   
\draw[postaction={decorate}] (0,0) -- (240:3.5);   

\end{tikzpicture}
\caption{The four regions of Airy parametrix}
\label{Fig: Airy regions}
\end{figure}
\end{center}
Since only the $2 \times 2$ lower block of the jump matrices is non‑trivial, the Riemann–Hilbert problem effectively reduces to a $2 \times 2$ problem. The jumps are of a standard form, and the local parametrix can therefore be constructed using Airy functions \cite{Deift_book_1999}. Let us define the function $\Phi(s)$ as
\begin{align*}
\Phi(\zeta) = \begin{cases}
    \begin{pmatrix}
      1  &0 & 0\\
      0  & Ai(\zeta) & -\omega_\zeta^2Ai(\omega_\zeta^2\zeta) \\
       0 & Ai'(\zeta)& -\omega_\zeta Ai'(\omega_\zeta^2\zeta)
    \end{pmatrix}, & s \in I, \\
    \begin{pmatrix}
      1  &0 & 0\\
      0  & -\omega_\zeta Ai(\omega_\zeta\zeta) & -\omega_\zeta^2Ai(\omega_\zeta^2\zeta) \\
       0 & -\omega_\zeta^2 Ai'(\omega_\zeta\zeta)& -\omega_\zeta Ai'(\omega_\zeta^2\zeta)
    \end{pmatrix}, & s \in II, \\
    \begin{pmatrix}
      1  &0 & 0\\
      0  & -\omega_\zeta^2 Ai(\omega_\zeta^2\zeta) & \omega_\zeta Ai(\omega_\zeta\zeta) \\
       0 & -\omega_\zeta Ai'(\omega_\zeta^2 \zeta)& \omega_\zeta^2 Ai'(\omega_\zeta\zeta)
    \end{pmatrix}, & s \in III, \\
    \begin{pmatrix}
      1  &0 & 0\\
      0  & Ai(\zeta) & \omega_\zeta Ai(\omega_\zeta \zeta) \\
       0 & Ai'(\zeta)& \omega_\zeta^2 Ai'(\omega_\zeta\zeta)
    \end{pmatrix}, & s \in IV, 
\end{cases}
\end{align*}
where $\omega_\zeta = e^{2\pi i/3}$. We then choose $P^{(x^*)}$ in the form
\begin{align}\label{eq: P^x* = E_n*}
P^{(x^*)}(z)=E_n^*(z) \Phi(n^{3/2}f_*(z)) \diag [1, (c_\alpha\rho)^{-1/2}e^{n(H_2 - H_1)/2}, (c_\alpha\rho)^{1/2}e^{-n(H_2 - H_1)/2} ].
\end{align}
This $P^{(x^*)}(z)$ can be shown to satisfy the jump conditions \eqref{1st Jump of Px*}, \eqref{2nd Jump of Px*}, and \eqref{3nd Jump of Px*} for any analytic prefactor $E_n^*(z)$, which will now be chosen to satisfy the matching condition \eqref{Matching condition local para at x*} on $\partial D_{x^*}$. Hence, we have
\begin{align}\label{eq: En*z}
& E_n^{(*)}(z) = \sqrt{\pi} N(z) \begin{pmatrix}
    1 & 0& 0\\
    0 & (c_\alpha\rho)^{1/2} & 0\\
     0&0 &(c_\alpha\rho)^{-1/2}
 \end{pmatrix}  \begin{pmatrix}
 1&0 & 0\\
    0 & 1& -1\\
    0 & -i & -i
\end{pmatrix} \notag\\ 
&\hspace{5cm} \times \begin{pmatrix}
    1  & 0& 0\\
    0 &  n^{1/6} (f_*(z))^{1/4} & 0\\
     0&0 & n^{-1/6} (f_*(z))^{-1/4}
 \end{pmatrix}  ,
\end{align}
where $(f_*(z))^{1/4}$ is defined with a cut along $\Gamma$. This completes the construction of local parametrix $P^{(x^*)}(z)$ in the neighborhood $D_{x^*}$ of $x^*$.

\subsection{Final transformation}
Having the global parametrix $N$, and the local paramertrices ${P}^{(1)}$ and $P^{(x^*)}$ in the neighbourhood of disks $D(1,\delta)$ and $D(x^{\ast},\delta)$, respectively, and $P$ in the neighbourhood of disk $D(0,r_0)$, we define the final transformation as follows:
\begin{definition}
We define $R$ as
\begin{align}\label{Transformation R}
R(z) = \begin{cases}
 T(z) N(z)^{-1},   & z \in \mathbb{C} \backslash \{\Sigma_T \cup \overline{D(1,\delta)} \cup \overline{D(0,r_0)} \cup \overline{D(x^{\ast},\delta)}\},\\
  T(z) P(z)^{-1},  & z \in D(0,r_0) \backslash \{\Sigma_T \cup \partial D(0,r_n) \}, \\
  T(z) P^{(1)}(z)^{-1},  &  z \in D(1,\delta) \backslash \Sigma_T, \\
  T(z) P^{(x^{
  \ast
  })}(z)^{-1},  &  z \in D(x^{\ast},\delta) \backslash \Sigma_T,
\end{cases}
\end{align}
where $\Sigma_T = (-\infty,1] \cup \Delta^\pm \cup \Gamma_*^\pm$.
\end{definition}
\color{black}
Since $T$ and $N$ have the same jumps on $\Delta$ and $\Gamma$, we find that $R$ has no jump across $(-\infty,-r_0)$ and $r_0, 1-\delta$. Also, $T$ and $P^{(1)}$ have the same jumps on $D(1,\delta)$ and so $R$ has no jump in $D(1,\delta)$. Further, $T$ and $P^{(x^*)}$ have the same jumps on $D(x^*,\delta)$ and so $R$ has no jump in $D(x^*,\delta)$. Finally, the jumps of $T$ and $P$ agree inside $D(0,r_0)$ and on $(-r_0,0)$ and $(0,r_0)$. Thus, $R$ solves an RH problem on the contour $\Sigma_R$ depicted in Figure \ref{Final Contour} below and defined as
\begin{multline*}
\Sigma_R = (-\infty,x^*-\delta] \cup \partial D(0,r_0) \cup \partial D(0,r_n) \cup \partial D(1,\delta) \cup D(x^*,\delta) \\ \cup (\Delta^\pm \backslash (D(0,r_0) \cup D(1,\delta))) \cup (\Gamma^\pm \backslash ( D(0,r_0) \cup D(x^*,\delta)).
\end{multline*}

\begin{center}
\begin{figure}[htbp]
\begin{tikzpicture}
\filldraw (3,2)node[below=5pt,red]{$\Delta^+$};
\filldraw (3,-1)node[below=5pt,red]{$\Delta^-$};
\filldraw (-3,2)node[below=5pt,red]{$\Gamma^{*+}$};
\filldraw (-3,-1)node[below=5pt,red]{$\Gamma^{*-}$};
\filldraw (0,0)node[below=5pt,red]{$0$} circle [radius=2pt];
\filldraw (5,0)node[below=5pt,red]{$1$} circle [radius=2pt];
\filldraw (-5,0)node[below=5pt,red]{$x^*$} circle [radius=2pt];
 \draw[arc arrow=to pos .5 with length 2mm] (-8,0) to (-6,0);
\draw[arc arrow=to pos .5 with length 2mm] (-5,1) to[out=60,in=120] (-0.2,0.5);
\draw[arc arrow=to pos .5 with length 2mm] (-5,-1) to[out=-60,in=-120] (-0.2,-0.5);
\draw[arc arrow=to pos .5 with length 2mm] (0.35,0.35) to[out=60,in=120] (5,1);
\draw[arc arrow=to pos .5 with length 2mm] (0.35,-0.35) to[out=-60,in=-120] (5,-1);
    \draw[arc arrow=to pos .4 with length 2mm] (0,0) circle (1cm);
    \draw[arc arrow=to pos .3 with length 2mm] (0,0) circle (0.5cm);
    \draw[arc arrow=to pos .4 with length 2mm] (-5,0) circle (1cm);
    \draw[arc arrow=to pos .4 with length 2mm] (5,0) circle (1cm);
\end{tikzpicture}
\caption{The jump contours for the matrix $R$}
\label{Final Contour}
\end{figure}
\end{center} 

\begin{proposition}
As $n \to \infty$, we have
\begin{align}
R_+(z) = R_-(z) \bigg(\mathbb{I}+O\bigg(\frac{1}{n} \bigg) \bigg),& \quad z \in \partial D(0,r_0) \cup \partial D(0,r_n) \cup \partial D(1,\delta) \cup \partial D(x^*,\delta), \label{R match on disks}\\
R_+(z) = R_-(z) (\mathbb{I}+O(e^{-d_2 \sqrt{n}})), & \quad z \in \Delta^\pm ({\rm or~} \Gamma^{*\pm}) \cap A(0;r_n,r_0), \label{R match in annulus}\\
R_+(z) = R_-(z) (\mathbb{I}+O(e^{-d_1n})),& \quad  on~~ remaining~~ parts~~of~~ \Sigma_R, \label{R match on rays}
\end{align}
where $d_1$ and $d_2$ are positive constants. All $O$ terms in \eqref{R match on disks}-\eqref{R match on rays} are uniform for $z$ on the indicated contours. Moreover, we have
\begin{align}\label{R like I}
    R(z) = \mathbb{I}+O\bigg(\frac{1}{n} \bigg), \quad as ~~ n \to \infty,
\end{align}
uniformly for $z \in \mathbb{C} \backslash \Sigma_R$.
\end{proposition}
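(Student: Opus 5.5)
We treat the three groups of contours separately and then invoke the standard small-norm theory for Riemann--Hilbert problems. On each piece of $\Sigma_R$ the jump of $R$ is $J_R = R_-^{-1}R_+$. We express it through the jumps of $T$ and of the parametrices, and bound it using the matching conditions and the sign properties of $\mathrm{Re}(H_1-H_0)$ and $\mathrm{Re}(H_2-H_1)$.

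\textbf{The circles.} On $\partial D(1,\delta)$ we have $J_R = P^{(1)}N^{-1}$. The matching \eqref{Matching condition local para at 1} follows from the large-$\eta$ asymptotics of the Bessel model $\Psi_1$, inserted into \eqref{P^1 = E_n} with the prefactor \eqref{E n 1 of bessel parametrix}; this gives $\mathbb{I}+O(1/n)$. The circle $\partial D(x^*,\delta)$ is handled the same way, using the Airy asymptotics and the prefactor \eqref{eq: En*z}. On $\partial D(0,r_0)$ we have $J_R = P N^{-1} = E_n^\infty$, which is $\mathbb{I}+O(n^{-1})$ by \eqref{PRefactor 2}. On $\partial D(0,r_n)$ the jump is $P_+P_-^{-1} = E_n^0\mathring{P}N^{-1}(E_n^\infty)^{-1}$, which is $\mathbb{I}+O(n^{-1})$ by \eqref{PRefactor 1}, or equivalently by the final iteration \eqref{eq: final iteration}. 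Here we must check that $E_n^\infty$ is uniformly bounded with bounded inverse on the annulus. This follows from the estimates \eqref{eq: Arno eq 1}--\eqref{eq: Arno eq 5}, since each factor $(\mathbb{I}-C^{(k)}_n(0)/(n^{3k}z))^{-1}$ is $\mathbb{I}+O(n^{-1/2})$ for $|z|\ge r_n$.

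\textbf{The lips of the lenses.} Outside the disks, $J_R = N J_T N^{-1}$ with $J_T$ from \eqref{First Jump for T} and \eqref{Second Jump for T}. The only nontrivial entries are $w_1^{-1}e^{n(H_1-H_0)}$ on $\Delta^\pm$ and $(c_\alpha\rho)^{-1}e^{n(H_2-H_1)}$ on $\Gamma^{*\pm}$.
\begin{itemize}
\item \emph{Real-axis inequalities.} By the equilibrium relations (Proposition \ref{prop_equi_algebraic} and the boundary values \eqref{eq: Boundary conditions H}), $\mathrm{Re}(H_1-H_0)=0$ on $(0,1)$. Its derivative in the normal direction is $\mp 2\pi \lambda_1'(x)<0$, so by Cauchy--Riemann $\mathrm{Re}(H_1-H_0)<0$ on the lens lips near $(0,1)$. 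The same argument gives $\mathrm{Re}(H_2-H_1)<0$ on $\Gamma^{*\pm}$. On $(-\infty,x^*)$ the inequality \eqref{eq: Real H2-H1 +ve} makes the off-diagonal entry $c_\alpha\rho\, e^{n(H_1-H_2)}$ exponentially small, uniformly once we stay outside $D(x^*,\delta)$. Together these give \eqref{R match on rays}.
\item \emph{The annulus $A(0;r_n,r_0)$.} Near $0$, the expansions of $H_j$ give $H_1-H_0 \approx 3e_2(0)(\omega^{\pm 2}-1)z^{1/3}$ (and similarly for $H_2-H_1$). With the lips chosen along the rays of Figure \ref{Fig: 6 rays}, the real part is $\le -c|z|^{1/3}$. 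For $|z|\ge r_n=n^{-3/2}$ this yields $n|z|^{1/3}\ge n^{1/2}$, hence the factor $e^{-d_2\sqrt n}$. The growth of $N$, $N^{-1}$ like $|z|^{-1/3}|z|^{A/3}$ from \eqref{Nz behavior at z=0}, together with $w_1^{-1}=|z|^{-\alpha_1}$, contributes only a polynomial factor in $n$, which this exponential absorbs. The bounded factor $E_n^\infty$ does not spoil this, which gives \eqref{R match in annulus}.
\end{itemize}

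\textbf{Conclusion and the main obstacle.} With $\|J_R-\mathbb{I}\|_{L^\infty\cap L^2(\Sigma_R)}=O(1/n)$, we obtain \eqref{R like I} from the standard argument: $R=\mathbb{I}+C(R_-(J_R-\mathbb{I}))$ with $\|C_-\|$ bounded. The obstacle is that $\partial D(0,r_n)$ shrinks, so the norm of the Cauchy operator must be controlled uniformly. We would first try rescaling $z\mapsto z/r_n$ near $0$, which leaves the Cauchy operator norm on circles scale invariant. The $L^2$ norm on the small circle is then only smaller, by a factor $r_n^{1/2}$. The remaining care is checking that $R$ has no singularity at $0$, $1$, $x^*$: the entries of $TP^{-1}$ grow at most like $|z|^{-\gamma}$ with $\gamma<1$, which makes the singularities removable.
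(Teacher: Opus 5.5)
Your proposal follows the paper's proof:
\begin{itemize}
\item the circle jumps come from the matching conditions at $1$ and $x^*$ together with the double matching \eqref{PRefactor 1}--\eqref{PRefactor 2};
\item the lip jumps outside the disks come from the sign of $\mathrm{Re}(H_1-H_0)$ and $\mathrm{Re}(H_2-H_1)$ and from \eqref{eq: Real H2-H1 +ve}; your Cauchy--Riemann justification here is more accurate than the paper's appeal to the real-axis equilibrium relation;
\item the annulus jumps come from $|e^{n(H_1-H_0)}|\le e^{-cn|z|^{1/3}}$ with $|z|\ge n^{-3/2}$;
\item \eqref{R like I} follows from small-norm theory, with the shrinking circle handled as in \cite[Appendix A]{Bleher_Arno_2007}.
\end{itemize}
One side claim is false, though harmless. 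Near $|z|=r_n$ the term $C_n^{(i)}(0)/(n^3z)$ has size $O(n^2)/(n^3 r_n)=O(n^{1/2})$, as the paper itself records in Lemma \ref{Lm: First iteration}. So $E_n^\infty$ is neither $\mathbb{I}+O(n^{-1/2})$ nor uniformly bounded on the annulus. It and its inverse are only polynomially bounded in $n$; for the first factor, use $(C_n^{(i)}(0)/(n^3z))^2=O(n^{-1})$. That is all your annulus estimate needs, since $e^{-d_2\sqrt n}$ absorbs any power of $n$. The jump on $\partial D(0,r_n)$ follows directly from \eqref{PRefactor 1} and needs no bound on $E_n^\infty$ at all.
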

\begin{proof}
For $z \in \partial D(0,r_n)$, we have by definition \eqref{Transformation R} that
\begin{align*}
R_-(z)^{-1}R_+(z) = [T(z)P_-(z)^{-1}]^{-1} T(z)P_+(z)^{-1} = P_-(z)P_+(z)^{-1} .
\end{align*}
Since $T$ has no jump on $\partial D(0,r_n)$. Then, \eqref{R match on disks} for $z \in \partial D(0,r_n)$ follows from \eqref{double matching of P 2}. Similarly, for $z \in \partial D(0,r_0)$, by \eqref{Transformation R}, we get
\begin{align*}
R_-(z)^{-1}R_+(z) = [T(z)P(z)^{-1}]^{-1} T(z)N(z)^{-1} = P(z)N(z)^{-1},
\end{align*}
and \eqref{R match on disks} follows from \eqref{double matching of P 1} for $z \in \partial D(0,r_n)$. For $z \in \partial D(1,\delta)$ and for $z \in \partial D(x^*,\delta)$, \eqref{R match on disks} follows from the matching condition \eqref{Matching condition local para at 1} and \eqref{Matching condition local para at x*} for the local parametrix at $1$ and $x^*$, respectively. For $z \in \Sigma_R$ outside the disks $D(0,r_0)$, $D(x^*,\delta)$, and $D(1,\delta)$, we have by \eqref{Transformation R},
\begin{align*}
R_-(z)^{-1}R_+(z) = [T_-(z)N(z)^{-1}]^{-1} T_+(z)N(z)^{-1} = N(z) T_-(z)^{-1}T_+(z) N(z)^{-1},
\end{align*}
and $T_-(z)^{-1}T_+(z) = \mathbb{I}+O(e^{-d_1n})$ which follows from the jumps in the RH problem for $T$, since $\Re (H_1-H_0) = 2V^{\lambda_1}-V^{\lambda_2}-\omega_1 \leq 0$ on $\Delta^\pm$ by \eqref{equilibrium problem} and $\Re (H_2-H_1) = V^{\lambda_1}-2V^{\lambda_2}-\omega_2 \leq 0$ on $\Gamma^{*\pm}$ by \eqref{equilibrium problem 2}. For $z \in (-\infty, x^*)$, the asymptotic relation $T_-(z)^{-1}T_+(z) = \mathbb{I}+O(e^{-d_1n})$ follows from the facts: the transformation $T$ coincides with $U$ outside the lenses $\Delta^\pm$ and $\Gamma^{*\pm}$; additionally, $e^{n(\tilde{H}_1^- - \tilde{H}_2^+)}$ is exponentially small, as guaranteed by \eqref{eq: Real H2-H1 +ve} and the condition that $n\theta$ is an integer. Hence, the estimate \eqref{R match on rays} also follows. For $z \in \Delta^\pm \cap A(0;r_n,r_0)$, we have $R(z) = T(z)P(z)^{-1}$. By the jump of $T$ in \eqref{First Jump for T}, we get
\begin{align*}
R_-(z)^{-1}R_+(z) &= [T_-(z)P(z)^{-1}]^{-1} T_+(z)P(z)^{-1} = P(z) T_-(z)^{-1}T_+(z) P(z)^{-1}\\
&=\mathbb{I}+z^{-\alpha_1}e^{n(H_1-H_0)}P(z)E_{21}P(z)^{-1}.
\end{align*}
Since $P(z)$ and $P(z)^{-1}$ have at most power law singularity near the origin, the estimate of $R_-(z)^{-1}R_+(z)$ relies totally on the behavior of $e^{n(H_1-H_0)}$ near $z=0$, we have
\begin{align*}
|e^{n(H_1-H_0)}| = |e^{n\int(h_1-h_0)dz}|=|e^{-c n\int z^{-2/3} dz}| = |e^{-d_2 n z^{1/3}}| \leq e^{-d_2 n |z|^{1/3}}
\end{align*}
for some $c>0$. Since $|z| \geq n^{-3/2}$ on the annulus $A(0;r_n,r_0)$, we have $|e^{n(H_1-H_0)}| \leq e^{-d_2 \sqrt{n}}$ for $z \in \Delta^\pm \cap A(0;r_n,r_0)$. Then, \eqref{R match in annulus} follows from the fact that if $z \in \Gamma^\pm \cap A(0;r_n,r_0)$, then the estimates of $R_-(z)^{-1}R_+(z)$  can be derived in a similar manner, where one needs to explore the behavior of $|e^{n(H_2-H_1)}|$ near $z=0$. Finally, we conclude \eqref{R like I} from the standard arguments in the RH analysis, see for example \cite{Deift_book_1999} and \cite[Appendix A]{Bleher_Arno_2007}.
\end{proof}

\section{Strong asymptotics}
Having known the asymptotic behavior \eqref{R like I} of $R$, we are now ready to state the strong asymptotics results for the Jacobi-Piñeiro polynomials $P_{\vec{n}}$ defined by \eqref{Jacobi Pineiro} by tracing back our steps to the original RH problem for $Y$.

\subsection{Strong asymptotics away from the zeros}
\begin{theorem}\label{Thrm:Asymptotics in unbounded region}
Let $\vec{n}=(n_1,n_2)$ be multi-indices that tend to infinity such that $n_1+n_2=2n$, $n_2/(n_1+n_2) =\theta \in (0,1/2)$ where $\theta$ remains constant with $\theta n \in \mathbb{Z}$. Then, for $\beta=0$, uniformly for $z$ in compact subsets of $\mathbb{C} \backslash [0,1]$, we have 
\begin{align}\label{Asymptotics in unbounded region}
P_{\vec{n}}(z)=\frac{4^{\alpha_1}(\varphi_0(z)-\theta_1)}{[(2+\theta)(4-\theta)]^{\alpha_1}} \dfrac{(\varphi_0(z)-\theta_2)^{\alpha_2-\alpha_1+1}\varphi_0(z)^{2\alpha_1-\alpha_2}}{z^{\alpha_1} \sqrt{\mathcal{D}(\varphi_0(z))}} e^{n\tilde{H_0}(z)}\bigg(1+O\bigg(\frac{1}{n} \bigg) \bigg).
\end{align}
\end{theorem}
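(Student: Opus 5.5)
We would prove the statement by unwinding the chain of transformations $Y\to X\to U\to T\to R$ for a point $z$ in a compact set $K\subset\mathbb{C}\setminus[0,1]$, and reading off the $(1,1)$ entry. Since $P_{\vec n}=Y_{11}$ and $K$ is compact and avoids $[0,1]$, we first shrink $r_0,\delta$ and choose the lips $\Delta^\pm$, $\Gamma^{*\pm}$ so that they stay at positive distance from $K$. One caveat: $K$ may meet $[x^*,0)$ or $(-\infty,x^*)$. On those sets $Y_{11}$ has no jump, so we can argue with boundary values from either side. Alternatively, we can let the lens $\Gamma^{*\pm}$ pass on one side of $K\cap\Gamma^*$ and cover the points of $K$ near $0$ or $x^*$ by re-choosing the radii, which is possible because the disks may be taken arbitrarily small.

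\textbf{Step 1 (inverting the transformations).} The right factor in the definition of $X$ is upper triangular with first column $e_1$, so the first column is unchanged and $X_{11}=Y_{11}$. From \eqref{Transformation U}, the left factor has $(1,1)$ entry $1$, so $U_{11}=X_{11}e^{-n\tilde H_0}$. Outside the lenses \eqref{Transformtion T} gives $T=U$. Outside all disks \eqref{Transformation R} gives $T=RN$. Hence
\[
P_{\vec n}(z)=e^{n\tilde H_0(z)}\,(R N)_{11}(z)=e^{n\tilde H_0(z)}\sum_{k=0}^{2}R_{1k}(z)N_{k1}(z).
\]
By \eqref{R like I}, $R_{11}=1+O(1/n)$ and $R_{1k}=O(1/n)$ for $k\neq1$, uniformly on $K$. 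The entries $N_{k0}=\mathcal F_k(\varphi_0(z))$ are bounded on $K$ independently of $n$, since $\varphi_0(K)$ stays away from the zeros of $\mathcal D$. Moreover $N_{00}$ has no zeros on $K$: $\mathcal G$ is nonvanishing, and the factors $s-\theta_{1,2}$ vanish only at $\infty_{1,2}$, which are not on sheet $0$. Therefore
\[
P_{\vec n}(z)=e^{n\tilde H_0(z)}N_{00}(z)\bigl(1+O(1/n)\bigr).
\]

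\textbf{Step 2 (identifying $N_{00}$).} Insert $s=\varphi_0(z)\in\varphi(\mathcal R_0)$, with $\beta=0$, into \eqref{Function F1 general case} and \eqref{Function G general case}. This gives
\[
N_{00}(z)=c_0^{\alpha_1}(\varphi_0-\theta_1)(\varphi_0-\theta_2)\,\frac{(\varphi_0-\theta_2)^{\alpha_2-\alpha_1}\varphi_0^{2\alpha_1-\alpha_2}}{z^{\alpha_1}\sqrt{\mathcal D(\varphi_0)}},
\]
where $c_0^{\alpha_1}=4^{\alpha_1}/[(2+\theta)(4-\theta)]^{\alpha_1}$. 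This is exactly the prefactor in \eqref{Asymptotics in unbounded region}.

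\textbf{Step 3 (checks we expect to take the most care).} The main delicate point is the uniformity of \eqref{R like I} on all of $K$, including points of $K$ on $(-\infty,x^*)$ or inside $\Gamma^*$. The estimate \eqref{R like I} is stated for $\mathbb{C}\setminus\Sigma_R$, and since $R$'s jumps are $\mathbb{I}+O(\cdot)$, its boundary values on $\Sigma_R$ also satisfy $\mathbb{I}+O(1/n)$. Near $(-\infty,x^*)$, however, $U$ and $T$ acquire a jump in the $(2,3)$ entry. This does not affect column $1$, so $U_{11}$ and hence $Y_{11}$ are continuous there, and the formula for the $(1,1)$ entry extends by continuity.

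A second point is the branch of $\varphi_0^{2\alpha_1-\alpha_2}$ and of $z^{\alpha_1}$ for $z$ on the negative axis. Here we use that $\mathcal G$ on sheet $0$ is analytic off $\varphi_1^-(\Delta)$, so the combination above is single-valued on $\mathbb C\setminus[0,1]$, consistently with $P_{\vec n}$ being entire.

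Finally, the $O(1/n)$ bound is uniform on $K$: the error bounds for $R$ are uniform, and $N$ and $N^{-1}$ are bounded on $K$.
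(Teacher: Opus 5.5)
Your proposal follows the paper's own proof: $P_{\vec n}=Y_{11}=X_{11}=e^{n\tilde H_0}U_{11}$, then $T_{11}=U_{11}$, then $T=RN$ with $R=\mathbb{I}+O(1/n)$ from \eqref{R like I}, then the nonvanishing of $\mathcal F_0(\varphi_0(z))$, and finally substitution of \eqref{Function F1 general case} and \eqref{Function G general case} with $\beta=0$. Your extra bookkeeping on the negative axis is more careful than the paper's one-line treatment.

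One sub-step does not work as written. If $x^*\in K$, no choice of $\delta$ takes $K$ out of $D(x^*,\delta)$, and there $T=RP^{(x^*)}$ rather than $RN$. A clean repair is the maximum principle applied to $q(z)=e^{-n\tilde H_0(z)}P_{\vec n}(z)/\mathcal F_0(\varphi_0(z))$. This function is analytic in $\mathbb{C}\setminus[0,1]$: the constant jump $4\pi i$ of $H_0$ on $(-\infty,0)$ disappears after exponentiation, and $\mathcal F_0(\varphi_0)$ is zero-free with no jump across $\Gamma^*$. Since $q=1+O(1/n)$ on $\partial D(x^*,\delta)$, the same bound holds inside the disk.

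Separately, you do not need to route $\Gamma^{*\pm}$ to one side of $K\cap\Gamma^*$, and you cannot, since the lens must enclose $(x^*,0)$ from both sides. The $\Gamma^*$-lens factor in \eqref{Transformtion T} only modifies the second column, so $T_{11}=U_{11}$ inside that lens as well.
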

\begin{proof}
Let $z \in \mathbb{C} \backslash [0,1]$. From \eqref{Transformation U}, we see that
\begin{align*}
P_{\vec{n}}(z)=Y_{11}(z) = e^{n\tilde{H_0}(z)}U_{11}(z).
\end{align*}
Then, from the definition of T, it is always possible to assume that $z$ does not belong to the lenses around $[0,1]$. Hence, $T=U$ from \eqref{Transformtion T} and we get $T_{11}(z)=U_{11}(z)$. Finally, we note that $T(z)=R(z)N(z)$ that gives
\begin{align*}
  T_{11}(z) = R_{11}(z)N_{11}(z)+R_{12}(z)N_{21}(z)+R_{13}(z)N_{31}(z) .
\end{align*}
Since $R(z)=\mathbb{I}+O(1/n)$ and since $N_{11}(z)$ does not vanish in a neighborhood of $z$, we get
\begin{align*}
 P_{\vec{n}}(z)=   e^{n\tilde{H_0}(z)}N_{11}(z)\bigg(1+O\bigg(\frac{1}{n} \bigg) \bigg)
\end{align*}
uniformly in $\mathbb{C} \backslash [0,1]$. Using the formula for $N_{11}(z)$ from \eqref{Solution matrix N general case}, we get \eqref{Asymptotics in unbounded region}.
\end{proof}

\subsection{Strong asymptotics near the intervals}
\begin{theorem}
Let $\vec{n}=(n_1,n_2)$ be multi-indices that tend to infinity such that $n_1+n_2=2n$, $n_2/(n_1+n_2) =\theta \in (0,1/2)$ where $\theta$ remains constant with $\theta n \in \mathbb{Z}$. Then, for $\beta=0$, uniformly for $z$ on $\pm$ sides of $\Delta$ and $\Gamma$, away from the end points, we have
\begin{align}\label{Asymptotics near intervals}
&P_{\vec{n}}(z)=e^{n\tilde{H_0}(z)}\bigg[\dfrac{4^{\alpha_1}(\varphi_0(z)-\theta_1)(\varphi_0(z)-\theta_2)^{\alpha_2-\alpha_1+1}\varphi_0(z)^{2\alpha_1-\alpha_2}}{[(2+\theta)(4-\theta)]^{\alpha_1}z^{\alpha_1} \sqrt{\mathcal{D}(\varphi_0(z))}}+O\bigg(\frac{1}{n} \bigg)  \bigg] \notag\\
& \pm e^{n(H_1(z) - H_0(z))} \bigg[\dfrac{4^{\alpha_1}(\varphi_1(z)-\theta_1)(\varphi_1(z)-\theta_2)^{\alpha_2-\alpha_1+1} \varphi_1(z)^{2\alpha_1-\alpha_2}}{ [(2+\theta)(4-\theta)]^{\alpha_1} z^{\alpha_1}\sqrt{\mathcal{D}(\varphi_1(z))}}+O\bigg(\frac{1}{n} \bigg)  \bigg].
\end{align}
\end{theorem}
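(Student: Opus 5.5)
The approach is to unwind the transformations $Y\to X\to U\to T\to R$ for $z$ inside the lens around $\Delta$, keeping both relevant entries of $N$ this time. In the proof of Theorem \ref{Thrm:Asymptotics in unbounded region} we could always take $z$ outside the lens, so only $N_{11}$ mattered. Here the lens term contributes the second exponential.

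\textbf{Step 1: the first row of $Y$.} Fix $z$ in the upper (resp. lower) part of the lens around $\Delta$, away from $0$ and $1$, so that $z$ lies outside $D(0,r_0)$ and $D(1,\delta)$. The first transformation does not change the first column, and \eqref{Transformation U} leaves the first row unscaled. Hence $Y_{11}=e^{n\tilde H_0}U_{11}$, exactly as before.

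\textbf{Step 2: undo the lens.} From \eqref{Transformtion T}, in the region between $\Delta^\pm$ and $\Delta$ we have
\[
U=T\begin{pmatrix}1&0&0\\ \pm w_1^{-1}e^{n(H_1-H_0)}&1&0\\0&0&1\end{pmatrix}.
\]
Therefore
\[
U_{11}=T_{11}\pm w_1^{-1}e^{n(H_1-H_0)}T_{12}.
\]
The same formula holds on the $\pm$ boundary values on $\Delta$, since $T$ has continuous boundary values there.

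\textbf{Step 3: replace $T$ by $N$.} Outside the disks we have $T=RN$ with $R=\mathbb{I}+O(1/n)$ uniformly by \eqref{R like I}. This gives
\[
T_{11}=N_{11}+O(1/n),\qquad T_{12}=N_{12}+O(1/n).
\]
These remainders are genuinely additive $O(1/n)$, and this is why the statement is written with additive errors. Both $N_{11}$ and $N_{12}$ stay bounded on compact subsets of the open interval away from the endpoints, because the only singularities of $\mathcal{F}_0$ are at $s=0,1,s^*$.

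Multiplying by $e^{n\tilde H_0}$ gives two terms:
\[
e^{n\tilde H_0}\bigl[N_{11}+O(1/n)\bigr] \quad\text{and}\quad \pm e^{n\tilde H_0}\,w_1^{-1}e^{n(H_1-H_0)}\bigl[N_{12}+O(1/n)\bigr].
\]
Note that $e^{n\tilde H_0}e^{n(H_1-H_0)}=e^{n\tilde H_1}$. The displayed formula writes the prefactor as $e^{n(H_1-H_0)}$ and absorbs $w_1^{-1}=z^{-\alpha_1}$ (recall $\beta=0$) into the bracket. I would reconcile this bookkeeping and, if needed, read the second prefactor as $e^{n\tilde H_0}e^{n(H_1-H_0)}$.

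\textbf{Step 4: insert the outer parametrix.} Substitute $N_{11}=\mathcal{F}_0(\varphi_0(z))$ and $N_{12}=\mathcal{F}_0(\varphi_1(z))$ from \eqref{Solution matrix N general case}, with \eqref{Function F1 general case} and \eqref{Function G general case}.

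On $\varphi(\mathcal R_0)$, the factor $\mathcal G$ carries $z^{-\alpha_1}$. This reproduces the first bracket exactly as in Theorem \ref{Thrm:Asymptotics in unbounded region}.

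On $\varphi(\mathcal R_1)$, the factor $\mathcal G$ has no $z^{-\alpha_1}$. Here the $w_1^{-1}=z^{-\alpha_1}$ coming from the lens factor supplies it. The result is the second bracket with $\varphi_1$ in place of $\varphi_0$, and the constant $(4/((2+\theta)(4-\theta)))^{\alpha_1}$ matches $4^{\alpha_1}/[(2+\theta)(4-\theta)]^{\alpha_1}$.

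\textbf{Step 5: the other side and branch choices.} The sign $\pm$ comes directly from the sign of the lens factor in the upper and lower halves. For the part of the statement concerning $\Gamma$, I would argue analogously. In the lens around $\Gamma^*$ the lens factor touches only the $(3,2)$ entry, so $U_{11}=T_{11}$. On $(-\infty,x^*)$ only $U$ enters. So on $\Gamma$ the correction term is absent, or at least exponentially small by \eqref{eq: Real H2-H1 +ve}, and the formula reduces to its first line.

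The main obstacle I expect is Step 5's boundary-value bookkeeping. One must check the branch choices of $\varphi_0,\varphi_1$, $z^{\alpha_1}$, $\sqrt{\mathcal D}$ and $e^{nH_j}$ on the two sides of $\Delta$, using $H_0^\pm=H_1^\mp$ and $\varphi_0^\pm=\varphi_1^\mp$. Only with these checks does the combination come out in exactly the stated form, real up to a constant phase and oscillatory on $\Delta$. One must also confirm that the constants in $\mathcal F_0$ on the sheet $\mathcal R_1$ agree.
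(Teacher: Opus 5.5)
Your proposal follows the paper's proof step for step: $P_{\vec n}=e^{n\tilde H_0}U_{11}$, undoing the lens gives $U_{11}=T_{11}\pm z^{-\alpha_1}e^{n(H_1-H_0)}T_{12}$, and then $T=RN$ with $R=\mathbb{I}+O(1/n)$, $N_{11}=\mathcal{F}_0(\varphi_0)$ and $N_{12}=\mathcal{F}_0(\varphi_1)$, with the lens factor supplying the $z^{-\alpha_1}$ in the second bracket; your reading of the second prefactor as $e^{n\tilde H_0}e^{n(H_1-H_0)}=e^{n\tilde H_1}$ is the correct one, since the paper's proof silently drops the $e^{n\tilde H_0}$ in that term. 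On $\Gamma$ your observation $U_{11}=T_{11}$ already suffices, and if you want to match the two-term display there, the extra term is negligible because $\mathrm{Re}(H_1-H_0)=2V^{\lambda_1}-V^{\lambda_2}-\omega_1<0$ off $\Delta$ (it is subharmonic in $\overline{\mathbb{C}}\setminus\Delta$, vanishes on $\Delta$ and tends to $-\infty$ at $\infty$), not because of \eqref{eq: Real H2-H1 +ve}, which concerns $H_2-H_1$ on $(-\infty,x^*)$.
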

\begin{proof}
We consider $z_0 \in \Delta$, away from the endpoints, and prove that \eqref{Asymptotics near intervals} holds uniformly in a disk $D_{z_0}$ centered at $z_0$. We may choose this disk so that it is disjoint from $D(0,r_0)$ and $D_1(1,\delta)$ and is contained in the lens around $\Delta$ (See Figure \ref{Final Contour}). Then, as in the proof of Theorem \ref{Thrm:Asymptotics in unbounded region}, we have $P_{\vec{n}}(z)= e^{n\tilde{H_0}(z)}U_{11}(z)$. In view of relations \eqref{Transformtion T} between $T$ and $U$, we get
\begin{align*}
P_{\vec{n}}(z)= e^{n\tilde{H_0}(z)}(T_{11}(z) \pm z^{-\alpha_1}(1-z)^{-\beta}e^{n(H_1(z) - H_0(z))}T_{12}(z)),
\end{align*}
where the $+$ sign holds in the upper part of the lens and the $-$ sign holds in the lower part. Since $T(z)=R(z)N(z)$ with $R(z)=\mathbb{I}+O(1/n)$ and the entries of $N$ do not vanish in $D_{z_0}$, we obtain
\begin{align*}
P_{\vec{n}}(z)=e^{n\tilde{H_0}(z)}\bigg[N_{11}(z)+O\bigg(\frac{1}{n} \bigg)  \bigg]\pm z^{-\alpha_1}(1-z)^{-\beta}e^{n(H_1(z) - H_0(z))} \bigg[N_{12}(z)+O\bigg(\frac{1}{n} \bigg)  \bigg],
\end{align*}
where we have also used the fact that $N_{ij}(z)$, $i=1,2,3$, $j=1,2$ are bounded in $D_{z_0}$. Using the formulas for $N_{11}(z)$ and $N_{12}(z)$ from \eqref{Solution matrix N general case}, we get \eqref{Asymptotics near intervals}. The proof when $z_0 \in \Gamma^*$ is similar and hence, we omit the details.
\end{proof}

\subsection{Strong asymptotics near the branch points}
\begin{theorem}
Let $\vec{n}=(n_1,n_2)$ be multi-indices that tend to infinity such that $n_1+n_2=2n$, $n_2/(n_1+n_2) =\theta \in (0,1/2)$ where $\theta$ remains constant with $\theta n \in \mathbb{Z}$. Then, for $\beta=0$, uniformly for $z$ in a small neighborhood $D_1$ of $1$, we have
\begin{align}\label{eq:asymptotics near 1}
P_{\vec{n}}(z)&=\sqrt{\pi n}f_1(z)^{1/4}e^{(n/2)(H_1(z) + H_0(z)-2C_0)} [B_0(z)J_0(2n(-f_1(z))^{1/2})(1+O(n^{-1})) \notag\\
&\hspace{5cm} \mp iB^*_0(z)(J_0)'(2n(-f_1(z))^{1/2})(1+O(n^{-1}))],
\end{align}
where
\begin{align*}
&B_0(z) = \dfrac{(\varphi_0(z)-\theta_2)^{\alpha_2-\alpha_1+1}\varphi_0(z)^{2\alpha_1-\alpha_2}\sqrt{\mathcal{D}(\theta_1)}}{(\theta_1-\theta_2)^{\alpha_2-\alpha_1+1} \theta_1^{2\alpha_1-\alpha_2}z^{\alpha_1}\sqrt{\mathcal{D}(\varphi_0(z))}}  +i\dfrac{(\varphi_1(z)-\theta_2)^{\alpha_2-\alpha_1+1} \varphi_1(z)^{2\alpha_1-\alpha_2}\sqrt{\mathcal{D}(\theta_1)}}{(\theta_1-\theta_2)^{\alpha_2-\alpha_1+1} \theta_1^{2\alpha_1-\alpha_2}z^{\alpha_1}\sqrt{\mathcal{D}(\varphi_1(z))}},  \\
&B^*_0(z)= \dfrac{(\varphi_0(z)-\theta_2)^{\alpha_2-\alpha_1+1}\varphi_0(z)^{2\alpha_1-\alpha_2}\sqrt{\mathcal{D}(\theta_1)}}{(\theta_1-\theta_2)^{\alpha_2-\alpha_1+1} \theta_1^{2\alpha_1-\alpha_2}z^{\alpha_1}\sqrt{\mathcal{D}(\varphi_0(z))}}  -i\dfrac{(\varphi_1(z)-\theta_2)^{\alpha_2-\alpha_1+1} \varphi_1(z)^{2\alpha_1-\alpha_2}\sqrt{\mathcal{D}(\theta_1)}}{(\theta_1-\theta_2)^{\alpha_2-\alpha_1+1} \theta_1^{2\alpha_1-\alpha_2}z^{\alpha_1}\sqrt{\mathcal{D}(\varphi_1(z))}}.
\end{align*}
In \eqref{eq:asymptotics near 1}, the $\mp$ sign represents $z$ in the upper half-plane and lower half-plane, respectively. The functions $f_1(z)^{1/4}$ and $(-f_1(z))^{1/2}$ are, respectively, taken with branch cuts along the real positive and real
negative semi-axis.
\end{theorem}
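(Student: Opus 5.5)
The plan is to unwind the chain of transformations $Y\to X\to U\to T$ inside $D_1$, where now $T=RP^{(1)}$ instead of $T=RN$. By the first row of \eqref{Transformation U} (the first column of $X$ equals that of $Y$), we have $P_{\vec n}(z)=Y_{11}(z)=e^{n\tilde H_0(z)}U_{11}(z)$. I will take $z$ outside the lens around $\Delta$. There $T=U$, so $U_{11}=T_{11}$; inside the lens, the extra term $\pm w_1^{-1}e^{n(H_1-H_0)}T_{12}$ is recombined in the same way, because the lens factor is exactly undone by the piecewise definition of $\Psi_1$. Then
\[
T_{11}=\sum_k R_{1k}P^{(1)}_{k1},\qquad R=\mathbb I+O(1/n),
\]
uniformly in $D_1$ by \eqref{R like I}. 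So the task reduces to computing the $(1,1)$ entry of $P^{(1)}$ from \eqref{P^1 = E_n} and \eqref{E n 1 of bessel parametrix}, and checking that the $O(1/n)$ corrections from $R_{12},R_{13}$ are relatively small.

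Next I compute $P^{(1)}_{11}$ explicitly. The last factor in \eqref{P^1 = E_n} contributes $W^{-1}e^{(n/2)(H_1-H_0)}$ to the first column. Combined with $e^{n\tilde H_0}=e^{n(H_0-C_0)}$, this gives the prefactor $e^{(n/2)(H_1+H_0-2C_0)}$. Since $\beta=0$, $W^{\pm1}=z^{\pm\alpha_1/2}$, and this cancels against the $W$ in $E_n^{(1)}$.

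The first column of $\tilde P^{(1)}=\Psi_1(n^2f_1)$ is then substituted. Outside the lens (Regions II/III) it is given by Hankel functions. Writing $\tfrac12 H^{(1,2)}_0(x)=\tfrac12(J_0(x)\pm iY_0(x))$ and regrouping with the $\begin{pmatrix}1&i\\ i&1\end{pmatrix}$ factor shows that the Hankel combination collapses to $J_0(2n(-f_1)^{1/2})$ and $J_0'$. The factor $e^{\pm\frac12\beta\pi i\sigma_3}$ is trivial for $\beta=0$. The upper/lower half-plane choice produces the sign $\mp$. The derivative row carries $\pi\eta^{1/2}=\pi n f_1^{1/2}$, which against the $(2\pi n)^{\mp1/2}f_1^{\mp1/4}$ scalings yields the common factor $\sqrt{\pi n}\,f_1^{1/4}$ in front of both terms.

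The coefficients of $J_0$ and $J_0'$ are the combinations $N_{11}\pm iN_{12}$, after the $z^{\alpha_1}$ normalization. I will read these off from \eqref{Solution matrix N general case}, using $\mathcal F_1$ and \eqref{Function G general case} on sheets $0$ and $1$; the result should be $B_0$ and $B_0^*$ (the stated formulas carry the $\mathcal F_1$ normalizing constant, and I would recheck which row of $N$ is meant). The error terms come from $R=\mathbb I+O(1/n)$ multiplying the bounded quantities $E_n^{(1)}\Psi_1$.

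The main obstacle is the error term. Near the zeros of $J_0$, one cannot factor out a single $(1+O(1/n))$; that is why the statement attaches separate factors to each term. I would obtain this by writing $R_{1k}E^{(1)}_{n,kj}$ and noting that $E_n^{(1)}$ is analytic and bounded up to the scalings $n^{\pm1/2}$. The off-diagonal contributions then get absorbed into the respective Bessel terms, using the uniform boundedness of $f_1^{1/4}J_0$ and $f_1^{-1/4}J_0'$ under the appropriate scaling.
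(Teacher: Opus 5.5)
Your overall architecture is the paper's. You write $P_{\vec n}=e^{n\tilde H_0}U_{11}$ and $T=RP^{(1)}$ with $R=\mathbb I+O(1/n)$. The prefactor $e^{(n/2)(H_1+H_0-2C_0)}$ comes from the right diagonal factor of $P^{(1)}$, and the common factor $\sqrt{\pi n}\,f_1^{1/4}$ from the scalings $(2\pi n)^{\pm1/2}f_1^{\pm1/4}$. The coefficients $N_{11}W\pm iN_{12}W^{-1}$ come from the matrix $\bigl(\begin{smallmatrix}1&i\\ i&1\end{smallmatrix}\bigr)$ in $E_n^{(1)}$. Your remark that the displayed $B_0,B_0^*$ are built from $\mathcal F_1$, i.e.\ the second row of $N$, is a fair catch. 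However, the step that produces $J_0$ fails as you describe it.

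\textbf{The region is misidentified.} Outside the lens is not Region II/III. On $(1-\delta,1)$ one has $H_1^\pm-H_0^\pm=\mp2\pi i\,\lambda_1([x,1])$, so $f_1<0$ there, while $f_1>0$ on $(1,1+\delta)$. Hence $\Delta\cap D_1$ goes to the negative axis and the lips $\Delta^\pm$ to $\arg\eta=\pm2\pi/3$. The exterior of the lens therefore maps to Region I. There the first column of $\Psi_1$ is $\bigl(I_0(2\eta^{1/2}),\,2\pi i\,\eta^{1/2}I_0'(2\eta^{1/2}),\,0\bigr)^T$, with no Hankel functions. Hankel functions occur only in Regions II/III, i.e.\ inside the lens.

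\textbf{The mechanism cannot work.} The factor $\bigl(\begin{smallmatrix}1&i\\ i&1\end{smallmatrix}\bigr)$ multiplies $\Psi_1$ from the left. It only mixes the function row with the derivative row, so applied to a single Hankel column it leaves $Y_0$ and $Y_0'$ in place. The cancellation $\tfrac12\bigl(H^{(1)}_0+H^{(2)}_0\bigr)=J_0$ requires combining the two columns of $\Psi_1$, and that happens only inside the lens. There $U(1,0,0)^T=T\,(1,\pm w_1^{-1}e^{n(H_1-H_0)},0)^T$. Since $Ww_1^{-1}=W^{-1}$ for $\beta=0$, the right diagonal factor of $P^{(1)}$ turns this into $W^{-1}e^{(n/2)(H_1-H_0)}$ times the $\pm$ combination of the first two columns of $\Psi_1$. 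This is the paper's route.

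\textbf{How to repair it.} Either adopt the paper's route, or stay outside the lens and use the entire-function identities $I_0(2\eta^{1/2})=J_0(2(-\eta)^{1/2})$ and $\eta^{1/2}I_0'(2\eta^{1/2})=(-\eta)^{1/2}J_0'(2(-\eta)^{1/2})$. Then write $(-f_1)^{1/2}=\mp i\,f_1^{1/2}$ for $\pm\mathrm{Im}\,f_1>0$ to get the sign in front of $J_0'$. With that substitution the rest of your plan goes through, including the treatment of the $O(1/n)$ terms from $R$ as corrections to the coefficients of $J_0$ and $J_0'$.
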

\begin{proof}
We will use the parametrix $P^{(1)}$ constructed in Subsection \ref{subsec:Local parametrix near 1}. Assume $z \in D_1$ in the upper half region, then from \eqref{Transformation R} and \eqref{P^1 = E_n}, we get
\begin{align*}
T(z)=R(z)P^{(1)}(z)=R(z)E_n^{(1)}(z)\Psi_1(n^2 f_1(z)) \begin{pmatrix}
      W^{-1}(z)e^{\frac{n}{2}(H_1 - H_0)}& 0 &0\\
        0 &W(z)e^{-\frac{n}{2}(H_1 - H_0)} & 0\\
         0& 0& 1 \\  
    \end{pmatrix}.
\end{align*}
Using \eqref{Transformtion T} for region between $\Delta^+$ and $\Delta$ in the above expression, we have

\begin{align}\label{U times the column}
U(z)\begin{pmatrix}
    1 \\
    0 \\
    0
\end{pmatrix}=z^{-\alpha_1/2} (1-z)^{-\beta/2 } e^{(n/2)(H_1 - H_0)}R(z)E_n^{(1)}(z)\Psi_1(n^2 f_1(z)) \begin{pmatrix}
      e^{-\beta \pi i/2}z\\
        e^{\beta \pi i/2}z \\
         0 \\  
    \end{pmatrix}.
\end{align}
In the above-mentioned region, we have $\Psi_1(s)$ given by last expression in \eqref{Bessel parametrix in different regions} and further, using the formulas $9.1.3$ and $9.1.4$ from \cite{Abramowitz_Stegun_1964}, we have
\begin{align}\label{Psi times the column}
\Psi_1(n^2 f_1(z))\begin{pmatrix}
      e^{-\beta \pi i/2}z\\
        e^{\beta \pi i/2}z \\
         0 \\  
    \end{pmatrix} = \begin{pmatrix}
   J_\beta(2n(-f_1(z))^{1/2}) \\
   -2\pi n f_1(z)^{1/2} (J_\beta)'(2n(-f_1(z))^{1/2}) \\
   0 
\end{pmatrix}.
\end{align}
Substituting \eqref{Psi times the column}, \eqref{E n 1 of bessel parametrix}, \eqref{R like I} and \eqref{Solution matrix N general case} in \eqref{U times the column} and calculating the $(1,1)$ entry, we get \eqref{eq:asymptotics near 1}.
\end{proof}

\begin{theorem}
Let $\vec{n}=(n_1,n_2)$ be multi-indices that tend to infinity such that $n_1+n_2=2n$, $n_2/(n_1+n_2) =\theta \in (0,1/2)$ where $\theta$ remains constant with $\theta n \in \mathbb{Z}$. Then, for $\beta=0$, uniformly for $z$ in a small neighborhood $D_{x^*}$ of $x^*$, we have
\begin{align}\label{Asymptotics near x*}
P_{\vec{n}}(z)=\frac{4^{\alpha_1}(\varphi_0(z)-\theta_1)}{[(2+\theta)(4-\theta)]^{\alpha_1}} \dfrac{(\varphi_0(z)-\theta_2)^{\alpha_2-\alpha_1+1}\varphi_0(z)^{2\alpha_1-\alpha_2}}{z^{\alpha_1} \sqrt{\mathcal{D}(\varphi_0(z))}} e^{n\tilde{H_0}(z)}\bigg(1+O\bigg(\frac{1}{n} \bigg) \bigg).
\end{align}
\end{theorem}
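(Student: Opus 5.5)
The plan is to trace back the transformations inside $D_{x^*}$, as in the proof of Theorem \ref{Thrm:Asymptotics in unbounded region}. The key observation is that the local parametrix $P^{(x^*)}$ of Subsection \ref{subsec:Local parametrix at $x^*$} acts trivially on the first row and column. Indeed, $\Phi$, the diagonal factor in \eqref{eq: P^x* = E_n*}, and the middle matrices in \eqref{eq: En*z} are all of the form $1\oplus(2\times 2)$. Moreover, $x^*<0$ lies away from $\Delta$, so the lens transformation \eqref{Transformtion T} near $x^*$ only involves the $(3,2)$ entry. Hence the first column of $T$ equals the first column of $U$ there.

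First, for $z\in D_{x^*}$ (off the real line) we have
\[
P_{\vec n}(z)=Y_{11}(z)=X_{11}(z)=e^{n\tilde H_0(z)}U_{11}(z)=e^{n\tilde H_0(z)}T_{11}(z).
\]
The first equality uses that the transformation $Y\to X$ does not touch the first column. The second uses that the left prefactor in \eqref{Transformation U} has $(1,1)$ entry $1$. The third uses that the lens factors in \eqref{Transformtion T} near $x^*$ fix the first column. Next, by \eqref{Transformation R}, $T=RP^{(x^*)}$, so
\[
T_{11}=\sum_k R_{1k}\,P^{(x^*)}_{k1}.
\]
Now $\Phi(\zeta)e_1=e_1$ and the diagonal factor in \eqref{eq: P^x* = E_n*} has $(1,1)$ entry $1$, so $P^{(x^*)}e_1=E_n^{(*)}e_1$. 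In \eqref{eq: En*z} every right factor of $N$ fixes $e_1$, hence $E_n^{(*)}e_1=\sqrt{\pi}\,Ne_1$. The constant $\sqrt\pi$ looks suspicious: it must be absorbed by the normalization of $\Phi$ (the usual $\sqrt{2\pi}$ and $e^{\pm\pi i/6}$ factors). Otherwise the matching condition \eqref{Matching condition local para at x*} would fail in the $(1,1)$ entry. I would therefore first check the matching, which forces $E_n^{(*)}e_1=Ne_1$, and use that. Then
\[
T_{11}=R_{11}N_{11}+R_{12}N_{21}+R_{13}N_{31}.
\]

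Third, I apply \eqref{R like I}, $R=\mathbb I+O(1/n)$ uniformly. The entries $N_{k1}=\mathcal F_{k-1}(\varphi_0(z))$ are analytic and bounded in $D_{x^*}$: $\varphi_0$ is analytic off $\Delta$ and $\varphi_0(x^*)=s_0(x^*)\neq 0,1,s^*$. The denominator $\sqrt{\mathcal D(\varphi_0)}$ is nonvanishing, and $N_{11}$ does not vanish, since $\mathcal F_0$ vanishes only at $\theta_{1,2}$, which correspond to $\infty_{1,2}$ and not to sheet $0$. Hence
\[
T_{11}=N_{11}(1+O(1/n)).
\]
Inserting $N_{11}=\mathcal F_0(\varphi_0(z))$ with $\mathcal G$ on $\varphi(\mathcal R_0)$ from \eqref{Function G general case} and $\beta=0$ then gives \eqref{Asymptotics near x*}.

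The main obstacle is the bookkeeping in the second step: verifying that the first column of $E_n^{(*)}$ is exactly $Ne_1$ (fixing the constant), and confirming that the boundary values on $\Gamma^*$ and on $(-\infty,x^*)$ pose no issue for the first column. Both reduce to the block-diagonal structure $1\oplus(2\times 2)$ of all jumps and factors near $x^*$. Uniformity up to the real axis then follows by continuity of $N_{11}$ across $\Gamma$ near $x^*$, since $\varphi_0$ has no jump there.
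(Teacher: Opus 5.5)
Your proposal is correct and follows the paper's route. You write $P_{\vec n}=e^{n\tilde H_0}U_{11}$, use $T=RP^{(x^*)}$ together with the $1\oplus(2\times 2)$ structure of the Airy parametrix so that the first column of $P^{(x^*)}$ is that of $N$, and conclude $U_{11}=N_{11}(1+O(1/n))$ from $R=\mathbb{I}+O(1/n)$ and $N_{11}\neq 0$ near $x^*$. Your concern about $\sqrt{\pi}$ is justified: the paper's proof writes $U=\sqrt{\pi}RN(\cdots)$ and then silently drops the factor, but it belongs only to the lower $2\times 2$ block of $E_n^{(*)}$, as the matching condition on $\partial D_{x^*}$ forces; and your remark that the lens factor near $x^*$ fixes the first column removes the paper's restriction to regions I and IV.
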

\begin{proof}
We will use the parametrix $P^{(x^*)}$ constructed in Subsection \ref{subsec:Local parametrix at $x^*$}. Assume $z \in D_{x^*}$ and $f_*(z)$ lies in region I or IV. Then, from \eqref{Transformation R} and \eqref{eq: P^x* = E_n*}, we get 
\begin{align*}
T(z)=R(z)P^{(x^*)}(z)=R(z)E_n^*(z) \Phi(n^{3/2}f_*(z)) \diag [1, (c_\alpha\rho)^{-1/2}e^{\frac{n}{2}(H_2 - H_1)}, (c_\alpha\rho)^{1/2}e^{-\frac{n}{2}(H_2 - H_1)} ].
\end{align*}
Since $T = U$ in regions I or IV, \eqref{eq: En*z} implies
\begin{align*}
&U(z) = \sqrt{\pi}R(z)N(z)\begin{pmatrix}
    1 & 0& 0\\
    0 & (c_\alpha\rho)^{1/2}n^{1/6} (f_*(z))^{1/4} & -(c_\alpha\rho)^{1/2}n^{-1/6} (f_*(z))^{-1/4}\\
     0&-i(c_\alpha\rho)^{-1/2}n^{1/6} (f_*(z))^{1/4} &-i(c_\alpha\rho)^{-1/2}n^{-1/6} (f_*(z))^{-1/4}
 \end{pmatrix} \times\\
& \begin{pmatrix}
      1  &0 & 0\\
      0  & Ai(n^{3/2}f_*(z)) & \omega_\zeta Ai(\omega_\zeta n^{3/2}f_*(z)) \\
       0 & Ai'(n^{3/2}f_*(z))& \omega_\zeta^2 Ai'(\omega_\zeta n^{3/2}f_*(z))
    \end{pmatrix} \diag [1, (c_\alpha\rho)^{-1/2}e^{\frac{n}{2}(H_2 - H_1)}, (c_\alpha\rho)^{1/2}e^{-\frac{n}{2}(H_2 - H_1)} ],
\end{align*}
which implies
\begin{align*}
U_{11}(z) =   N_{11}(z)\bigg(1+O\bigg(\frac{1}{n} \bigg)\bigg),
\end{align*} 
and hence \eqref{Asymptotics near x*} follows upon substituting above expression in $P_{\vec{n}}(z)= e^{n\tilde{H_0}(z)}U_{11}(z)$ and computing the $(1,1)$ entry.
\end{proof}

We record the following remark for later use.
\begin{remark}\label{lm: Enz  matrix}
From \eqref{eq: E(z)} and \eqref{N hat def} in the upper half-region, we can write
\begin{align*}
E(z)= N(z)   z^{-\frac{A}{3}}  [\mathcal{U}^+]^{-1} H(z)^{-1} \xi^{\frac{M}{3}} = N(z)z^{-\frac{A}{3}} D_n(z)^{-1} [\mathcal{U}^+]^{-1} \xi^{\frac{M}{3}} = N(z)J^0(z).
\end{align*}
Next, using \eqref{Omega and its tilde}, \eqref{Nz behavior at z=0} and \eqref{U plus and U minus}, we get
\begin{align*}
&[\mathcal{U}^+]^{-1}=\begin{pmatrix}
      \omega^{-A_1}& 0 & 0 \\
        0& \omega^{A_1} & 0\\
        0 & 0 & 1 \\  
    \end{pmatrix}
\begin{pmatrix}
      \omega^{2}& -1 & \omega^{-2} \\
        \omega^{-2} & -1 & \omega^{2}\\
        1 & -1 & 1 \\  
    \end{pmatrix} = \begin{pmatrix}
      \omega^{2-A_1}& -\omega^{-A_1} & \omega^{-2-A_1} \\
        \omega^{A_1-2} & -\omega^{A_1} & \omega^{2+A_1}\\
        1 & -1 & 1 \\  
    \end{pmatrix}.
\end{align*}
Then 
\begin{align*}
J^0(z)= \begin{pmatrix}
      \xi^{\frac{1}{3}}\omega^{2-A_1}e^{-nH_0+p_1}& -\omega^{-A_1}e^{-nH_0+p_1} & \xi^{-\frac{1}{3}}\omega^{-2-A_1}e^{-nH_0+p_1} \\
        -\xi^{\frac{1}{3}}\omega^{A_1-2}e^{-nH_1+p_2}   & \omega^{A_1}e^{-nH_1+p_2}  & -\xi^{-\frac{1}{3}}\omega^{2+A_1}e^{-nH_1+p_2} \\
        \xi^{\frac{1}{3}} e^{-nH_2+p_3} & - e^{-nH_2+p_3} & \xi^{-\frac{1}{3}} e^{-nH_2+p_3} \\  
    \end{pmatrix},
\end{align*}
where  $p_1 = (\alpha_1+\alpha_2) \ln z/3 $, $p_2=-(2\alpha_1-\alpha_2)\ln z/3  $ and $p_3=-(2\alpha_2-\alpha_1)\ln z/3  $.
\end{remark}

\begin{theorem}
Let $\vec{n}=(n_1,n_2)$ be multi-indices that tend to infinity such that $n_1+n_2=2n$, $n_2/(n_1+n_2) =\theta \in (0,1/2)$ where $\theta$ remains constant with $\theta n \in \mathbb{Z}$. Then, for $\beta=0$, uniformly for $z$ in a small neighborhood $D(0,r_n)$ of $0$, we have, as $n \to \infty$,
\begin{align*}
&P_{\vec{n}}(z)=e^{n\tilde{H_0}(z)} \bigg\{e^{3\xi^{1/3}\omega^{\pm 2} - nH_0} \left( \frac{\xi}{z} \right)^{A_1/3}
\Big[ \tilde{\alpha}(\xi)L_0 + \tilde{\beta}(\xi)L_1 + \tilde{\gamma}(\xi)L_2 \Big](1+O(n^{-1})) \\
&\pm z^{-\alpha_1} e^{n(H_1(z) - H_0(z))}e^{3\xi^{\frac{1}{3}}\omega^{\mp 2}-nH_1} \left( \frac{\xi}{z} \right)^{A_2/3}
\Big[ \alpha'(\xi)L_0 + \beta'(\xi)L_1 + \gamma'(\xi)L_2 \Big](1+O(n^{-1})) \bigg\},
\end{align*}
where
\begin{equation}\label{eq: alpha,beta,gamma}
\left.
\begin{aligned}
\tilde{\alpha}(\xi) &= A(\xi)\omega^{2-A_1} - B(\xi)\omega^{-A_1} - C(\xi)\omega^{-2-A_1}, \\
\alpha'(\xi) &= A'(\xi)\omega^{2-A_1} - B'(\xi)\omega^{-A_1} - C'(\xi)\omega^{-2-A_1}, \\
\tilde{\beta}(\xi) &= -A(\xi)\omega^{A_1-2} + B(\xi)\omega^{A_1} + C(\xi)\omega^{2+A_1}, \\
\beta'(\xi) &= -A'(\xi)\omega^{A_1-2} + B'(\xi)\omega^{A_1} + C'(\xi)\omega^{2+A_1}, \\
\tilde{\gamma}(\xi) &= A(\xi) - B(\xi) - C(\xi), \\
\gamma'(\xi) &= A'(\xi) - B'(\xi) - C'(\xi).
\end{aligned}
\right\}
\end{equation}
\begin{align}\label{eq: A,B,C xi}
\left.
\begin{aligned}
A(\xi) &= [g_1 - w_1^{-1}g_2^\pm] \xi^{1/3},\\
B(\xi) &= z \partial_z g_1 - w_1^{-1} (z\partial_z - \alpha_1) g_2^\pm, \\
C(\xi) &= \big[(z\partial_z)^2 g_1 - w_1^{-1} (z\partial_z - \alpha_1)^2 g_2^\pm\big] \xi^{-1/3}.
\end{aligned}
\right\}
\end{align}
\begin{align}
A'(\xi) &= [g_2^\pm] \xi^{1/3},\quad
B'(\xi) = (z\partial_z - \alpha_1) g_2^\pm, \quad C'(\xi) = \big[(z\partial_z - \alpha_1)^2 g_2^\pm\big] \xi^{-1/3}.\label{eq: A,B,C xi prime} \\
L_0 = &e^{-nH_0+p_1} \sum_{i=1}^3 C_{1i} \mathcal{F}_i(\varphi_0),\quad
L_1 = e^{-nH_1+p_2} \sum_{i=1}^3 C_{1i} \mathcal{F}_i(\varphi_1),\quad
L_2 = e^{-nH_2+p_3} \sum_{i=1}^3 C_{1i} \mathcal{F}_i(\varphi_2) \label{eq:L0,L1,L2},
\end{align}
where
\begin{align}
C_{ik}=\big[ \mathfrak{C}_n^{(iii)}(z) \cdot \mathfrak{C}_n^{(ii)}(z) \cdot \mathfrak{C}_n^{(i)}(z) \big]_{ik}
= \sum_{j=1}^3 \sum_{\ell=1}^3
\big[ \mathfrak{C}_n^{(iii)}(z) \big]_{ij}
\big[ \mathfrak{C}_n^{(ii)}(z) \big]_{j\ell}
\big[ \mathfrak{C}_n^{(i)}(z) \big]_{\ell k} \label{eq: analytic prefactor matrix repre}
\end{align}
and the entries of 
\begin{align}
&\mathfrak{C}_n^{(iii)}(z)=\mathbb{I}-\dfrac{C_n^{(iii)}(z)-C_n^{(iii)}(0)}{n^9 z} , \quad \mathfrak{C}_n^{(ii)}(z)=\mathbb{I}-\frac{C_n^{(ii)}(z)-C_n^{(ii)}(0)}{n^6 z} , \notag \\ &\mathfrak{C}_n^{(i)}(z) = \mathbb{I}-\frac{C_n^{(i)}(z)-C_n^{(i)}(0)}{n^3 z}, \quad C_n^{(i)}(z)=E(z) C(z) [E(z)]^{-1} \label{eq: analytic prefactors}
\end{align} 
are explicitly determined using \eqref{eq: C(z)}, \eqref{eq: C n 2 z}, and \eqref{eq: C n 3 z}. Further, $E(z)$, $\xi(z)$ and $\mathcal{F}_i(\varphi_i)$ are given by \eqref{eq: E(z)}, \eqref{Conformal map}, and \eqref{Function F1 general case}- \eqref{Function F3 general case}, respectively.
\end{theorem}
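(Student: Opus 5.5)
We unwind the transformations $Y\to X\to U\to T\to R$ inside the small disk $D(0,r_n)$, in the same way as the proofs of the theorems near $1$ and $x^\ast$. For $z\in D(0,r_n)$ the first column of $X$ equals that of $Y$, so $P_{\vec n}(z)=Y_{11}(z)=e^{n\tilde H_0(z)}U_{11}(z)$. We work in the upper part of the lens around $\Delta$; the lower part and the $\Gamma^*$ sectors are analogous, with $\pm$ and $\omega^{\pm2}$ interchanged. Inverting \eqref{Transformtion T} gives
\[U(z)\begin{pmatrix}1\\0\\0\end{pmatrix}=T(z)\begin{pmatrix}1\\ w_1^{-1}e^{n(H_1-H_0)}\\0\end{pmatrix},\]
so that $U_{11}=T_{11}+ z^{-\alpha_1}e^{n(H_1-H_0)}T_{12}$. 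This is the origin of the two terms in the statement, one with prefactor $e^{n\tilde H_0}$ and one with prefactor $\pm z^{-\alpha_1}e^{n(H_1-H_0)}$.

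Next we insert $T=RP$ with $P=E_n^0\mathring P$ from \eqref{definition P(z)} and Definition \ref{Def: En 0 and En inf}, and use $R=\mathbb I+O(1/n)$ from \eqref{R like I}. The product $E_n^0=\mathfrak C_n^{(iii)}\mathfrak C_n^{(ii)}\mathfrak C_n^{(i)}E(z)$ produces the first row $C_{1i}$ of \eqref{eq: analytic prefactor matrix repre}, multiplied by $E(z)=N(z)J^0(z)$ from Remark \ref{lm: Enz  matrix}. Then:
\begin{enumerate}
\item $\sum_i C_{1i}\mathcal F_i(\varphi_k)$ is the first row of the prefactor times $N$.
\item The columns of $J^0$ contribute the factors $e^{-nH_k+p_{k+1}}$ and powers $\xi^{\pm1/3}$. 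This yields $L_0,L_1,L_2$ in \eqref{eq:L0,L1,L2}.
\end{enumerate}
For $\mathring P$ we use its definition together with \eqref{First guess of local parametrix}. After the factors $\widehat{N}$ and $(z/\xi)^{M/3}$ are absorbed into $E$, what remains is $\Psi(\xi)(\xi/z)^{-A/3}\exp(3\xi^{1/3}\Omega_\pm)D_n(z)$. Its first column gives the factor $e^{3\xi^{1/3}\omega^{\pm2}-nH_0}(\xi/z)^{A_1/3}$; its second column gives $e^{3\xi^{1/3}\omega^{\mp2}-nH_1}(\xi/z)^{A_2/3}$.

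The combination $T_{11}+z^{-\alpha_1}e^{n(H_1-H_0)}T_{12}$ amounts to multiplying $\Psi$ by a lower-triangular factor. In the sector where $\Psi=\widehat\Psi$ times the lens factor, the first column becomes $(g_1-w_1^{-1}g_2^\pm,\ z\partial_z g_1-w_1^{-1}(z\partial_z-\alpha_1)g_2^\pm,\dots)^T$, and the second column is the $g_2^\pm$ column. Pairing these with the $\xi^{\pm1/3}$ weights from $J^0$ gives exactly $A,B,C$ in \eqref{eq: A,B,C xi} and $A',B',C'$ in \eqref{eq: A,B,C xi prime}. Contracting with the rows of $[\mathcal U^+]^{-1}$ in Remark \ref{lm: Enz  matrix}, which carry the phases $\omega^{2-A_1},\omega^{-A_1},\omega^{-2-A_1}$ and so on, produces $\tilde\alpha,\tilde\beta,\tilde\gamma$ and $\alpha',\beta',\gamma'$ as in \eqref{eq: alpha,beta,gamma}.

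The main obstacle is the error term. $R=\mathbb I+O(1/n)$ multiplies $E_n^0$, whose entries can be as large as $O(n)$ by Lemma \ref{Lm: Ez approx}. We therefore cannot simply pull $(1+O(n^{-1}))$ outside each bracket. Instead we write $R E_n^0=E_n^0\big(\mathbb I+(E_n^0)^{-1}(R-\mathbb I)E_n^0\big)$ and bound the conjugated error using \eqref{eq: Arno eq 5} together with the analyticity of $R-\mathbb I$ near $0$. If that bound only gives $O(n^{-1/2})$, we will state the relative error in that weaker form, or absorb the discrepancy into the $\xi^{\pm1/3}$ weights, since $|\xi^{1/3}|\asymp n^{1/2}$ on $D(0,r_n)$. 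The remaining work is bookkeeping of branches of $z^{1/3}$ and of $\omega^{\pm2}$ across the four sectors.
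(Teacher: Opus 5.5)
Your reduction follows the paper's proof. In the upper part of the $\Delta$-lens you write $P_{\vec n}=e^{n\tilde H_0}\bigl(T_{11}+z^{-\alpha_1}e^{n(H_1-H_0)}T_{12}\bigr)$ and $T=RE_n^0\mathring P$, with $\mathring P=\Psi(\xi)(\xi/z)^{-A/3}\exp(3\xi^{1/3}\Omega_\pm)D_n$ and $E_n^0=\mathfrak C_n^{(iii)}\mathfrak C_n^{(ii)}\mathfrak C_n^{(i)}NJ^0$, and then read off $L_0,L_1,L_2$, $A,B,C$ and $A',B',C'$.

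The genuine gap is the error term, which you flag but do not close. From \eqref{R like I} you only get $T_{1j}=[E_n^0\mathring P]_{1j}+\sum_{k}(R-\mathbb I)_{1k}[E_n^0\mathring P]_{kj}$. This is an \emph{additive} error of size $O(n^{-1})\max_k|[E_n^0\mathring P]_{kj}|$, and it involves rows $2,3$ of $E_n^0\mathring P$. It is not a factor $(1+O(n^{-1}))$ on the row-$1$ bracket.

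Your conjugation $RE_n^0=E_n^0\bigl(\mathbb I+(E_n^0)^{-1}(R-\mathbb I)E_n^0\bigr)$ does not help. Already for $E$ alone, Lemma~\ref{Lm: Ez approx} gives $E,E^{-1}=O(n)$, so the available bounds give only $E^{-1}(R-\mathbb I)E=O(n)$, not $O(n^{-1/2})$. This is sharp in general: $E$ contains the factor $(\xi/z)^{M/3}=(nf_0/3)^{M}$, and conjugating by it multiplies the $(3,1)$ entry by $n^2$. Estimate \eqref{eq: Arno eq 5} controls $E_n^{-1}C_n^{(i)}$, not $R-\mathbb I$, and analyticity of $R-\mathbb I$ gives no extra decay.

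To reach the stated multiplicative form you would need $|[E_n^0\mathring P]_{kj}|\lesssim|[E_n^0\mathring P]_{1j}|$ for $k=2,3$ and $j=1,2$, uniformly on $D(0,r_n)$. That is a lower bound on the row-$1$ brackets which nothing in the construction provides. It is also delicate: since $\lambda_1'(x)\asymp x^{-2/3}$, the disc $D(0,r_n)$ heuristically contains about $n\lambda_1([0,r_n])\asymp\sqrt n$ zeros of $P_{\vec n}$. Falling back to a weaker exponent, or ``absorbing into the $\xi^{\pm1/3}$ weights'', does not prove the statement as written.

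The paper does not supply this step either. In \eqref{eq: C.M T11 On-1} and \eqref{eq: C.M T12 On-1} it pulls $R=\mathbb I+O(1/n)$ out as a scalar factor, which is exactly the move you rightly reject. What the argument actually supports is the additive form $T_{1j}=\sum_iC_{1i}M_{ij}+O(n^{-1})\max_k\bigl|\sum_iC_{ki}M_{ij}\bigr|$.

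Two smaller points.

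First, the $\Gamma^*$-lens and the regions outside the lenses are not ``analogous with $\pm$ and $\omega^{\pm2}$ interchanged''. There, both $U\to T$ and $\widehat\Psi\to\Psi$ are either trivial or act only on columns $2,3$. So $Ue_1=Te_1$ and the relevant column of $\Psi$ is $(g_1,z\partial_zg_1,(z\partial_z)^2g_1)^T$. Neither the $w_1^{-1}g_2^\pm$ terms nor the second summand appear. The paper, too, only computes for $\arg\xi\in(0,\pi/4)$.

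Second, the contraction does not return \eqref{eq: alpha,beta,gamma} ``exactly''. Since $F=(\xi/z)^{-A/3}\exp(3\xi^{1/3}\Omega_\pm)D_n$, the prefactors are $(\xi/z)^{-A_1/3}$ and $(\xi/z)^{-A_2/3}$, not $(\xi/z)^{+A_j/3}$. With $J^0$ as written in Remark~\ref{lm: Enz  matrix}, the $C$- and $C'$-terms also enter with the opposite signs, e.g.\ $+C(\xi)\omega^{-2-A_1}$ in $\tilde\alpha$. The paper's proof contains the same slips, so derive these formulas rather than matching them to the statement.
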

\begin{proof}
We will use the parametrix $P$ constructed in Subsection \ref{subsec:Local parametrix near 0}. Assume $z \in D(0,r_n)$, then from \eqref{Transformation R}, \eqref{First guess of local parametrix} and \eqref{definition P(z)}, we get
\begin{multline}\label{eq: T in asymp zero 0}
T(z)=R(z)P(z)=R(z)E_n^0(z) \mathring{P}(z)=R(z)E_n^0(z)\left(\frac{z}{\xi}\right)^{\frac{M}{3}}\widehat{N(z)}^{-1}\mathbf{P}(z)
\\
=R(z)E_n^0(z) \Psi(\xi) \bigg(\frac{\xi}{z}\bigg)^{-A/3} \exp \big({ 3\xi^{\frac{1}{3}} \Omega_\pm } \big) D_n(z).
\end{multline}
As in the proof of Theorem \ref{Thrm:Asymptotics in unbounded region}, we have $P_{\vec{n}}(z)= e^{n\tilde{H_0}(z)}U_{11}(z)$. In view of relations \eqref{Transformtion T} between $T$ and $U$ in the region $\arg \xi \in (0,\frac{\pi}{4})$, we get
\begin{align}\label{eq: Pn in T11 and T12}
P_{\vec{n}}(z)= e^{n\tilde{H_0}(z)}(T_{11}(z) \pm z^{-\alpha_1}(1-z)^{-\beta} e^{n(H_1(z) - H_0(z))}T_{12}(z)),
\end{align}
where the $+$ sign holds in the upper part of the lens and the $-$ sign holds in the lower part. To calculate $T_{11}$ and $T_{12}$, we perform the following operations in \eqref{eq: T in asymp zero 0}
\begin{align}\label{eq: T11 and T12}
T_{11}(z) = (1,0,0)T(z)(1,0,0)^T, \quad T_{12}(z) = (1,0,0)T(z)(0,1,0)^T.
\end{align}
To begin with, let us compute matrix $F$ defined by
\begin{multline*}
F=\bigg(\frac{\xi}{z}\bigg)^{-A/3} \exp \big({ 3\xi^{\frac{1}{3}} \Omega_\pm } \big) D_n(z) 
\\
= \bigg(\frac{nf_0(z)}{3} \bigg)^{-A} \diag\bigg[e^{3\xi^{\frac{1}{3}}\omega^{\pm 2}},e^{3\xi^{\frac{1}{3}}\omega^{\mp 2}}, e^{3\xi^{\frac{1}{3}}} \bigg] \diag [e^{-nH_0}, e^{-nH_1}, e^{-nH_2}]
\\
=\diag \bigg[\bigg(\frac{nf_0(z)}{3} \bigg)^{\alpha_1+\alpha_2}e^{3\xi^{\frac{1}{3}}\omega^{\pm 2}-nH_0}, \bigg(\frac{nf_0(z)}{3} \bigg)^{-2\alpha_1+\alpha_2}e^{3\xi^{\frac{1}{3}}\omega^{\mp 2}-nH_1}, \bigg(\frac{nf_0(z)}{3} \bigg)^{\alpha_1-2\alpha_2}e^{3\xi^{\frac{1}{3}}-nH_2} \bigg].
\end{multline*}
Therefore,
\begin{align*}
F\begin{pmatrix}
      1 \\
        0 \\
        0
    \end{pmatrix}  = \begin{pmatrix}
      F_{11} \\
        0 \\
        0
    \end{pmatrix}.
\end{align*}
From \eqref{G 2 xi jump relations}, in the region $\arg \xi \in (0,\frac{\pi}{4})$,
\begin{align*}
\Psi(\xi) = \begin{pmatrix}
    g_1(\xi) & g^{\pm}_2(\xi) & g_3(\xi)  \\
    z \partial_z g_1(\xi) & \big(z \partial_z -\alpha_1 \big)g^{\pm}_2(\xi) &\big(z \partial_z -\alpha_2 \big)g_3(\xi)  \\
    \big(z \partial_z\big)^2 g_1(\xi) &\big(z \partial_z -\alpha_1 \big)^2 g^{\pm}_2(\xi) & \big(z \partial_z -\alpha_2 \big)^2 g_3(\xi)
\end{pmatrix} \begin{pmatrix}
      1& 0 &0\\
        -\frac{1}{w_1} &1 & 0\\
         0& 0& 1 \\  
    \end{pmatrix}.
\end{align*}
Hence,
\begin{align}\label{eq: G multiply with F}
&\widehat{\Psi}(\xi) \begin{pmatrix}
      1& 0 &0\\
        -\frac{1}{w_1} &1 & 0\\
         0& 0& 1 \\  
    \end{pmatrix}\begin{pmatrix}
      F_{11} \\
        0 \\
        0
    \end{pmatrix} = \widehat{\Psi}(\xi)\begin{pmatrix}
      F_{11} \\
      -w_1^{-1}  F_{11} \\
        0
    \end{pmatrix} \notag\\
&=\begin{pmatrix}
      [g_1(\xi)-w_1^{-1}g^{\pm}_2(\xi)]  F_{11} \\
      \bigg[z \partial_z g_1(\xi)-w_1^{-1}\big(z \partial_z -\alpha_1 \big)g^{\pm}_2(\xi) \bigg]  F_{11} \\
        \bigg[\big(z \partial_z\big)^2 g_1(\xi)-w_1^{-1}\big(z \partial_z -\alpha_1 \big)^2 g^{\pm}_2(\xi) \bigg]  F_{11}
    \end{pmatrix}= \begin{pmatrix}
      \mathbf{G}_{11} \\
        \mathbf{G}_{21} \\
        \mathbf{G}_{31}
    \end{pmatrix}.
\end{align}
Further, we need 
\begin{align*}
E_n^0(z)= \mathfrak{C}_n^{(iii)}(z)\mathfrak{C}_n^{(ii)}(z)\mathfrak{C}_n^{(i)}(z) E(z)
\end{align*}
from the definition \eqref{Def: En 0 and En inf} where $\mathfrak{C}_n^{(iii)}(z)$, $\mathfrak{C}_n^{(ii)}(z)$, and $\mathfrak{C}_n^{(i)}(z)$ are given by \eqref{eq: analytic prefactors}.
Finally, using \eqref{eq: G multiply with F} and Remark \ref{lm: Enz  matrix}, we have
\begin{align*}
&T(z)(1,0,0)^T= R(z)\mathfrak{C}_n^{(iii)}(z)\mathfrak{C}_n^{(ii)}(z)\mathfrak{C}_n^{(i)}(z) N(z)\begin{pmatrix}
      J^0_{11}& J^0_{12} & J^0_{13} \\
        J^0_{21} & J^0_{22} & J^0_{23}\\
        J^0_{31} & J^0_{32} & J^0_{33} \\  
    \end{pmatrix} \begin{pmatrix}
      \mathbf{G}_{11} \\
        \mathbf{G}_{21} \\
        \mathbf{G}_{31}
    \end{pmatrix}\\
& =R(z)\mathfrak{C}_n^{(iii)}(z)\mathfrak{C}_n^{(ii)}(z)\mathfrak{C}_n^{(i)}(z) \begin{pmatrix}
      N_{11}& N_{12} & N_{13} \\
        N_{21} & N_{22} & N_{23}\\
        N_{31} & N_{32} & N_{33} \\  
    \end{pmatrix} \begin{pmatrix}
      L_{11} \\
        L_{21} \\
        L_{31}
    \end{pmatrix} \\
& = R(z) \begin{pmatrix}
      C_{11}& C_{12} & C_{13} \\
        C_{21} & C_{22} & C_{23}\\
        C_{31} & C_{32} & C_{33} \\  
    \end{pmatrix} \begin{pmatrix}
      M_{11} \\
        M_{21} \\
        M_{31}
    \end{pmatrix}= R(z)\begin{pmatrix}
      C_{11}M_{11}+C_{12}M_{21}+C_{13}M_{31} \\
       C_{21}M_{11}+C_{22}M_{21}+C_{23}M_{31} \\
       C_{31}M_{11}+C_{32}M_{21}+C_{33}M_{31} 
    \end{pmatrix},
\end{align*}
where $L_{j1}=\sum_{i=1}^3 J^0_{ji}\mathbf{G}_{i1}$, $M_{j1}=\sum_{i=1}^3 N_{ji}L_{i1}$, and $C_{ik}$ are the entries given by \eqref{eq: analytic prefactor matrix repre}.
Consequently,
\begin{align}\label{eq: C.M T11 On-1}
&T_{11}(z) = [C_{11}M_{11}+C_{12}M_{21}+C_{13}M_{31}](1+O(n^{-1})).
\end{align}
With the definition of $A(\xi)$, $B(\xi)$, and $C(\xi)$ as in \eqref{eq: A,B,C xi}, we obtain
\begin{align*}
&L_{11} = [g_1(\xi)-w_1^{-1}g^{\pm}_2(\xi)] \bigg(\frac{nf_0(z)}{3} \bigg)^{\alpha_1+\alpha_2}e^{3\xi^{\frac{1}{3}}\omega^{\pm 2}-nH_0}\xi^{\frac{1}{3}}\omega^{2-A_1}e^{-nH_0+p_1}\\
&-\bigg[z \partial_z g_1(\xi)-w_1^{-1}\bigg(z \partial_z -\alpha_1 \bigg)g^{\pm}_2(\xi) \bigg]\bigg(\frac{nf_0(z)}{3} \bigg)^{\alpha_1+\alpha_2}e^{3\xi^{\frac{1}{3}}\omega^{\pm 2}-nH_0}\omega^{-A_1}e^{-nH_0+p_1}\\
&+\bigg[\bigg(z \partial_z\bigg)^2 g_1(\xi)-w_1^{-1}\bigg(z \partial_z -\alpha_1 \bigg)^2 g^{\pm}_2(\xi) \bigg]\bigg(\frac{nf_0(z)}{3} \bigg)^{\alpha_1+\alpha_2}e^{3\xi^{\frac{1}{3}}\omega^{\pm 2}-nH_0}\xi^{-\frac{1}{3}}\omega^{-2-A_1}e^{-nH_0+p_1}\\
&=e^{3\xi^{1/3}\omega^{\pm 2} - nH_0}e^{-nH_0+p_1} \left( \frac{\xi}{z} \right)^{A_1/3} \bigg[A(\xi)\xi^{\frac{1}{3}}\omega^{2-A_1}-B(\xi)\omega^{-A_1}+C(\xi)\xi^{-\frac{1}{3}}\omega^{-2-A_1} \bigg].
\end{align*}
Similarly,
\begin{align*}
&L_{21} = e^{3\xi^{\frac{1}{3}}\omega^{\pm 2}-nH_0-nH_1+p_2}  \left( \frac{\xi}{z} \right)^{A_1/3}  \bigg[-A(\xi)\xi^{\frac{1}{3}}\omega^{A_1-2}+B(\xi)\omega^{A_1}-C(\xi)\xi^{-\frac{1}{3}}\omega^{2+A_1} \bigg],\\
&L_{31} = e^{3\xi^{\frac{1}{3}}\omega^{\pm 2}-nH_0-nH_2+p_3}  \left( \frac{\xi}{z} \right)^{A_1/3}  \bigg[A(\xi)\xi^{\frac{1}{3}}-B(\xi)+C(\xi)\xi^{-\frac{1}{3}} \bigg].
\end{align*}

We have \( M_{i1} \) for \( i = 1,2,3 \), all of the form:
\[
M_{i1} = e^{3\xi^{1/3}\omega^{\pm 2} - nH_0} \left( \frac{\xi}{z} \right)^{A_1/3}
\Big\{
A(\xi) \mathcal{X}_i + B(\xi) \mathcal{Y}_i + C(\xi) \mathcal{Z}_i
\Big\}.
\]

The vectors \(\mathcal{X}_i, \mathcal{Y}_i, \mathcal{Z}_i\) are defined as
\begin{align*}
\mathcal{X}_i &= \omega^{2-A_1} e^{-nH_0+p_1} \mathcal{F}_i(\varphi_0(z))
- \omega^{A_1-2} e^{-nH_1+p_2}  \mathcal{F}_i(\varphi_1(z))
+  e^{-nH_2+p_3} \mathcal{F}_i(\varphi_2(z)),\\
\mathcal{Y}_i &= -\omega^{-A_1} e^{-nH_0+p_1} \mathcal{F}_i(\varphi_0(z))
+ \omega^{A_1} e^{-nH_1+p_2}  \mathcal{F}_i(\varphi_1(z))
-  e^{-nH_2+p_3} \mathcal{F}_i(\varphi_2(z)),\\
\mathcal{Z}_i &= -\omega^{-2-A_1} e^{-nH_0+p_1} \mathcal{F}_i(\varphi_0(z))
+ \omega^{2+A_1} e^{-nH_1+p_2}  \mathcal{F}_i(\varphi_1(z))
-  e^{-nH_2+p_3} \mathcal{F}_i(\varphi_2(z)).
\end{align*}

We aim to compute
\begin{multline*}
C_{11}M_{11}+C_{12}M_{21}+C_{13}M_{31}
\\= e^{3\xi^{1/3}\omega^{\pm 2} - nH_0} \left( \frac{\xi}{z} \right)^{A_1/3}\Big[ A(\xi) \sum_{i=1}^3 C_{1i} \mathcal{X}_i
+ B(\xi) \sum_{i=1}^3 C_{1i} \mathcal{Y}_i
+ C(\xi) \sum_{i=1}^3 C_{1i} \mathcal{Z}_i \Big].
\end{multline*}

Define the sums
\[
S_0 = \sum_{i=1}^3 C_{1i} \mathcal{F}_i(\varphi_0),\quad
S_1 = \sum_{i=1}^3 C_{1i} \mathcal{F}_i(\varphi_1),\quad
S_2 = \sum_{i=1}^3 C_{1i} \mathcal{F}_i(\varphi_2).
\]

Then the linear combinations become
\begin{align*}
C_{11}\mathcal{X}_1+C_{12}\mathcal{X}_2+C_{13}\mathcal{X}_3 &= \omega^{2-A_1} e^{-nH_0+p_1} S_0
- \omega^{A_1-2} e^{-nH_1+p_2}  S_1
+  e^{-nH_2+p_3} S_2,\\
C_{11}\mathcal{Y}_1+C_{12}\mathcal{Y}_2+C_{13}\mathcal{Y}_3 &= -\omega^{-A_1} e^{-nH_0+p_1} S_0
+ \omega^{A_1} e^{-nH_1+p_2}  S_1
-  e^{-nH_2+p_3} S_2,\\
C_{11}\mathcal{Z}_1+C_{12}\mathcal{Z}_2+C_{13}\mathcal{Z}_3 &= -\omega^{-2-A_1} e^{-nH_0+p_1} S_0
+ \omega^{2+A_1} e^{-nH_1+p_2}  S_1
-  e^{-nH_2+p_3} S_2.
\end{align*}

Using \eqref{eq:L0,L1,L2}, we can introduce the compact notation
\[
L_0 = e^{-nH_0+p_1} S_0,\qquad
L_1 = e^{-nH_1+p_2}  S_1,\qquad
L_2 = e^{-nH_2+p_3}  S_2.
\]

Then
\begin{align*}
C_{11}\mathcal{X}_1+C_{12}\mathcal{X}_2+C_{13}\mathcal{X}_3 &= \omega^{2-A_1} L_0 - \omega^{A_1-2} L_1 + L_2,\\
C_{11}\mathcal{Y}_1+C_{12}\mathcal{Y}_2+C_{13}\mathcal{Y}_3  &= -\omega^{-A_1} L_0 + \omega^{A_1} L_1 - L_2,\\
C_{11}\mathcal{Z}_1+C_{12}\mathcal{Z}_2+C_{13}\mathcal{Z}_3 &= -\omega^{-2-A_1} L_0 + \omega^{2+A_1} L_1 - L_2.
\end{align*}



Thus
\begin{align}\label{eq: C.M T11}
C_{11}M_{11}+C_{12}M_{21}+C_{13}M_{31}
= e^{3\xi^{1/3}\omega^{\pm 2} - nH_0} \left( \frac{\xi}{z} \right)^{A_1/3}
\Big[ \tilde{\alpha}(\xi)L_0 + \tilde{\beta}(\xi)L_1 + \tilde{\gamma}(\xi)L_2 \Big],
\end{align}
where $\tilde{\alpha}(\xi)$, $\tilde{\beta}(\xi)$, and $\tilde{\gamma}(\xi)$ are given by \eqref{eq: alpha,beta,gamma}.
Now, to extract $T_{12}$ as in \eqref{eq: T11 and T12}, we calculate,
\begin{align*}
F\begin{pmatrix}
      0 \\
        1 \\
        0
    \end{pmatrix}  = \begin{pmatrix}
      0 \\
        F_{22} \\
        0
    \end{pmatrix},
\end{align*}
which implies
\begin{align*}
&\widehat{\Psi}(\xi) \begin{pmatrix}
      1& 0 &0\\
        -\frac{1}{w_1} &1 & 0\\
         0& 0& 1 \\  
    \end{pmatrix}\begin{pmatrix}
      0 \\
        F_{22} \\
        0
    \end{pmatrix} =\begin{pmatrix}
      [g^{\pm}_2(\xi)]  F_{22} \\
      \big[\big(z \partial_z -\alpha_1 \big)g^{\pm}_2(\xi) \big]  F_{22} \\
        \big[\big(z \partial_z -\alpha_1 \big)^2 g^{\pm}_2(\xi) \big]  F_{22}
    \end{pmatrix}= \begin{pmatrix}
      \mathbf{G}_{12} \\
        \mathbf{G}_{22} \\
        \mathbf{G}_{32}
    \end{pmatrix}.
\end{align*}
Following a similar procedure as detailed above, we get
\begin{align}\label{eq: C.M T12 On-1}
T_{12}(z) = (1,0,0)T(z) = [C_{11}M_{12}+C_{12}M_{22}+C_{13}M_{32}](1+O(n^{-1})),
\end{align}
where
\[
M_{i2} = e^{3\xi^{\frac{1}{3}}\omega^{\mp 2}-nH_1} \left( \frac{\xi}{z} \right)^{A_2/3}
\Big\{
A'(\xi) \mathcal{X}_i + B'(\xi) \mathcal{Y}_i + C'(\xi) \mathcal{Z}_i
\Big\},
\]
and $A'(\xi)$, $B'(\xi)$, and $C'(\xi)$ are given in \eqref{eq: A,B,C xi prime}. Thus,
\begin{align}\label{eq: C.M T12}
C_{11}M_{12}+C_{12}M_{22}+C_{13}M_{32}
= e^{3\xi^{\frac{1}{3}}\omega^{\mp 2}-nH_1} \left( \frac{\xi}{z} \right)^{A_2/3}
\Big[ \alpha'(\xi)L_0 + \beta'(\xi)L_1 + \gamma'(\xi)L_2 \Big],
\end{align}
where $\alpha'(\xi)$, $\beta'(\xi)$, and $\gamma'(\xi)$ are given by \eqref{eq: alpha,beta,gamma}. Substituting \eqref{eq: C.M T12}, \eqref{eq: C.M T11}, \eqref{eq: C.M T11 On-1}, and \eqref{eq: C.M T12 On-1} into \eqref{eq: Pn in T11 and T12}, we get the final asymptotics.
\end{proof}

\section*{Acknowledgement} 
The research of the first author was carried out within the state assignment for IAM FEB RAS (N 075-00460-26-00).

\bibliographystyle{abbrv} 
\bibliography{references.bib}

\end{document}